\newtheorem{definition}{Definition}[section]
\newtheorem{theorem}{Theorem}[section]
\newtheorem{proposition}{Proposition}[section]
\newtheorem{lemma}{Lemma}[section]
\newtheorem{corollary}{Corollary}[section]
\newtheorem{remark}{Remark}[section]
\numberwithin{equation}{section}
\begin{document}

\title{On the Cauchy problem for $D_t^2-D_x\big(b(t)a(x)\big)D_x$}

\author{Ferruccio Colombini \footnote{Dipartimento di Matematica, Universit\`a di Pisa:  ferruccio.colombini@unipi.it}\;\; and Tatsuo Nishitani\footnote{Department of Mathematics, Osaka University:  
nishitani@math.sci.osaka-u.ac.jp
}}

\date{}
\maketitle

\def\dif{\partial}
\def\al{\alpha}
\def\be{\beta}
\def\ga{\gamma}
\def\om{\omega}
\def\lam{\lambda}
\def\tika{{\tilde \kappa}}
\def\baka{{\bar \kappa}}
\def\varep{\varepsilon}
\def\tal{{\tilde\alpha}}
\def\tbe{{\tilde\beta}}
\def\tis{{\tilde s}}
\def\bas{{\bar s}}
\def\R{{\mathbb R}}
\def\N{{\mathbb N}}
\def\Ga{\Gamma}
\def\La{\Lambda}
\def\lr#1{\langle{#1}\rangle}
\def\mD{\lr{ D}_{\mu}}
\def\xim{\lr{\xi}_{\mu}}

\begin{abstract}
We consider the Cauchy problem for second order differential operators with two independent variables $P=D_t^2-D_x(b(t)a(x))D_x$. Assuming that $b(t)$ is a nonnegative $C^{n,\alpha}$ function and $a(x)$ is a nonnegative Gevrey function of order $1<s<1+(n+\alpha)/2$ we prove that the Cauchy problem for $P$ is well-posed in the Gevrey class of any order $s'$ with $1<s<s'<1+(n+\alpha)/2$. 
\end{abstract}

\smallskip
 {\footnotesize Keywords: Cauchy problem, well-posedness, Weyl-H\"ormander calculus, Gevrey classes.}
 
 \smallskip
 {\footnotesize Mathematics Subject Classification 2010: Primary 35L15; Secondary 35S05}

\section{Introduction}

We are concerned with the Cauchy problem for  second order differential operators with two independent variables 
\begin{equation}
\label{eqia}
P=D_t^2-D_x\big(b(t) a(x)\big)D_x,\quad D_t=\frac{1}{i}\frac{\dif}{\dif t}, \;\; D_x=\frac{1}{i}\frac{\dif}{\dif x}
\end{equation}
with a nonnegative $b(t)\in C^{n,\al}([0,T])$, with $n\geq 0$ integer and $0\leq \al\leq1$  and a nonnegative Gevrey function $a(x)$  such that
\begin{equation}
\label{eq:mondai}
\left\{\begin{array}{ll}
Pu=0\;\;\mbox{in}\;\;(t,x)\in [0,T']\times \R,\\
D_t^ju(0,x)=u_j(x)  \;\;\mbox{for}\;\;j=0,1
\end{array}
\right.
\end{equation}
where $0<T'\leq T$. In the special case that $a(x)$ is positive constant  the well-posedness in the Gevrey class of order $1<s'<1+(n+\al)/2$ is proved in \cite{CJS}. Moreover this result is optimal in the sense that there exists $0\leq b(t)\in C^{n,\al}([0,T])$ such that the Cauchy problem for $D_t^2-b(t)D_x^2$ is not well-posed in the Gevrey class of order $s'>1+(n+\al)/2$ (for more details see \cite[Theorem 3]{CJS}). We denote by ${\mathcal G}^s(\R)$ the set of functions which are uniformly Gevrey $s$ on $\R$, that is the function $a(x)\in C^{\infty}(\R)$ belongs to ${\mathcal G}^s(\R)$  if  there exist $C>0, A>0$ such that
\[
|\dif_x^ka(x)|\leq CA^kk!^s,\quad \forall x\in \R, \quad k=0,1,\ldots.
\]
Denote ${\mathcal G}_0^s(\R)={\mathcal G}^s(\R)\cap C_0^{\infty}(\R)$. In this paper we prove
\begin{theorem}
\label{thm:taifu}Assume $0\leq b(t)\in C^{n,\al}([0,T])$ and $0\leq a(x)\in {\mathcal G}^s(\R)$. Assume $1<s<s'<1+(n+\al)/2$ then the Cauchy problem \eqref{eq:mondai} for $P$ is well-posed in ${\mathcal G}^{s'}(\R)$. More precisely there exists $\theta_0>0$ such that for any $1<s<s'<1+(n+\al)/2$ and $0<\tau \,(\leq T)$ there is $\mu>0$  such that for any $u_j\in {\mathcal G}_0^{s'}(\R)$ with finite $\sum_{j=0}^1\|\mD^{1-j}{\rm Op}(e^{\tau\xim^{1/s'}})u_j\|$ and any $0<\tau'<\tau$ there exists
a unique solution $u\in C^1([0,\tau'/\theta_0);{\mathcal G}_0^{s'}(\R))$ to \eqref{eq:mondai} with finite $\sum_{j=0}^1\|\mD^{(1-j)/s'}{\rm Op}(e^{(\tau'-\theta_0t)\xim^{1/s'}})D_t^ju(t)\|$.
\end{theorem}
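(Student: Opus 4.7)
The plan is to extend the Colombini--Jannelli--Spagnolo (CJS) scheme of \cite{CJS}, originally developed for $a\equiv\mathrm{const}$, to the variable-coefficient case by conjugating with a pseudodifferential Gevrey weight and working in a Weyl--H\"ormander framework adapted to that weight and to the Gevrey factor $a(x)$.

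The first step is to regularize $b$: set $b_\varep(t)=(b*\rho_\varep)(t)+\varep^{n+\al}$ for a standard nonnegative mollifier $\rho_\varep$. Then $b_\varep\geq\varep^{n+\al}$ is smooth, $|b_\varep-b|\leq C\varep^{n+\al}$, and the classical CJS/Glaeser estimate yields
\begin{equation*}
|\dif_t b_\varep(t)|\leq C\,(b_\varep(t)+\varep^{n+\al})^{1/2}\,\varep^{(n+\al)/2-1}.
\end{equation*}
The parameter $\varep$ will eventually be chosen microlocally as a negative power of $\xim$ in each frequency shell. Replace $P$ by $P_\varep = D_t^2 - D_x(b_\varep(t)a(x))D_x$.

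Next, introduce the Gevrey weight $\phi(t,\xi)=(\tau'-\theta_0 t)\xim^{1/s'}$ and the conjugated variable $v(t)={\rm Op}(e^\phi)u(t)$. Because $a\in{\mathcal G}^s(\R)$ with $s<s'$ strictly, the symbol $a(x)$ and the exponential weight $e^\phi$ lie in compatible Weyl--H\"ormander symbol classes, so that ${\rm Op}(e^\phi)$ is microlocally invertible and the conjugated operator ${\rm Op}(e^\phi)P_\varep{\rm Op}(e^{-\phi})$ retains its hyperbolic structure, up to a drift term of order $\xim^{1/s'}$ coming from $\dif_t\phi$ plus remainders controlled by Gevrey symbol calculus. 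The weighted energy
\begin{equation*}
E_\varep(t)=\|D_t v\|^2+\bigl({\rm Op}\bigl((b_\varep(t)a(x)+\varep^{n+\al})\lr{\xi}^2\bigr)v,\,v\bigr)
\end{equation*}
is coercive by the sharp G\aa rding inequality applied to the nonnegative symbol $b_\varep a+\varep^{n+\al}$. Differentiating in $t$, the worst contribution from $\dif_t(b_\varep a)$ grows at rate $\sim\varep^{(n+\al)/2-1}\lr{\xi}$ relative to $E_\varep$, while the drift produces a favorable term of order $-2\theta_0\xim^{1/s'}$. Balancing the two by the microlocal choice $\varep=\xim^{-\theta}$ with $\theta$ appropriate recovers the sharp Gevrey threshold $s'<1+(n+\al)/2$, for $\theta_0$ taken large enough, uniformly for $\mu$ sufficiently large.

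Once the uniform-in-$\varep$ bound on $E_\varep(t)$ is in hand, the weighted norm $\sum_{j=0}^1\|\mD^{(1-j)/s'}{\rm Op}(e^{(\tau'-\theta_0 t)\xim^{1/s'}})D_t^j u(t)\|$ stays finite on $[0,\tau'/\theta_0)$; standard compactness then produces the limit $u\in C^1([0,\tau'/\theta_0);{\mathcal G}_0^{s'}(\R))$, and uniqueness follows by applying the same estimate to the difference of two solutions. The main obstacle is the rigorous construction of the Weyl--H\"ormander calculus that simultaneously accommodates the Gevrey exponential weight $e^\phi$ and the Gevrey symbol $a(x)$: one must verify that every remainder in the symbolic asymptotic expansion (in the conjugation, in sharp G\aa rding, and in the product $b_\varep a$) has order strictly below the drift $\theta_0\xim^{1/s'}$. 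This is precisely the point at which the strict gap $s'>s$ is indispensable, and where the loss of regularity inherent in replacing $b$ by $b_\varep$ must be absorbed.
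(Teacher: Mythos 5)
Your proposal is a genuinely different route from the paper's, and it is worth spelling out the difference. The paper never mollifies $b$. Instead it cancels the dangerous term $b'(t)a(x)\xi^2|\hat u|^2$ exactly by conjugating with a \emph{secondary} pseudodifferential weight $e^{-\Lambda}$, where $\Lambda(t,x,\xi)=\int_0^t\frac{|b'(s)|\phi}{b(s)\phi+\xim^{-2\delta}}\,ds$ and $\phi=a(x)+\xim^{-2\delta}$. Differentiating $e^{-\Lambda}$ in $t$ supplies precisely the term $-|b'(t)|\phi/(b(t)\phi+\xim^{-2\delta})$, and the algebraic identity $\Delta=2(b\,a\Lambda'+\xim^{2\kappa-2}\Lambda')-b'a=F/(b\phi+\xim^{-2\delta})$ with $F\geq 0$ (Lemma~\ref{lem:kihon}, inequality~\eqref{eq:isoji:b}) is what absorbs $b'$. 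The drift $-2\theta\xim^{\kappa}$ coming from the primary Gevrey weight $e^{(\tau-\theta t)\xim^{\kappa}}$ is reserved for the commutator errors and the first-order correction $a_1$ produced by the conjugation (Propositions~\ref{prva}--\ref{prvo}). Crucially, the Weyl--H\"ormander calculus is built on the metric $g=\phi^{-1}dx^2+\xim^{-2}d\xi^2$, chosen precisely so that $\Lambda$, $\Lambda'$ and $e^{\pm\Lambda}$ are tame symbols despite the degeneracy of $a$.

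Your route --- mollify $b$ to $b_\varep$, then let $\varep$ be frequency-dependent and play off $|\partial_t b_\varep|\lesssim(b_\varep+\varep^{n+\alpha})^{1/2}\varep^{(n+\alpha)/2-1}$ against the drift --- sidesteps the $\Lambda$-weight entirely, and in the scalar case ($a\equiv\text{const}$) this is essentially the Fourier-side CJS balance. In the variable-$a$ case, however, it leaves two real gaps. First, once $\varep=\xim^{-\theta}$ the mollified coefficient $b_\varep(t,\xi)$ is a \emph{symbol} in $\xi$ whose $\xi$-derivatives bring out $\partial_\varep(b*\rho_\varep)$, which is comparable to the very quantity $\partial_t b_\varep$ you are trying to control; you would have to show that this symbol, multiplied by $a(x)$, lives in a class that is stable under composition with the Gevrey conjugation $e^{\phi}$ and under the sharp G\aa rding/Fefferman--Phong step, and nothing in the proposal does this. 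Second, you never identify the admissible metric: "compatible Weyl--H\"ormander symbol classes" is asserted, not produced. The metric $g=(a(x)+\xim^{-2\delta})^{-1}dx^{2}+\xim^{-2}d\xi^{2}$ (equivalently the regularization of $a$ by $\xim^{-2\delta}$, not merely a floor $\varep^{n+\alpha}$ added to the product $b_\varep a$) is the device that makes the $x$-degeneracy of $a$ compatible with symbolic calculus; without it, your symbol estimates for $a$ lose the factor $\phi^{-1/2}$ per $x$-derivative that the paper needs at every stage (Lemmas~\ref{lem:katto:bis}, \ref{lem:ano:1bis}, \ref{lem:moriyama:bis}).

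A smaller point: the parameter regime is inverted --- the paper takes $\mu$ \emph{small}, which makes $\xim\gtrsim\mu^{-1}$ large everywhere, so that remainders in $S_\delta(\xim^{-\epsilon})$ become $O(\mu^{\epsilon})$ uniformly; "uniformly for $\mu$ sufficiently large" is the wrong direction. And the existence/uniqueness step is not "standard compactness" on its own: in the paper one first cuts $a$ off to $a_\nu\in\mathcal G_0^s$, adds $\varepsilon$ to both $a_\nu$ and $b$ to get strict hyperbolicity and finite propagation speed, checks that the constants in the energy estimate are uniform in $\nu,\varepsilon$, and then passes to the limit; uniqueness then uses the self-adjointness $P^*=P$ and a Holmgren-type duality, not the forward energy inequality directly. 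These two omissions do not wreck the strategy, but they are not cosmetic either. The essential missing idea relative to the paper is the cancellation mechanism $e^{-\Lambda}$, and what you propose in its place is not yet justified at the symbol level.
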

For  a more detailed statement, see Proposition \ref{pro:seikaku} below. For case $b(t)\in C^{n,\alpha}$ with $n=0,1$ the result is a special case of  a more general result \cite[Th\'eor\`eme 1.3]{N:BSM}.
\begin{remark}
\label{rem:tyui}\rm From the Plancherel's theorem one has for $\tau>0$
\[
\|\mD^{\ell}{\rm Op}(e^{\tau\xim^{\kappa}})v\|^2=\sum_{n=0}^{\infty}\frac{(2\tau)^n}{n!}\|\mD^{\ell}\mD^{\kappa n/2}v\|^2.
\]
\end{remark}
\begin{corollary}
\label{cor:Cmugen}Assume $b(t)\in C^{\infty}([0,T])$ and $0\leq a(x)\in {\mathcal G}^s(\R)$ with $1<s$ which is constant for large $|x|$. Then the Cauchy problem for $P$ is well-posed in ${\mathcal G}^{s'}(\R)$ for any $s'>s$.
\end{corollary}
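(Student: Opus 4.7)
The plan is to reduce Corollary~\ref{cor:Cmugen} to Theorem~\ref{thm:taifu} by choosing $(n,\alpha)$ large enough to absorb $s'$ in the admissible range, and then to remove the compactness-of-support hypothesis on the initial data by a finite-speed-of-propagation argument. Fix $s'>s$. Since $b\in C^\infty([0,T])$, we have $b\in C^{n,\alpha}([0,T])$ for every integer $n\geq 0$ and every $\alpha\in[0,1]$. Picking an integer $n$ with $n/2>s'-1$ and setting $\alpha=0$, the chain $1<s<s'<1+(n+\alpha)/2$ holds, so Theorem~\ref{thm:taifu} supplies, for every compactly supported $u_j\in\mathcal{G}_0^{s'}(\R)$ with finite weighted norm, a unique solution $u\in C^1([0,\tau'/\theta_0);\mathcal{G}_0^{s'}(\R))$.

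To extend well-posedness from compactly supported data to general $u_j\in\mathcal{G}^{s'}(\R)$, I would invoke finite propagation speed. The hypothesis that $a$ is constant for $|x|$ large, together with the continuity of $a$ and smoothness of $b$, gives $0\leq b(t)a(x)\leq M^2$ uniformly on $[0,T]\times\R$, and a standard $L^2$-energy argument then bounds the propagation speed by $M$: the value of $u$ at $(t_0,x_0)$ depends only on $u_j$ restricted to $[x_0-Mt_0,\,x_0+Mt_0]$. Given arbitrary $u_j\in\mathcal{G}^{s'}(\R)$ and any $(t_0,x_0)$, multiply by a cut-off $\chi\in\mathcal{G}_0^{s'}(\R)$ equal to $1$ on a neighborhood of that interval (such cut-offs exist since $s'>1$); then $\chi u_j\in\mathcal{G}_0^{s'}(\R)$ because products of uniformly Gevrey functions are Gevrey. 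Theorem~\ref{thm:taifu} applied to $\chi u_j$ yields a local solution whose germ at $(t_0,x_0)$ is independent of the particular choice of $\chi$, and patching produces the global solution in $\mathcal{G}^{s'}$.

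The main technical obstacle is establishing the finite-propagation estimate in the Gevrey-weighted norms appearing in Theorem~\ref{thm:taifu}; I would handle this by the classical cone-truncation energy inequality, using the uniform bound on $b(t)a(x)$ to close the estimate. The constant-at-infinity hypothesis on $a$ moreover permits a cleaner treatment of the exterior region: outside the support of $a-a(\infty)$ the operator reduces to $D_t^2-a(\infty)\,b(t)\,D_x^2$, whose well-posedness in $\mathcal{G}^{s'}$ is exactly the constant-coefficient result of \cite{CJS} with the same bound $s'<1+(n+\alpha)/2$, so the exterior problem and the compactly perturbed interior problem can be combined without wasting any regularity coming from $b$.
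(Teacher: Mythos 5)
Your opening reduction is exactly the proof: since $b\in C^\infty([0,T])$ belongs to $C^{n,\alpha}([0,T])$ for every integer $n\geq 0$ and every $\alpha\in[0,1]$, one may take $n$ large enough that $s'<1+n/2$ and invoke Theorem~\ref{thm:taifu} directly. That single observation is the entire content of the corollary, and the paper offers no separate proof because it is immediate.

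The second half of your proposal, however, is aimed at a stronger claim that the corollary does not make. The phrase ``well-posed in $\mathcal{G}^{s'}(\R)$'' in Corollary~\ref{cor:Cmugen} is the same informal shorthand that appears in the statement of Theorem~\ref{thm:taifu} and in the abstract; its precise meaning is given by the ``More precisely\ldots'' clause of the theorem and by Proposition~\ref{pro:seikaku}, namely existence and uniqueness for $u_j\in\mathcal{G}_0^{s'}(\R)$ with finite weighted norm $\sum_j\|\mD^{1-j}{\rm Op}(e^{\tau\xim^{1/s'}})u_j\|$. For a non-decaying function such as $u_0\equiv 1\in\mathcal{G}^{s'}(\R)$ that weighted norm is infinite, so the corollary is not asserting well-posedness for arbitrary $u_j\in\mathcal{G}^{s'}(\R)$, and your ``main technical obstacle'' --- a cone-truncation energy estimate in the Gevrey-weighted norms --- need not be resolved at all. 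Finite propagation speed, which you correctly identify as the relevant tool, is already established and used inside the proof of Proposition~\ref{pro:seikaku} (for the regularized strictly hyperbolic operators $P_{\nu,\varepsilon}$), where its role is to localize the approximate solutions, not to enlarge the class of admissible data. Likewise, the ``constant for large $|x|$'' hypothesis is not exploited by splitting the problem into an interior compactly perturbed part and a constant-coefficient exterior, as your last paragraph suggests; the paper's argument cuts $a$ off directly via $a_\nu=a\chi(\nu\,\cdot)$ and the bound $b(t)a(x)\leq{\hat c}^2$ already follows from $a\in\mathcal{G}^s(\R)$ and $b\in C^\infty([0,T])$.
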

In \cite{CS} they constructed a nonnegative $C^{\infty}$  function $b(t)$ such that the Cauchy problem for $D_t^2-b(t)D_x^2$ is not $C^{\infty}$ well-posed.
Our proof of Theorem \ref{thm:taifu} is based on the energy method. To
explain the idea we consider the case $a(x)\equiv 1$. After Fourier transform with respect to $x$ it suffices to consider $\dif_t^2+b(t)\xi^2$. A very naive idea to obtain energy  estimates is to employ a weighted energy
\[
E(u)=e^{-\La (t,\xi)}\Bigl\{|\dif_t{\hat u}(t,\xi)|^2+(b(t)+|\xi|^{-2\delta})\xi^2
|{\hat u}(t,\xi)|^2\Bigr\}
\]
with weight $e^{-\Lambda}$ where $\delta>0$ to be chosen suitably.
In $dE(u)/dt$ we have a term $e^{-\La(t,\xi)}b'(t)\xi^2|{\hat
u}(t,\xi)|^2$ which is the hardest term to manage because
$b'(t)$ may change the sign although $b(t)\geq 0$ is assumed. A way to cancel the
term is to choose $\La(t,\xi)$ so that
\[
\La(t,\xi)=\int_0^t\frac{|b'(s)|}
{b(s)+|\xi|^{-2\delta}}\,ds
\]
since $dE(u)/dt$ supplies the term
$- e^{-\La}|b'(t)|\xi^2|{\hat u}(t,\xi)|^2$ which obviously 
controls $e^{-\La}b'(t)\xi^2|{\hat u}(t,\xi)|^2$. For general
$D_t^2-D_xh(t,x)D_x$ this suggests to choose
\[
\La(t,x,\xi)=\int_0^t\frac{|\dif_th(s,x)|}{h(s,x)+|\xi|^{-2\delta}}\,ds
\]
which, however, does not work  as a symbol of
pseudodifferential operator for the lack of regularity in $x$ (see however \cite{N1}). If $h(t,x)=b(t)a(x)$, thanks to this special form, the weight $\Lambda(t,x,\xi)$ works as a symbol of pseudodifferential operator with the metric $g=(a(x)+|\xi|^{-2\delta})^{-1}dx^2+|\xi|^{-2}d\xi^2$, which is equivalent to the one defining the symbol class $S_{1,\delta}$ on the set $\{a(x)= 0\}$. However, since an expected weight is not ${\rm Op}(\pm\Lambda)$ but ${\rm Op}(e^{\pm \Lambda})$  we must provide a calculus including ${\rm Op}(e^{\pm\Lambda})$.  Moreover in order to obtain energy estimates in the Gevrey class of order $s'$ we need to choose   $\delta =1-1/s'$ which could be  very close to $1$ and the larger $\delta$ gives worse commutators against $\Lambda$ because $\dif_x\Lambda$ looses the factor $(a(x)+|\xi|^{-2\delta})^{-1/2}$ on the symbol level, which is $|\xi|^{\delta}$ if $a(x)=0$. To manage such commutators we take advantage of pseudodifferential calculus with the metric $g$ (see \cite{N1},  \cite{CN2}).

\section{Metrics, symbols and weights}

In this section we introduce a metric defining a class of symbols of pseudodifferential operators and weights which we use throughout the paper. For Weyl-H\"ormander calculus of pseudodifferential operators we refer to \cite{Ho}, \cite{Ler}.
We denote 
\[
\xim=(\mu^{-2}+|\xi|^2)^{1/2}
\]
where $0<\mu\ll 1$ is a small parameter. Assume that $0\leq a(x)\in {\mathcal G}^s(\R)$ and with $0\leq \delta<1$ we set
\[
\phi(x,\xi)=a(x)+\xim^{-2\delta}.
\]
Throughout the paper  all constants are assumed to be independent of $\mu$ unless  otherwise stated.
\begin{lemma}
\label{lem:katto}
There exist $A>0, C>0$ such that
\begin{equation}
\label{eq:hasaku}
|\dif_x^{k}\dif_{\xi}^{l}\phi(x,\xi)|/\phi\leq CA^{k+l}(k+l)!(1+(k+l)^{s-1}\xim^{-\delta})^{k+l}\xim^{-l+\delta k}
\end{equation}
for any $k, l\in \N$.  
\end{lemma}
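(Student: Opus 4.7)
The first observation is that $\phi(x,\xi) = a(x) + \xim^{-2\delta}$ is a sum of a function of $x$ alone and a function of $\xi$ alone, so $\partial_x^k \partial_\xi^l \phi \equiv 0$ whenever both $k \geq 1$ and $l \geq 1$. The inequality \eqref{eq:hasaku} is trivial in that range, and the proof reduces to the two marginal cases $k = 0$ and $l = 0$.

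For $k = 0$, induction on $l$ using $\partial_\xi \xim = \xi/\xim$ and the Leibniz rule gives the standard estimate $|\partial_\xi^l \xim^{-2\delta}| \leq CA^l l!\,\xim^{-2\delta - l}$; dividing by $\phi \geq \xim^{-2\delta}$ yields $|\partial_\xi^l\phi|/\phi \leq CA^l l!\,\xim^{-l}$, which is majorised by the right-hand side of \eqref{eq:hasaku} at $k = 0$.

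For $l = 0$ the claim reduces to $|\partial_x^k a(x)|/\phi \leq CA^k k!\,(1+k^{s-1}\xim^{-\delta})^k\,\xim^{\delta k}$, and here I plan to combine two complementary estimates. On one side, the Gevrey hypothesis gives $|\partial_x^k a|/\phi \leq CA^k k!^s\,\xim^{2\delta}$ (via $\phi \geq \xim^{-2\delta}$), which is effective in the regime $\xim^\delta \lesssim k^{s-1}$, where the binomial on the right of \eqref{eq:hasaku} is dominated by the extreme term $k!\,k^{(s-1)k} \asymp k!^s$ (via Stirling). On the other side, the nonnegativity of $a$, through an iterated Glaeser--Bronshtein inequality for $a \in C^{2k}$, produces a bound of the form $|\partial_x^k a(x)| \leq CA^k k!^s \sqrt{a(x)}$, whence $|\partial_x^k a|/\phi \leq CA^k k!^s\,\xim^{\delta}$ since $\sqrt{a}/\phi \leq \phi^{-1/2} \leq \xim^\delta$; this version is effective in the complementary regime $\xim^\delta \gtrsim k^{s-1}$. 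Taking the minimum of the two bounds in each regime and using the binomial expansion $(1+k^{s-1}\xim^{-\delta})^k \xim^{\delta k} = \sum_{j=0}^k \binom{k}{j} k^{(s-1)j} \xim^{\delta(k-j)}$ delivers the right-hand side of \eqref{eq:hasaku}.

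The main obstacle will be running this interpolation cleanly, with constants $C, A$ independent of $k$ and of the small parameter $\mu$. In particular, the combinatorial constants hidden in the iterated Glaeser--Bronshtein step must be shown to grow no faster than $C^k$ (so as to be absorbed into $A^k$ and preserve the Gevrey index $s$), and one must verify that the two regime-dependent estimates genuinely combine into the single clean binomial $(1+k^{s-1}\xim^{-\delta})^k$ appearing in \eqref{eq:hasaku} rather than a messier sum.
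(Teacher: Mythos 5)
Your reduction to the marginal cases $k=0$ and $l=0$ (via the observation that $\phi$ is a sum of a function of $x$ alone and a function of $\xi$ alone) is clean and correct, and the $k=0$ case is fine. The gap is in the $l=0$ case: the ``iterated Glaeser--Bronshtein inequality'' $|\partial_x^k a(x)| \leq CA^k k!^s \sqrt{a(x)}$ is \emph{false} for $k\geq 2$. Already $a(x)=x^2$ (suitably cut off to be compactly supported Gevrey) gives $a''(0)=2$ while $\sqrt{a(0)}=0$, so no bound of the form $|a''|\lesssim\sqrt a$ can hold. Glaeser's inequality genuinely gives only $|a'|\leq C\sqrt a$; at order two one has merely $|a''|\leq C$, and at higher orders there is no $\sqrt a$ gain at all. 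Since your interpolation in the regime $\xim^\delta\gtrsim k^{s-1}$ leans entirely on that nonexistent inequality, the proof as written does not go through.

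Fortunately the regime splitting is unnecessary. The paper's argument is simpler: it uses the square-root gain only for $j=0,1,2$, namely $|\partial_x^j a|\leq Ca^{1-j/2}\leq C\phi^{1-j/2}\leq C\phi\,\xim^{\delta j}$ (with only $j=1$ actually requiring Glaeser), and for $k\geq 3$ it relies on nothing beyond the crude Gevrey bound $|\partial_x^k a|\leq CA^k k!^s$. The point is the elementary combinatorial inequality
\[
k!^s\,\xim^{2\delta}\leq\big(2^{2(s-1)}\big)^k\,(1+k^{s-1}\xim^{-\delta})^k\,k!\,\xim^{\delta k},
\]
which follows from $k!^{s-1}\leq k^{(s-1)k}$, $k\leq 2^k$, and the term $\binom{k}{2}\xim^{2\delta}k^{(s-1)(k-2)}$ in the binomial expansion of $(\xim^\delta+k^{s-1})^k$; this already absorbs the unwanted factor $\xim^{2\delta}=\phi^{-1}\cdot\phi\,\xim^{2\delta}$ coming from dividing by $\phi\geq\xim^{-2\delta}$, uniformly in $\xi$ and $\mu$. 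In other words, only $k=1$ genuinely needs positivity of $a$; everything else is the Gevrey hypothesis plus binomial bookkeeping. You should drop the appeal to an iterated Glaeser--Bronshtein inequality and replace it with this direct estimate for $k\geq 2$.
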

\begin{proof}
Since $a(x)$ is nonnegative note that $|\dif^j_xa(x)|\leq Ca^{1-j/2}\leq C\phi^{1-j/2}\leq C\phi\xim^{j\delta}\leq C\phi\xim^{j\delta}$ for $j=0,1,2$ and 
\begin{equation}
\label{eq:sanada}
|\dif_{\xi}^j\xim^{-2\delta}|\leq CA^jj!\xim^{-2\delta}\xim^{-j}\leq CA^jj!\phi\xim^{-j}
\end{equation}
for any $j\in\N$. Then for $k+l\leq 2$ the estimate \eqref{eq:hasaku} holds clearly. For any $k\geq 3$ we have
\[
k!^s\xim^{2\delta}\leq \big(2^{2(s-1)}\big)^k(1+k^{s-1}\xim^{-\delta})^kk!\xim^{\delta k}
\]
which proves with $A_1=2^{2(s-1)}$ that for $k\geq 3$
\[
|\dif_x^ka(x)|\leq CA^kk!^s\phi \xim^{2\delta}\leq C(AA_1)^kk!(1+k^{s-1}\xim^{-\delta})^k\phi\xim^{\delta k}.
\]
This together with \eqref{eq:sanada} proves the assertion for any $k,l\in\N$. 
\end{proof}

Assume $0\leq b(t)\in C^{n,\al}([0,T])$ and consider
\begin{equation}
\label{eq:ramu}
\La(t,x,\xi)=\int^t_0\frac{|b'(s)|\phi(x,\xi)}{b(s)\phi(x,\xi)+\xim^{-2\delta}}\,ds.
\end{equation}
Here recall \cite[Lemma 1]{CJS}.
\begin{lemma}
\label{lem:iia}
Assume that $f(t,\eta)\geq 0$ and $f(t,\eta)\in C^{n,\al}([0,T])$ for any $\eta\in \Omega$. Then one has 
\[
\int_0^t\frac{|\dif_tf(s,\eta)|}{f(s,\eta)+r}ds\leq C_NB^{1/(n+\al)}r^{-1/(n+\al)}
\]
for $(t,\eta)\in [0,T]\times \Omega$ where $B=\sup_{\eta\in\Omega}\|f(t,\eta)\|_{C^{n,\al}([0,T])}$. 
\end{lemma}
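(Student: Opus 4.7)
The plan is to derive the integral bound from a pointwise Glaeser-type inequality applied to $s\mapsto f(s,\eta)$ with $\eta$ held fixed.

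First I would invoke (or reprove) the sharp Glaeser-type inequality for nonnegative $C^{n,\al}$ functions of one variable: for any $g\ge 0$ in $C^{n,\al}([0,T])$,
\[
|g'(s)|\ \le\ C_{n,\al}\,\|g\|_{C^{n,\al}([0,T])}^{1/(n+\al)}\,g(s)^{(n+\al-1)/(n+\al)}.
\]
In the classical case $n=1$, $\al=1$ this is Glaeser's $|g'|^2\le 2\|g''\|_\infty g$. The proof goes through a Taylor expansion of $g$ about $s$: if $|g'(s)|$ were too large relative to $g(s)$, one could pick a step $h$ (one-sided if $s$ is close to an endpoint of $[0,T]$) of suitable sign and size to make $g(s+h)$ negative, contradicting $g\ge 0$; balancing the linear term against the order-$(n+\al)$ H\"older remainder and optimising $|h|$ yields the exponent $1/(n+\al)$. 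The entire technical difficulty of the lemma is concentrated here.

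Applying this inequality with $g(\cdot)=f(\cdot,\eta)$, so that $\|g\|_{C^{n,\al}([0,T])}\le B$ uniformly in $\eta\in\Omega$, I obtain
\[
\frac{|\dif_tf(s,\eta)|}{f(s,\eta)+r}\ \le\ C\,B^{1/(n+\al)}\,\frac{f(s,\eta)^{(n+\al-1)/(n+\al)}}{f(s,\eta)+r}.
\]
I then bound the right-hand fraction by $r^{-1/(n+\al)}$ uniformly in $(s,\eta)$ by a two-case comparison: if $f(s,\eta)\ge r$ drop $r$ from the denominator to get $f^{-1/(n+\al)}\le r^{-1/(n+\al)}$; if $f(s,\eta)<r$ drop $f$ instead to get $r^{(n+\al-1)/(n+\al)}/r=r^{-1/(n+\al)}$.

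Integrating the resulting pointwise bound $C\,B^{1/(n+\al)}\,r^{-1/(n+\al)}$ over $s\in[0,t]\subset[0,T]$ absorbs the factor $T$ into the final constant and produces the claimed estimate. Apart from the Glaeser-type inequality, every step is elementary; that pointwise inequality is the single nontrivial input, and one may either cite \cite{CJS} directly or reprove it via the Taylor argument sketched above.
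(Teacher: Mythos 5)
The paper does not prove this lemma at all: the sentence ``Here recall [Lemma 1, CJS]'' indicates it is simply quoted from Colombini--Jannelli--Spagnolo. So there is no ``paper's own proof'' to compare against; your task is really to supply the CJS argument.

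Unfortunately your proposal has a genuine gap: the pointwise Glaeser-type inequality you take as the single nontrivial input,
\[
|g'(s)|\ \le\ C_{n,\al}\,\|g\|_{C^{n,\al}([0,T])}^{1/(n+\al)}\,g(s)^{(n+\al-1)/(n+\al)},
\]
is \emph{false} as soon as $N:=n+\al>2$, which is precisely the regime of interest in this paper (one needs $s'<1+N/2$ with $s'>2$, hence $N>2$). A concrete counterexample: take $g(x)=\varepsilon+x^2$ on $[0,1]$ and, say, $n=3$, $\al=0$, so $N=3$. Then $\|g\|_{C^{3}([0,1])}\le 2$ uniformly in $\varepsilon$, but at $x=\sqrt{\varepsilon}$ one has $g'(x)=2\sqrt{\varepsilon}$ and $g(x)=2\varepsilon$, so $|g'(x)|/g(x)^{2/3}=2^{1/3}\varepsilon^{-1/6}\to\infty$ as $\varepsilon\to 0$. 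The Taylor/barrier argument you sketch only controls the competition between the linear term $g'(s)h$ and the order-$N$ H\"older remainder; when $N>2$ the intermediate terms $g^{(k)}(s)h^k/k!$ for $2\le k\le n$ are not controlled by $g(s)$ and $\|g\|_{C^{n,\al}}$, and they can conspire to keep $g(s+h)\ge 0$ even when $|g'(s)|$ is large relative to $g(s)^{(N-1)/N}$. The sharp pointwise exponent for nonnegative functions, regardless of how smooth they are, remains $1/2$ (the classical Glaeser bound $|g'|^2\le 2\|g''\|_\infty g$).

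The integral bound survives nonetheless, but for a different reason: the set on which the pointwise ratio is large has small measure, so the estimate is intrinsically \emph{global in $s$} rather than a consequence of a pointwise inequality integrated over $[0,T]$. The CJS proof exploits this: one splits $[0,T]$ according to the size of $f$ relative to (a multiple of) $r$, uses that $\int |f'|/f = $ total variation of $\log f$ on the region where $f$ is bounded below, and bounds the number of excursions/oscillations of $f$ across a given level using the $C^{n,\al}$ regularity (roughly, a nonnegative $C^{n,\al}$ function cannot drop from a level $\lambda$ to near $0$ and climb back up too many times on an interval of length $T$; the count is $O((B/\lambda)^{1/N})$). Each excursion contributes a bounded multiple of $\log(B/r)$ to the integral, and combining the count with the logarithm yields $C\,B^{1/N}r^{-1/N}$ after absorbing the logarithm into the power. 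Your two-case comparison at the end is correct, and for $N\le 2$ your argument would go through, but for $N>2$ you must replace the pointwise Glaeser input with this counting/level-set argument (or simply cite CJS, as the paper does).
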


Denote $N=n+\al$ and assume $
1<s<s'<1+N/2$. 
Define $\kappa>0$ and $\tika$ by
\[
0<\kappa=\frac{1}{s'}\,,\quad \tika=\frac{2\delta}{N}
\]
so that $s\kappa<s'\kappa=1$ and we put
\[
0<\delta=1-\kappa<1.
\]
\begin{lemma}
\label{lem:jimei}We have ${\tilde \kappa}<\kappa$ and hence $s{\tilde \kappa}<s\kappa=s(1-\delta)<1$.
\end{lemma}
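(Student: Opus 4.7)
The plan is to verify the two inequalities by a direct algebraic unpacking of the definitions, since the statement is essentially a compatibility check of the chosen parameters against the standing hypothesis $1<s<s'<1+N/2$.

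First I would rewrite $\tika<\kappa$ in a form that isolates $\kappa$. Substituting $\delta=1-\kappa$ into $\tika=2\delta/N$, the inequality $\tika<\kappa$ becomes $2(1-\kappa)/N<\kappa$, which after clearing denominators is equivalent to $2<(N+2)\kappa$, i.e.\ $\kappa>2/(N+2)$. Recalling $\kappa=1/s'$, this is the same as $s'<(N+2)/2=1+N/2$, which is precisely the upper bound on $s'$ assumed in the hypothesis. So the first inequality is nothing more than a rewriting of the constraint $s'<1+N/2$.

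For the chain $s\tika<s\kappa=s(1-\delta)<1$, I would proceed step by step. The middle equality is trivial from $\kappa=1-\delta$. The left inequality $s\tika<s\kappa$ follows by multiplying $\tika<\kappa$ (just proved) by the positive number $s$. Finally, $s\kappa<1$ is equivalent to $s/s'<1$, i.e.\ $s<s'$, which is again part of the hypothesis.

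The main (and only) obstacle here is bookkeeping: making sure the equivalence $\tika<\kappa \Longleftrightarrow s'<1+N/2$ is presented cleanly so that the reader sees how the definitions of $\kappa$, $\delta$, $\tika$ conspire to encode the Gevrey threshold. No estimation or analysis is needed beyond elementary manipulation of fractions.
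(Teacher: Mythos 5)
Your proposal is correct and follows essentially the same route as the paper's proof: both reduce $\tika<\kappa$ to the hypothesis $s'<1+N/2$ by elementary manipulation of $\kappa=1/s'$, $\delta=1-\kappa$, $\tika=2\delta/N$, and both note that $s\kappa<1$ is just $s<s'$. The paper phrases the chain starting from $\delta<N/(2+N)$ whereas you isolate $\kappa>2/(N+2)$, but these are the same computation.
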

\begin{proof}
Since $s'<1+N/2$ then $\delta=1-1/s'<N/(2+N)$. Thus we have
\[
\kappa=1-\delta>\frac{2}{2+N}=\frac{2}{N}\,\frac{N}{2+N}>\frac{2\delta}{N}=\tika
\]
which proves the assertion. 
\end{proof}
\begin{corollary}
\label{cor:kurinasi}
There is $B$ such that
\begin{equation}
\label{eq:keiho}
\La(t,x,\xi)\leq B\xim^{\tika}.
\end{equation}
\end{corollary}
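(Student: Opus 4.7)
The plan is a direct application of Lemma \ref{lem:iia} after suitably rearranging the integrand so that the parameter $r$ can be identified. First I would factor $\phi(x,\xi)>0$ out of the denominator in \eqref{eq:ramu} to write
\[
\La(t,x,\xi)=\int_0^t\frac{|b'(s)|}{b(s)+r(x,\xi)}\,ds,\qquad r(x,\xi)=\frac{\xim^{-2\delta}}{\phi(x,\xi)}.
\]
This is now exactly the form treated by Lemma \ref{lem:iia}, with $f(s,\eta)=b(s)$ (taking $\eta=(x,\xi)$ as a trivial parameter), so $B$ in the lemma reduces to $\|b\|_{C^{n,\al}([0,T])}$, which is finite by hypothesis.

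Applying the lemma gives
\[
\La(t,x,\xi)\leq C_N\,\|b\|_{C^{n,\al}}^{1/N}\,r(x,\xi)^{-1/N}=C_N\,\|b\|_{C^{n,\al}}^{1/N}\,\phi(x,\xi)^{1/N}\,\xim^{2\delta/N}.
\]
Since $2\delta/N=\tika$ by definition, it remains only to absorb the factor $\phi(x,\xi)^{1/N}$ into a constant.

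For this last step I would use that $a(x)\in{\mathcal G}^s(\R)$ is uniformly Gevrey, so taking $k=0$ in the defining inequality yields $\|a\|_{L^\infty(\R)}\leq C$. Combined with $\xim\geq \mu^{-1}$ (hence $\xim^{-2\delta}\leq \mu^{2\delta}\leq 1$ since $\mu\ll 1$), we get $\phi(x,\xi)=a(x)+\xim^{-2\delta}\leq C+1$ uniformly in $(x,\xi)$. Therefore $\phi^{1/N}$ is bounded, and setting $B:=C_N\,\|b\|_{C^{n,\al}}^{1/N}(C+1)^{1/N}$ yields \eqref{eq:keiho}.

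There is no real obstacle here: the corollary is essentially a repackaging of Lemma \ref{lem:iia}, and the only mild points are the algebraic rearrangement that reveals $r(x,\xi)$ and the observation that uniform Gevrey regularity of $a$ forces $a$ to be bounded. The choice of $\tika=2\delta/N$ is made precisely so that this application of Lemma \ref{lem:iia} produces a bound of the form $\xim^{\tika}$.
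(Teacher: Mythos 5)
Your proof is correct and takes essentially the same route the paper has in mind: the corollary follows from a direct application of Lemma \ref{lem:iia}, and the paper simply declares the proof ``clear.'' Your rearrangement (dividing through by $\phi$ so that $f=b$, $r=\xim^{-2\delta}/\phi$) and the observation that $\phi = a(x)+\xim^{-2\delta}$ is uniformly bounded (from $a\in{\mathcal G}^s$ and $\xim^{-2\delta}\le 1$) supply exactly the two small steps the paper leaves implicit.
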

\begin{proof} Thanks to Lemma \ref{lem:iia} the proof is clear.
\end{proof}
\begin{lemma}
\label{lem:ano:1}There are $A>0, C>0$ such that
\[
\big|\dif_x^k\dif_{\xi}^l\La(t,x,\xi)|\leq CA^{k+l}(k+l)!(1+(k+l)^{s-1}\xim^{-\delta})^{k+l}\La \xim^{-l+\delta k}
\]
for any $k,l\in \N$.
\end{lemma}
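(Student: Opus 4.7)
The plan is to differentiate under the integral sign. Set
\[
\psi(s,x,\xi)=b(s)\phi(x,\xi)+\xim^{-2\delta},\qquad F(s,x,\xi)=\phi(x,\xi)/\psi(s,x,\xi),
\]
so that $\La(t,x,\xi)=\int_0^t|b'(s)|F(s,x,\xi)\,ds$. It suffices to establish the pointwise bound
\[
|\dif_x^k\dif_{\xi}^l F(s,x,\xi)|\le C'A'^{k+l}(k+l)!(1+(k+l)^{s-1}\xim^{-\delta})^{k+l}F(s,x,\xi)\xim^{-l+\delta k}
\]
uniformly in $s\in[0,T]$; multiplying by $|b'(s)|$ and integrating over $[0,t]$ will then yield the claim with $\La$ on both sides.

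First I would verify that $\psi$ is a weight of the same type as $\phi$, namely that $|\dif_x^k\dif_{\xi}^l\psi|/\psi$ is bounded by $M_{k,l}:=CA^{k+l}(k+l)!(1+(k+l)^{s-1}\xim^{-\delta})^{k+l}\xim^{-l+\delta k}$. For $k\ge1$, since $\xim^{-2\delta}$ does not depend on $x$, one has $\dif_x^k\dif_{\xi}^l\psi=b(s)\dif_x^k\dif_{\xi}^l\phi$; dividing by $\psi\ge b(s)\phi$ and applying Lemma \ref{lem:katto} produces the desired bound. For $k=0$ the splitting $\dif_{\xi}^l\psi=b(s)\dif_{\xi}^l\phi+\dif_{\xi}^l\xim^{-2\delta}$ is handled term-by-term, using $\psi\ge b(s)\phi$ for the first summand (again via Lemma \ref{lem:katto}) and $\psi\ge\xim^{-2\delta}$ together with \eqref{eq:sanada} for the second.

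To pass from $\psi$ to $F=\phi/\psi$, I would apply Faà di Bruno to $\psi\mapsto\psi^{-1}$ and then Leibniz to the product $\phi\cdot\psi^{-1}$. The key structural observation is the submultiplicativity $M_{k_1,l_1}M_{k_2,l_2}\le M_{k,l}$ (with enlarged constants) for $k=k_1+k_2$, $l=l_1+l_2$, which holds because $(k_i+l_i)^{s-1}\le(k+l)^{s-1}$, so the Gevrey-type factor satisfies $(1+(k_1+l_1)^{s-1}\xim^{-\delta})^{k_1+l_1}(1+(k_2+l_2)^{s-1}\xim^{-\delta})^{k_2+l_2}\le(1+(k+l)^{s-1}\xim^{-\delta})^{k+l}$ directly. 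Combined with the standard partition-counting bounds (number of set partitions of $\{1,\ldots,k+l\}$ into $m$ blocks is controlled geometrically) this yields $|\dif_x^k\dif_{\xi}^l F|/F\le M'_{k,l}$, which is the desired estimate. The main obstacle is carrying the non-standard factor $(1+(k+l)^{s-1}\xim^{-\delta})^{k+l}$ through the Faà di Bruno sum without degeneration; the submultiplicativity above is precisely what makes this parameter-dependent factor behave like a classical Gevrey weight under reciprocation. Once the uniform-in-$s$ pointwise bound for $F$ is in hand, integrating against $|b'(s)|$ is immediate and produces the stated estimate with $\La$ on the right-hand side.
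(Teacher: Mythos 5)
Your overall structure matches the paper's: write $\La=\int_0^t|b'(s)|\,\phi/\Phi\,ds$ with $\Phi=b(s)\phi+\xim^{-2\delta}$, prove the Gevrey-type pointwise bound for $\phi/\Phi$ uniformly in $s$, and integrate against $|b'(s)|$. You also derive the type bound for $\Phi$ essentially as the paper does, with a harmless $k\ge 1$ versus $k=0$ case split instead of the paper's direct inequality $\xim^{-2\delta}\xim^{-l}\le\Phi\xim^{-l+\delta k}$.

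Where you diverge from the paper is in passing from $\Phi$ to $\Phi^{-1}$. You propose Fa\`a di Bruno, while the paper differentiates the identity $\Phi^{-1}\Phi=1$ and runs the Leibniz induction on $k+l$ (explicitly citing the analogous step in \cite[Lemma 4.1]{N:ojm}). Both routes reach the same estimate, but your justification for the Fa\`a di Bruno route contains a genuine error: the number of set partitions of $\{1,\ldots,k+l\}$ into $m$ blocks is a Stirling number of the second kind, which is not geometrically bounded (the Bell numbers grow superexponentially), so your stated bookkeeping, submultiplicativity of $M_{k,l}$ times a geometric count of partitions, would not close the estimate. What actually saves the Fa\`a di Bruno argument is the factorial saving hidden in the multinomial coefficient: for a partition into blocks of sizes $n_1,\ldots,n_m$, the product $\prod_i n_i!$ is $(k+l)!$ divided by the multinomial coefficient, and the Bell polynomial $B_{n,m}(1!,2!,3!,\ldots)$ equals the Lah number $\binom{n-1}{m-1}n!/m!$, so after multiplying by the $m!$ from $(1/t)^{(m)}$ you get $n!\binom{n-1}{m-1}$, whose sum over $m$ weighted by $C^m$ is $n!\,C(1+C)^{n-1}$. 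That is the precise reason the factor $A^{k+l}(k+l)!$ persists. If you patch your parenthetical with this Lah-number computation (or simply switch to the paper's shorter Leibniz induction from $\Phi^{-1}\Phi=1$, for which the same cancellation appears as the telescoping $\binom{n}{j}j!(n-j)!=n!$), the argument is complete. The submultiplicativity observation you make for the factor $(1+(k+l)^{s-1}\xim^{-\delta})^{k+l}$ is correct and does carry through unchanged in either route, and the final integration step is fine.
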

\begin{proof} Note that with $\Phi=b(s)\phi(x,\xi)+\xim^{-2\delta}$ one has
\begin{equation}
\label{eq:kaki}
\big|\dif_x^k\dif_{\xi}^l \Phi^{-1}\big|\leq CA^{k+l}(k+l)!(1+(k+l)^{s-1}\xim^{-\delta})^{k+l} \Phi^{-1}\xim^{-l+\delta k}.
\end{equation}
Indeed we have
\begin{align*}
|\dif_x^k\dif_{\xi}^l\Phi|\leq CA^{k+l}(k+l)!(1+(k+l)^{s-1}\xim^{-\delta})^{k+l}\big(b(s)\phi\xim^{-l+\delta k}\\
+\xim^{-2\delta}\xim^{-l}\big)\leq CA^{k+l}(k+l)!(1+(k+l)^{s-1}\xim^{-\delta})^{k+l} \Phi\xim^{-l+\delta k}.
\end{align*}
Then from $\Phi^{-1}\Phi=1$ one obtains the assertion for $\Phi^{-1}$ (see the proof of \cite[Lemma 4.1]{N:ojm}. Therefore we see
\begin{align*}
\big|\dif_x^k\dif_{\xi}^l\frac{|b'(s)|\phi }{\Phi}\big|\leq |b'(s)|CA^{k+l}(k+l)!(1+(k+l)^{s-1}\xim^{-\delta})^{k+l} \frac{\phi}{\Phi}\xim^{-l+\delta k}\\
=CA^{k+l}(k+l)!(1+(k+l)^{s-1}\xim^{-\delta})^{k+l} \frac{|b'(s)|\phi}{\Phi}\xim^{-l+\delta k}
\end{align*}
which proves the assertion.
\end{proof}
\begin{lemma}
\label{lem:ano:2} There are $A>0, C>0$ such that
\begin{align*}
\big|\dif_x^k\dif_{\xi}^l\dif_t\La(t,x,\xi)|\leq CA^{k+l}(k+l)!(1+(k+l)^{s-1}
 \xim^{-\delta})^{k+l}
 \dif_t\La(t,x,\xi) \xim^{-l+\delta k}
\end{align*}
for any $k,l\in\N$.
\end{lemma}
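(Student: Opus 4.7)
The plan is to observe that differentiating the defining formula \eqref{eq:ramu} in $t$ simply removes the integral sign, giving
\[
\dif_t\La(t,x,\xi)=\frac{|b'(t)|\phi(x,\xi)}{b(t)\phi(x,\xi)+\xim^{-2\delta}}.
\]
This is exactly the integrand that was controlled pointwise in $s$ in the proof of Lemma \ref{lem:ano:1}. So the content of Lemma \ref{lem:ano:2} is already present in that earlier argument: one only needs to freeze $s=t$ and avoid the final integration step.

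Concretely, I would first recall from the proof of Lemma \ref{lem:ano:1} the two ingredients, which I will re-use verbatim. First, by Lemma \ref{lem:katto} together with the trivial bound $|\dif_s^jb(s)|\leq \|b\|_{C^{n,\al}}$ for $j=0,1$, one has for $\Phi(s,x,\xi)=b(s)\phi(x,\xi)+\xim^{-2\delta}$ the Leibniz-type bound
\[
|\dif_x^k\dif_\xi^l\Phi|\leq CA^{k+l}(k+l)!(1+(k+l)^{s-1}\xim^{-\delta})^{k+l}\,\Phi\,\xim^{-l+\delta k},
\]
valid pointwise in $s$ (in particular at $s=t$). Second, writing $\Phi^{-1}\Phi=1$ and inducting on $k+l$ (as in \cite[Lemma 4.1]{N:ojm}, cited in the proof of Lemma \ref{lem:ano:1}) converts this into the estimate \eqref{eq:kaki} for derivatives of $\Phi^{-1}$, again pointwise in $s$.

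Next I would apply the Leibniz rule to $\dif_t\La=|b'(t)|\phi\cdot\Phi(t,\cdot)^{-1}$, using the bounds of Lemma \ref{lem:katto} for the $\phi$-factor and \eqref{eq:kaki} for the $\Phi(t,\cdot)^{-1}$-factor. The scalar $|b'(t)|$ is independent of $(x,\xi)$ and factors out. Combining the two estimates exactly as done at the end of the proof of Lemma \ref{lem:ano:1} yields
\[
\big|\dif_x^k\dif_\xi^l\big(|b'(t)|\phi/\Phi(t,\cdot)\big)\big|\leq CA^{k+l}(k+l)!(1+(k+l)^{s-1}\xim^{-\delta})^{k+l}\,\frac{|b'(t)|\phi}{\Phi(t,\cdot)}\,\xim^{-l+\delta k},
\]
which is the claimed bound. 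No new obstacle arises — the only point to verify is that nowhere in the Leibniz computation was the $s$-integration actually used, so specializing to $s=t$ is legitimate; this is essentially bookkeeping.
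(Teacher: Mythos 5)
Your proposal is correct and follows the paper's intended route: the paper omits a separate proof for Lemma \ref{lem:ano:2} precisely because the final display in the proof of Lemma \ref{lem:ano:1} already gives the pointwise-in-$s$ bound on $|b'(s)|\phi/\Phi(s,\cdot)$, and $\dif_t\La$ is that quantity at $s=t$. Your bookkeeping observation that the $s$-integration was never used in the Leibniz computation is exactly the point.
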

Taking Lemma \ref{lem:ano:1} into account  we study $e^{\psi}$ with $\psi$ satisfying 
\begin{equation}
\label{eq:adati}
\begin{split}
\big|\dif_x^k\dif_{\xi}^l\psi(x,\xi)|\leq C^{k+l+1}(k+l)!(1+(k+l)^{s-1}\xim^{-\delta})^{k+l}
 \lam(x,\xi) \xim^{-l+\delta k}
\end{split}
\end{equation}
for any $k,l\in \N$ with some  $\lam(x,\xi)>0$ and $C>0$. Define  $\om^{i}_{j}$ ($i,j\in\N$) by
\[
\dif_{\xi}^{i}\dif_x^{j}e^{\psi(x,\xi)}=\om^{i}_{j}(x,\xi)e^{\psi(x,\xi)}.
\]
\begin{lemma}There exist $B>0, C>0, A>0$ such that one has 
\begin{align*}
&|(\om^{i}_{j})^{(k)}_{(l)}|\leq CA^{i+j+k+l}\xim^{-(i+k)+\delta(j+l)}\\
\times \sum_{p=0}^{i+j-1}&(B\lam)^{i+j-p}(k+l+p)!(1+(k+l+p)^{s-1}\xim^{-\delta})^{k+l+p}
\end{align*}
for any $i,j, k, l\in \N$ where $(\om^i_j)^{(k)}_{(l)}=\dif_{\xi}^k\dif_x^l\om^i_j$.
\end{lemma}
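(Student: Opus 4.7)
I would induct on $n=i+j\geq 1$, using the one-step recursions
\[
\omega^{i+1}_{j}=\dif_\xi\omega^{i}_{j}+\omega^{i}_{j}\,\dif_\xi\psi,\qquad
\omega^{i}_{j+1}=\dif_x\omega^{i}_{j}+\omega^{i}_{j}\,\dif_x\psi,
\]
obtained by applying one more derivative to $\dif_\xi^i\dif_x^j e^{\psi}=\omega^i_j\,e^{\psi}$. The base case $n=1$ is immediate since $\omega^1_0=\dif_\xi\psi$ and $\omega^0_1=\dif_x\psi$: applying \eqref{eq:adati} with indices shifted by one reproduces the claim, the sum collapsing to its unique term $p=0$ of the form $(B\lambda)^1(k+l)!(1+(k+l)^{s-1}\xim^{-\delta})^{k+l}$, provided the polynomial slop introduced by the index shift,
\[
(m+1)\frac{(1+(m+1)^{s-1}\xim^{-\delta})^{m+1}}{(1+m^{s-1}\xim^{-\delta})^m}\qquad (m=k+l),
\]
is absorbed into $A^{m+1}$; using $\xim^{-\delta}\leq\mu^\delta\leq 1$ and $(m+1)^{s-1}-m^{s-1}\leq(s-1)m^{s-2}$ this slop is bounded by $C(m+1)^s$, and then by $(e^{s/e})^{m+1}$ via $x^{s/x}\leq e^{s/e}$.

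For the inductive step, say $\omega^{i+1}_j$ with $i+j=n$, Leibniz gives
\[
(\omega^{i+1}_{j})^{(k)}_{(l)}=(\omega^{i}_{j})^{(k+1)}_{(l)}+\sum_{\substack{k_1+k_2=k\\ l_1+l_2=l}}\binom{k}{k_1}\binom{l}{l_1}\,(\omega^{i}_{j})^{(k_1)}_{(l_1)}\,\psi^{(k_2+1)}_{(l_2)}.
\]
The transport term is controlled by the inductive hypothesis with $(k,l)\mapsto(k+1,l)$: each of its summands, parametrized by $p_{\mathrm{old}}\in\{0,\ldots,n-1\}$, matches the target new form at $p_{\mathrm{new}}=p_{\mathrm{old}}+1\in\{1,\ldots,n\}$. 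For each source summand, insert the inductive bound on $(\omega^{i}_j)^{(k_1)}_{(l_1)}$ (a sum over $p_1\in\{0,\ldots,n-1\}$) and \eqref{eq:adati} on $\psi^{(k_2+1)}_{(l_2)}$, and merge the two factors via the elementary majorations
\[
m_1!\,m_2!\leq(m_1+m_2)!,
\]
\[
(1+m_1^{s-1}\xim^{-\delta})^{m_1}(1+m_2^{s-1}\xim^{-\delta})^{m_2}\leq (1+(m_1+m_2)^{s-1}\xim^{-\delta})^{m_1+m_2},
\]
the second following from $m_q^{s-1}\leq(m_1+m_2)^{s-1}$ for $s\geq 1$. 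The extra $\lambda$ supplied by \eqref{eq:adati} promotes the $\lambda$-power to $(B\lambda)^{(n+1)-p_{\mathrm{new}}}$ with $p_{\mathrm{new}}=p_1+1\in\{1,\ldots,n\}$, while summing the Leibniz binomials $\sum\binom{k}{k_1}\binom{l}{l_1}=2^{k+l}$ is absorbed into $A^{k+l}$. Both contributions live in $p_{\mathrm{new}}\in\{1,\ldots,n\}$; the slot $p_{\mathrm{new}}=0$ in the new sum remains as free slack on the upper-bound side. The analogue for $\omega^{i}_{j+1}$ is identical after interchanging the roles of $\xi$ and $x$.

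\textbf{Main obstacle.} The only subtle point is the bookkeeping of polynomial inflation. At each step the factorial index is shifted by one, producing a multiplicative cost $\sim(m+1)^s$ with $m=k+l+p$, and Leibniz contributes $2^{k+l}$. All such subexponential growth is absorbed into the prefactor $A^{i+j+k+l}$ by enlarging the constants $A$, $B$, $C$, which may be chosen at the outset depending only on $s$ and on the constant of \eqref{eq:adati}. No cancellation or delicate estimate is needed: the argument reduces to the two combining inequalities above together with the uniform bound $x^s\leq(e^{s/e})^x$.
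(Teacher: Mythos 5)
Your proposal follows the same overall route as the paper—induction on $i+j$ via the one-step recursion $\om^{i+e_1}_{j+e_2}=\om^{i(e_1)}_{j(e_2)}+\psi^{(e_1)}_{(e_2)}\om^i_j$ and a Leibniz expansion—but it contains a genuine gap in how the Leibniz binomials are controlled, and this is not a bookkeeping nuisance: it breaks the induction.

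After applying the inductive bound to $(\om^i_j)^{(k_1)}_{(l_1)}$ and \eqref{eq:adati} to $\psi^{(k_2+1)}_{(l_2)}$, each Leibniz summand carries a factor $A^{k_1+l_1}\,C^{k_2+l_2}$, where $A$ is the inductive constant for $k+l$ and $C$ the constant from \eqref{eq:adati}. Once you replace the factorial and $\xim^{-\delta}$ brackets by their $(k_1,l_1)$-independent majorants via $m_1!\,m_2!\le(m_1+m_2)!$ and the bracket inequality, the remaining Leibniz sum is exactly
\[
\sum_{k_1,l_1}\binom{k}{k_1}\binom{l}{l_1}A^{k_1+l_1}C^{k_2+l_2}=(A+C)^{k+l},
\]
not $2^{k+l}$. (Even if one used the same $A$ throughout, one gets $(2A)^{k+l}$.) Either way the base of the $k+l$-exponential grows strictly at every step, so the inductive conclusion at level $n+1$ no longer has the shape $A^{k+l}$: no "enlarging constants at the outset'' can fix a factor that compounds with $n=i+j$. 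The paper avoids this by splitting the constant into $A_1$ (for $k+l$) and $A_2$ (for $i+j$), and by using a sharper combinatorial bound that merges the binomials \emph{into} the factorial estimate:
\[
\binom{k}{k'}\binom{l}{l'}(k'+l')!\,(k-k'+l-l'+p)!\le 2^{k'+l'}(k+l+p)!
\]
(this follows from Vandermonde and holds even without the $2^{k'+l'}$). This makes the residual Leibniz sum a convergent geometric series $\sum_{k',l'}(2C/A_1)^{k'+l'}$, indexed by the $\psi$-derivatives only, rather than an exponential $2^{k+l}$ in the total order; the resulting bounded constant is then absorbed into the $A_2^{i+j}$ factor. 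Your plain $m_1!\,m_2!\le(m_1+m_2)!$ does not accomplish this coupling.

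Separately, there is a small off-by-one in your indexing for the Leibniz term: with the inductive factor $(B\lam)^{n-p_1}$ and the extra $(B\lam)$ from $\psi$, the product is $(B\lam)^{(n+1)-p_1}$ and the merged factorial is $(k+l+p_1)!$, so the contribution matches the slot $p_{\mathrm{new}}=p_1\in\{0,\dots,n-1\}$, not $p_{\mathrm{new}}=p_1+1$; the transport term $(\om^i_j)^{(k+1)}_{(l)}$ is the one that shifts to $p_{\mathrm{new}}=p_{\mathrm{old}}+1\in\{1,\dots,n\}$. The two ranges together cover $\{0,\dots,n\}$, so this is only a labeling slip, but as written your claim that "both contributions live in $\{1,\dots,n\}$'' is not correct.
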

\begin{proof}We first note that one can find $C>0, B>0$ such that for $i+j=1$
\begin{equation}
\label{jositu}
\begin{split}
|\dif_x^{l+j}\dif_{\xi}^{k+i}\psi|\leq C^{k+l}(k+l)!(1+(k+l)^{s-1}\xim^{-\delta})^{k+l}
 (B\lam)\xim^{-(k+i)+\delta( l+j)}
\end{split}
\end{equation}
for any $k, l\in \N$. Indeed from \eqref{eq:adati} 
\begin{align*}
|\dif_x^{l+j}\dif_{\xi}^{k+i}\psi|\leq C^{k+l+1}(k+l+1)!(1+(k+l+1)^{s-1}\xim^{-\delta})^{k+l+1}\\
\times \lam\,\xim^{-(k+i)+\delta (l+j)}\\
\leq C^{k+l+1}C_1^{k+l+1}(k+l)!(1+(k+l)^{s-1}\xim^{-\delta})^{k+l}
 \lam\,\xim^{-(k+i)+\delta (l+j)}
\end{align*}
with some $C_1>0$. Thus replacing $C$ by $CC_1$ and choosing $B=CC_1$ we get  \eqref{jositu}.
Assume that there exist $C>0, A_1>0, A_2>0$ such that for $i+j\leq n$ we have for any $k, l\in\N$
\begin{equation}
\label{eq:kasitu}
\begin{split}
&|(\om^{i}_{j})^{(k)}_{(l)}|\leq CA_2^{i+j}A_1^{k+l}\xim^{-(i+k)+\delta(j+l)}\\
\times \sum_{p=0}^{i+j-1}&(B\lam)^{i+j-p}(k+l+p)!(1+(k+l+p)^{s-1}\xim^{-\delta})^{k+l+p}.
\end{split}
\end{equation}
When $i+j=1$ the estimate \eqref{eq:kasitu} holds thanks to \eqref{jositu} for any $k,l\in\N$ because $\om^{i}_{j}=\dif_{\xi}^{i}\dif_x^{j}\psi$.  
When $e_1+e_2=1$ we have 
$\om^{i+e_1}_{j+e_2}=\om^{i(e_1)}_{j(e_2)}+
\psi^{(e_1)}_{(e_2)}\om^{i}_{j}$ then it follows that
\begin{align*}
|\om^{i+e_1(k)}_{j+e_2(l)}|=\Bigl|\om^{i(k+e_1)}_{j(l+e_2)}
+\sum\binom k{k'}\binom l{l'}\psi^{(e_1+k')}_{(e_2+l')}
\om^{i(k-k')}_{j(l-l')}\Bigr|\\
\leq CA_1^{k+l+1}A_2^{i+j}\xim^{-(i+k+e_1)+\delta(j+l+e_2)}
\sum_{p=0}^{i+j-1}(B\lam)^{i+j-p}\\
\times (k+l+1+p)! (1+(k+l+1+p)^{s-1}\xim^{-\delta})^{k+l+1+p}\\
+\sum\binom k{k'}\binom l{l'}(B\lam)C^{k'+l'+1}
(k'+l')!(1+(k'+l')^{s-1}\xim^{-\delta})^{k'+l'}\\
\times \xim^{-(k'+e_1)+\delta(l'+e_2)}
 CA_1^{k+l-k'-l'}A_2^{i+j}
\xim^{-(i+k-k')+\delta(j+l-l')}\\
\times \sum_{p=0}^{i+j-1}(B\lam)^{i+j-p}
 (k-k'+l-l'+p)!\\
\times (1+(k-k'+l-l'+p)^{s-1}\xim^{-\delta})^{k-k!+l-l'+p}.
\end{align*}
We denote the right-hand side by $\xim^{-(i+k+e_1)+\delta(j+l+e_2)}(I_1+I_2)$ where
\begin{align*}
I_1=CA_1^{k+l+1}A_2^{i+j}
\sum_{p=0}^{i+j-1}\sum_{i+j=p}(B\lam)^{i+j-p}\\
\times (k+l+1+p)! (1+(k+l+1+p)^{s-1}\xim^{-\delta})^{k+l+1+p}
\end{align*}
and the remaining part called $I_2$. We consider the sum over $k', l'$ in $I_2$:
\begin{align*}
&\sum\binom k{k'}\binom l{l'}C^{k'+l'+1}
(k'+l')!(1+(k'+l')^{s-1}\xim^{-\delta})^{k'+l'}A_1^{k+l-k'-l'}\\
&\times \sum_{p=0}^{\al+\be-1}
 (k-k'+l-l'+p)!(1+(k-k'+l-l'+p)^{s-1}\xim^{-\delta})^{k-k'+l-l'+p}
\end{align*}
which is bounded by
\begin{equation}
\label{eq:nitu}
\begin{split}
CA_1^{k+l}\sum_{k',l'}C^{k'+l'}A_1^{-k'-l'}2^{k'+l'}(1+(k'+l')^{s-1}\xim^{-\delta})^{k'+l'}\\
\times (1+(k-k'+l-l'+p)^{s-1}\xim^{-\delta})^{k-k'+l-l'+p}
\end{split}
\end{equation}
because 
\[
\binom k{k'}\binom l{l'}
(k'+l')!(k-k'+l-l'+p)!\leq 2^{k'+l'}(k+l+p)!.
\]
Note that
\begin{align*}
(1+(k'+l')^{s-1}\xim^{-\delta})^{k'+l'}
(1+(k-k'+l-l'+p)^{s-1}\xim^{-\delta})^{k-k'+l-l'+p}\\
\leq (1+(k+l+p)^{s-1}\xim^{-\delta})^{k'+l'}
(1+(k+l+p)^{s-1}\xim^{-\delta})^{k-k'+l-l'+p}\\
\leq (1+(k+l+p)^{s-1}\xim^{-\delta})^{k+l+p}
\end{align*}
then \eqref{eq:nitu} is estimated by
\begin{align*}
CA_1^{k+l}(1+(k+l+p)^{s-1}\xim^{-\delta})^{k+l+p}\sum_{k'=0}^k\big(2C/A_1\big)^{k'}\sum_{l'=0}^l\big(2C/A_1\big)^{l'}\\
\leq CA_1^{k+l+2}(A_1-2C)^{-2}(1+(k+l+p)^{s-1}\xim^{-\delta})^{k+l+p}.
\end{align*}
Thus we obtain
\begin{align*}
I_2\leq C^2A_2^{i+j}A_1^{k+l+2}(A_1-2C)^{-2}\sum_{p=0}^{i+j-1}(B\lam)^{i+j+1-p}\\
\times (k+l+p)!(1+(k+l+p)^{s-1}\xim^{-\delta})^{k+l+p},\\
I_1\leq CA_1^{k+l+1}A_2^{i+j}
\sum_{p=1}^{i+j}(B\lam)^{i+j+1-p}
 (k+l+p)!\\
\times  (1+(k+l+p)^{s-1}\xim^{-\delta})^{k+l+p}.
\end{align*}
Therefore we have proved that
\begin{align*}
&I_1+I_2\leq CA_2^{i+j+1}A_1^{k+l}\big(A_1A_2^{-1}+CA_1^2A_2^{-1}(A_1-2C)^{-2}\big)\\
&\times \sum_{p=0}^{i+j}
(B\lam)^{i+j+1-p}
 (k+l+p)! (1+(k+l+p)^{s-1}\xim^{-\delta})^{k+l+p}.
\end{align*}
Choosing $A_1, A_2$ such that $
A_1A_2^{-1}+CA_1^2A_2^{-1}(A_1-2C)^{-2}\leq 1$ 
the estimate \eqref{eq:kasitu} holds for $i+j=n+1$. 
\end{proof}
Noting that $\sum_{p=0}^{i+j-1}(B\lam)^{i+j-p}(k+l+p)!(1+(k+l+p)^{s-1}\xim^{-\delta})^{k+l+p}$ is bounded by
\begin{align*}
C_1^{k+l}B\lam\,(k+l)!(1+(k+l)^{s-1}\xim^{-\delta})^{k+l}
 \sum_{p=0}^{i+j-1}C_1^p\big(B\lam\big)^{i+j-1-p}(p+p^s\xim^{-\delta}\big)^p\\
\leq C_2^{k+l+i+j}B\lam\,(k+l)!(1+(k+l)^{s-1}\xim^{-\delta})^{k+l}\\
\times (B\lam+(i+j-1)+(i+j-1)^s\xim^{-\delta})^{i+j-1}
\end{align*}
if $i+j\geq 1$ we have the following lemma (see \cite[Lemma 5.1]{N:ojm}):
\begin{lemma}
\label{lem:seikei}
Assume that $\psi$ satisfies \eqref{eq:adati}. Then we have
\begin{equation}
\label{eq:hiei}
|\dif_{\xi}^{i}\dif_x^{j}e^{\psi}|\leq
C_2^{i+j}(B\lam+i+j
+(i+j)^{s}\xim^{-\delta})^{i+j} \xim^{-i+\delta j}e^{\psi}
\end{equation}
and $(i+j\geq 1)$
\begin{equation}
\label{eq:hira}
\begin{split}
|\dif_{\xi}^k\dif_x^l(\om^{i}_{j})| \leq
 C^{i+j+k+l}B\lam\,(B\lam+(i+j-1)
 +(i+j-1)^s\xim^{-\delta})^{i+j-1}\\
 \times 
 (k+l)!(1+(k+l)^{s-1}\xim^{-\delta})^{k+l}\xim^{-(i+k)+\delta(j+l)}.
\end{split}
\end{equation}
\end{lemma}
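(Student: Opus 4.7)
The plan is to derive both estimates directly from the bound on $|(\omega^i_j)^{(k)}_{(l)}|$ in the preceding lemma, combined with the algebraic simplification of the inner sum displayed just before the statement of the present lemma.

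For \eqref{eq:hira} (case $i+j \geq 1$): the preceding lemma gives
\[
|(\omega^i_j)^{(k)}_{(l)}| \leq C A^{i+j+k+l}\xim^{-(i+k)+\delta(j+l)}\,\Sigma,
\]
where $\Sigma := \sum_{p=0}^{i+j-1}(B\lam)^{i+j-p}(k+l+p)!(1+(k+l+p)^{s-1}\xim^{-\delta})^{k+l+p}$. The displayed estimate immediately before the statement asserts
\[
\Sigma \leq C_2^{k+l+i+j}B\lam\,(k+l)!(1+(k+l)^{s-1}\xim^{-\delta})^{k+l}(B\lam+(i+j-1)+(i+j-1)^s\xim^{-\delta})^{i+j-1}.
\]
Substituting this bound into the previous one and absorbing the combined constants into the $C^{i+j+k+l}$ of \eqref{eq:hira} yields the claimed estimate. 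To justify the displayed bound on $\Sigma$, I would use $(k+l+p)!\leq 2^{k+l+p}(k+l)!\,p!$ and $p!\leq p^p$; split the growth factor $(1+(k+l+p)^{s-1}\xim^{-\delta})^{k+l+p}$ by means of $(k+l+p)^{s-1}\leq 2^{s-1}((k+l)^{s-1}+p^{s-1})$ together with the elementary inequality $(1+\alpha)(1+\beta)\geq 1+\alpha+\beta$; and absorb the $p$-dependent factors into $(p+p^s\xim^{-\delta})^p$. The leftover sum $\sum_{p=0}^{i+j-1}C_1^p(B\lam)^{i+j-1-p}(p+p^s\xim^{-\delta})^p$ is then bounded term-by-term by the binomial expansion of $(B\lam+(i+j-1)+(i+j-1)^s\xim^{-\delta})^{i+j-1}$, with the overall factor $i+j$ from summing absorbed into $C_2^{i+j}$.

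For \eqref{eq:hiei}, the case $i+j = 0$ is trivial since both sides equal $e^\psi$. For $i+j \geq 1$, the identity $\partial_\xi^i\partial_x^j e^\psi = \omega^i_j e^\psi$ together with \eqref{eq:hira} applied at $k=l=0$ gives
\[
|\partial_\xi^i\partial_x^j e^\psi| \leq C^{i+j}B\lam(B\lam+(i+j-1)+(i+j-1)^s\xim^{-\delta})^{i+j-1}\xim^{-i+\delta j}\,e^\psi.
\]
Setting $Y = (i+j-1)+(i+j-1)^s\xim^{-\delta}$, the elementary inequality $B\lam(B\lam+Y)^{i+j-1}\leq (B\lam + i+j + (i+j)^s\xim^{-\delta})^{i+j}$ then delivers \eqref{eq:hiei} after absorbing $C^{i+j}$ into $C_2^{i+j}$.

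The main obstacle is the algebraic bound on $\Sigma$, and specifically the clean separation of the $(k+l)$- and $p$-dependent growth in $(1+(k+l+p)^{s-1}\xim^{-\delta})^{k+l+p}$; this is the sort of manipulation carried out in detail in \cite{N:ojm}, and once accepted the rest of the argument is just bookkeeping and constant absorption.
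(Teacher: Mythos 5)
Your proposal is correct and matches the paper's route: the paper proves \eqref{eq:hira} exactly by substituting the displayed bound on $\Sigma$ into the unnumbered lemma that precedes it, and \eqref{eq:hiei} is the $k=l=0$ case with the factor $B\lam$ absorbed, as you observe. The one place the paper is terse --- the justification of the $\Sigma$ bound, which it delegates to \cite[Lemma 5.1]{N:ojm} --- is also where your sketch is most compressed (the split of $(1+(k+l+p)^{s-1}\xim^{-\delta})^{k+l+p}$ is really the content of Lemma \ref{lem:kanta}, which needs a case distinction on $\max(k+l,p)$ rather than a single application of $(1+\alpha)(1+\beta)\geq 1+\alpha+\beta$), but the paper does no better, and the rest of your bookkeeping is accurate.
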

We apply these results to $\psi=\pm\Lambda$ and $\om^{i}_{j}=T^{i}_{j}$ to obatain
\begin{corollary}
\label{cor:moriyama}There exist $A>0, C>0$ such that
\begin{equation}
\label{eq:Mgsymbol:a}
\begin{split}
|\dif_{\xi}^k\dif_x^le^{\pm\Lambda}|\leq
CA^{k+l}(\xim^{\tika}+k+l+(k+l)^{s}\xim^{-\delta})^{k+l} \xim^{-k+\delta l}e^{\pm\Lambda}
\end{split}
\end{equation}
and for any $i, j$ $(i+j\geq 1)$
\begin{equation}
\label{eq:Mgsymbol:b}
\begin{split}
|\dif_{\xi}^k\dif_x^lT^{i}_{j}|\leq CA^{k+l}(k+l)!(1+(k+l)^{s-1}\xim^{-\delta})^{k+l}\\
\times \xim^{\tika}(\xim^{\tika}+(i+j-1)+(i+j-1)^s\xim^{-\delta})^{i+j-1}
 \xim^{-(i+k)+\delta(j+l)}.
\end{split}
\end{equation}
\end{corollary}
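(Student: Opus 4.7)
The plan is to deduce both estimates directly from Lemma \ref{lem:seikei} by specializing $\psi = \pm \Lambda$ with $\lambda = \Lambda$. First I would verify that $\psi = \pm \Lambda$ satisfies the hypothesis \eqref{eq:adati} of that lemma: this is exactly the content of Lemma \ref{lem:ano:1}, and since \eqref{eq:adati} involves only $|\dif_x^k \dif_\xi^l \psi|$, the sign of $\Lambda$ is immaterial. With this choice, the coefficients $\om^i_j$ produced by Lemma \ref{lem:seikei} coincide with the $T^i_j$ appearing in the corollary.

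Next I would plug the specialization into the outputs \eqref{eq:hiei} and \eqref{eq:hira} and replace every occurrence of $B\lambda = B\Lambda$ by the uniform bound $B\Lambda \leq B^2 \xim^{\tika}$ supplied by Corollary \ref{cor:kurinasi}. From \eqref{eq:hiei} this immediately produces
\[
|\dif_\xi^k \dif_x^l e^{\pm\Lambda}| \leq C_2^{k+l}\bigl(B^2\xim^{\tika} + k+l + (k+l)^s \xim^{-\delta}\bigr)^{k+l} \xim^{-k+\delta l}\,e^{\pm\Lambda},
\]
and \eqref{eq:hira} yields the analogous bound for $T^i_j$ with $B\lambda$ replaced throughout by $B^2\xim^{\tika}$.

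The last step is purely cosmetic. Using the elementary inequality $B^2 \xim^{\tika} + r \leq B^2(\xim^{\tika} + r)$ valid for $r \geq 0$ and $B \geq 1$, I would pull the constant $B^2$ out of each parenthesis. This costs an overall factor $B^{2(k+l)}$ in \eqref{eq:Mgsymbol:a} and $B^{2(i+j)}$ in \eqref{eq:Mgsymbol:b}, both of which I would absorb into a single enlarged constant $A$ (chosen to dominate $C_2 B^2$ and $CB^2$ simultaneously, with the $(i+j)$-growth in the second case compressed into the already present bracket $(\xim^{\tika}+(i+j-1)+(i+j-1)^s\xim^{-\delta})^{i+j-1}$ via a harmless rescaling). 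The required estimates \eqref{eq:Mgsymbol:a} and \eqref{eq:Mgsymbol:b} follow.

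I do not anticipate any real obstacle: the heavy lifting has been carried out by the inductive argument in the preceding lemma, and this corollary is essentially a substitution statement. The only point demanding a modicum of care is the bookkeeping of constants in \eqref{eq:Mgsymbol:b}, where the $(i+j)$-dependent growth must be checked to fit inside the single bracket $(\xim^{\tika}+(i+j-1)+(i+j-1)^s\xim^{-\delta})^{i+j-1}\xim^{\tika}$ rather than appearing as a spurious multiplicative factor.
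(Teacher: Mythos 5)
Your approach is exactly the paper's: the sentence immediately preceding the corollary reads ``We apply these results to $\psi=\pm\Lambda$ and $\om^{i}_{j}=T^{i}_{j}$,'' i.e.\ one specializes Lemma~\ref{lem:seikei}, whose hypothesis~\eqref{eq:adati} is supplied by Lemma~\ref{lem:ano:1} with $\lambda=\Lambda$, and then replaces $B\lambda=B\Lambda$ by the uniform bound $B\Lambda\leq B'\xim^{\tika}$ of Corollary~\ref{cor:kurinasi}. You identify all the right ingredients, and the derivation of~\eqref{eq:Mgsymbol:a} goes through exactly as you describe.

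One correction on the point you rightly flag for~\eqref{eq:Mgsymbol:b}: the factor $C^{i+j}$ already present in~\eqref{eq:hira}, together with the $B^{2(i+j)}$ you introduce by pulling constants out of the bracket, is \emph{not} absorbable into $\big(\xim^{\tika}+(i+j-1)+(i+j-1)^{s}\xim^{-\delta}\big)^{i+j-1}$ by a ``harmless rescaling.'' That bracket is merely $\geq 1$ for small $i+j$ (in particular it equals $1$ when $i+j=1$), so it affords no geometric-in-$(i+j)$ slack. The honest output of the substitution is a prefactor $CA^{i+j+k+l}$ rather than $CA^{k+l}$ — precisely the shape already visible in~\eqref{eq:hira}. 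This is what the paper's own argument actually produces, and it is all that is ever used (every application of the corollary keeps $i,j$ bounded, e.g.\ when expanding a $\#$-product to fixed order). So the discrepancy with the literal prefactor in~\eqref{eq:Mgsymbol:b} is an imprecision in the stated constant — either $A,C$ are tacitly allowed to depend on $i,j$, as in Lemma~\ref{lem:moriyama:bis}, or the exponent on $A$ should read $i+j+k+l$ — and is not a defect of your argument.
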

%
We introduce some symbol classes \cite[Definition 4.1]{N:ojm} (see also \cite{N1}):
\begin{definition}\rm
\label{dfn:kurasu:b}Let $0\leq \delta<1$ and $1<s$. Let $m(x,\xi;\mu)$ be a positive function with a small parameter $\mu>0$. We say that $a(x,\xi;\mu)\in C^{\infty}(\R\times\R ) $ 
belongs to  ${S}^{\lr{s}}_{\delta}(m)$ if for any $\varep>0$ there exist $C>0, A>0$ independent of 
$0<\mu\le 1$ such that for all $i, j\in\N$ one has
\begin{align*}
\big|
\partial_x^{j}\partial_{\xi}^{i}a(x,\xi;\mu)
\big|
\leq
C\,A^{i+j}\ 
((i+j)^{1+\varep}
+(i+j)^s\xim^{-\delta})^{i+j}\xim^{-i+j\delta}m(x,\xi,\mu)
\,.
\end{align*}
\end{definition}
\begin{definition}\rm
Let $s>1$ and $m(x,\xi,\mu)$ be a positive function with a small parameter $\mu>0$. We say that $a(x,\xi,\mu)\in C^{\infty}(\R\times\R)$ belongs to $S_{0,0}^{(s)}(m)$ if there exist $C>0, A>0$ independent of $\mu$ such that one has
\[
|\dif_x^j\dif_{\xi}^ia(x,\xi,\mu)|\leq CA^{i+j}(i+j)^{s(i+j)}m(x,\xi,\mu)
\]
for any $i,j\in \N$.
\end{definition}
We often write $a(x,\xi)$ for $a(x,\xi;\mu)$ dropping $\mu$.  Now we introduce the metric (see \cite{CN2})
\[
g(dx,d\xi)=g_{x,\xi}(dx,d\xi)=\phi(x,\xi)^{-1}dx^2+\xim^{-2}d\xi^2
\]
and $g_{\delta}(dx,d\xi)=\xim^{2\delta}dx^2+\xim^{-2}d\xi^2$. It is clear that $
g\leq g_{\delta}$ 
and $g_{\delta}$ is the metric defining the class $S_{1,\delta}$ for any fixed $\mu>0$.  
\begin{lemma}
Assume $0<\delta<1$. Then the metric $g$ is slowly varying and $\sigma$ temperate $($uniformly in $\mu$$)$ and 
\begin{equation}
\label{eq:hikoshi}
h^2=\sup g/g^{\sigma}=\xim^{-2}\phi^{-1}\leq \xim^{2\delta-2}=\xim^{ -2(1-\delta)}\leq 1
\end{equation}
that is, $g$ is an admissible metric.
\end{lemma}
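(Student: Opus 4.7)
The plan is to verify the three admissibility conditions---slow variation, $\sigma$-temperance, and $h\le 1$---by reducing everything to pointwise control of $\phi(x,\xi)=a(x)+\xim^{-2\delta}$ and of $\xim$, then combining the two diagonal factors. The bound on $h$ is the easy first step. Since $g$ is diagonal with coefficients $\phi^{-1}$ in $dx^{2}$ and $\xim^{-2}$ in $d\xi^{2}$, its symplectic dual is $g^{\sigma}_{x,\xi}=\xim^{2}\,dx^{2}+\phi\,d\xi^{2}$, so $g/g^{\sigma}$ has the same coefficient $\xim^{-2}\phi^{-1}$ in both directions. Hence $h^{2}=\xim^{-2}\phi^{-1}$, and the built-in lower bound $\phi\ge\xim^{-2\delta}$ gives $h^{2}\le\xim^{-2(1-\delta)}\le 1$ since $\xim\ge\mu^{-1}\ge 1$ and $\delta<1$.

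For slow variation I must show that $\phi(y,\eta)\sim\phi(x,\xi)$ and $\lr{\eta}_{\mu}\sim\xim$ whenever $g_{x,\xi}(y-x,\eta-\xi)\le r^{2}$ for $r$ small, with constants independent of $\mu$. The $\xim$ comparison is standard. For $\phi$ the crucial input is Glaeser's inequality for nonnegative $C^{2}$ functions, $|a'(x)|^{2}\le 2\,\|a''\|_{\infty}\,a(x)$, which yields $|a'(x)|\le C\,\phi(x,\xi)^{1/2}$; together with $|\dif_{\xi}\xim^{-2\delta}|\le C\phi\,\xim^{-1}$ this gives, along any segment in a $g$-ball,
\[
|\phi(y,\eta)-\phi(x,\xi)|\le C|y-x|\sup\phi^{1/2}+C|\eta-\xi|\,\xim^{-1}\sup\phi.
\]
A continuity bootstrap then produces $\phi(y,\eta)\in\bigl[\tfrac12\phi(x,\xi),\,2\phi(x,\xi)\bigr]$ for $r$ small, and similarly for the $\xi$-variable.

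For $\sigma$-temperance one has to prove, with $\mu$-uniform constants,
\[
\frac{\phi(x,\xi)}{\phi(y,\eta)}+\frac{\xim^{2}}{\lr{\eta}_{\mu}^{2}}\le C\bigl(1+\lr{\eta}_{\mu}^{2}|y-x|^{2}+\phi(y,\eta)|\eta-\xi|^{2}\bigr)^{N}.
\]
The $\xim$-ratio is handled by Peetre's inequality. For the $\phi$-ratio, Lipschitz continuity of $a^{1/2}$---an immediate consequence of $|a'|\le C a^{1/2}$---gives $a(x)\le 2a(y)+C|x-y|^{2}$, while $\xim^{-2\delta}\le C\lr{\eta}_{\mu}^{-2\delta}\bigl(1+|\xi-\eta|\lr{\eta}_{\mu}^{-1}\bigr)^{2\delta}$; writing $|x-y|^{2}=\lr{\eta}_{\mu}^{-2}\bigl(\lr{\eta}_{\mu}^{2}|x-y|^{2}\bigr)$ and absorbing the resulting factors of $\lr{\eta}_{\mu}^{\delta}$ into $(1+g^{\sigma})^{\delta}$ produces the desired bound. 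The main obstacle is the bookkeeping: verifying that the Gevrey assumption on $a$ plays no role beyond $C^{2}$ in these two pointwise estimates (so that the constants are genuinely independent of $\mu$), and ensuring the stray powers of $\xim^{\delta}$ balance to a small integer $N$; once this is done, the admissibility follows by the usual Hörmander-metric tensor-product argument.
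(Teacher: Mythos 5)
The paper itself does not spell out a proof of this lemma; its ``proof'' of both metric lemmas is simply a reference to \cite{CN2}. Your sketch assembles the correct ingredients: the computation $g^{\sigma}=\xim^{2}dx^{2}+\phi\,d\xi^{2}$ and hence $h^{2}=\xim^{-2}\phi^{-1}\leq\xim^{-2(1-\delta)}\leq 1$ is exact, and the use of Glaeser's inequality to get $|\partial_{x}\phi|\leq C\phi^{1/2}$ (so $\sqrt a$ is Lipschitz, uniformly in $\mu$) together with Peetre's inequality for the $\xim$-factor is precisely the mechanism that makes $\phi$ a $g$-continuous, $\sigma$-temperate weight, which is what \cite{CN2} establishes. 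Your remark that only $C^{2}$ regularity of $a$ and the nonnegativity enter, so the bounds are $\mu$-uniform, is the right observation; the remaining ``bookkeeping'' you flag (bootstrapping slow variation of $\xim$ before that of $\phi$, and trading $|x-y|^{2}$ for $\lr{\eta}_{\mu}^{-2}\cdot g^{\sigma}_{y,\eta}$ using $\lr{\eta}_{\mu}^{-2}\leq\lr{\eta}_{\mu}^{-2\delta}\leq\phi(y,\eta)$) is routine and closes the argument.
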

\begin{lemma}
\label{lem:phiOK} $\phi^{\pm 1}$ are $g$ continuous and $\sigma, g$ temperate $($uniformy in $\mu$$)$, that is $g$-admissible weights.
\end{lemma}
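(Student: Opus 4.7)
The plan is to verify, for both $m=\phi$ and $m=\phi^{-1}$, the two defining conditions of a $g$-admissible weight: slow variation with respect to $g$, and $\sigma, g$-temperance. With the symplectic dual written as $g^{\sigma}_{x,\xi}(Y,H)=\xim^2 Y^2+\phi(x,\xi)H^2$, the argument rests on two inputs already at hand: the Glaeser-type bound $|\dif_x a(x)|^2\leq Ca(x)\leq C\phi(x,\xi)$ together with $|\dif_x^2 a|\leq C$ used in the proof of Lemma \ref{lem:katto}, and the elementary weight inequality $|\xim-\lr{\eta}_\mu|\leq|\xi-\eta|$ following from $\xim^2-\lr{\eta}_\mu^2=\xi^2-\eta^2$.

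For the slow variation step I would assume $g_{x,\xi}(y-x,\eta-\xi)\leq c$ for small $c>0$, giving $|y-x|\leq\sqrt{c\,\phi(x,\xi)}$ and $|\eta-\xi|\leq\sqrt{c}\,\xim$. A Taylor expansion combined with $|\dif_x a|\leq C\sqrt{\phi(x,\xi)}$ and $|\dif_x^2 a|\leq C$ produces $|a(y)-a(x)|\leq C(\sqrt{c}+c)\phi(x,\xi)$. Taking $c\leq 1/4$ ensures $\lr{\eta}_\mu\geq\xim/2$, and a mean value bound on $\lr{\cdot}_\mu^{-2\delta}$ yields $|\lr{\eta}_\mu^{-2\delta}-\xim^{-2\delta}|\leq C\sqrt{c}\,\xim^{-2\delta}\leq C\sqrt{c}\,\phi(x,\xi)$. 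Summing, $|\phi(y,\eta)-\phi(x,\xi)|\leq C'\sqrt{c}\,\phi(x,\xi)$, which for $c$ small delivers slow variation of $\phi$, and a fortiori of $\phi^{-1}$.

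For the temperance of $\phi$, write $g^\sigma=g^\sigma_{x,\xi}(y-x,\eta-\xi)$. Since $\xim\geq 1$ and $\delta\leq 1$, $(y-x)^2\leq\xim^{-2}g^\sigma\leq\xim^{-2\delta}g^\sigma\leq\phi(x,\xi)g^\sigma$, and combining this with $a(y)\leq 2a(x)+C(y-x)^2$ delivers $a(y)\leq C\phi(x,\xi)(1+g^\sigma)$. For the momentum term, split into two cases. If $|\eta-\xi|\leq\xim/2$, then $\lr{\eta}_\mu\geq\xim/2$, so $\lr{\eta}_\mu^{-2\delta}\leq C\xim^{-2\delta}\leq C\phi(x,\xi)$. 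Otherwise $\xim^2\leq 4(\eta-\xi)^2\leq 4g^\sigma/\phi(x,\xi)\leq 4g^\sigma\xim^{2\delta}$, whence $\xim^{2(1-\delta)}\leq 4g^\sigma$ and hence $\xim^{2\delta}\leq C(g^\sigma)^{\delta/(1-\delta)}$; coupled with the trivial $\lr{\eta}_\mu^{-2\delta}\leq 1\leq\xim^{2\delta}\phi(x,\xi)$, this case gives $\lr{\eta}_\mu^{-2\delta}\leq C\phi(x,\xi)(g^\sigma)^{\delta/(1-\delta)}$. Altogether $\phi(y,\eta)\leq C\phi(x,\xi)(1+g^\sigma)^N$ with $N=\max(1,\delta/(1-\delta))$.

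For the temperance of $\phi^{-1}$, applying the estimate just obtained with $(x,\xi)$ and $(y,\eta)$ swapped furnishes $\phi(x,\xi)\leq C\phi(y,\eta)(1+g^\sigma_{y,\eta}(y-x,\eta-\xi))^N$, and the $\sigma$-temperance of the metric $g$ (from the previous lemma) then lets me replace $g^\sigma_{y,\eta}$ by a bounded multiple of $(1+g^\sigma_{x,\xi}(y-x,\eta-\xi))^{N_1}g^\sigma_{x,\xi}(y-x,\eta-\xi)$, yielding the required inequality with a larger exponent. The main obstacle is the second case of the temperance argument for $\phi$: the ratio $\phi(x,\xi)^{-1}$ can be as large as $\xim^{2\delta}$, and the estimate is rescued only by extracting from $g^\sigma$ itself the nontrivial bound $\xim^{2(1-\delta)}\leq 4g^\sigma$, which leans crucially on the hypothesis $\delta<1$.
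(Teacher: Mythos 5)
The paper's own proof of this lemma is a citation: the text simply refers the reader to \cite{CN2}. Your argument is, by contrast, a genuine self-contained verification, and it is essentially sound. The two key ingredients you isolate are exactly the right ones: the Glaeser-type bound $|\dif_x a|\leq C\sqrt{a}\leq C\sqrt{\phi}$ together with the uniform bound on $a''$, which controls the $a$-contribution to $\phi(y,\eta)-\phi(x,\xi)$ in both the slow-variation and the temperance computation; and, in the far-frequency case of the temperance estimate, the extraction of $\xim^{2(1-\delta)}\leq 4\,g^{\sigma}_{x,\xi}$ from $\phi^{-1}\leq\xim^{2\delta}$ — this is precisely where the hypothesis $\delta<1$ enters, as you note. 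The computation in the near-frequency case via $|\xim-\lr{\eta}_\mu|\leq|\xi-\eta|$ is also correct. The only place where you are a little brisk is the last step, passing from the $\sigma,g$-temperance of $\phi$ to that of $\phi^{-1}$. Swapping base points yields an inequality gauged by $g^{\sigma}_{y,\eta}(y-x,\eta-\xi)$ rather than $g^{\sigma}_{x,\xi}(y-x,\eta-\xi)$, and the precise bridge you need is the standard consequence of $\sigma$-temperance (and slow variation) of the metric that $1+g^{\sigma}_{y,\eta}(y-x,\eta-\xi)\leq C\,(1+g^{\sigma}_{x,\xi}(y-x,\eta-\xi))^{N_1}$ for some fixed $N_1$; your phrasing ``replace $g^{\sigma}_{y,\eta}$ by a bounded multiple of $(1+g^{\sigma}_{x,\xi})^{N_1}g^{\sigma}_{x,\xi}$'' is a slightly imprecise version of that. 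If you cite or state that equivalence explicitly, the whole argument closes cleanly, and it gives the reader a self-contained proof that the paper itself does not supply.
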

\begin{proof}
Proofs of the above two lemmas are found in \cite{CN2}.
\end{proof}
\begin{definition}\rm
\label{dfn:kurasu}
We denote $S_{\phi}(m)=S(m,g)$ and $S_{\delta}(m)=S(m,g_{\delta})$ for a $g$-admissible and $g_{\delta}$-admissible $($uniformly in $\mu$$)$ weight $m(x,\xi,\mu)>0$ respectively. By $a\in \mu^{c}S_{\phi}(m)$ (resp. $a\in \mu^c S_{\delta}(m)$) we mean that $\mu^{-c}a\in  S_{\phi}(m)$ (resp. $\mu^{-c}a\in S_{\delta}(m)$) uniformly in small $\mu>0$. 
\end{definition}
\begin{lemma}
\label{lem:ano:1bis}For any $k,l\in \N$ there is $C>0$ such that
\[
\big|\dif_x^k\dif_{\xi}^l\La(t,x,\xi)|\leq C\Lambda \phi^{-k/2}\xim^{-l}.
\]
\end{lemma}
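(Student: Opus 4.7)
The plan is to estimate the derivatives of $\La$ directly from the integral formula
\[
\La(t,x,\xi)=\int_0^t |b'(s)|\,\frac{\phi(x,\xi)}{\Phi(s,x,\xi)}\,ds,\qquad \Phi=b(s)\phi(x,\xi)+\xim^{-2\delta},
\]
by differentiating under the integral sign. The key is to establish two structural pointwise bounds, uniform in $s,\mu$: for all $\al,\be\in\N$,
\[
|\dif_x^{\al}\dif_{\xi}^{\be}\phi|\le C_{\al,\be}\,\phi\,\phi^{-\al/2}\xim^{-\be},\qquad
|\dif_x^{\al}\dif_{\xi}^{\be}\Phi|\le C_{\al,\be}\,\Phi\,\phi^{-\al/2}\xim^{-\be}.
\]

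Since $\phi=a(x)+\xim^{-2\delta}$, only pure derivatives of $\phi$ are nonzero. The $\xi$-derivatives are handled by $|\dif_\xi^{\be}\xim^{-2\delta}|\le C\xim^{-2\delta-\be}\le C\phi\xim^{-\be}$. For $\al=1$ we use Glaeser's inequality $|a'|\le C\sqrt{a}\le C\sqrt{\phi}$. For $\al\ge 2$, the Gevrey bound with $k=\al$ gives $|a^{(\al)}|\le C_{\al}$, while the same Gevrey bound with $k=0$ forces $|a|\le C$, hence $\phi$ is uniformly bounded above; since the exponent $1-\al/2\le 0$ is nonpositive, $\phi^{1-\al/2}$ is bounded below by a positive constant, giving $|a^{(\al)}|\le C_\al\phi^{1-\al/2}$. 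The analogous estimate for $\Phi$ follows immediately from $b\phi\le\Phi$ and $\xim^{-2\delta}\le\Phi$.

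Next I would apply Leibniz and Fa\`a di Bruno to $\dif_x^k\dif_\xi^l(\phi/\Phi)$, producing a finite linear combination of terms of the form
\[
\frac{(\dif_x^{\al_0}\dif_\xi^{\be_0}\phi)\prod_{i=1}^{m}(\dif_x^{\al_i}\dif_\xi^{\be_i}\Phi)}{\Phi^{m+1}},
\]
with $\sum_{i=0}^m\al_i=k$, $\sum_{i=0}^m\be_i=l$, and $\al_i+\be_i\ge 1$ for $i\ge 1$. Inserting the two bounds and cancelling $\Phi^m$ against $\Phi^{m+1}$ in the denominator, each such term is dominated by $C(\phi/\Phi)\,\phi^{-k/2}\xim^{-l}$. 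Integrating against $|b'(s)|$ yields
\[
|\dif_x^k\dif_\xi^l\La|\le C\,\phi^{-k/2}\,\xim^{-l}\int_0^t\frac{|b'(s)|\phi}{\Phi}\,ds=C\,\La\,\phi^{-k/2}\,\xim^{-l},
\]
which is the claim.

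The main obstacle is the pointwise bound on $|\dif_x^\al\phi|$ for $\al\ge 3$, since no Glaeser-type estimate gives it directly; the argument succeeds only because the Gevrey hypothesis simultaneously yields boundedness of $a$ (so $\phi$ is bounded above) and boundedness of every $a^{(\al)}$. The Leibniz/Fa\`a di Bruno combinatorics then present no difficulty since the constant $C$ is allowed to depend on $k$ and $l$.
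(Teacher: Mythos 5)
Your proof is correct and follows essentially the same route as the paper: both arguments reduce to the symbol estimates $|\dif_x^{\alpha}\dif_{\xi}^{\beta}\phi|\leq C\phi\,\phi^{-\alpha/2}\xim^{-\beta}$ and $|\dif_x^{\alpha}\dif_{\xi}^{\beta}\Phi|\leq C\Phi\,\phi^{-\alpha/2}\xim^{-\beta}$ (Glaeser for $\alpha=1$, boundedness of $\phi$ and of the higher Gevrey derivatives for $\alpha\geq2$), and then transfer these to the quotient $\phi/\Phi$ and integrate in $s$. The only cosmetic difference is that the paper first bounds $\dif_x^k\dif_\xi^l\Phi^{-1}$ via the identity $\Phi^{-1}\Phi=1$ and then multiplies by $|b'(s)|\phi$ with Leibniz, whereas you expand $\phi/\Phi$ directly by Leibniz and Fa\`a di Bruno, which is the same computation in a different order.
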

\begin{proof} Note that with $\Phi=b(s)\phi(x,\xi)+\xim^{-2\delta}$ one has
\begin{equation}
\label{eq:kakibis}
\big|\dif_x^k\dif_{\xi}^l \Phi(s,x,\xi)^{-1}\big|\leq C_{kl} \Phi(s,x,\xi)^{-1}\phi^{-k/2}\xim^{-l}.
\end{equation}
Indeed we have
\begin{align*}
|\dif_x^k\dif_{\xi}^l\Phi(s,x,\xi)|\leq C \big(b(s)\phi(x,\xi)\phi^{-k/2}\xim^{-l}
+\xim^{-2\delta}\xim^{-l}\big)\\\leq C\Phi(s,x,\xi)\phi^{-k/2}\xim^{-l}.
\end{align*}
Then from $\Phi^{-1}\Phi=1$ one obtains the assertion for $\Phi^{-1}$. Therefore we see
\begin{equation}
\label{eq:kaki:a}
\Big|\dif_x^k\dif_{\xi}^l\frac{|b'(s)|\phi(x,\xi)}{\Phi(s,x,\xi)}\Big|\leq C\frac{|b'(s)|\phi(x,\xi)}{\Phi(s,x,\xi)}\phi^{-k/2}\xim^{-l}
\end{equation}
which proves the assertion.
\end{proof}
\begin{lemma}
\label{lem:katto:bis}
For any $k, l\in \N$ there exist $C>0$ such that
\[
|\dif_x^{k}\dif_{\xi}^{l}\phi(x,\xi)|/\phi\leq C\phi^{-k/2}\xim^{-l}
\]
that is $\phi\in S_{\phi}(\phi)$.
\end{lemma}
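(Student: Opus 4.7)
The proof splits cleanly according to the structure $\phi(x,\xi)=a(x)+\xim^{-2\delta}$. Since the first summand depends only on $x$ and the second only on $\xi$, the mixed derivative $\dif_x^k\dif_\xi^l\phi$ vanishes whenever $k\ge 1$ and $l\ge 1$, so the inequality is trivial in that regime. For $k=0$, the already-proved estimate \eqref{eq:sanada} gives $|\dif_\xi^l\xim^{-2\delta}|\le C_l\xim^{-2\delta}\xim^{-l}\le C_l\phi\xim^{-l}$ upon using $\xim^{-2\delta}\le\phi$, which is the claim after dividing by $\phi$.

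The substantive case is $l=0$, $k\ge 1$, where the target becomes $|\dif_x^k a(x)|\le C_k\,\phi^{1-k/2}$. For $k=1$ this is exactly the Glaeser inequality $|\dif_x a|\le C\sqrt{a}\le C\sqrt{\phi}$ available for nonnegative $C^2$ functions, and for $k=2$ it reduces to the boundedness of $\dif_x^2 a$ (noting $\phi^0=1$). For $k\ge 3$ the Gevrey hypothesis yields the uniform bound $|\dif_x^k a(x)|\le CA^k k!^s=:C_k$, while $\phi\le \|a\|_\infty+1=:M$ uniformly in $\mu$ because $\xim\ge\mu^{-1}\ge 1$ forces $\xim^{-2\delta}\le 1$. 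Since $k/2-1>0$ we have $\phi^{k/2-1}\le M^{k/2-1}$, so $|\dif_x^k a|\,\phi^{k/2-1}\le C_k M^{k/2-1}$, which is the desired inequality with a $k$-dependent constant.

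The only genuine analytic input is the Glaeser bound at $k=1$; the rest is bookkeeping that distinguishes this lemma from Lemma \ref{lem:katto}, which had to trade $\phi^{-k/2}$ for the weaker $\xim^{\delta k}$ in order to keep constants uniform in $(k,l)$. The main (mild) obstacle is simply noticing that allowing the constant to depend on $k$ lets one avoid the loss and handle $k\ge 3$ by the crude boundedness of $\phi$. Once the pointwise inequality is in place, the statement $\phi\in S_\phi(\phi)$ is just its translation into the Weyl-H\"ormander language for the metric $g=\phi^{-1}dx^2+\xim^{-2}d\xi^2$, whose unit vectors in the $x$- and $\xi$-directions have lengths $\phi^{-1/2}$ and $\xim^{-1}$ respectively.
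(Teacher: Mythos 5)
Your proof is correct and follows essentially the same route as the paper's: Glaeser's inequality for the first $x$-derivative, the crude bound $\phi\leq M$ (equivalently $\phi^{-1}\geq c$) to absorb the Gevrey bounds for $k\geq 2$, and \eqref{eq:sanada} for the $\xi$-derivatives. Your case split, including the observation that mixed derivatives vanish identically and that $k$-dependent constants permit the stronger $\phi^{-k/2}$ gain here versus Lemma \ref{lem:katto}, is a slightly more explicit rendering of the same argument.
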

\begin{proof}
Note that $|\dif_xa(x)|\leq C\sqrt{a}\leq C\phi^{1/2}$ since $a(x)\geq 0$  and for $k\geq 2$
\[
|\dif_x^ka(x)|\leq C_k\phi^{1-k/2}
\]
since $\phi^{-1}\geq c$ with some $c>0$. Taking \eqref{eq:sanada} into account the proof is clear.
\end{proof}
\begin{lemma}
\label{lem:ano:2bis} For any $k,l\in\N$ there is $C>0$ such that
\[
\big|\dif_x^k\dif_{\xi}^l\dif_t\La(t,x,\xi)|\leq C\dif_t\La(t,x,\xi) \phi^{-k/2}\xim^{-l}.
\]
\end{lemma}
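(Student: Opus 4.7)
The plan is to observe that $\dif_t\La(t,x,\xi)$ is precisely the integrand in the definition \eqref{eq:ramu} of $\La$, evaluated at $s=t$. Writing $\Phi(s,x,\xi)=b(s)\phi(x,\xi)+\xim^{-2\delta}$ as in the proof of Lemma \ref{lem:ano:1bis}, differentiation under the integral sign gives
\[
\dif_t\La(t,x,\xi)=\frac{|b'(t)|\phi(x,\xi)}{\Phi(t,x,\xi)},
\]
so the lemma reduces to estimating derivatives of this quotient.

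The key step is to reuse the pointwise-in-$s$ inequality \eqref{eq:kaki:a} established during the proof of Lemma \ref{lem:ano:1bis}, namely
\[
\Big|\dif_x^k\dif_{\xi}^l\frac{|b'(s)|\phi(x,\xi)}{\Phi(s,x,\xi)}\Big|\leq C\frac{|b'(s)|\phi(x,\xi)}{\Phi(s,x,\xi)}\phi^{-k/2}\xim^{-l}.
\]
This bound was derived \emph{before} any integration in $s$ was performed, with a constant $C=C_{k,l}$ independent of $s\in[0,T]$. Specializing at $s=t$ yields at once
\[
|\dif_x^k\dif_\xi^l\dif_t\La(t,x,\xi)|\leq C\,\dif_t\La(t,x,\xi)\,\phi^{-k/2}\xim^{-l},
\]
which is the desired claim.

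I expect no genuine obstacle: the present lemma stands to Lemma \ref{lem:ano:1bis} as Lemma \ref{lem:ano:2} stands to Lemma \ref{lem:ano:1}. The same pointwise computation---Leibniz rule combined with the weight bound \eqref{eq:kakibis} for $\Phi^{-1}$ and the $S_\phi(\phi)$-estimate for $\phi$ from Lemma \ref{lem:katto:bis}---applies uniformly in $s$, and one simply omits the final integration. The only thing to notice is that neither $b(s)$ nor $|b'(s)|$ is differentiated in $x$ or $\xi$, which is manifest from the structure of the integrand and lets the factor $|b'(t)|$ pass untouched through the Leibniz expansion.
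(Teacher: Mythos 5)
Your proof is correct and is exactly the argument the paper has in mind (the paper states the lemma without a separate proof because, as you observe, it is contained in the proof of Lemma \ref{lem:ano:1bis}): by the fundamental theorem of calculus $\dif_t\La=|b'(t)|\phi/\Phi(t,x,\xi)$, and \eqref{eq:kaki:a} is precisely the required bound, pointwise in $s$ with $C_{kl}$ independent of $s$, so specializing to $s=t$ finishes. Your remark that $|b'(s)|$ passes untouched through the Leibniz expansion because it carries no $(x,\xi)$ dependence is the right thing to notice.
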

\begin{lemma}
\label{lem:moriyama:bis}For any $i, j$, $k,l$  there exists $C
>0$ such that
\begin{equation}
\label{eq:Mgsymbol:bbis}
|\dif_{\xi}^k\dif_x^lT^{i}_{j}|\leq C\xim^{\tika(i+j)}\phi^{-(j+l)/2}\xim^{-(i+k)}
\end{equation}
that is $T^i_j\in S_{\phi}(\xim^{\tika(i+j)}\phi^{-j/2}\xim^{-i})$.
\end{lemma}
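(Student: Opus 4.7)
The plan is to expand $T^i_j$ via the Fa\`a di Bruno formula, estimate each resulting factor of $\La$ by Lemma \ref{lem:ano:1bis}, and finally absorb the remaining power of $\La$ into $\xim^{\tika(i+j)}$ using Corollary \ref{cor:kurinasi}. Since the constant $C$ in \eqref{eq:Mgsymbol:bbis} is allowed to depend on $i,j,k,l$, no uniformity in the orders of differentiation is needed and the combinatorial bookkeeping remains elementary.

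By definition $T^i_j=\om^i_j$ with $\dif_\xi^i\dif_x^j e^{\La}=\om^i_j e^{\La}$, so the multivariate Fa\`a di Bruno formula gives
\[
T^i_j=\sum_{\pi}c_\pi\prod_{m=1}^{p_\pi}\dif_\xi^{i_m^\pi}\dif_x^{j_m^\pi}\La,
\]
where the finite sum runs over set partitions $\pi$ of the differentiations, $(i_m^\pi,j_m^\pi)\neq(0,0)$, $\sum_m i_m^\pi=i$, $\sum_m j_m^\pi=j$, and $1\leq p_\pi\leq i+j$. Applying $\dif_\xi^k\dif_x^l$ by the Leibniz rule produces a further finite sum of products, each still consisting of exactly $p_\pi$ factors, whose orders now add up to $(i+k,j+l)$.

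For each such factor, Lemma \ref{lem:ano:1bis} yields
\[
|\dif_x^{j'_m}\dif_\xi^{i'_m}\La|\leq C_{i'_m,j'_m}\,\La\,\phi^{-j'_m/2}\,\xim^{-i'_m}.
\]
Multiplying these estimates across the $p_\pi$ factors of one such product and using $\sum_m i'_m=i+k$, $\sum_m j'_m=j+l$ gives
\[
\Big|\prod_{m=1}^{p_\pi}\dif_\xi^{i'_m}\dif_x^{j'_m}\La\Big|\leq C'\,\La^{p_\pi}\,\phi^{-(j+l)/2}\,\xim^{-(i+k)}.
\]
By Corollary \ref{cor:kurinasi}, $\La\leq B\xim^{\tika}$, and since $\xim\geq\mu^{-1}\geq 1$ with $1\leq p_\pi\leq i+j$, one has $\La^{p_\pi}\leq\max(1,B)^{i+j}\,\xim^{\tika(i+j)}$. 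Summing the finitely many resulting terms proves \eqref{eq:Mgsymbol:bbis}.

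The only potential obstacle is organizing the Fa\`a di Bruno and Leibniz expansions cleanly, but this is straightforward here because $(i,j,k,l)$ are fixed. An equivalent derivation proceeds by induction on $i+j$ using the recursions $T^{i+1}_j=\dif_\xi T^i_j+(\dif_\xi\La)T^i_j$ and $T^i_{j+1}=\dif_x T^i_j+(\dif_x\La)T^i_j$ together with Lemma \ref{lem:ano:1bis}.
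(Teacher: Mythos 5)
Your proof is correct. The paper states Lemma \ref{lem:moriyama:bis} without an explicit proof, leaving it as the $S_{\phi}$-analogue of Corollary \ref{cor:moriyama}: just as that corollary follows from the induction argument behind Lemma \ref{lem:seikei} applied with the Gevrey-type bound of Lemma \ref{lem:ano:1}, the ``bis'' version follows from the same combinatorics applied with the $S_{\phi}$-type bound of Lemma \ref{lem:ano:1bis}. Your Fa\`a di Bruno expansion is simply the unrolled form of that induction (you yourself note the equivalent recursions $T^{i+1}_j=\dif_\xi T^i_j+(\dif_\xi\La)T^i_j$, etc.), and it is a sensible simplification here precisely because the constant in \eqref{eq:Mgsymbol:bbis} may depend on $i,j,k,l$, so none of the careful uniformity bookkeeping that Lemma \ref{lem:seikei} performs (the $C^{i+j+k+l}$ factors, the factorial growth) is required. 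The two essential inputs are exactly those you use: Lemma \ref{lem:ano:1bis} for each factor $\dif_\xi^{i'_m}\dif_x^{j'_m}\La$, giving $\La\,\phi^{-j'_m/2}\xim^{-i'_m}$, and Corollary \ref{cor:kurinasi} together with $\xim\ge 1$ to absorb $\La^{p_\pi}$ ($1\le p_\pi\le i+j$) into $\xim^{\tika(i+j)}$. The exponents collapse correctly since $\sum i'_m=i+k$, $\sum j'_m=j+l$, which is exactly the $S_\phi(\xim^{\tika(i+j)}\phi^{-j/2}\xim^{-i})$ membership.
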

Since $\phi^{-1}\leq \xim^{2\delta}$ we have $S_{\phi}(\xim^m)\subset S_{\delta}(\xim^m)$.
\begin{lemma}
\label{lem:budo:a}We have
$\Lambda \in S_{\phi}(\xim^{\tika})\cap S_{\delta}^{\lr{s}}(\xim^{\tika})$ and $\dif_t\Lambda \in S_{\phi}(\sqrt{\phi}\xim^{\delta})\cap S_{\delta}^{\lr{s}}(\xim^{\delta})$. 
\end{lemma}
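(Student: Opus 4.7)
All four inclusions follow the same two--step scheme: (a) establish a \emph{pointwise} bound for $\La$, resp.\ $\dif_t\La$, by the target weight, and (b) invoke the already-proved $(x,\xi)$--derivative estimates (Lemmas \ref{lem:ano:1}, \ref{lem:ano:1bis}, \ref{lem:ano:2}, \ref{lem:ano:2bis}) to cast the bound into the proper class.

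For $\La\in S_{\phi}(\xim^{\tika})$, Corollary \ref{cor:kurinasi} gives $\La\leq B\,\xim^{\tika}$, and substituting this into Lemma \ref{lem:ano:1bis} yields $|\dif_x^{k}\dif_{\xi}^{l}\La|\leq CB\,\xim^{\tika}\phi^{-k/2}\xim^{-l}$, which is the definition of $S_{\phi}(\xim^{\tika})$. For $\La\in S_{\delta}^{\lr{s}}(\xim^{\tika})$, I would start from Lemma \ref{lem:ano:1} combined with $\La\leq B\,\xim^{\tika}$; the only thing to check is the elementary comparison (with $n=k+l$)
\[
n!\,(1+n^{s-1}\xim^{-\delta})^{n}\leq (n^{1+\varep}+n^{s}\xim^{-\delta})^{n}
\]
for every $\varep>0$. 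This follows by $n!\leq n^{n}$, which turns the left-hand side into $(n+n^{s}\xim^{-\delta})^{n}$, and then $n\leq n^{1+\varep}$ for $n\geq 1$ (the case $n=0$ being trivial).

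For $\dif_t\La$ the only genuinely new ingredient is the pointwise bound $\dif_t\La\leq C(t)\sqrt{\phi}\,\xim^{\delta}$. Starting from the explicit formula $\dif_t\La = |b'(t)|\phi/(b(t)\phi+\xim^{-2\delta})$ and using AM--GM in the denominator,
\[
b(t)\phi+\xim^{-2\delta}\geq 2\sqrt{b(t)\phi}\,\xim^{-\delta},
\]
I obtain $\dif_t\La \leq \tfrac{|b'(t)|}{2\sqrt{b(t)}}\sqrt{\phi}\,\xim^{\delta}$. The $t$--dependent factor $|b'(t)|/(2\sqrt{b(t)})$ is harmless because the symbol class $S_{\phi}(\cdot)$ is defined by constants that need only be uniform in $(x,\xi,\mu)$. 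Combining this with Lemma \ref{lem:ano:2bis} gives $\dif_t\La \in S_{\phi}(\sqrt{\phi}\,\xim^{\delta})$. Combining it with Lemma \ref{lem:ano:2} and absorbing $\sqrt{\phi}\leq C$ (since $a$ is bounded and $\xim^{-2\delta}\leq\mu^{2\delta}\leq 1$) into the constant, together with the same factorial--polynomial comparison as above, gives $\dif_t\La \in S_{\delta}^{\lr{s}}(\xim^{\delta})$.

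The only delicate point is the pointwise bound for $\dif_t\La$ just discussed; the rest is a routine repackaging of Lemmas \ref{lem:ano:1}--\ref{lem:ano:2bis} using $n!\leq n^{n}$ and the trivial inclusion $S_{\phi}\subset S_{\delta}$ after absorbing constants.
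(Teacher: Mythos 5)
Your treatment of $\Lambda$ matches the paper's argument exactly: Corollary \ref{cor:kurinasi} supplies the pointwise bound $\La\leq B\xim^{\tika}$, Lemmas \ref{lem:ano:1bis} and \ref{lem:ano:1} supply the derivative bounds, and the elementary comparison $n!\leq n^{n}$, $n\leq n^{1+\varep}$ converts the Lemma \ref{lem:ano:1} estimate into the defining estimate of $S_{\delta}^{\lr{s}}(\xim^{\tika})$. No issue there.

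For $\dif_t\Lambda$, however, the ``absorb the $t$-dependent factor'' step is a genuine gap. Your AM--GM step does give $\dif_t\La\leq\tfrac{|b'(t)|}{2\sqrt{b(t)}}\sqrt{\phi}\,\xim^{\delta}$, but the assertion that $|b'(t)|/(2\sqrt{b(t)})$ is harmless ``because the $S_{\phi}$ constants need only be uniform in $(x,\xi,\mu)$'' fails on two counts. First, the constants implicit in $\dif_t\La\in S_{\phi}(\sqrt{\phi}\xim^{\delta})$ must also be uniform in $t$: this membership feeds into the sharp G\aa rding inequality of Lemma \ref{lem:muneage}, into Lemma \ref{lem:koreha} and \eqref{eq:ramA}, and ultimately into the Gronwall bound \eqref{eq:hayabusa}, where all constants must be $t$-independent. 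Second, and more fundamentally, the factor $|b'(t)|/\sqrt{b(t)}$ is uniformly bounded on $[0,T]$ precisely when Glaeser's inequality $|b'|\leq C\sqrt b$ holds, i.e.\ when $b\in C^{1,1}$ (so $n+\al\geq 2$). But the theorem also covers $n+\al<2$; for instance $b(t)=t^{3/2}\in C^{1,1/2}$ gives $|b'(t)|/\sqrt{b(t)}=\tfrac32 t^{-1/4}\to\infty$ as $t\to 0^{+}$, so the factor cannot be swept into a constant. Hence your argument only establishes the pointwise bound $\dif_t\La\leq C\sqrt{\phi}\xim^{\delta}$ under an extra Glaeser-type hypothesis, not under the stated assumptions. (To be fair, the paper itself is equally terse at this point: it simply writes $\dif_t\La=\phi^{1/2}\cdot\tfrac{|b'(t)|\phi^{1/2}}{b(t)\phi+\xim^{-2\delta}}\leq\phi^{1/2}\xim^{\delta}$ with no derivation, which after your AM--GM is exactly the assertion $|b'|\lesssim\sqrt b$. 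So the difficulty you run into is inherited, not self-inflicted; but the justification you wrote for the crucial step is not valid as stated.)
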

\begin{proof}The first assertion follows from Lemmas \ref{lem:ano:1}, \ref{lem:ano:1bis} and Corollary \ref{cor:kurinasi}. Noting  
\[
\dif_t\La(t,x,\xi)=\phi^{1/2}\frac{|b'(t)|\phi^{1/2}}{b(t)\phi+\xim^{-2\delta}}\leq \phi^{1/2}\xim^{\delta}
\]
the second assertion follows from Lemmas \ref{lem:ano:2} and \ref{lem:ano:2bis}.
\end{proof}
\begin{lemma}
\label{lem:yoritune}
We have $e^{-\Lambda}\in S_{\delta}^{\lr{s}}(1)\subset S_{\delta}(1)$.
\end{lemma}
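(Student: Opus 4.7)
The plan is to apply Lemma \ref{lem:seikei} (estimate \eqref{eq:hiei}) with $\psi=-\La$; since $|\dif_x^k\dif_{\xi}^l(-\La)|=|\dif_x^k\dif_{\xi}^l\La|$, Lemma \ref{lem:ano:1} shows that $\psi=-\La$ satisfies \eqref{eq:adati} with $\lam=\La$. This yields
\[
|\dif_{\xi}^{i}\dif_x^{j}e^{-\La}|\leq C_2^{i+j}\bigl(B\La+(i+j)+(i+j)^{s}\xim^{-\delta}\bigr)^{i+j}\,\xim^{-i+\delta j}\,e^{-\La}.
\]
The crucial observation is that the factor $e^{-\La}$ is still available to damp the powers of $\La$ produced when the term $B\La$ inside the bracket is expanded; this is precisely why going through Corollary \ref{cor:moriyama}, which already replaces $B\La$ by its upper bound of order $\xim^{\tika}$, would be too wasteful for this lemma.

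Next I expand by the binomial theorem. Setting $Y=(i+j)+(i+j)^{s}\xim^{-\delta}$,
\[
(B\La+Y)^{i+j}e^{-\La}=\sum_{m=0}^{i+j}\binom{i+j}{m}(B\La)^{m}e^{-\La}\,Y^{i+j-m},
\]
and each factor $(B\La)^{m}e^{-\La}$ is controlled by the elementary one-variable inequality $x^{m}e^{-x/B}\leq (mB/e)^{m}$ (maximum of $x\mapsto x^{m}e^{-x/B}$, attained at $x=mB$). Since $m\leq i+j$, this gives $(B\La)^{m}e^{-\La}\leq ((B/e)(i+j))^{m}$, so reassembling the binomial sum yields
\[
(B\La+Y)^{i+j}e^{-\La}\leq \bigl((B/e+1)(i+j)+(i+j)^{s}\xim^{-\delta}\bigr)^{i+j}.
\]
For $i+j\geq 1$ (the case $i+j=0$ is trivial since $e^{-\La}\leq 1$) the inequality $i+j\leq (i+j)^{1+\varep}$ bounds this by $(B/e+1)^{i+j}\bigl((i+j)^{1+\varep}+(i+j)^{s}\xim^{-\delta}\bigr)^{i+j}$. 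Combined with the $\xim^{-i+\delta j}$ factor, this is precisely the defining estimate of $S_{\delta}^{\lr{s}}(1)$ for any $\varep>0$, so $e^{-\La}\in S_{\delta}^{\lr{s}}(1)$.

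The inclusion $S_{\delta}^{\lr{s}}(1)\subset S_{\delta}(1)$ is routine: for each fixed $i,j$, since $\xim^{-\delta}\leq\mu^{\delta}\leq 1$, the factor $\bigl((i+j)^{1+\varep}+(i+j)^{s}\xim^{-\delta}\bigr)^{i+j}$ is bounded by a constant $C_{ij}$, which gives the H\"ormander-type estimate $|\dif_{\xi}^{i}\dif_x^{j}a|\leq C_{ij}\xim^{-i+\delta j}$ defining $S(1,g_{\delta})=S_{\delta}(1)$. The only nontrivial step in the whole argument is the one-line inequality $x^{m}e^{-x/B}\leq (mB/e)^{m}$: without the damping by $e^{-\La}$ the factor $\La^{m}$ would contribute $\xim^{\tika m}$, which cannot be absorbed into an $(i+j)^{1+\varep}$-type bound; using it turns that bad weight into a harmless polynomial in $i+j$.
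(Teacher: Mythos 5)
Your proof is correct and follows essentially the same route as the paper: invoke Lemma \ref{lem:ano:1} to verify hypothesis \eqref{eq:adati} for $\psi=-\La$, apply Lemma \ref{lem:seikei}, and then absorb the $\La$ powers using $\La^{m}e^{-\La}\leq(m/e)^{m}$, which is exactly the paper's observation $\La^{i+j}e^{-\La}\leq A_1^{i+j}(i+j)^{i+j}$. Your binomial expansion merely spells out the step the paper leaves implicit, and your parenthetical remark about why Corollary \ref{cor:moriyama} would lose the needed damping is accurate and matches the structure of the paper's argument.
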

\begin{proof}
Thanks to Lemmas \ref{lem:ano:1} and \ref{lem:seikei} one has
\begin{align*}
|\dif_{\xi}^i\dif_x^je^{-\Lambda}|\leq
CA^{i+j}(\Lambda+(i+j)+(i+j)^{s}\xim^{-\delta})^{i+j} \xim^{-i+\delta j}e^{-\Lambda}.
\end{align*}
Since $\Lambda\geq 0$ one has $\Lambda^{i+j}e^{-\Lambda}\leq A_1^{i+j}(i+j)^{i+j}$ and hence the right-hand side is bounded by
\[
CA^{i+j}((i+j)+(i+j)^{s}\xim^{-\delta})^{i+j} \xim^{-i+\delta j}
\]
which proves the assertion.
\end{proof}
\begin{lemma}
\label{lem:piko}Let $c>0$ and $0<\kappa<1$. Then for any $0<c'<c$ and $s>1$, $\delta>0$  one has 
$e^{-c\xim^{\kappa}}\in S_{\delta}^{\lr{s}}(e^{-c'\xim^{\kappa}})$.
\end{lemma}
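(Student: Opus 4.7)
My plan is to invoke Lemma \ref{lem:seikei} applied to $\psi=-c\xim^{\kappa}$. Since $\psi$ depends only on $\xi$, every $x$-derivative vanishes, so only pure $\xi$-derivatives need to be controlled and the index $k$ in \eqref{eq:adati} is automatically zero. The function $\xim^{\kappa}=(\mu^{-2}+\xi^2)^{\kappa/2}$ is real-analytic in $\xi$ with uniform bounds, so a direct computation (or Fa\`a di Bruno applied to $z\mapsto z^{\kappa/2}$) yields
\begin{equation*}
|\dif_\xi^l\xim^{\kappa}|\leq CA^l\,l!\,\xim^{\kappa-l},\qquad l\geq 0,
\end{equation*}
with $C,A>0$ independent of $\mu$. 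This shows that $\psi$ verifies \eqref{eq:adati} with $\lam=\xim^{\kappa}$ (the factor $(1+l^{s-1}\xim^{-\delta})^l\geq 1$ being absorbed trivially), and Lemma \ref{lem:seikei} therefore gives
\begin{equation*}
|\dif_\xi^i e^{-c\xim^{\kappa}}|\leq C_2^i\bigl(B\xim^{\kappa}+i+i^s\xim^{-\delta}\bigr)^i\xim^{-i}e^{-c\xim^{\kappa}}
\end{equation*}
for all $i\geq 0$.

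The decisive step is to absorb the factor $(B\xim^{\kappa})^i$ into the exponential decay. Fix $0<c'<c$, set $\alpha=c-c'>0$, and split $e^{-c\xim^{\kappa}}=e^{-\alpha\xim^{\kappa}}e^{-c'\xim^{\kappa}}$. The elementary maximum $t^ie^{-\alpha t}\leq(i/(\alpha e))^i$ applied with $t=\xim^{\kappa}$ yields
\begin{equation*}
(B\xim^{\kappa})^i e^{-\alpha\xim^{\kappa}}\leq\bigl(B/(\alpha e)\bigr)^i i^i.
\end{equation*}
Using $(X+Y+Z)^i\leq 3^i(X^i+Y^i+Z^i)$ together with $e^{-\alpha\xim^{\kappa}}\leq 1$ on the remaining terms gives, with a new constant $A_1$,
\begin{equation*}
|\dif_\xi^i e^{-c\xim^{\kappa}}|\leq CA_1^i\bigl(i^i+(i^s\xim^{-\delta})^i\bigr)\xim^{-i}e^{-c'\xim^{\kappa}}.
\end{equation*}

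To conclude, fix any $\varepsilon>0$. For $i\geq 1$ one has $i\leq i^{1+\varepsilon}$, hence $i^i\leq(i^{1+\varepsilon})^i$, so, absorbing factors of $2$ into a new constant $A_2$,
\begin{equation*}
|\dif_\xi^i e^{-c\xim^{\kappa}}|\leq CA_2^i\bigl(i^{1+\varepsilon}+i^s\xim^{-\delta}\bigr)^i\xim^{-i}e^{-c'\xim^{\kappa}};
\end{equation*}
the case $i=0$ reduces to $e^{-c\xim^{\kappa}}\leq e^{-c'\xim^{\kappa}}$, and $\dif_x^j$ contributes nothing for $j\geq 1$ since $\psi$ is $x$-independent. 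This is precisely the defining estimate of $S_\delta^{\lr{s}}(e^{-c'\xim^{\kappa}})$. The only non-cosmetic point is the trade between the polynomial factor $(B\xim^{\kappa})^i$ and the exponential decay $e^{-(c-c')\xim^{\kappa}}$, which is what forces the strict inequality $c'<c$.
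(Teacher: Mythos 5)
Your proof is correct and follows essentially the same route as the paper: estimate $|\dif_\xi^l\xim^\kappa|\leq C^{l+1}l!\xim^{\kappa-l}$, invoke Lemma \ref{lem:seikei} with $\lam=\xim^\kappa$, and trade the polynomial factor $\xim^{\kappa l}$ for the surplus decay $e^{-(c-c')\xim^\kappa}$. The only difference is cosmetic: you bound $\xim^{\kappa l}e^{-(c-c')\xim^\kappa}$ by $l^l$ via the elementary maximum, while the paper bounds it by $A^{l+1}l!$; these are equivalent up to the usual geometric factor.
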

\begin{proof}
Since $|\dif_{\xi}^l\xim^{\kappa}|\leq C^{l+1}l!\xim^{\kappa}\xim^{-l}$ then from Lemma \ref{lem:seikei} there exists $C_1>0$ such that
\[
|\dif_{\xi}^le^{-c\xim^{\kappa}}|\leq C_1^{l+1}(\xim^{\kappa}+l)^l\xim^{-l}e^{-c\xim^{\kappa}}.
\]
For any $c'<c$ there is $A>0$ such that $\xim^{\kappa l}e^{-(c-c')\xim^{\kappa}}\leq A^{l+1}l!$ and hence we have
\[
|\dif_{\xi}^le^{-c\xim^{\kappa}}|\leq C_1^{l+1}l^l\xim^{-l}e^{-c'\xim^{\kappa}}
\]
which proves the assertion.
\end{proof}
%

\section{Composition of PDO's acting in the Gevrey classes }

We recall several facts on compositions of pseudodifferential operators including ${\rm Op}(e^{\pm \Lambda})$  whose proofs are given in Appendix. We denote by ${\rm Op}(a)$ the Weyl quantization of $a(x,\xi)$  (see \cite{Ho}). Here we recall
\[
\delta=1-\kappa,\quad \kappa>\tika,\quad s\kappa<1.
\]
\begin{proposition}
\label{pro:weyl:1} 
Assume 
$a(x)\in {\mathcal G}_0^s(\R)$ 
which is constant for large $|x|$. Then  
the operator $b(x,D)=e^{\tau \mD^{\kappa}}a(x)e^{-\tau\mD^{\kappa}}$ is a pseudodifferential operator with symbol  given by
\[
b(x,\xi) =a(x) -i\tau
\dif_xa(x)
\dif_{\xi}\xim^{\kappa}+q(x,\xi)
+r(x,\xi)
\]
with $q(x,\xi)\in S_{\delta}^{\lr{s}}(\xim^{-2+2\kappa})$ and 
$r(x,\xi)\in S_{0,0}^{(s)}(e^{-c\xim^{1/s}})$ with some $c>0$ where  $\tau$ is a real parameter.
\end{proposition}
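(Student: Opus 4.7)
My approach will be to compute the symbol of $b(x,D)$ directly from a Fourier integral representation. Since $\Psi(\xi):=e^{\tau\xim^\kappa}$ depends only on $\xi$, both $\Psi(D)$ and $\Psi(D)^{-1}$ are Fourier multipliers, and $b(x,D)u=\Psi(D)\bigl(a\,\Psi(D)^{-1}u\bigr)$ has Kohn--Nirenberg symbol
\[
b_{\rm KN}(x,\xi)=\frac{1}{2\pi}\int e^{ix\zeta}\,\frac{\Psi(\xi+\zeta)}{\Psi(\xi)}\,\hat a(\zeta)\,d\zeta.
\]
I will show that the Weyl symbol differs from $b_{\rm KN}$ only by terms in $S_\delta^{\lr{s}}(\xim^{-2+2\kappa})$ (the leading correction is $\tfrac{1}{2i}\dif_x\dif_\xi$ applied to $b_{\rm KN}$, which for the linear part produces a term of order $\xim^{\kappa-2}\leq\xim^{2\kappa-2}$), so it suffices to analyse $b_{\rm KN}$.

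Next I introduce a cutoff $\chi(\zeta/\xim)$ with $\chi\equiv 1$ near $0$ and supported in $\{|\eta|\leq 1\}$, splitting the $\zeta$-integral into a \emph{near} part $|\zeta|\lesssim\xim$ and a \emph{far} part $|\zeta|\gtrsim\xim$. On the near part I perform the second-order Taylor expansion
\[
\Psi(\xi+\zeta)=\Psi(\xi)+\zeta\,\Psi'(\xi)+\zeta^2\int_0^1(1-t)\,\Psi''(\xi+t\zeta)\,dt,
\]
divide by $\Psi(\xi)$, and use the identity $\int e^{ix\zeta}\zeta^k\hat a(\zeta)\,d\zeta/(2\pi)=(-i)^k\dif_x^ka(x)$. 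The $k=0,1$ contributions yield exactly $a(x)-i\tau\,\dif_xa(x)\,\dif_\xi\xim^\kappa$, while the second-order remainder is bounded by $\xim^{2\kappa-2}|\dif_x^2a|$ thanks to the estimate $|\Psi''(\xi+t\zeta)/\Psi(\xi)|\lesssim\xim^{2\kappa-2}$ valid on $|\zeta|\leq\xim$, which follows from Lemma~\ref{lem:seikei} applied to $\psi=\tau\xim^\kappa$. Iterating these bounds for higher $(x,\xi)$-derivatives, combined with the Gevrey estimate $|\dif_x^ka|\leq CA^kk!^s$, will place the near contribution in the required form with $q\in S_\delta^{\lr{s}}(\xim^{-2+2\kappa})$.

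For the far part I plan to exploit the Gevrey Paley--Wiener estimate $|\hat a(\zeta)|\leq Ce^{-c_0|\zeta|^{1/s}}$, available since $a\in{\mathcal G}^s_0(\R)$ has compact support. Since $\xim^\kappa$ is sublinear, one has $|\Psi(\xi+\zeta)/\Psi(\xi)|\leq e^{C|\zeta|^\kappa}$, and the crucial inequality $s\kappa<1$, i.e.\ $\kappa<1/s$, makes the integrand pointwise $\leq Ce^{-c_0'|\zeta|^{1/s}}$ for large $|\zeta|$. Since $|\zeta|\gtrsim\xim$ on the support of $1-\chi(\zeta/\xim)$, this yields $\lesssim e^{-c\xim^{1/s}}$. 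The same argument applied after differentiating in $x$ (which produces polynomial $\zeta$-factors absorbed by the exponential) and in $\xi$ (handled by the derivative bounds of Lemma~\ref{lem:piko} type for $\Psi$) gives the remainder $r\in S_{0,0}^{(s)}(e^{-c\xim^{1/s}})$.

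The main obstacle I anticipate is the uniform-in-$\mu$ bookkeeping of the $S_\delta^{\lr{s}}$ estimates for $q$: differentiating $\chi(\zeta/\xim)\,\Psi(\xi+\zeta)/\Psi(\xi)$ repeatedly in both $x$ and $\xi$ produces sums involving derivatives of $\Psi$ at shifted points, whose factorial growth must be controlled precisely by the $(1+(i+j)^{s-1}\xim^{-\delta})^{i+j}$ factor from Definition~\ref{dfn:kurasu:b}. Lemma~\ref{lem:seikei} is designed exactly for this purpose, and the structural inequality $\kappa<1/s$ underlying every bound guarantees that the Gevrey-$s$ decay of $\hat a$ dominates the sub-exponential growth of $\Psi(\xi+\zeta)$.
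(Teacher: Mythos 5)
Your proposal follows essentially the same strategy as the paper's proof (Proposition \ref{pro:weyl:1gene} in the Appendix): write the conjugated operator as an oscillatory integral in which the ratio $\Psi(\cdot)/\Psi(\cdot)$ of the weight at two shifted points multiplies $\hat a$, split into a near part $|\zeta|\lesssim\xim$ and a far part via a cutoff, Taylor-expand to second order to extract $a(x)-i\tau\,\dif_x a\,\dif_\xi\xim^{\kappa}$ plus a $q\in S_\delta^{\lr s}(\xim^{-2+2\kappa})$ remainder, and control the far part by the Gevrey Paley--Wiener decay $|\hat a(\zeta)|\leq Ce^{-c|\zeta|^{1/s}}$ together with the subadditivity $\kappa<1/s$. (The paper writes the Taylor expansion on $a(x+\tilde y)$ and integrates by parts to bring the derivatives onto $e^{\phi(\xi+\tilde\eta/2)-\phi(\xi-\tilde\eta/2)}$ at $\tilde\eta=0$; you expand the $\Psi$-ratio directly. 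These are the same computation in different clothing.)

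The one genuine gap is the reduction from Weyl to Kohn--Nirenberg. You write the symbol as $b_{\rm KN}(x,\xi)=\frac{1}{2\pi}\int e^{ix\zeta}\Psi(\xi+\zeta)\Psi(\xi)^{-1}\hat a(\zeta)\,d\zeta$ and then assert, in one line, that the Weyl symbol differs from $b_{\rm KN}$ by a term in $S_\delta^{\lr s}(\xim^{-2+2\kappa})$. This is not automatic: the KN-to-Weyl conversion $b_W=e^{\frac{i}{2}\dif_x\dif_\xi}b_{\rm KN}$ is a full asymptotic expansion, and in the exotic class $S_\delta^{\lr s}$ (with the $(i+j)^{1+\varep}+(i+j)^s\xim^{-\delta}$ growth and uniform-in-$\mu$ constants) one must control not just the first correction but the remainder after truncation, and show that it again splits as $q+r$ with $q\in S_\delta^{\lr s}(\xim^{-2+2\kappa})$ and $r\in S_{0,0}^{(s)}(e^{-c\xim^{1/s}})$. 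That is precisely the kind of oscillatory-integral estimate which forms the bulk of the actual proof, so the conversion would duplicate the hard work rather than sidestep it. The cleaner route — the one the paper takes — is to write the \emph{Weyl} symbol directly as
\[
b_W(x,\xi)\ \propto\ \int e^{ix\zeta}\,\frac{\Psi(\xi+\zeta/2)}{\Psi(\xi-\zeta/2)}\,\hat a(\zeta)\,d\zeta,
\]
that is, the symmetric shift $\xi\pm\zeta/2$ in place of your $\xi+\zeta,\xi$. The rest of your argument (near/far split, Taylor expansion, Gevrey decay) then goes through verbatim for this kernel with no conversion needed; in fact the first-order Taylor coefficients of the two kernels in $\zeta$ coincide, so you even get the same principal terms. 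Replace your KN formula by the Weyl one, delete the unproved conversion claim, and the proposal matches the paper's argument.
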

\begin{proposition}
\label{pro:comLam}Assume $b(x,\xi)\in S^{\lr{s}}_{\delta}(\xim^m)$. Then we have
\begin{equation}
\label{eq:comLam}
\big(be^{\pm \Lambda}\big)\#e^{\mp\Lambda}=b+b^{\pm}+R^{\pm}
\end{equation}
where $b^{\pm}\in S^{\lr{s}}_{\delta}\big(\xim^{m-(\kappa-{\tilde\kappa})}\big)$ and $R^{\pm}\in S^{({\bar s})}_{0,0}(e^{-c\xim^{\baka}})$ with some $c>0$, ${\bar s}$ and $\baka$ such that $\bas \tika<1$ and $\baka>\tika$.
\end{proposition}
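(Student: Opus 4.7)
The plan is to apply the asymptotic expansion of the Weyl-Moyal product to $(be^{\pm\Lambda})\#e^{\mp\Lambda}$, isolate the leading term $b$, collect the finitely many correction terms into $b^{\pm}$, and estimate the truncation tail as the Gevrey-small remainder $R^{\pm}$. The standard Weyl expansion reads
\[
(be^{\pm\Lambda})\#e^{\mp\Lambda}=\sum_{k=0}^{N-1} c_k + r_N,\qquad c_k=\frac{1}{k!}\Bigl(\frac{i}{2}\Bigr)^{k}\sum_{j=0}^{k}\binom{k}{j}(-1)^{k-j}\dif_\xi^{j}\dif_x^{k-j}(be^{\pm\Lambda})\,\dif_x^{j}\dif_\xi^{k-j}e^{\mp\Lambda},
\]
and the $k=0$ term reduces exactly to $b$ because the two exponentials cancel pointwise.

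For $k\geq 1$ I would expand each derivative of $be^{\pm\Lambda}$ via Leibniz, factoring out $e^{\pm\Lambda}$ through the identity
\[
\dif_\xi^{j}\dif_x^{k-j}(be^{\pm\Lambda})=e^{\pm\Lambda}\sum_{j_1\leq j,\,l_1\leq k-j}\binom{j}{j_1}\binom{k-j}{l_1}\dif_\xi^{j_1}\dif_x^{l_1}b\cdot T^{\pm,j-j_1}_{k-j-l_1},
\]
together with $\dif_x^{j}\dif_\xi^{k-j}e^{\mp\Lambda}=e^{\mp\Lambda}\,T^{\mp,k-j}_{j}$, where $T^{\pm,i}_l:=\dif_\xi^{i}\dif_x^{l}e^{\pm\Lambda}/e^{\pm\Lambda}$ are controlled, for both signs, by Corollary \ref{cor:moriyama}. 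Since $e^{\pm\Lambda}e^{\mp\Lambda}=1$, the coefficient $c_k$ reduces to a finite sum of products $\dif_\xi^{j_1}\dif_x^{l_1}b\cdot T^{\pm,j-j_1}_{k-j-l_1}\cdot T^{\mp,k-j}_{j}$. The key algebraic observation is that the purely-$\Lambda$ contribution at each order (the terms with $j_1=l_1=0$) collapses: at $k=1$ this is just the Poisson bracket $\{e^{\pm\Lambda},e^{\mp\Lambda}\}=0$, and the analogue for $k\geq 2$ follows from differentiating the identity $e^{\pm\Lambda}e^{\mp\Lambda}=1$. Hence every surviving contribution at $k\geq 1$ carries at least one derivative of $b$. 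Invoking Corollary \ref{cor:moriyama} and Lemma \ref{lem:budo:a} to bound the $T^{\pm}$, $T^{\mp}$ factors, and the $S^{\lr{s}}_\delta(\xim^m)$ bounds on $b$, each such contribution is controlled by $\xim^{m-k(\kappa-\tika)}$ in the $S^{\lr{s}}_\delta$ seminorms; summing over $1\leq k\leq N-1$ yields $b^{\pm}\in S^{\lr{s}}_\delta(\xim^{m-(\kappa-\tika)})$.

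For the remainder $r_N$, I would use its standard oscillatory-integral representation (as in \cite{Ho}) involving $N$-fold derivatives of the two factors, and estimate those derivatives via Corollary \ref{cor:moriyama}. Each of the $N$ derivative steps contributes a Moyal gain of order $\xim^{-\kappa}$ together with a combinatorial cost of $(\xim^{\tika}+N+N^{s}\xim^{-\delta})^{N}$. Taking the cutoff $N=[c_1\xim^{\baka}]$ for some $\tika<\baka<\kappa$, and exploiting both $s\kappa<1$ (Lemma \ref{lem:jimei}) and the freedom to select $\bas$ with $\bas\tika<1$ and $\baka>\tika$, the combinatorial factor is dominated by the Moyal gain $\xim^{-\kappa N}$ and produces an exponentially small bound $C\,e^{-c\xim^{\baka}}$. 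A derivative analysis in the spirit of the proof of Lemma \ref{lem:piko} then upgrades this to $R^{\pm}\in S^{(\bas)}_{0,0}(e^{-c'\xim^{\baka}})$ after shrinking $c'<c$.

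The main obstacle will be the simultaneous tracking of the $S^{\lr{s}}_\delta$-Gevrey seminorms -- with their characteristic $(k+l)^{s-1}\xim^{-\delta}$ factors inherited from Corollary \ref{cor:moriyama} -- through every Leibniz and Moyal expansion, and the calibration of the cutoff $N$ in the remainder so that the combinatorial $N^{sN}$ factor is beaten by the Moyal gain $\xim^{-\kappa N}$. The inequalities $\tika<\kappa$, $s\kappa<1$ and $\bas\tika<1$, guaranteed by Lemma \ref{lem:jimei} for suitable $\bas$, are exactly what makes this balancing act succeed.
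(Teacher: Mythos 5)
Your plan is to replace the paper's direct oscillatory-integral analysis by a term-by-term Weyl--Moyal expansion, but the key algebraic observation you rely on is false, and the architecture of the decomposition has a structural flaw as well.

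The claim that the purely-$\Lambda$ contributions to the Moyal expansion of $e^{\pm\Lambda}\#e^{\mp\Lambda}$ vanish for every $k\ge 1$ does not hold. At $k=1$ one indeed gets the Poisson bracket $\{e^{\Lambda},e^{-\Lambda}\}=0$, but at $k=2$ a direct computation gives
\[
c_2=-\tfrac14\bigl(\Lambda_{x\xi}^2-\Lambda_{xx}\Lambda_{\xi\xi}\bigr),
\]
which is not identically zero (e.g.\ $\Lambda=x\xi$ gives $c_2=-1/4$). Differentiating the pointwise identity $e^{\Lambda}e^{-\Lambda}=1$ produces only Leibniz-type relations of the form $\sum\binom{a}{a'}\binom{b}{b'}\dif_x^{a'}\dif_\xi^{b'}e^{\Lambda}\cdot\dif_x^{a-a'}\dif_\xi^{b-b'}e^{-\Lambda}=0$, which are not the anti-diagonal combinations occurring in the Moyal coefficient $\sum_j\binom{k}{j}(-1)^{k-j}\dif_\xi^j\dif_x^{k-j}e^{\Lambda}\,\dif_x^j\dif_\xi^{k-j}e^{-\Lambda}$. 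So the conclusion "every surviving contribution at $k\ge 1$ carries a derivative of $b$" is wrong. This is not merely cosmetic: a term-by-term estimate of the purely-$\Lambda$ part of $c_k$ via Corollary \ref{cor:moriyama} gives the weight $\xim^{2k\tika}\phi^{-k/2}\xim^{-k}\lesssim\xim^{k(2\tika-\kappa)}$, which need not be $\le\xim^{-(\kappa-\tika)}$ when $\kappa<3\tika$ (a range that does occur for $s'$ close to $1+N/2$). The actual $c_2$ is $O(\xim^{-2(\kappa-\tika)})$ only because the highest-power-of-$\Lambda$ pieces $\Lambda_x^2\Lambda_\xi^2$ cancel; you would have to exhibit and propagate this cancellation at every order, which your argument does not do.

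There is also an architectural problem with the remainder. You propose taking the truncation order $N=[c_1\xim^{\baka}]$ to make $r_N$ exponentially small, but then the "symbol part" $\sum_{k<N(\xi)}c_k$ depends on $\xi$ through $N$ and is no longer a candidate for a fixed element of $S^{\lr{s}}_\delta(\xim^{m-(\kappa-\tika)})$. With a fixed $N$ the sum is a symbol but $r_N$ is only polynomially, not exponentially, small. The paper sidesteps this entirely: in Proposition \ref{pro:syu:AA} one starts from the exact identity $(be^{\psi})\#e^{-\psi}=b+\int e^{-2i\sigma(Y,Z)}b(X+Y)\bigl(e^{\psi(X+Y)-\psi(X+Y+Z)}-1\bigr)\,dY\,dZ$, Taylor-expands the \emph{exponent} once (the $-1$ cancels the zeroth-order contribution for free, which is the genuine reason the leading term is exactly $b$), and controls the resulting $\tilde b$ by a $(Z,\xi)$-dependent number of integrations by parts against $\Phi^{-N}L^N$, $\Theta^{-\ell}M^\ell$. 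The exponentially small piece $R$ arises not from truncating an expansion but from splitting phase space into $\hat\chi_0$, $\hat\chi_1$ and $1-\hat\chi$, with $R$ supported away from the diagonal. Rebuilding the proof from the Moyal series would require a very different, and presently missing, treatment of both cancellations and the symbol/remainder split.
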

\begin{corollary}
\label{cor:kaneko}
There is $\tis>1$ such that
\[
e^{\pm\La}\#e^{\mp \La}-1\in  S^{\lr{\tis}}_{\delta}(\xim^{-(\kappa-\tika)})\subset \mu^{\kappa-\tika} S_{\delta}(1).
\]
\end{corollary}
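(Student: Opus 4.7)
The plan is to apply Proposition \ref{pro:comLam} with the trivial symbol $b\equiv 1$ and then absorb the exponentially small remainder into the target class.

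Since $1\in S^{\lr{s}}_{\delta}(\xim^{0})$, Proposition \ref{pro:comLam} (with $m=0$) gives
\[
e^{\pm \Lambda}\# e^{\mp\Lambda}\;=\;1+b^{\pm}+R^{\pm},
\]
where $b^{\pm}\in S^{\lr{s}}_{\delta}\big(\xim^{-(\kappa-\tika)}\big)$ and $R^{\pm}\in S^{(\bas)}_{0,0}(e^{-c\xim^{\baka}})$ with $\bas\tika<1$ and $\baka>\tika$. Subtracting $1$ reduces the claim to showing $b^\pm+R^\pm\in S^{\lr{\tis}}_\delta\big(\xim^{-(\kappa-\tika)}\big)$ for some $\tis>1$. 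Because $S^{\lr{s}}_\delta\subset S^{\lr{\tis}}_\delta$ whenever $\tis\geq s$, the term $b^\pm$ is automatically in the target class once $\tis\geq s$, so the entire work lies in handling $R^\pm$.

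For this I would use the elementary inequality $t^{M}e^{-ct}\le (M/(ce))^{M}$ in the form
\[
\xim^{M}\,e^{-c\xim^{\baka}/2}\;\leq\;(C_1 M)^{M/\baka},
\]
applied with $M=(1+\delta)(i+j)+(\kappa-\tika)$ so as to trade the factor $\xim^{-i+j\delta-(\kappa-\tika)}$ coming from the $S^{\lr{\tis}}_\delta(\xim^{-(\kappa-\tika)})$ seminorm against half of the exponential decay of $R^\pm$. The derivative estimate $|\dif_x^j\dif_\xi^i R^\pm|\leq C A^{i+j}(i+j)^{\bas(i+j)}e^{-c\xim^{\baka}}$ then becomes an estimate of order $(i+j)^{(\bas+(1+\delta)/\baka)(i+j)}$, which is absorbed into the $(i+j)^{\tis(i+j)}\xim^{-\delta(i+j)}$ branch of the $S^{\lr{\tis}}_\delta$ seminorm provided
\[
\tis\;\geq\;\max\!\Bigl(s,\;\bas+\tfrac{1+\delta}{\baka}\Bigr).
\]
Any such $\tis$ satisfies $\tis>1$ automatically, and with this choice one obtains $R^\pm\in S^{\lr{\tis}}_\delta\big(\xim^{-(\kappa-\tika)}\big)$, hence the first inclusion in the statement.

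For the second inclusion, observe that $\xim=(\mu^{-2}+|\xi|^2)^{1/2}\geq \mu^{-1}$ and $\kappa>\tika$ by Lemma \ref{lem:jimei}, so $\xim^{-(\kappa-\tika)}\leq \mu^{\kappa-\tika}$; dropping the Gevrey refinement in derivatives yields $S^{\lr{\tis}}_\delta\big(\xim^{-(\kappa-\tika)}\big)\subset \mu^{\kappa-\tika}S_\delta(1)$ uniformly in small $\mu>0$. The only nontrivial step is the bookkeeping for $R^\pm$, i.e.\ verifying that the exponentially decaying weight can be traded against the polynomial weight $\xim^{-(\kappa-\tika)}$ and the excess factor $\xim^{-i+j\delta}$ in the Gevrey seminorm; this is routine once one fixes $\tis$ large enough as above.
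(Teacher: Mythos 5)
Your proposal is correct and reproduces essentially the paper's own argument: the paper proves this in the Appendix (Corollary~6.1, obtained from Proposition~6.1 with $b\equiv 1$), and the conversion of $R^{\pm}\in S_{0,0}^{(\bas)}(e^{-c\xim^{\baka}})$ into $S_{\delta}^{\lr{\tis}}$ is carried out there by exactly your elementary $\xim^{M}e^{-c\xim^{\baka}}\lesssim (CM)^{M/\baka}$ trick, yielding $\tis=\bas+(1+\delta)/\baka$. The only cosmetic difference is that the paper keeps a residual exponential weight $e^{-c'\xim^{\baka}}$ before passing to $\xim^{-(\kappa-\tika)}$, while you pass to the polynomial weight directly; both give the stated inclusion.
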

\begin{corollary}
\label{cor:watabe}Assume $b(x,\xi)\in S^{\lr{s}}_{\delta}(\xim^m)$ then 
\[
\big(be^{\pm \Lambda}\big)\#e^{\mp\Lambda}\in S_{\delta}(\xim^m).
\]
\end{corollary}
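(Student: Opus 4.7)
The plan is to invoke Proposition \ref{pro:comLam} and verify that each of the three resulting pieces already lies in $S_{\delta}(\xim^m)$, so no further $\#$-calculus is required.

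First, I apply Proposition \ref{pro:comLam} to obtain the decomposition
\[
(be^{\pm\Lambda})\#e^{\mp\Lambda}=b+b^{\pm}+R^{\pm}
\]
with $b^{\pm}\in S^{\lr{s}}_{\delta}(\xim^{m-(\kappa-\tika)})$ and $R^{\pm}\in S^{(\bas)}_{0,0}(e^{-c\xim^{\baka}})$ for some $c>0$, and I would then handle the three summands separately.

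Second, I would establish the elementary inclusion $S^{\lr{s}}_{\delta}(\xim^{m'})\subset S_{\delta}(\xim^{m'})$ for every $m'\in\R$. From Definition \ref{dfn:kurasu:b}, for each fixed pair $(i,j)$ the Gevrey-type factor $((i+j)^{1+\varep}+(i+j)^s\xim^{-\delta})^{i+j}$ is uniformly bounded in $(x,\xi)$ because $\xim^{-\delta}\le \mu^{\delta}$ is bounded. Hence the $S^{\lr{s}}_{\delta}$ estimate specialises, for each fixed $(i,j)$, to precisely the seminorm bound defining $S_{\delta}(\xim^{m'})$. Applying this both to the hypothesis $b\in S^{\lr{s}}_{\delta}(\xim^m)$ and to $b^{\pm}$, and using $\kappa>\tika$ from Lemma \ref{lem:jimei} so that $\xim^{m-(\kappa-\tika)}\leq \xim^{m}$, I conclude that $b+b^{\pm}\in S_{\delta}(\xim^m)$.

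Third, for the remainder $R^{\pm}$ the rapid decay $e^{-c\xim^{\baka}}$ with $\baka>0$ majorises every polynomial weight $\xim^{-i+j\delta+m}$, so, seminorm by seminorm, $R^{\pm}\in S_{\delta}(\xim^m)$ for every $m$. Summing the three contributions yields the claimed inclusion. There is no real obstacle here; the corollary is a repackaging of Proposition \ref{pro:comLam} in which the Gevrey bookkeeping is discarded. The only subtle step worth flagging is the passage from $S^{\lr{s}}_{\delta}$ to $S_{\delta}$, which rests on the uniform boundedness of $((i+j)^{1+\varep}+(i+j)^s\xim^{-\delta})^{i+j}$ for each fixed $(i,j)$.
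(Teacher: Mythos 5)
Your proof is correct and takes essentially the same route as the paper: invoke Proposition \ref{pro:comLam} and absorb each of the three pieces into $S_{\delta}(\xim^m)$. The paper's own proof is a one-liner noting only the inclusion $S_{0,0}^{(\bas)}(e^{-c\xim^{\baka}})\subset S_{\delta}(\xim^k)$ for all $k$, while you additionally make explicit the inclusion $S^{\lr{s}}_{\delta}(\xim^{m'})\subset S_{\delta}(\xim^{m'})$ (via uniform boundedness of the Gevrey factor for each fixed $(i,j)$, using $\xim^{-\delta}\leq\mu^{\delta}\leq 1$) and the weight comparison $\xim^{m-(\kappa-\tika)}\leq\xim^m$ from $\kappa>\tika$; both are exactly the obvious steps the paper leaves unstated.
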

\begin{proof}
Since $S_{0,0}^{(\bas)}(e^{-c\xim^{\baka}})\subset S_{\delta}(\xim^k)$ for any $k$ the proof is immediate. 
\end{proof}
\begin{proposition}
\label{pro:abLam}
Assume $b(x,\xi)\in S^{\lr{s}}_{\delta}(\xim^{m})$ and $a(x,\xi)\in S^{\lr{s}}_{\delta}(\xim^d)$ then 
\begin{eqnarray*}
\big(be^{-\Lambda}\big)\#a=\sum_{k+l< N}\frac{(-1)^k}{(2i)^{k+l}k!l!}\big(\dif_x^k\dif_{\xi}^l(be^{-\Lambda})\big)\dif_x^{l}\dif_{\xi}^{k}a+q_Ne^{-\Lambda}+R_N,\\
a\#\big(be^{-\Lambda}\big)=\sum_{k+l<N}\frac{(-1)^k}{(2i)^{k+l}k!l!}\dif_x^k\dif_{\xi}^la\big(\dif_x^l\dif_{\xi}^k(be^{-\Lambda})\big)+{\tilde q}_Ne^{-\Lambda}+{\tilde R}_N
\end{eqnarray*}
with $q, {\tilde q}\in S^{\lr{s}}_{\delta}(\xim^{m+d-(\kappa-\tika)N})$ and $R_N, {\tilde R}_N\in S^{(\bas)}_{0,0}(e^{-c\xim^{\baka}})$  with some ${\bar s}>1$, $\baka>0$ and $c>0$ satisfying  $\bas \tika<1$ and $\baka>\tika$. 
\end{proposition}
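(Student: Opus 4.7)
The plan is to derive both identities from the standard Weyl composition integral and isolate the remainder into an $e^{-\Lambda}$-weighted piece plus a Gevrey-small piece, using the bounds on derivatives of $e^{-\Lambda}$ from Corollary~\ref{cor:moriyama}. I focus on the first identity, the second being handled by the same argument applied to the mirror form of the Weyl product.

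For the first identity I start from the oscillatory integral representation
\[
(be^{-\Lambda})\#a\,(x,\xi)=\pi^{-2}\!\int\! e^{-2i\sigma((y,\eta),(z,\zeta))}(be^{-\Lambda})(x+y,\xi+\eta)\,a(x+z,\xi+\zeta)\,dy\,d\eta\,dz\,d\zeta,
\]
and Taylor-expand $a(x+z,\xi+\zeta)$ around $(x,\xi)$ to order $N-1$. The Gaussian-type $(z,\zeta)$-integrals convert monomials $z^l\zeta^k$ into derivatives $\dif_x^l\dif_\xi^k$ acting on the first factor, yielding the explicit finite sum $\sum_{k+l<N}\frac{(-1)^k}{(2i)^{k+l}k!l!}\dif_x^k\dif_\xi^l(be^{-\Lambda})\cdot\dif_x^l\dif_\xi^k a$ plus a Taylor integral remainder.

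Next, by the Leibniz rule,
\[
\dif_x^k\dif_\xi^l(be^{-\Lambda})=\sum_{k'\le k,\,l'\le l}\binom{k}{k'}\binom{l}{l'}(\dif_x^{k-k'}\dif_\xi^{l-l'}b)\,T^{l'}_{k'}\,e^{-\Lambda},
\]
so factoring $e^{-\Lambda}$ out of the Taylor tail beyond order $N$ produces the term $q_Ne^{-\Lambda}$. Combining Corollary~\ref{cor:moriyama}, which bounds $T^i_j$ with size essentially $\xim^{\tika(i+j)}\phi^{-j/2}\xim^{-i}$ together with Gevrey-type derivative bounds, with the hypotheses $b\in S^{\lr{s}}_\delta(\xim^m)$ and $a\in S^{\lr{s}}_\delta(\xim^d)$, one sees that every pair $\dif_x\dif_\xi$ of derivatives appearing in the expansion produces the standard gain $\xim^{-\kappa}$ of the calculus on $g_\delta$, while each derivative falling on $e^{-\Lambda}$ pays a factor $\xim^{\tika}$. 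Each order of truncation therefore yields a net gain $\xim^{-(\kappa-\tika)}$, which is genuine since $\kappa>\tika$ by Lemma~\ref{lem:jimei}. Summing the Taylor tail in this way delivers $q_N\in S^{\lr{s}}_\delta(\xim^{m+d-(\kappa-\tika)N})$, after checking the combinatorial Gevrey factors just as in the inductive argument behind Lemma~\ref{lem:seikei}.

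The main obstacle is producing the Gevrey remainder $R_N\in S^{(\bas)}_{0,0}(e^{-c\xim^{\baka}})$. Here I would follow the resummation technique used in the proof of Proposition~\ref{pro:weyl:1} and in \cite[Lemma 5.1]{N:ojm}, \cite{CN2}: integrate by parts in the oscillatory-integral remainder using the operators $\lr{(y,\eta,z,\zeta)}^{-2M}(1-\Delta_{y,\eta,z,\zeta})^{M}$ to produce polynomial decay and convergence of the integral, then balance the Gevrey factorial $N!^s$ against the gain $\xim^{-(\kappa-\tika)N}$ by choosing $N$ proportional to a suitable fractional power of $\xim$. The constraints $\baka>\tika$ and $\bas\tika<1$ arise so that this optimization yields true exponential decay $e^{-c\xim^{\baka}}$ while keeping the Gevrey-$\bas$ symbol bounds summable against the $\xim^{\tika}$-type growth of $\Lambda$; the same constants served in the earlier compositions of Proposition~\ref{pro:comLam}, so the only new work is verifying that they are admissible in the present Leibniz-plus-Taylor-tail setting.
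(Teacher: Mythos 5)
Your outline captures the right starting point (Weyl oscillatory integral, Taylor expansion of $a$, integration by parts, resummation as in Proposition~\ref{pro:weyl:1}), but there is a genuine gap at the very place where the $q_Ne^{-\Lambda}+R_N$ splitting is produced. After Taylor-expanding $a(X+Z)$, the tail is an oscillatory integral whose integrand carries $b(X+Y)\,e^{-\Lambda(X+Y)}$, i.e.\ $e^{-\Lambda}$ evaluated at the \emph{shifted} point $X+Y$, not at $X$. So ``factoring $e^{-\Lambda}$ out of the Taylor tail by Leibniz'' does not by itself produce the shape $q_N(X)\,e^{-\Lambda(X)}$. What the paper actually does is introduce Gevrey cutoffs $\hat\chi(\eta,\zeta)$ (frequencies comparable to $\xim$) and $\tilde\chi(y,z)$ (bounded $g_X$-balls), and on the support of $\hat\chi_0=\tilde\chi\hat\chi$ it writes $e^{-\Lambda(X+Y)}=e^{-\Lambda(X)}e^{-(\Lambda(X+Y)-\Lambda(X))}$, using Lemma~\ref{lem:kanpa}-type bounds $|\Lambda(X+Y)-\Lambda(X)|\leq C\xim^{\tika}g_X^{1/2}(Y)$ to keep the second factor under control. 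That localized piece yields $q_Ne^{-\Lambda}$; everything outside the support of $\hat\chi$ is handled separately and gives $R_N$. Without stating this cutoff decomposition and the factoring through $\Psi(X,Y)=\Lambda(X)-\Lambda(X+Y)$, your argument has no mechanism for separating the two kinds of remainder and, in particular, no reason for the exponential of $-\Lambda$ to reappear at the base point.

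A second, related confusion: you propose to ``balance $N!^s$ against $\xim^{-(\kappa-\tika)N}$ by choosing $N$ proportional to a fractional power of $\xim$,'' but here $N$ is the fixed Taylor truncation order in the statement. The exponential smallness of $R_N$ does not come from taking $N$ large as a function of $\xim$. It comes from the bad-frequency region $(1-\hat\chi)$: there $\lr{\eta}$ or $\lr{\zeta}$ is $\gtrsim\xim$, and one integrates by parts a $\xi$-dependent number of times (the auxiliary parameter $M$, denoted $2N$ in Lemma~\ref{lem:syu:c}, chosen $\sim c\,\lr{\eta}^{(1-\delta)/(1+\varep)}$) so that the factorial from the Gevrey bounds on $\Lambda$ and $b,a$ is swamped by the polynomial decay, yielding $e^{-c\xim^{\baka}}$ with $\baka=(1-\delta)/(1+\varep)>\tika$. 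You should make this distinction explicit: two separate parameters, one fixed ($N$, the Taylor order) and one optimized over ($M$, the IBP order on the bad region). As written, your proposal would not deliver the claimed exponential decay.

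The rest of your sketch is consistent with the paper: the identification of the explicit finite sum, the use of Corollary~\ref{cor:moriyama} to get the $\xim^{\tika}$ loss per derivative on $e^{-\Lambda}$ (hence the net $\xim^{-(\kappa-\tika)}$ gain per order, using $\kappa>\tika$ from Lemma~\ref{lem:jimei}), and the mirror argument for $a\#(be^{-\Lambda})$.
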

\begin{corollary}
\label{cor:nao} 
Assume $b(x,\xi)\in S^{\lr{s}}_{\delta}(\xim^{m})$ and $a(x,\xi)\in S^{\lr{s}}_{\delta}(\xim^d)$ then   
\begin{eqnarray*}
\big(be^{-\Lambda}\big)\#a=c\,e^{-\Lambda}+r,\quad
a\#\big(be^{-\Lambda}\big)={\tilde c}\,e^{-\Lambda}+{\tilde r}
\end{eqnarray*}
with $c, {\tilde c}\in S^{\lr{s}}_{\delta}(\xim^{m+d})$ and $r, {\tilde r}\in S^{(\bar s)}_{0,0}(e^{-c\xim^{\baka}})$ with some ${\bar s}>1$, $\baka>0$ and  $c>0$ satisfying $\bas \tika <1$ and $\baka>\tika$.
\end{corollary}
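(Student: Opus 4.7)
The plan is to invoke Proposition \ref{pro:abLam} with $N=1$. For the first identity, the asymptotic sum over $k+l<1$ collapses to the single term $(k,l)=(0,0)$, which equals $(be^{-\Lambda})\cdot a = ba\cdot e^{-\Lambda}$, since the pointwise product of symbols commutes. Combining with the remainder terms of the Proposition gives
\[
(be^{-\Lambda})\# a \;=\; ba\cdot e^{-\Lambda}+q_1 e^{-\Lambda}+R_1 \;=\; (ba+q_1)\,e^{-\Lambda}+R_1,
\]
with $q_1\in S^{\lr{s}}_{\delta}(\xim^{m+d-(\kappa-\tika)})$ and $R_1\in S^{(\bas)}_{0,0}(e^{-c\xim^{\baka}})$ for some $\bas>1$, $\baka>0$, $c>0$ satisfying $\bas\tika<1$ and $\baka>\tika$.

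I then set $c:=ba+q_1$ and $r:=R_1$. The product $ba\in S^{\lr{s}}_{\delta}(\xim^{m+d})$ follows directly by Leibniz from Definition \ref{dfn:kurasu:b}; closure of $S^{\lr{s}}_{\delta}$ under product with orders adding is a routine consequence of the defining Gevrey-type estimates. For the next-order piece, Lemma \ref{lem:jimei} yields $\kappa>\tika$, hence $\xim^{m+d-(\kappa-\tika)}\leq \xim^{m+d}$ and therefore $S^{\lr{s}}_{\delta}(\xim^{m+d-(\kappa-\tika)})\subset S^{\lr{s}}_{\delta}(\xim^{m+d})$, so $q_1$ is absorbed into $c$. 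The constraints on $\bas$ and $\baka$ are inherited verbatim from Proposition \ref{pro:abLam}.

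The second identity $a\#(be^{-\Lambda})=\tilde c\,e^{-\Lambda}+\tilde r$ is handled identically from the second expansion of Proposition \ref{pro:abLam} taken at $N=1$, yielding $\tilde c := ab + \tilde q_1$ and $\tilde r := \tilde R_1$. There is essentially no obstacle: the corollary is a bookkeeping repackaging of the $N=1$ case of Proposition \ref{pro:abLam}, in which the leading symbol product is grouped with the next-order remainder so that everything outside the factor $e^{-\Lambda}$ sits in the super-exponentially small class $S^{(\bas)}_{0,0}(e^{-c\xim^{\baka}})$.
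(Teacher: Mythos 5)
Your proof is correct and reaches the conclusion by invoking Proposition~\ref{pro:abLam}, just as the paper does, but you specialize to $N=1$ so the asymptotic sum collapses to the single $(k,l)=(0,0)$ term $ba\,e^{-\Lambda}$. The paper instead keeps $N$ general and writes each summand $\big(\dif_x^k\dif_\xi^l(be^{-\Lambda})\big)\dif_x^l\dif_\xi^k a$ as $c_{k,l}\,e^{-\Lambda}$, then verifies $c_{k,l}\in S^{\lr{s}}_{\delta}(\xim^{m+d})$ by appealing to Corollary~\ref{cor:moriyama} (the bounds on $T^i_j$) together with $\kappa>\tika$. Your choice of $N=1$ sidesteps Corollary~\ref{cor:moriyama} entirely: there are no derivatives hitting $e^{-\Lambda}$ in the leading term, and the remainder $q_1\in S^{\lr{s}}_{\delta}(\xim^{m+d-(\kappa-\tika)})$ is absorbed into $c$ for free since $\kappa>\tika$. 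Nothing is lost in your version and it is slightly shorter; the only nontrivial ingredient you need beyond the proposition itself is closure of $S^{\lr{s}}_{\delta}$ under pointwise products, which is indeed a routine Leibniz estimate.
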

\begin{proof}
Write
\[
\big(\dif_x^k\dif_{\xi}^l(be^{-\Lambda})\big)\dif_x^{l}\dif_{\xi}^{k}a=c_{k,l}e^{-\Lambda}.
\]
Then to prove the assertion it suffices to note $c_{k,l}\in S^{\lr{s}}_{\delta}(\xim^{m+d})$ which can be seen from Corollary \ref{cor:moriyama} because $\kappa>\tika$.
\end{proof}
\begin{proposition}
\label{pro:kodaira}
Assume $p\in S^{(\bar s)}_{0,0}(e^{-c\xim^{\baka}})$ with $\bas \tika<1$, $\baka>\tika$ and $c>0$. Then 
\[
p\#e^{ \Lambda},\quad  e^{ \Lambda}\#p\in S^{\lr{s^*}}_{\delta}\big(e^{-c\xim^{\kappa^*}}\big)
\]
with some $s^*>1$ and $\kappa^*>\tika$. In particular we have $p\#e^{ \Lambda}$, $e^{ \Lambda}\#p\in S_{\delta}(\xim^l)$ for amy $l\in\R$.
\end{proposition}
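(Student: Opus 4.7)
The plan is to analyze $p\#e^\Lambda$ (the argument for $e^\Lambda\#p$ being symmetric) via the standard asymptotic expansion for the Weyl product
\[
(p\#e^\Lambda)(x,\xi) = \sum_{k+l<N}\frac{(-1)^k}{(2i)^{k+l}k!l!}\bigl(\dif_x^k\dif_\xi^l p\bigr)\bigl(\dif_\xi^k\dif_x^l e^\Lambda\bigr) + R_N(x,\xi),
\]
and to show that every main term as well as the remainder lies in $S^{\lr{s^*}}_\delta(e^{-c'\xim^{\kappa^*}})$ for a suitable $s^*>1$, some $\tika<\kappa^*<\baka$, and $0<c'<c$.

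For the pointwise size of the main terms I would combine the hypothesis on derivatives of $p$ with Corollary \ref{cor:moriyama} for derivatives of $e^\Lambda$ and the envelope $e^\Lambda\leq e^{B\xim^{\tika}}$ furnished by Corollary \ref{cor:kurinasi}. Since $\baka>\tika$, for any $\tika<\kappa^*<\baka$ and $0<c'<c$ one has
\[
e^{-c\xim^{\baka}}e^{B\xim^{\tika}}\leq C\,e^{-c'\xim^{\kappa^*}},
\]
which converts the competing behaviours into the desired overall decay. To estimate $\dif_x^i\dif_\xi^j$ of a single summand, apply Leibniz and combine the Gevrey-$\bas$ bound for $p$ with the Gevrey-$s$-type bound for $e^\Lambda$; the factors $(\xim^{\tika})^{k+l}$ produced by Corollary \ref{cor:moriyama} are absorbed into the exponential $e^{-c'\xim^{\kappa^*}}$ by slightly shrinking $c'$, while the symbol-calculus gain $\xim^{-k+\delta l}$ per order furnishes the contraction factor $\xim^{\tika-\kappa}<1$ that ensures summability. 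Choosing $s^*>\max(s,\bas)$ so that the Leibniz combinatorics fit the envelope $((i+j)^{1+\varep}+(i+j)^{s^*}\xim^{-\delta})^{i+j}$ of Definition \ref{dfn:kurasu:b}, each main term lies in $S^{\lr{s^*}}_\delta(e^{-c'\xim^{\kappa^*}})$ with constants uniform in $N$.

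The remainder $R_N$ is written as an oscillatory integral of the same form as in the proof of Proposition \ref{pro:abLam}. Repeated integration by parts in the phase, together with the uniform rapid decay $|\dif^\alpha p|\leq C_\alpha e^{-c\xim^{\baka}}$, yields a bound of the shape $C A^N N^{\bas N}\xim^{-(\kappa-\tika)N}e^{-c''\xim^{\kappa^*}}$ for some $c''>0$. Optimizing $N\asymp \xim^{(\kappa-\tika)/(\bas-1)}$, by the same Borel-type argument already employed for Propositions \ref{pro:comLam}--\ref{pro:abLam}, identifies $R_N$ with an element of $S^{(\bas)}_{0,0}(e^{-c''\xim^{\kappa^*}})\subset S^{\lr{s^*}}_\delta(e^{-c''\xim^{\kappa^*}})$. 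The final inclusion $p\#e^\Lambda\in S_\delta(\xim^l)$ for every $l\in\R$ is then immediate since $e^{-c'\xim^{\kappa^*}}\leq C_l\xim^{-l}$ for all $l$.

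The principal obstacle is the Leibniz bookkeeping in the second paragraph: Corollary \ref{cor:moriyama} delivers derivatives of $e^\Lambda$ as a product of three competing factors $\xim^{\tika}$, $(k+l)$, and $(k+l)^s\xim^{-\delta}$, and after multiplication by $\dif^\alpha p$ and by $e^{-c\xim^{\baka}}$ one must verify that the resulting combinatorial estimate fits, uniformly in $k,l$, the precise Gevrey-$s^*$ envelope of Definition \ref{dfn:kurasu:b} for every $\varep>0$. This is the same type of lengthy computation that underlies the appendix proofs of Propositions \ref{pro:comLam}--\ref{pro:abLam}; here the exponential decay of $p$ improves the overall size but does not by itself simplify the combinatorics, so the argument must be carried out term-by-term with care in the choice of $s^*$ and $\kappa^*$.
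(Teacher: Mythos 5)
Your approach via the Weyl asymptotic expansion is genuinely different from what the paper does, and unfortunately it runs into a real obstruction coming from the regularity class of $p$. The paper's Appendix proof (Proposition \ref{pro:syu:AAbis}) never inserts a Taylor expansion: it writes $p\#e^{\psi}$ directly as an oscillatory integral, splits the $(\eta,\zeta)$-domain by the cutoffs ${\hat\chi}$, ${\hat\chi}_1$, ${\hat\chi}_2$, and on each piece chooses the number of integrations by parts adaptively ($2N\sim\lr{\eta}^{(1-\delta)/(1+\varep)}$ in one region, $2N\sim\lr{\zeta}^{1/\bas}$ in the other) to produce the exponential decay directly, landing first in $S_{0,0}^{(s^*)}(e^{-c\xim^{\kappa^*}})$ and only then passing to $S_{\delta}^{\lr{\cdot}}$ via Remark \ref{re:sara}.

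The crucial gap in your proposal is the remainder bound $CA^NN^{\bar sN}\xim^{-(\kappa-\tika)N}e^{-c''\xim^{\kappa^*}}$. In Proposition \ref{pro:abLam} the analogous $\xim^{-(\kappa-\tika)N}$ gain does appear, but there \emph{both} symbols have symbol-class behaviour: each pair $(\dif_{\eta}^{\mu}\dif_y^{\nu}$ on one factor, $\dif_{\xi}^{\nu}\dif_x^{\mu}$ on the other$)$ contributes $\xim^{-|\mu|+\delta|\nu|}\cdot\xim^{-|\nu|+\delta|\mu|}=\xim^{-(1-\delta)(|\mu|+|\nu|)}$, which together with $e^{\Lambda}$'s $\xim^{\tika}$ losses gives the factor $\xim^{-(\kappa-\tika)N}$. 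Here, however, $p\in S_{0,0}^{(\bar s)}$ gains \emph{nothing} under $\xi$-derivatives, and $\dif_x^{\mu}e^{\Lambda}$ \emph{loses} $\xim^{\delta|\mu|}$. When you push the $N$-th order derivatives onto $p$ and onto $e^{\Lambda}$ in the remainder, the net power in $\xim$ is a loss of the order $\xim^{+(\tika+\delta)N}$, not a gain of $\xim^{-(\kappa-\tika)N}$; the contraction factor you invoke for summability does not exist, and the optimization $N\asymp\xim^{(\kappa-\tika)/(\bar s-1)}$ has nothing to optimize. Only the exponential $e^{-c\xim^{\baka}}$ can salvage the estimate, and using it properly is precisely the non-trivial part the paper handles by the region-dependent integration by parts. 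Relatedly, you never use the hypothesis $\bas\tika<1$; in the paper it is exactly what makes $\kappa^*=\min\{(1-\delta)/(1+\varep),\ 1/\bas,\ \baka\}>\tika$, so its absence is a symptom that your choice of $\kappa^*$ ``between $\tika$ and $\baka$'' would not survive the actual estimates.
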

\begin{corollary}
\label{cor:takagi}
Assume $b(x,\xi)\in S^{\lr{s}}_{\delta}(\xim^{m})$ and $a(x,\xi)\in S^{\lr{s}}_{\delta}(\xim^d)$ then  
\begin{eqnarray*}
\big(be^{-\Lambda}\big)\#a\#e^{\Lambda},\quad
e^{\Lambda}\#a\#\big(be^{-\Lambda}\big)\in S_{\delta}(\xim^{m+d}).
\end{eqnarray*}
\end{corollary}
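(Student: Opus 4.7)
The plan is to reduce the triple composition to a sum of pieces each of which has already been handled by earlier results, and then just add them up.

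First I would apply Corollary \ref{cor:nao} to the inner product $(be^{-\Lambda})\#a$. That corollary gives
\[
(be^{-\Lambda})\#a = c\,e^{-\Lambda} + r,
\]
where $c\in S^{\langle s\rangle}_{\delta}(\langle\xi\rangle_\mu^{m+d})$ and $r\in S^{(\bar s)}_{0,0}(e^{-c\langle\xi\rangle_\mu^{\bar\kappa}})$ with $\bar s\,\tilde\kappa<1$ and $\bar\kappa>\tilde\kappa$. Composing on the right with $e^{\Lambda}$, the triple product splits into
\[
(be^{-\Lambda})\#a\#e^{\Lambda} = (c\,e^{-\Lambda})\#e^{\Lambda} + r\#e^{\Lambda}.
\]

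For the first summand I would invoke Corollary \ref{cor:watabe} with the symbol $c\in S^{\langle s\rangle}_{\delta}(\langle\xi\rangle_\mu^{m+d})$, which gives directly $(c\,e^{-\Lambda})\#e^{\Lambda}\in S_{\delta}(\langle\xi\rangle_\mu^{m+d})$. For the second summand I would apply Proposition \ref{pro:kodaira}, which says that $r\#e^{\Lambda}\in S_{\delta}(\langle\xi\rangle_\mu^{l})$ for every $l\in\R$; choosing $l=m+d$ places this remainder in the same class. Adding the two contributions shows that $(be^{-\Lambda})\#a\#e^{\Lambda}\in S_{\delta}(\langle\xi\rangle_\mu^{m+d})$.

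The second assertion $e^{\Lambda}\#a\#(be^{-\Lambda})\in S_{\delta}(\langle\xi\rangle_\mu^{m+d})$ is handled by the symmetric argument: Corollary \ref{cor:nao} (right-hand formula) gives $a\#(be^{-\Lambda})=\tilde c\,e^{-\Lambda}+\tilde r$ with $\tilde c\in S^{\langle s\rangle}_{\delta}(\langle\xi\rangle_\mu^{m+d})$ and $\tilde r\in S^{(\bar s)}_{0,0}(e^{-c\langle\xi\rangle_\mu^{\bar\kappa}})$; composing on the left with $e^{\Lambda}$ and applying Corollary \ref{cor:watabe} together with Proposition \ref{pro:kodaira} (the $e^{\Lambda}\#p$ form) finishes the argument. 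There is no real obstacle here: the statement is essentially bookkeeping that combines Corollary \ref{cor:nao}, Corollary \ref{cor:watabe}, and Proposition \ref{pro:kodaira}, once one checks that the hypotheses $\bar s\,\tilde\kappa<1$ and $\bar\kappa>\tilde\kappa$ produced by Corollary \ref{cor:nao} are precisely what Proposition \ref{pro:kodaira} requires.
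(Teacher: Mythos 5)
Your argument matches the paper's proof step for step: decompose $(be^{-\Lambda})\#a$ via Corollary \ref{cor:nao}, handle $(c\,e^{-\Lambda})\#e^{\Lambda}$ by Corollary \ref{cor:watabe} and $r\#e^{\Lambda}$ by Proposition \ref{pro:kodaira}, then mirror the argument for the left composition. The bookkeeping on $\bar s\,\tilde\kappa<1$ and $\bar\kappa>\tilde\kappa$ is exactly what the paper relies on, so the proposal is correct and takes the same route.
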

\begin{proof}
Thanks to Corollary \ref{cor:nao} one can write $
\big(be^{-\Lambda})\#a=c\,e^{-\Lambda}+r$ 
with $c\in S_{\delta}^{\lr{s}}(\xim^{m+d})$ and $r\in S_{0,0}^{(\bas)}(e^{-c\xim^{\baka}})$ where $\bas \tika<1$, $\baka>\tika$ and $c>0$. Write
\[
\big(be^{-\Lambda}\big)\#a\#e^{\Lambda}=\big(c\,e^{-\Lambda}\big)\#e^{\Lambda}+r\#e^{\Lambda}
\]
then in virtue of Corollary \ref{cor:watabe} one has $(c\,e^{-\Lambda})\#e^{\Lambda}\in S_{\delta}(\xim^{m+d})$.  On the other hand from Proposition \ref{pro:kodaira} it follows that $r\#e^{\Lambda}\in S_{\delta}(\xim^{m+d})$ and hence the assertion.
\end{proof}
\begin{lemma}
\label{leiva}
Assume $p\in S^{\lr{s}}_{\delta}(\xim^{m})$ then there is  $C>0$ such that
\[
\|{\rm Op}(p e^{-\Lambda})u\|\leq C\|\mD^{m}{\rm Op}(e^{-\La}) u\|.
\]
\end{lemma}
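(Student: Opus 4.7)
The plan is to factor the Weyl operator ${\rm Op}(pe^{-\Lambda})$ through $\mD^{m}\circ{\rm Op}(e^{-\Lambda})$ modulo a correction controlled by the small parameter $\mu$. The starting point is the identity $e^{\Lambda}\#e^{-\Lambda}=1+\rho$ with $\rho\in\mu^{\kappa-\tika}S_{\delta}(1)$ from Corollary \ref{cor:kaneko}, which gives
\[
pe^{-\Lambda}=\big((pe^{-\Lambda})\#e^{\Lambda}\big)\#e^{-\Lambda}-(pe^{-\Lambda})\#\rho.
\]
By Corollary \ref{cor:watabe}, $c:=(pe^{-\Lambda})\#e^{\Lambda}$ lies in $S_{\delta}(\xim^{m})$, and since $\mD^{\pm m}$ are Fourier multipliers one has $\mD^{-m}\#\mD^{m}=1$ exactly, so $c\#\mD^{-m}\in S_{\delta}(1)$ and ${\rm Op}(c\#\mD^{-m})$ is $L^{2}$-bounded by Calderón--Vaillancourt for the metric $g_{\delta}$. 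Hence
\[
\|{\rm Op}(c)\,{\rm Op}(e^{-\Lambda})u\|=\|{\rm Op}(c\#\mD^{-m})\,\mD^{m}{\rm Op}(e^{-\Lambda})u\|\le C\|\mD^{m}{\rm Op}(e^{-\Lambda})u\|,
\]
which takes care of the principal contribution.

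What remains is to control $\|{\rm Op}((pe^{-\Lambda})\#\rho)u\|$. Applying Corollary \ref{cor:nao} with $b=p$ and $a=\rho$, I would write $(pe^{-\Lambda})\#\rho=p_{1}e^{-\Lambda}+r_{1}$, where $p_{1}\in S^{\lr{s}}_{\delta}(\xim^{m})$ inherits the small factor $\mu^{\kappa-\tika}$ from $\rho$, and $r_{1}\in S_{0,0}^{(\bas)}(e^{-c\xim^{\baka}})$ is smoothing. Iterating the same splitting on $p_{1}e^{-\Lambda}$ and continuing produces a geometric series in $\mu^{\kappa-\tika}$; the $N$-th partial sum gives
\[
{\rm Op}(pe^{-\Lambda})u=T_{N}\,\mD^{m}{\rm Op}(e^{-\Lambda})u+{\rm Op}(p_{N+1}e^{-\Lambda})u+{\rm Op}(R_{N})u,
\]
with $\|T_{N}\|_{L^{2}\to L^{2}}\le C\sum_{k=0}^{N}\mu^{k(\kappa-\tika)}$ and $p_{N+1}\in\mu^{(N+1)(\kappa-\tika)}S^{\lr{s}}_{\delta}(\xim^{m})$. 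For $\mu$ sufficiently small the residual ${\rm Op}(p_{N+1}e^{-\Lambda})u$ vanishes as $N\to\infty$ and $T_{N}\to T$ in operator norm, leaving an aggregated smoothing piece ${\rm Op}(R)u$ with $R\in S_{0,0}^{(\bas)}(e^{-c\xim^{\baka}})$.

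The main obstacle is then to absorb $\|{\rm Op}(R)u\|$ into the desired bound. For this I would apply the very same splitting to $R$: by Proposition \ref{pro:kodaira}, $R\#e^{\Lambda}\in S_{\delta}(\xim^{\ell})$ for every $\ell\in\R$, so the argument of the first paragraph shows that $\|{\rm Op}((R\#e^{\Lambda})\#e^{-\Lambda})u\|\le C_{\ell}\|\mD^{m}{\rm Op}(e^{-\Lambda})u\|$, while $R\#\rho$ reenters the iteration with one more factor $\mu^{\kappa-\tika}$ and is absorbed by the same geometric argument. All constants remain uniform in $\mu\in(0,\mu_{0}]$ because the gains $\mu^{\kappa-\tika}$ dominate the Calderón--Vaillancourt losses incurred at each step, which yields the claimed estimate.
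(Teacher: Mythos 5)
The paper's proof is short and exact: from Corollary \ref{cor:kaneko}, $e^{-\La}\#e^{\La}=1-R$ with $R\in\mu^{\kappa-\tika}S_\delta(1)$; for $\mu$ small $(1-{\rm Op}(R))^{-1}$ exists as a bounded operator on $L^2$, and \emph{Beals' theorem} is invoked to conclude this inverse equals ${\rm Op}(K)$ with $K\in S^0_{1,\delta}$. This yields the exact operator identity ${\rm Op}(pe^{-\Lambda})={\rm Op}(q){\rm Op}(K){\rm Op}(e^{-\La})$ with $q=(pe^{-\Lambda})\#e^{\La}\in S_\delta(\xim^m)$, and the estimate follows at once from $L^2$-boundedness of ${\rm Op}(q\#K\#\xim^{-m})$. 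Your proposal takes a genuinely different route: you replace the operator-level Neumann inversion (validated by Beals) with a \emph{symbol-level} iterated expansion using Corollaries \ref{cor:kaneko}, \ref{cor:watabe}, \ref{cor:nao} and Proposition \ref{pro:kodaira}, accumulating one factor $\mu^{\kappa-\tika}$ per step. The trade-off is clear: you avoid appealing to Beals, but you inherit a much heavier bookkeeping burden — uniformity of the symbol seminorms along the iteration, uniformity of the $(\bas,\baka)$ parameters in the smoothing remainders, and convergence of the resulting series.

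There is a genuine gap in your treatment of the accumulated smoothing remainder. You write "$R\#\rho$ reenters the iteration with one more factor $\mu^{\kappa-\tika}$ and is absorbed by the same geometric argument", but $R\#\rho$ is not of the form $p'e^{-\Lambda}$ that your iteration template handles; it is a composition of a Gevrey-smoothing symbol with a bounded small symbol, and Corollary \ref{cor:nao} does not directly apply to it. Pressing forward forces a second, parallel iteration on the smoothing part, whose error analysis you have not set up, and at each step of which the output parameters $(\bas,\baka,c)$ from Propositions \ref{pro:abLam} and \ref{pro:kodaira} can drift (they are produced afresh each time and are not asserted to stabilize). Without pinning these down, the claim that "all constants remain uniform in $\mu$" is not justified. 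Note also that you cannot fall back on Lemma \ref{lem:sanuki} to dispose of ${\rm Op}(R)u$, since that lemma is proved later \emph{using} the present lemma. The cleanest repair of your argument would be to do exactly what the paper does: instead of iterating at the symbol level, sum the operator-level Neumann series for $(1-{\rm Op}(R))^{-1}$ (trivially convergent since $\|{\rm Op}(R)\|<1$) and invoke Beals' characterization to return to the symbol calculus in one stroke.
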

\begin{proof} From Corollary \ref{cor:kaneko} it follows that
\[
e^{-\La}\#e^{\La}=1-R,\quad e^{\La}\#e^{-\La}=1-{\tilde R}
\]
with $R$, ${\tilde R}\in \mu^{\kappa-\tika}S_{\delta}(1)$. Choosing $\mu>0$ small the inverse $(1-{\rm Op} (R))^{-1}$ and $(1-{\rm Op}({\tilde R}))^{-1}$ are well-defined as a bounded operator on $L^2$. Thanks to \cite{Be} (see also \cite{Ler}) there exist $K$ and ${\tilde K}\in S^0_{1, \delta}$ such that 
${\rm Op}(K)=(1-{\rm Op}(R))^{-1}$ and ${\rm Op}({\tilde K})=(1-{\rm Op}({\tilde R}))^{-1}$. Therefore one has 
 \begin{equation}
 \label{eqivu}
{\rm Op}( K){\rm Op}(e^{-\La}){\rm Op}(e^{\La})=1,\quad {\rm Op}(e^{\La}){\rm Op}(e^{-\La}){\rm Op}({\tilde K})=1.
 \end{equation}
 Remark that 
 \begin{equation}
 \label{eq:ofuda}
 {\rm Op}(e^{\La}){\rm Op}(K){\rm Op}(e^{-\La})=1, \quad {\rm Op}(e^{-\Lambda}){\rm Op}({\tilde K}){\rm Op}(e^{\Lambda})=1.
 \end{equation}
Set $(pe^{-\Lambda})\#e^{\La}=q$ which belongs to $S_{\delta}(\xim^m)$ by Corollary \ref{cor:watabe}.  Note that $
{\rm Op}(pe^{-\Lambda})={\rm Op}(pe^{-\Lambda}){\rm Op}(e^{\La}){\rm Op}(K){\rm Op}(e^{-\La})={\rm Op}(q){\rm Op}(K){\rm Op}(e^{-\La})$ 
and hence
\[
\|{\rm Op}(pe^{-\Lambda})u\|=\|{\rm Op}(q){\rm Op}(K)\mD^{-m}\mD^{m}{\rm Op}(e^{-\La})u\|\leq C\|\mD^m {\rm Op}(e^{-\La})u\|
\]
since $q\#K\#\xim^{-m}\in S_{\delta}(1)$. This proves the assertion. 
\end{proof}
\begin{corollary}
\label{cor:koike}
For any $k,l\in\R$ there is $C>0$ such that 
\[
\|{\rm Op}(e^{-\Lambda})\mD^{k+l}u\|/C\leq\|\mD^{k}{\rm Op}(e^{-\Lambda})\mD^lu\|\leq C\|\mD^{k+l} {\rm Op}(e^{-\Lambda})u\|.
\]
\end{corollary}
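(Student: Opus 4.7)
My plan is to reduce both inequalities to Lemma~\ref{leiva} by pushing the powers $\mD^k, \mD^l$ through ${\rm Op}(e^{-\Lambda})$ using the composition calculus of Corollary~\ref{cor:nao}, and then controlling the rapidly-decreasing remainders using Proposition~\ref{pro:kodaira} together with the parametrix identities \eqref{eq:ofuda}. The starting observation is that $\xim^k \in S_\delta^{\langle s\rangle}(\xim^k)$ (since $|\dif_\xi^i\xim^k|\le C^{i+1}i!\,\xim^{k-i}$), and likewise for $\xim^l$, so the hypotheses of Corollary~\ref{cor:nao} apply.

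\textbf{Right inequality.} I would write
\[
\mD^k {\rm Op}(e^{-\Lambda}) \mD^l u = {\rm Op}\bigl(\xim^k \# e^{-\Lambda} \# \xim^l\bigr) u.
\]
Applying Corollary~\ref{cor:nao} first to $e^{-\Lambda}\#\xim^l = c_0 e^{-\Lambda} + r_0$, with $c_0\in S_\delta^{\langle s\rangle}(\xim^l)$ and $r_0\in S_{0,0}^{(\bas)}(e^{-c\xim^{\baka}})$, and then to $\xim^k \# (c_0 e^{-\Lambda}) = \tilde c\,e^{-\Lambda} + \tilde r$ with $\tilde c\in S_\delta^{\langle s\rangle}(\xim^{k+l})$, I obtain a splitting into a main piece ${\rm Op}(\tilde c\,e^{-\Lambda})u$ and two residuals $\tilde r$ and $\xim^k \# r_0$. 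Lemma~\ref{leiva} gives directly $\|{\rm Op}(\tilde c\,e^{-\Lambda})u\|\le C\|\mD^{k+l}{\rm Op}(e^{-\Lambda})u\|$. For a generic residual $\rho$, I substitute $u = {\rm Op}(e^{\La}){\rm Op}(K){\rm Op}(e^{-\La})u$ from \eqref{eq:ofuda} to write ${\rm Op}(\rho)u = {\rm Op}(\rho\#e^{\La}\#K){\rm Op}(e^{-\La})u$. By Proposition~\ref{pro:kodaira}, $\rho \# e^{\La}\in S_\delta(\xim^m)$ for every $m\in\R$, so composing with $K\in S^0_{1,\delta}$ yields a symbol in $S_\delta(\xim^{k+l})$, and composing further with $\xim^{-(k+l)}$ gives an element of $S_\delta(1)$. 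This provides the $L^2$-bound $\|{\rm Op}(\rho)u\|\le C\|\mD^{k+l}{\rm Op}(e^{-\La})u\|$ and finishes the right inequality.

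\textbf{Left inequality and main obstacle.} Setting $w=\mD^l u$, the left inequality is equivalent to $\|{\rm Op}(e^{-\La})\mD^k w\|\le C\|\mD^k {\rm Op}(e^{-\La})w\|$, and the same scheme applies: expand $e^{-\La}\#\xim^k = c_1 e^{-\La} + r_1$ via Corollary~\ref{cor:nao} with $c_1\in S_\delta^{\langle s\rangle}(\xim^k)$, bound ${\rm Op}(c_1 e^{-\La})w$ through Lemma~\ref{leiva}, and treat $r_1$ with the parametrix trick of the previous paragraph applied to $w$. The only delicate point — and the place where I expect the argument to require some care — is the book-keeping needed to verify that every rapidly-decreasing piece, after composition with $e^{\La}$ on one side, still yields a pseudodifferential operator of arbitrarily negative order; but this is precisely the content of Proposition~\ref{pro:kodaira}, so no genuinely new estimate is needed beyond the machinery already assembled.
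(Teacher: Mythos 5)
Your proposal is correct and uses the same toolkit as the paper's proof: the parametrix identities \eqref{eq:ofuda}, the composition Corollaries \ref{cor:nao} and \ref{cor:watabe}, Proposition \ref{pro:kodaira}, and Lemma \ref{leiva}. The paper is simply more economical---it invokes Corollary \ref{cor:takagi} to conclude $e^{-\Lambda}\#\xim^l\#e^{\Lambda}\in S_{\delta}(\xim^l)$ in one stroke (that corollary's proof is precisely the $c\,e^{-\Lambda}+r$ splitting you carry out by hand), and it reads off the first inequality as the case $k=0$ of the second applied to $\mD^l u$ rather than rerunning the whole scheme.
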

\begin{proof}
Using \eqref{eq:ofuda} write 
\[
\mD^k{\rm Op}(e^{-\Lambda})\mD^l=\mD^k\big({\rm Op}(e^{-\Lambda})\mD^l{\rm Op}(e^{\Lambda})\big){\rm Op}(K){\rm Op}(e^{-\Lambda}).
\]
Thanks to Corollary \ref{cor:takagi} we see $e^{-\Lambda}\#\xim^l\#e^{\Lambda}\in S_{\delta}(\xim^l)$ and hence 
\[
\xim^k\#\big(  e^{-\Lambda}\#\xim^l\#e^{\Lambda}\big)\#K\in S_{\delta}(\xim^{k+l}).
\]
This shows the second inequality. The first inequality follows from the second one.
\end{proof}
\begin{lemma}
\label{lem:sanuki}
Assume $R\in S^{(\bar s)}_{0,0}(e^{-c\xim^{\baka}})$ with $\bas \tika<1$, $\baka>\tika$ and $c>0$ and $p\in S_{\delta}^{\lr{s}}(\xim^m)$. Then for any $k, l\in\R$ there is $C$ such that
\[
\|\mD^k{\rm Op}(R){\rm Op}(p)u\|\leq C\mu \|\mD^l{\rm Op}(e^{-\Lambda})u\|.
\]
\end{lemma}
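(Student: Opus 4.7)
The strategy is to use the right-inverse identity ${\rm Op}(e^{\Lambda}){\rm Op}(K){\rm Op}(e^{-\Lambda}) = 1$ from \eqref{eq:ofuda} to convert the inequality into an $L^2\to L^2$ operator-norm bound, and then extract the factor of $\mu$ from the exponential decay of $R$'s symbol combined with the inequality $\xim\geq \mu^{-1}$.

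Setting $v:={\rm Op}(e^{-\Lambda})u$, the identity \eqref{eq:ofuda} gives $u={\rm Op}(e^{\Lambda}){\rm Op}(K)v$, so
\[
\mD^k{\rm Op}(R){\rm Op}(p)u = T\mD^{l}v,\qquad T:=\mD^k{\rm Op}(R){\rm Op}(p){\rm Op}(e^{\Lambda}){\rm Op}(K)\mD^{-l}.
\]
It therefore suffices to prove $\|T\|_{L^2\to L^2}\leq C\mu$, for then
\[
\|\mD^k{\rm Op}(R){\rm Op}(p)u\| = \|T\mD^{l}v\| \leq C\mu\,\|\mD^{l}v\| = C\mu\,\|\mD^{l}{\rm Op}(e^{-\Lambda})u\|.
\]

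For the operator-norm estimate, the plan is to compute the Weyl symbol $\tau = \xim^{k}\#R\#p\#e^{\Lambda}\#K\#\xim^{-l}$ and to show $\tau\in S^{\lr{s^*}}_{\delta}(e^{-c\xim^{\kappa^*}})$ for suitable $s^*>1$ and $\kappa^*>\tika$. I would proceed in three stages: first, compose $R\#p$ using the Weyl formula; because $R$ has the exponential factor $e^{-c\xim^{\baka}}$ and $p$ satisfies the $S^{\lr{s}}_{\delta}(\xim^m)$ estimates, every derivative in the asymptotic expansion of $R\#p$ inherits the exponential decay of $R$, so $R\#p\in S^{(\bar s')}_{0,0}(e^{-c'\xim^{\baka}})$ for suitable $\bar s'$, $c'$. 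Second, apply Proposition \ref{pro:kodaira} with $R\#p$ in place of $p$ to get $(R\#p)\#e^{\Lambda}\in S^{\lr{s^*}}_{\delta}(e^{-c''\xim^{\kappa^*}})$. Third, further composition with $K\in S^{0}_{1,\delta}$ and the polynomial weights $\xim^{k},\xim^{-l}$ preserves this class of exponentially decaying symbols. Since $\xim\geq\mu^{-1}$, one has $e^{-c''\xim^{\kappa^*}}\leq e^{-c''\mu^{-\kappa^*}}=O(\mu^N)$ for any $N$, with analogous control on all relevant derivatives of $\tau$. The Calderón--Vaillancourt theorem then yields $\|T\|_{L^2\to L^2}=O(\mu^N)$, in particular $O(\mu)$.

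The main obstacle is the first stage: verifying quantitatively that the composition $R\#p$ lies in $S^{(\bar s')}_{0,0}(e^{-c'\xim^{\baka}})$ with a Gevrey exponent $\bar s'$ that still satisfies $\bar s'\tika<1$, so that Proposition \ref{pro:kodaira} may be legitimately applied in the second stage. This requires a careful analysis of the Weyl oscillatory integral, balancing the Gevrey-type derivative bounds on $R$ (which carry the factor $e^{-c\xim^{\baka}}$) against the $S^{\lr{s}}_{\delta}$ bounds on $p$, and tracking how the effective Gevrey index and decay constants degrade under composition while preserving the constraint on $\baka$ and $\bar s'\tika$.
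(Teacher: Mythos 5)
Your opening move agrees with the paper's: insert the identity ${\rm Op}(e^{\Lambda}){\rm Op}(K){\rm Op}(e^{-\Lambda})=1$ from \eqref{eq:ofuda} to reduce the claim to an $L^2\to L^2$ operator bound. The two proofs diverge from there, and the divergence is exactly where your gap sits. You propose to compute the composed symbol $R\#p$ directly and show it lies in $S^{(\bar s')}_{0,0}(e^{-c'\xim^{\baka}})$ with an index satisfying $\bar s'\tika<1$, so that Proposition \ref{pro:kodaira} becomes applicable. You correctly flag this as the main obstacle, and it is a genuine one: the paper nowhere proves a composition result of the type $S^{(\bar s)}_{0,0}(e^{-c\xim^{\baka}})\#S_{\delta}^{\lr{s}}(\xim^m)\subset S^{(\bar s')}_{0,0}(e^{-c'\xim^{\baka}})$, and such a result is not automatic. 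In particular $p\in S_{\delta}^{\lr{s}}(\xim^m)$ is not an $S_{0,0}$-type symbol; its $x$-derivatives pick up factors $\xim^{\delta j}$, and the Weyl oscillatory integral mixes these against the $e^{-c\xim^{\baka}}$ decay of $R$ in a way that would require a dedicated Appendix-style estimate, with care to verify that the resulting Gevrey exponent still satisfies $\bar s'\tika<1$. As written, your argument leaves that step unproven.

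The paper avoids the troublesome composition altogether by decoupling the two factors. It first establishes the preliminary bound \eqref{eq:suri}, namely $\|\mD^k{\rm Op}(R)u\|\leq C\mu\|\mD^l{\rm Op}(e^{-\Lambda})u\|$, by factoring ${\rm Op}(R)={\rm Op}(R){\rm Op}(e^{\Lambda}){\rm Op}(K){\rm Op}(e^{-\Lambda})$ and invoking Proposition \ref{pro:kodaira} only for the composition $R\#e^{\Lambda}$ (a case that is actually covered), which yields $\xim^k\#(R\#e^{\Lambda})\#K\in S_{\delta}(\xim^{l-1})\subset\mu S_{\delta}(\xim^l)$. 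It then applies this preliminary bound with ${\rm Op}(p)u$ in place of $u$, reducing to control of $\|\mD^{l-m}{\rm Op}(e^{-\Lambda}){\rm Op}(p)u\|$, and handles the remaining composition $e^{-\Lambda}\#p$ via Corollary \ref{cor:nao}, which splits it into $qe^{-\Lambda}+r$ with $q\in S_{\delta}^{\lr{s}}(\xim^m)$ and $r$ exponentially decaying; the $qe^{-\Lambda}$ piece is handled by Lemma \ref{leiva} and the $r$ piece by a second application of \eqref{eq:suri}. The only compositions that ever appear are $R\#e^{\Lambda}$ and $e^{-\Lambda}\#p$, both already proved, and $R$ is never composed with $p$ directly. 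To salvage your route you would either need to prove the $R\#p$ composition lemma outright (essentially a variant of Proposition \ref{pro:syu:AAbis} with a polynomially bounded second factor), or adopt the paper's decoupling.
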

\begin{proof}We first show that for any $k, l\in\R$ we have 
\begin{equation}
\label{eq:suri}
\|\mD^k{\rm Op}(R)u\|\leq C\mu \|\mD ^{l}{\rm Op}(e^{-\Lambda})u\|.
\end{equation}
Indeed using \eqref{eq:ofuda} we write ${\rm Op}(R)={\rm Op}(R){\rm Op}(e^{\La}){\rm Op}(K){\rm Op}(e^{-\La})$. Thanks to Proposition \ref{pro:kodaira} it follows that $R\#e^{\Lambda}\in S_{\delta}(\xim^{l-k-1})$ for any $l\in\R$. Therefore one has $\xim^k\#(R\#e^{\Lambda})\in S_{\delta}(\xim^{l-1})$. Then $\xim^k\#(R\#e^{\Lambda})\#K\in S_{\delta}(\xim^{l-1})\subset \mu S_{\delta}(\xim^{l})$ for any $l$ and hence \eqref{eq:suri}. Therefore one has
\[
\|\mD^k{\rm Op}(R){\rm Op}(p)u\|\leq C\mu \|\mD ^{l-m}{\rm Op}(e^{-\Lambda}){\rm Op}(p)u\|.
\]
From Corollary \ref{cor:nao} one can write
\[
\xim^{l-m}\#(e^{-\Lambda}\#p)=\xim^{l-m}\#(qe^{-\Lambda})+\xim^{l-m}\#r
\]
where $q\in S_{\delta}^{\lr{s}}(\xim^m)$ and $r\in S_{0,0}^{(\bas)}(e^{-c\xim^{\baka}})$ with $\bas \tika<1$ and $\baka>\tika$. By Corollary \ref{cor:nao} again it follows that $\xim^{l-m}\#(qe^{-\Lambda})=c\,e^{-\Lambda}+R$ where $c\in S_{\delta}^{\lr{s}}(\xim^l)$ and $R\in S_{0,0}^{(\bas)}(e^{-c\xim^{\baka}})$ with possibly different $\bas$, $\baka$ satisfying the same conditions. Therefore we have
\[
\|\mD^{l-m}{\rm Op}(qe^{-\Lambda})u\|\leq C\|\mD^l {\rm Op}(e^{-\Lambda})u\|
\]
thanks to Lemma \ref{leiva}. Applying \eqref{eq:suri} again one obtains
\[
\|\mD^{l-m}{\rm Op}(r)u\|\leq C\mu \|\mD^l{\rm Op}(e^{-\Lambda})u\|
\]
and we conclude the assertion.
\end{proof}
\begin{lemma}
\label{lem:yodobasi}
Assume $p\in S_{\delta}^{\lr{s}}(\xim^{m_1})$ and $q\in S_{\delta}^{\lr{s}}(\xim^{m_2})$. Then for any $k\in\R$ there exists $C>0$ such that
\begin{align*}
\|\mD^k{\rm Op}(qe^{-\Lambda}){\rm Op}(p)u\|\leq C\|\mD^{k+m_1+m_2}{\rm Op}(e^{-\Lambda})u\|\\
\leq C\mu \|\mD^{k+m_1+m_2+1}{\rm Op}(e^{-\Lambda})u\|
\end{align*}
\end{lemma}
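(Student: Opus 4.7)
The natural approach is to first compose $(qe^{-\Lambda})\#p$ using Corollary \ref{cor:nao}, then compose on the left by $\xim^k$, and finally bound each resulting piece by the two main tools already in hand: Lemma \ref{leiva} for symbols of the form $(\text{symbol})\cdot e^{-\Lambda}$, and the auxiliary estimate \eqref{eq:suri} (from the proof of Lemma \ref{lem:sanuki}) for remainders in $S^{(\bas)}_{0,0}(e^{-c\xim^{\baka}})$.

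\textbf{Step 1 (decomposition of the symbol).} By Corollary \ref{cor:nao} applied to $b=q$ and $a=p$,
\[
(qe^{-\Lambda})\#p = c\,e^{-\Lambda}+r,\qquad c\in S_{\delta}^{\lr{s}}(\xim^{m_1+m_2}),\ \ r\in S_{0,0}^{(\bas)}(e^{-c\xim^{\baka}}).
\]
Therefore $\mD^k{\rm Op}(qe^{-\Lambda}){\rm Op}(p)={\rm Op}(\xim^k\#(ce^{-\Lambda}))+\mD^k{\rm Op}(r)$, and the second half of Corollary \ref{cor:nao} (with $a=\xim^k$ and $b=c$) gives
\[
\xim^k\#(ce^{-\Lambda})=\tilde c\,e^{-\Lambda}+\tilde r,\qquad \tilde c\in S_{\delta}^{\lr{s}}(\xim^{k+m_1+m_2}),\ \ \tilde r\in S_{0,0}^{(\bas)}(e^{-c\xim^{\baka}}).
\]

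\textbf{Step 2 (bounding each piece).} Lemma \ref{leiva} applied to $\tilde c$ yields
\[
\|{\rm Op}(\tilde c\,e^{-\Lambda})u\|\leq C\|\mD^{k+m_1+m_2}{\rm Op}(e^{-\Lambda})u\|,
\]
which is the principal contribution. For both small-class remainders $\tilde r$ and $\mD^k{\rm Op}(r)$, the estimate \eqref{eq:suri} established inside the proof of Lemma \ref{lem:sanuki} supplies, for any prescribed exponent $l\in\R$,
\[
\|{\rm Op}(\tilde r)u\|+\|\mD^k{\rm Op}(r)u\|\leq C\mu\|\mD^{l}{\rm Op}(e^{-\Lambda})u\|,
\]
and choosing $l=k+m_1+m_2$ absorbs these remainders into the principal bound. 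Summing the three contributions gives the first inequality of the lemma.

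\textbf{Step 3 (the extra factor $\mu$).} For the second inequality, note that $\xim^{-1}\leq\mu$ on the symbol level, so $\mD^{-1}$ is bounded on $L^2$ with norm at most $\mu$. Using Corollary \ref{cor:koike} to commute powers of $\mD$ through ${\rm Op}(e^{-\Lambda})$, we estimate
\[
\|\mD^{k+m_1+m_2}{\rm Op}(e^{-\Lambda})u\|\leq C\|\mD^{-1}{\rm Op}(e^{-\Lambda})\mD^{k+m_1+m_2+1}u\|\leq C\mu\|\mD^{k+m_1+m_2+1}{\rm Op}(e^{-\Lambda})u\|,
\]
which yields the stated second inequality. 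The whole argument is essentially bookkeeping once the composition identities of Corollary \ref{cor:nao} are invoked; the only mild subtlety, which is the main thing to be careful about, is ensuring that the $S_{0,0}^{(\bas)}(e^{-c\xim^{\baka}})$ remainders produced at each step can be absorbed uniformly in $\mu$ via \eqref{eq:suri}, but since that estimate holds for any pair $(k,l)$ of exponents there is no real obstacle.
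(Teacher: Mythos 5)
Your proof is correct and follows essentially the same route as the paper's: two applications of Corollary \ref{cor:nao} to extract a principal piece $\tilde c\,e^{-\Lambda}$ with $\tilde c\in S_{\delta}^{\lr{s}}(\xim^{k+m_1+m_2})$, followed by Lemma \ref{leiva} for that piece and the small-class estimate \eqref{eq:suri}/Lemma \ref{lem:sanuki} for the remainders. The only organizational difference is the order of composition: the paper first forms $\xim^{k}\#(qe^{-\Lambda})$ and then hits $p$, whereas you first form $(qe^{-\Lambda})\#p$ and then hit $\xim^{k}$; this means the paper needs the full strength of Lemma \ref{lem:sanuki} (to handle $r\#p$ with $p$ not of small class), while your remainder $\mD^{k}{\rm Op}(r)$ acts directly on $u$ and is disposed of by \eqref{eq:suri} alone, a marginally lighter tool. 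Both orderings deliver the same decomposition and the bookkeeping is equivalent.

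One small point in Step 3: the detour through $\|\mD^{-1}{\rm Op}(e^{-\Lambda})\mD^{k+m_1+m_2+1}u\|$ is unnecessary, and the first inequality in your chain does not obviously follow from Corollary \ref{cor:koike} as stated (the displayed estimate there bounds that middle quantity from above by $\|\mD^{k+m_1+m_2}{\rm Op}(e^{-\Lambda})u\|$, not the other way around). But the claim you want is in fact trivial: the operators $\mD^{a}$ are Fourier multipliers and commute with one another, and since $\xim^{-1}\le\mu$ one has $\|\mD^{k+m_1+m_2}v\|\le\mu\|\mD^{k+m_1+m_2+1}v\|$ for any $v$; taking $v={\rm Op}(e^{-\Lambda})u$ gives the second inequality of the lemma directly, with no appeal to Corollary \ref{cor:koike}.
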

\begin{proof}
Thanks to Corollary \ref{cor:nao} one can write $
\xim^k\#(qe^{-\Lambda})=c\,e^{-\Lambda}+r$ 
where $c\in S_{\delta}^{\lr{s}}(\xim^{k+m_2})$ and $r\in S_{0,0}^{(\bas_1)}(e^{-c\xim^{{\bar\kappa}_1}}$ with ${\bar s}_1\tika<1$ and ${\bar\kappa}_1>\tika$. Thus we have
\[
\xim^k\#(qe^{-\Lambda})\#p=(c\,e^{-\Lambda})\#p+r\#p
\]
and Corollary \ref{cor:nao} proves that $(c\,e^{-\Lambda})\#p={\tilde c}\,e^{-\Lambda}+R$ where $
{\tilde c}\in S_{\delta}^{\lr{s}}(\xim^{k+m_1+m_2})$ and $R\in S_{0,0}^{(\bas_2)}(e^{-c\xim^{{\bar\kappa}_2}})$ 
 with ${\bar s}_2\tika<1$ and ${\bar\kappa}_2>\tika$. Since 
 \[
 \|{\rm Op}(r){\rm Op}(p)u\|\leq C\|\mD^{k+m_1+m_2}{\rm Op}(e^{-\Lambda})u\|
 \]
 in virtue of Lemma \ref{lem:sanuki} the proof follows from Lemmas \ref{leiva} and Lemma \ref{lem:sanuki}.
\end{proof}
 \begin{lemma}
 \label{lem:koreha} Assume $p\in S_{\phi}(m)\cap S_{\delta}^{\lr{s}}(\xim^l)$ where $m$ is $g$-admissible, $l\in\R$. Then for any $M \in \N$ one can write
 \begin{align*}
pe^{-\Lambda}=(p+p_1+p_2)\#e^{-\Lambda}+qe^{-\Lambda}+r,\\
 pe^{-\Lambda}=e^{-\Lambda}\#(p+{\tilde p}_1+{\tilde p}_2)+{\tilde q}e^{-\Lambda}+{\tilde r}
 \end{align*}
with $p_j, {\tilde p}_j\in  S_{\phi}(m \phi^{-j/2}\xim^{-j+j\tika})\cap S_{\delta}^{\lr{s}}(\xim^{l-(\kappa-\tika)j})$ where $p_1$, ${\tilde p}_1$ are pure imaginary and $q, {\tilde q}\in S_{\delta}^{\lr{s}}(\xim^{-M})$,  $r, {\tilde r}\in S_{0,0}^{\lr{\bas}}(e^{-c\xim^{\baka}})$ with some $\bas>1$, $\baka>0$ and $c>0$ such that $\bas \tika<1$ and $\baka>\tika$.
 \end{lemma}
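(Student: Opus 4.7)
The plan is to expand $(p+p_1+p_2)\#e^{-\Lambda}$ via a finite-order Weyl asymptotic formula, factoring $e^{-\Lambda}$ out of each derivative through $\dif_x^l\dif_{\xi}^k e^{-\Lambda}=\omega^k_l\,e^{-\Lambda}$; the $\omega^k_l$ differ from the $T^k_l$ of Corollary \ref{cor:moriyama} only by signs and satisfy the same bounds as in Lemma \ref{lem:moriyama:bis}. Concretely, applying the second formula of Proposition \ref{pro:abLam} with $a=p+p_1+p_2$ and $b=1\in S_\delta^{\lr{s}}(\xim^0)$ yields
\[
(p+p_1+p_2)\#e^{-\Lambda}=\Big[\sum_{k+l<N}\frac{(-1)^k}{(2i)^{k+l}k!l!}\dif_x^k\dif_{\xi}^l(p+p_1+p_2)\,\omega^k_l\Big]e^{-\Lambda}+\tilde q_N e^{-\Lambda}+\tilde R_N,
\]
with $\tilde q_N\in S_\delta^{\lr{s}}(\xim^{l-(\kappa-\tika)N})$ and $\tilde R_N\in S_{0,0}^{(\bar s)}(e^{-c\xim^{\baka}})$.

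Collecting body terms by total differentiation order $j=k+l$, denote the corresponding Poisson-bracket-like operator by $L_j$; then $L_1(b)=-\frac{1}{2i}\{b,\Lambda\}$, and by Lemmas \ref{lem:ano:1}, \ref{lem:ano:1bis}, and \ref{lem:moriyama:bis} each $L_j$ lowers the $S_\delta^{\lr{s}}$-order by $j(\kappa-\tika)$ (using $\delta=1-\kappa$) and the $S_\phi$-weight by $\phi^{-j/2}\xim^{-j(1-\tika)}$. Set $p_1:=-L_1(p)=\frac{1}{2i}\{p,\Lambda\}$, which is pure imaginary whenever $p$ (hence $\{p,\Lambda\}$) is real. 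Define $p_2$ by asymptotic (Borel-type) summation of the cancellation recursion
\[
p_2\;\sim\;-\sum_{j\geq 2}\big(L_j(p)+L_{j-1}(p_1)+L_{j-2}(p_2)\big),
\]
whose leading term $-L_2(p)-L_1(p_1)$ lies in $S_\phi(m\phi^{-1}\xim^{-2+2\tika})\cap S_\delta^{\lr{s}}(\xim^{l-2(\kappa-\tika)})$, and whose subsequent corrections are of strictly lower order and therefore absorb into the same class. For prescribed $M\in\N$, choose the truncation level of the Borel summation together with $N\geq N(M)$ large enough that the residual body and $\tilde q_N$ together lie in $S_\delta^{\lr{s}}(\xim^{-M})$; this furnishes $q$, and one sets $r=\tilde R_N$.

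The second decomposition follows symmetrically from the first formula of Proposition \ref{pro:abLam} applied to $e^{-\Lambda}\#(p+\tilde p_1+\tilde p_2)$: the first-order cancellation gives $\tilde p_1=-\frac{1}{2i}\{p,\Lambda\}=-p_1$, still pure imaginary, and $\tilde p_2$ is constructed by the analogous asymptotic summation in identical symbol classes. The main technical obstacle is enforcing the intersection $S_\phi\cap S_\delta^{\lr{s}}$ \emph{simultaneously} on $p_1,p_2$: the $S_\phi$-estimate requires extracting a factor $\phi^{-k/2}$ per $x$-derivative (available from $|\dif_x a|\leq C\sqrt{a}\leq C\sqrt{\phi}$, cf.\ Lemma \ref{lem:katto:bis}), while the Gevrey-type $S_\delta^{\lr{s}}$-estimate must simultaneously control $k!^s$-growth through iterated Poisson brackets against $\Lambda$. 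Verifying that the Borel summation for $p_2$ preserves both estimates uniformly in $M$ (so that the symbol class of $p_2$ does not degrade with $M$) is the central step.
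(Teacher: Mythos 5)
Your core mechanism is the right one: expand the Weyl composition with $e^{-\Lambda}$ via Proposition \ref{pro:abLam}, cancel order by order, and read off $p_1$ and $p_2$ from the correction terms. But the paper does not (and does not need to) set this up as an implicit asymptotic recursion with Borel summation. It starts from the \emph{known} object $p\#e^{-\Lambda}$, expands it to order $N$ by Proposition \ref{pro:abLam}, and rearranges to get $pe^{-\Lambda}=p\#e^{-\Lambda}+p_1e^{-\Lambda}+p_2e^{-\Lambda}+qe^{-\Lambda}+r$ with pointwise (not sharp) corrections; the classes of $p_1$ (order $1$) and $p_2$ (orders $2$ through $N-1$, all of which lie in $S_{\phi}(m\phi^{-1}\xim^{-2+2\tika})\cap S_{\delta}^{\lr{s}}(\xim^{l-2(\kappa-\tika)})$ since they are of lower order) come directly from Lemma \ref{lem:moriyama:bis} and Corollary \ref{cor:moriyama}, and $q\in S_{\delta}^{\lr{s}}(\xim^{-M})$ once $N$ is chosen with $-l+(\kappa-\tika)N\geq M$. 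It then applies the same step again to $p_1e^{-\Lambda}$ and to the resulting $p_2'e^{-\Lambda}$, so that each pointwise correction is upgraded to a $\#$-product at the cost of a strictly lower-order residual. Since every pass lowers the order by at least $\kappa-\tika$, finitely many iterations (a number depending on $M$) suffice. Nothing infinite is being summed.

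Your write-up has two genuine problems. First, the ``cancellation recursion'' $p_2\sim-\sum_{j\geq2}(L_j(p)+L_{j-1}(p_1)+L_{j-2}(p_2))$ has $p_2$ on both sides; as written it is an implicit fixed-point equation, not a definition, and asserting that a ``Borel-type'' summation resolves it is exactly the step that needs proof — you identify it as ``the central step'' but do not carry it out. The paper simply never enters this regime. Second, the concern about the symbol class of $p_2$ being preserved ``uniformly in $M$'' is a red herring: the lemma is quantified \emph{for each} $M$, the symbols $p_1,p_2,q,r$ are allowed to depend on $M$, and only their classes are fixed. This worry is self-inflicted by the choice of an infinite asymptotic construction; a finite iteration, which is all that is required, has nothing to make uniform. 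If you replace the Borel summation by the paper's finite bootstrap on $p_1e^{-\Lambda}$ and $p_2'e^{-\Lambda}$, the rest of your argument (identification of $p_1=\tfrac{1}{2i}\{p,\Lambda\}$ as pure imaginary, the $S_\phi$- and $S_\delta^{\lr{s}}$-bookkeeping via Lemmas \ref{lem:ano:1bis}, \ref{lem:moriyama:bis}, Corollary \ref{cor:moriyama}, and the symmetric treatment of $e^{-\Lambda}\#(\cdot)$) lines up with the paper.
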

\begin{proof} Thanks to Proposition \ref{pro:abLam} we can write $pe^{-\Lambda}=p\#e^{-\Lambda}+p_1e^{-\Lambda}+p_2e^{-\Lambda}+qe^{-\Lambda}+r$ where $q\in  S_{\delta}^{\lr{s}}(\xim^{-M})$ choosing $N$ such that $-l+(\kappa-\tika)N\geq M$  and  $r\in S_{0,0}^{(\bas)}(e^{-c\xim^{\baka}})$ with $\bas\tika<1$ and $\baka>\tika$. It follows from Lemma \ref{lem:moriyama:bis} that $p_j\in  S_{\phi}(m\phi^{-j/2}\xim^{j\tika-j})\cap S_{\delta}^{\lr{s}}(\xim^{l-(\kappa-\tika)j})$. Note that $p_1$ is pure imaginary. Repeating the same arguments we have $p_1e^{-\Lambda}=p_1\#e^{-\Lambda}+{\tilde p}_2e^{-\Lambda}+{\tilde q}e^{-\Lambda}+{\tilde r}$ so that
\[
pe^{-\Lambda}=(p+p_1)\#e^{-\Lambda}+p_2'e^{-\Lambda}+q'e^{-\Lambda}+r'
\]
where $p_2'\in  S_{\phi}(m\phi^{-1}\xim^{2\tika-2})\cap S_{\delta}^{\lr{s}}(\xim^{l-2(\kappa-\tika)})$ and $q'\in  S_{\delta}^{\lr{s}}(\xim^{-M})$ and $r'\in S_{0,0}^{(\bas)}(e^{-c\xim^{\baka}})$ with possibly different $\bas$, $\baka$, $c>0$ such that $\bas\tika<1$ and $\baka>\tika$. Repeating the same argument to $p_2'e^{-\Lambda}$ we conclude the assertion. The second assertion can be proved similarly.
\end{proof}
%


\section{Energy estimates}

Instead of $P=D_t^2-b(t)D_xa(x)D_x$  we study 
\[
Pu=D_t^2u-b(t)\mD a(x)\mD u
\]
which differs from ${\tilde P}$ only by a zero-th order term  which is irrelevant in the arguments proving Proposition \ref{pro:thva}. 
Assume $0\leq a(x)\in {\mathcal G}_0^s(\R)$ and consider
\[
P^{\sharp}={\rm Op}(e^{ (\tau-\theta t)\xim^{\kappa}})P\,{\rm Op}(e^{- (
\tau-\theta t)\xim^{\kappa}})
\]
where $\tau>0$ is a fixed positive constant and $\theta>0$ is a positive parameter where $0\leq \theta t\leq \tau$. Thanks to Proposition \ref{pro:weyl:1} we have
\begin{equation}
\label{eq:dedasi}
\begin{split}
P^{\sharp}=(D_t-i \theta \mD^{\kappa})^2-b(t)\mD a(x)\mD\\
-b(t)\mD {\rm Op}(a_1)\mD+R\\
=A^2-b(t)\mD a\mD -b(t)\mD{\rm Op}( a_1)\mD +R
\end{split}
\end{equation}
with $A=D_t-i \theta \mD^{\kappa}$ where $R=b(t)\mD{\rm Op}(q+r)\mD$ with $
q(x,\xi)\in S_{\delta}^{\lr{s}}(\xim^{-2+2\kappa})$ and $r(x,\xi)\in S_{0,0}^{(s)}(e^{-c\xim^{1/s}})$ with some $c>0$  
and 
\begin{eqnarray*}
a_1(t,x,\xi)= (\tau-\theta t)D_xa(x)\dif_{\xi}\xim^{\kappa}.
\end{eqnarray*}
In order to prove Theorem \ref{thm:taifu} we derive an apriori estimate for $P^{\sharp}$. We now introduce the energy:
\begin{align*}
E(u)=\|{\rm Op}(e^{-\La})Au\|^2+{\mathsf{ Re}}\,(b(t)a(\cdot){\rm Op}(e^{-\La})\mD u,{\rm Op}(e^{-\La})\mD
u)\\
+\|\mD^{\kappa} {\rm Op}(e^{-\La}) u\|^2
\end{align*}
with $\La(t,x,\xi)$ defined by \eqref{eq:ramu} where it is clear that
\begin{equation}
\label{eq:tiyo}
E(u)\geq \|{\rm Op}(e^{-\La})Au\|^2+\|\mD^{\kappa} {\rm Op}(e^{-\La}) u\|^2.
\end{equation}
It is easy to check that
\begin{equation}
\label{eq:sugi}
\begin{split}
\frac{d}{dt}\|{\rm Op}(e^{-\La})Au\|^2=-2{\mathsf{ Re}}\,({\rm Op}(\La' e^{-\La})Au,{\rm Op}(e^{-\La})Au)\\
-2 \theta  {\mathsf{ Re}}\,({\rm Op}(e^{-\La})\mD^{\kappa} Au,{\rm Op}(e^{-\La})Au)
\\
-2{\mathsf{ Im}}\,({\rm Op}(e^{-\La})\mD b(t)a\mD u,{\rm Op}(e^{-\La})Au)\\
-2{\mathsf{ Im}}\,({\rm Op}(e^{-\La})\mD b(t){\rm Op}(a_1)\mD u,{\rm Op}(e^{-\La})Au)\\
+2{\mathsf{ Im}}\,({\rm Op}(e^{-\La})Ru,{\rm Op}(e^{-\La})Au)-2{\mathsf{ Im}}\,({\rm Op}(e^{-\La})P^{\sharp}u, {\rm Op}(e^{-\La})Au)
\end{split}
\end{equation}
where $\Lambda'=\dif_t\Lambda$. We have also
\begin{equation}
\label{eq:sugi:b}
\begin{split}
\frac{d}{dt}\|\mD^{\kappa} {\rm Op}(e^{-\La})u\|^2
=-2{\mathsf{ Re}}\,(\mD^{\kappa}{\rm Op}(\La'e^{-\La})u,\mD^{\kappa} {\rm Op}(e^{-\La})u)\\
-2{\mathsf{ Im}}\,(\mD^{\kappa} {\rm Op}(e^{-\La})Au,\mD^{\kappa} {\rm Op}(e^{-\La})u)\\
-2 \theta {\mathsf{ Re}}\,(\mD^{\kappa} {\rm Op}(e^{-\La})\mD^{\kappa} u,\mD^{\kappa} {\rm Op}(e^{-\La})u).
\end{split}
\end{equation}
On the other hand we have
\begin{equation}
\label{eq:sugi:c}
\begin{split}
\frac{d}{dt}{\mathsf{ Re}}\,(b(t)a{\rm Op}(e^{-\La})\mD u,{\rm Op}(e^{-\La})\mD u)\\
={\mathsf{ Re}}\,(b'(t)a(\cdot){\rm Op}(e^{-\La})\mD u,{\rm Op}(e^{-\La})\mD u)\\
-2{\mathsf{ Re}}\,(b(t)a(\cdot){\rm Op}(\La' e^{-\La})\mD u,{\rm Op}(e^{-\La})\mD u)\\
-2{\mathsf{ Im}}\,(b(t)a(\cdot){\rm Op}(e^{-\La})A\mD u,{\rm Op}(e^{-\La})\mD u)\\
-2 \theta {\mathsf{Re}}\,(b(t)a (\cdot){\rm Op}(e^{-\La})\mD^{1+\kappa} u,{\rm Op}(e^{-\La})\mD u).
\end{split}
\end{equation}
%
We denote
\begin{align*}
{\mathcal E}_1&={\mathsf{Re}}\,(b(t)a (\cdot){\rm Op}(e^{-\La})\mD^{1+\kappa} u,{\rm Op}(e^{-\La})\mD u)\\
&\quad \quad+{\mathsf{ Re}}\,(\mD^{\kappa} {\rm Op}(e^{-\La})\mD^{\kappa} u,\mD^{\kappa} {\rm Op}(e^{-\La})u),\\
{\mathcal E}_2&={\mathsf{ Re}}\,({\rm Op}(e^{-\La})\mD^{\kappa} Au,{\rm Op}(e^{-\La})Au)
\end{align*}
and 
\begin{align*}
{\mathcal H}&={\mathsf{Im}}\,\big\{({\rm Op}(e^{-\La})\mD b(t)a\mD u,{\rm Op}(e^{-\La})Au)
\\
&\quad\quad+(b(t)a{\rm Op}(e^{-\La})A\mD u,{\rm Op}(e^{-\La})\mD u)\big\},\\
{\mathcal K}&=-2{\mathsf{ Re}}\,\big\{ (b(t)a{\rm Op}(\La'e^{-\La})\mD u,{\rm Op}(e^{-\Lambda})\mD u)\\
&\quad\quad+(\mD^{\kappa}{\rm Op}(\La'e^{-\La})u,\mD^{\kappa} {\rm Op}(e^{-\La})u)\big\}
\\
&\quad+{\mathsf{ Re}}\,(b'(t) a{\rm Op}(e^{-\Lambda})\mD u,{\rm Op}(e^{-\Lambda})\mD u).
\end{align*}
%
Then one can write
\begin{equation}
\label{eq:samui}
\begin{split}
dE/dt=-2\theta {\mathcal E}_1-2\theta {\mathcal E}_2-2{\mathcal H}+{\mathcal K}\\
-2{\mathsf{ Re}}\,({\rm Op}(\La' e^{-\La})Au,{\rm Op}(e^{-\La})Au)\\
-2{\mathsf{ Im}}\,(\mD^{\kappa} {\rm Op}(e^{-\La})Au,\mD^{\kappa} {\rm Op}(e^{-\La})u)\\
-2{\mathsf{ Im}}\,({\rm Op}(e^{-\La})\mD b(t){\rm Op}(a_1)\mD u,{\rm Op}(e^{-\La})Au)\\
+2{\mathsf{ Im}}\,({\rm Op}(e^{-\La})Ru,{\rm Op}(e^{-\La})Au)-2{\mathsf{ Im}}\,({\rm Op}(e^{-\La})P^{\sharp}u, {\rm Op}(e^{-\La})Au).
\end{split}
\end{equation}
In estimating $dE/dt$, a term which is bounded by 
\begin{equation}
\label{eqivi}
C\mu^{\epsilon}\big(\|\mD^{3\kappa/2}{\rm Op}(e^{-\La}) u\|^2+\|\mD^{\kappa/2}{\rm Op}(e^{-\La}) Au\|^2\big)
\end{equation}
with some $\epsilon>0$ and $C>0$ is irrelevant, choosing $\mu>0$ small (see Lemmas \ref{lem:hirame} and \ref{levki} below)   then we write
\[
A\preceq B 
\]
if the inequality $A\leq B$ holds modulo terms bounded by  \eqref{eqivi}. If $A=B$ holds modulo terms bounded by  \eqref{eqivi} we denote $A\simeq B$.

We start with estimating   ${\mathcal E}_1$.
\begin{lemma}
\label{lem:levi}
We have
\begin{align*}
&{\mathcal E}_1\simeq {\mathsf{ Re}}\big(b(t)\,{\rm Op}(\phi \xim^{\kappa}){\rm Op}(e^{-\La}) \mD u,{\rm Op}(e^{-\La}) \mD u\big)
\\
&
\simeq \|{\rm Op}(\sqrt{b(t)}\sqrt{\phi}\xim^{\kappa/2}){\rm Op}(e^{-\La})\mD u\|^2.
\end{align*}
\end{lemma}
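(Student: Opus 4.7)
The plan is to bring both summands of $\mathcal E_1$ into the common form $\mathsf{Re}({\rm Op}(w){\rm Op}(e^{-\Lambda})\mD u,{\rm Op}(e^{-\Lambda})\mD u)$ for suitable weights $w$, by using Proposition \ref{pro:abLam}, Corollary \ref{cor:nao} and Lemma \ref{lem:koreha} to swap operators past one another, and then to recombine the two resulting weights into $b(t)\phi\xim^\kappa$ modulo contributions in the irrelevant class \eqref{eqivi}. The second $\simeq$ is then a sharp symbol-calculus identity.

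For the first summand I would factor $\mD^{1+\kappa}=\mD^\kappa\cdot\mD$ and apply Lemma \ref{lem:koreha} with $p=\xim^\kappa\in S_\phi(\xim^\kappa)\cap S^{\lr{s}}_\delta(\xim^\kappa)$ to obtain ${\rm Op}(e^{-\Lambda})\mD^\kappa=\mD^\kappa{\rm Op}(e^{-\Lambda})+{}$(lower order). Multiplication by $a(x)$ then commutes against ${\rm Op}(\xim^\kappa){\rm Op}(e^{-\Lambda})$ modulo sub-symbols controlled via $|\partial_x a|\le C\sqrt\phi$ (Lemma \ref{lem:katto:bis}), yielding
\[
\mathsf{Re}\big(b(t){\rm Op}(a\xim^\kappa){\rm Op}(e^{-\Lambda})\mD u,{\rm Op}(e^{-\Lambda})\mD u\big)
\]
plus remainders which, by Lemmas \ref{leiva}, \ref{lem:yodobasi} and \ref{lem:sanuki}, are absorbed into the irrelevant class.

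For the second summand, self-adjointness of $\mD^\kappa$ and of ${\rm Op}(e^{-\Lambda})$ reduces the inner product at leading order to $\mathsf{Re}({\rm Op}(\xim^{3\kappa}e^{-2\Lambda})u,u)$ via Corollary \ref{cor:nao}. The algebraic identity $3\kappa-2=\kappa-2\delta$ (which follows from $\delta=1-\kappa$) allows me to factor $\xim^{3\kappa}=\xim\cdot\xim^{\kappa-2\delta}\cdot\xim$; peeling the two outer $\xim$ factors onto the two $u$-slots via the calculus produces $\mD u$ in each slot and leaves $\mathsf{Re}({\rm Op}(\xim^{\kappa-2\delta}){\rm Op}(e^{-\Lambda})\mD u,{\rm Op}(e^{-\Lambda})\mD u)$. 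Combined with the first summand this gives $b(t)a\xim^\kappa+\xim^{\kappa-2\delta}$ as the effective weight, agreeing with $b(t)\phi\xim^\kappa$ modulo a $(1-b(t))\xim^{\kappa-2\delta}$ piece absorbed in the irrelevant class. The second $\simeq$ then follows from sharp Weyl calculus for $\sqrt{b(t)}\sqrt\phi\,\xim^{\kappa/2}\in S_\phi(\phi^{1/2}\xim^{\kappa/2})$: its Weyl square equals ${\rm Op}(b(t)\phi\xim^\kappa)$ up to a symbol in $S_\phi(\xim^{\kappa-2})$, irrelevant after pairing with ${\rm Op}(e^{-\Lambda})\mD u$. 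The main obstacle is in the second summand: one must carefully track, via Corollary \ref{cor:moriyama} and the bound $\Lambda\le B\xim^{\tika}$ (Corollary \ref{cor:kurinasi}), that each subleading commutator gains a full power of $\xim^{-(\kappa-\tika)}$, hence a $\mu^{\kappa-\tika}$ factor, so as to drop into the $\mu^\epsilon$-scaled irrelevant class rather than competing with the principal $\|\mD^{3\kappa/2}{\rm Op}(e^{-\Lambda})u\|^2$ contribution.
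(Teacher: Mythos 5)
Your line of attack coincides with the paper's in its essential structure: move the $\mD^{\kappa}$ factors past ${\rm Op}(e^{-\Lambda})$ using Proposition \ref{pro:abLam} and Lemma \ref{lem:koreha}, discard the pure-imaginary subleading symbols under $\mathsf{Re}$, use the identity $3\kappa-2=\kappa-2\delta$ to bring the two summands onto the common $\mD u$ slots, and prove the second $\simeq$ via $(\phi^{1/2}\xim^{\kappa/2})\#(\phi^{1/2}\xim^{\kappa/2})-\phi\xim^{\kappa}\in S_{\phi}(\xim^{\kappa-2})$. Two remarks.

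First, there is a genuine gap at the recombination step. You correctly compute the effective weight as $b(t)a\xim^{\kappa}+\xim^{\kappa-2\delta}$, which is indeed what the paper's chain of manipulations produces, and you then assert that the discrepancy $(1-b(t))\xim^{\kappa-2\delta}$ with the stated $b(t)\phi\xim^{\kappa}$ falls into the irrelevant class \eqref{eqivi}. It does not: paired against ${\rm Op}(e^{-\Lambda})\mD u$ this piece is comparable to $(1-b(t))\|\mD^{3\kappa/2}{\rm Op}(e^{-\Lambda})u\|^{2}$, which is of principal size, not $O(\mu^{\epsilon})$. The paper's own final line glosses over the same point (it writes $a\xim^{\kappa}+\xim^{-2\delta+\kappa}=\phi\xim^{\kappa}$, forgetting the $b(t)$ factor that multiplies only $a$). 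The honest content is the identity $\mathcal E_{1}\simeq \mathsf{Re}\big({\rm Op}(b(t)a\xim^{\kappa}+\xim^{\kappa-2\delta}){\rm Op}(e^{-\Lambda})\mD u,{\rm Op}(e^{-\Lambda})\mD u\big)$, together with the one-sided comparison $b(t)\phi\xim^{\kappa}\le \max(1,\sup b)(b(t)a\xim^{\kappa}+\xim^{\kappa-2\delta})$; the latter inequality direction is all that Lemma \ref{lem:nihobi} and Proposition \ref{prvo} actually use, so the downstream argument is unaffected, but you cannot claim a clean $\simeq$ and certainly cannot dispose of $(1-b(t))\xim^{\kappa-2\delta}$ as irrelevant.

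Second, a lesser point: your passage through $\mathsf{Re}({\rm Op}(\xim^{3\kappa}e^{-2\Lambda})u,u)$ for the second summand is informal, since the Weyl adjoints produce $\overline{e^{-\Lambda}}\#\xim^{\kappa}\#e^{-\Lambda}$ and not literally $\xim^{3\kappa}e^{-2\Lambda}$. The paper proceeds differently here: it first shows $\mathsf{Re}(\mD^{\kappa}{\rm Op}(e^{-\Lambda})\mD^{\kappa}u,\mD^{\kappa}{\rm Op}(e^{-\Lambda})u)\simeq\|\mD^{3\kappa/2}{\rm Op}(e^{-\Lambda})u\|^{2}$ by expanding $\xim^{2\kappa}\#e^{-\Lambda}\#\xim^{\kappa}$, and then commutes a factor $\mD^{-1}$ through ${\rm Op}(e^{-\Lambda})$ to land on the common $\mD u$ slots.
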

\begin{proof}Note that $\xim^{\kappa}\in S_{\phi}(\xim^{\kappa})\cap S_{\delta}^{\lr{s}}(\xim^{\kappa})$. From Proposition \ref{pro:abLam} taking $N$ large, one can write 
\[
e^{-\La}\# \xim^{\kappa}=(\xim^{\kappa}+a_1+a_2)e^{-\La}+qe^{-\Lambda}+r,\quad r\in  S_{0,0}^{(\bas)}(e^{-c\xim^{\baka}})
\]
where $a_j\in  S_{\phi}(\xim^{\kappa-j+{\tilde \kappa}j}\phi^{-j/2})\cap S_{\delta}^{\lr{s}}(\xim^{\kappa-(\kappa-\tika)j})$ by Proposition \ref{pro:abLam} and Lemma \ref{lem:moriyama:bis} and $q\in  S_{\delta}^{\lr{s}}(\xim^{-M})$ where $\bas \tika<1$, $\baka>\tika$ and $a_1$ is pure imaginary. Thanks to Lemma \ref{lem:koreha} one can write 
\[
(\xim^{\kappa}+a_1+a_2)e^{-\La}=(\xim^{\kappa}+A_1+A_2)\#e^{-\Lambda}+q'e^{-\Lambda}+r'
\]
where $q'$ and $r'$ enjoy the same properties as $q$ and $r$ and hence
\begin{equation}
\label{eq:hiru}
e^{-\La}\# \xim^{\kappa}=(\xim^{\kappa}+A_1+A_2)\#e^{-\Lambda}+{\tilde q}e^{-\Lambda}+{\tilde r}
\end{equation}
 where $A_j\in  S_{\phi}(\xim^{\kappa+{\tilde \kappa}j-j}\phi^{-j/2})$ and that ${\tilde q}\in  S_{\delta}^{\lr{s}}(\xim^{-M})$, ${\tilde r}\in S_{0,0}^{(\bas)}(e^{-c\xim^{\baka}})$ with possibly different  $\bas$, $\baka$ such that $\bas\tika<1$ and $\baka>\tika$. Note that $A_1$ is pure imaginary and then  
\begin{equation}
\label{eq:hiru:b}
a\#(\xim^{\kappa}+A_1+A_2)=a\xim^{\kappa}+{\tilde A}_1+{\tilde A}_2
\end{equation}
where ${\tilde A}_1\in  S_{\phi}(\xim^{\kappa+{\tilde \kappa}-1}\sqrt{\phi})$  is pure imaginary and ${\tilde A}_2\in S_{\delta}(\xim^{\kappa+2{\tilde \kappa}-2})$ because $a\in S_{\phi}(\phi)$. Therefore from \eqref{eq:hiru} and \eqref{eq:hiru:b} one can write
\[
a\#e^{-\La}\# \xim^{\kappa}=Q\#e^{-\Lambda}+a\#({\tilde q}e^{-\Lambda}+{\tilde r}),\quad {\tilde r}\in S_{0,0}^{(\bas)}(e^{-c\xim^{\baka}})
\]
with $\bas\tika<1$ and $\baka>\tika$ where 
\[
{\mathsf{ Re}}\,Q-a(x)\xim^{\kappa}\in  S_{\delta}(\xim^{-2+\kappa+2{\tilde \kappa}})\subset \mu^{2(\kappa-\tika)}S_{\delta}(\xim^{-2+3\kappa})
\]
 since ${\tilde A}_1$ is pure imaginary. Therefore taking Lemma \ref{leiva} and Corollary \ref{cor:koike} into account one has
\begin{equation}
\label{eqve}
\begin{split}
{\mathsf{ Re}}(a{\rm Op}(e^{-\La}) \mD^{1+\kappa}u,{\rm Op}(e^{-\La}) \mD u)\\
\simeq {\mathsf{ Re}}({\rm Op}(a\xim^{\kappa}){\rm Op}(e^{-\La})\mD u,{\rm Op}(e^{-\La})\mD u).
\end{split}
\end{equation}
On the other hand, thanks to Lemmas \ref{lem:sanuki} and \ref{lem:yodobasi} one has 
\begin{align*}
|(a{\rm Op}({\tilde q}e^{-\Lambda}+{\tilde r})\mD u,{\rm Op}(e^{-\La})\mD u)|
\leq C\mu \|{\rm Op}(e^{-\La})u\|^2
\end{align*}
%
%
%
%
%
choosing $M\geq 2$. From \eqref{eq:hiru} one obtains
\[
\xim^{2\kappa}\#e^{-\La}\# \xim^{\kappa}=(\xim^{3\kappa}+{\tilde A}_1+{\tilde A}_2)\#e^{-\Lambda}+\xim^{2\kappa}\#({\tilde q}e^{-\Lambda}+{\tilde r})
\]
where ${\tilde A}_1\in S_{\delta}(\xim^{2\kappa+\tika})$ is pure imaginary and ${\tilde A}_2\in S_{\delta}(\xim^{\kappa+2\tika})\subset \mu^{2(\kappa-\tika)}S_{\delta}(\xim^{3\kappa})$ because $\kappa+\delta=1$ and $\phi^{-j/2}\in S_{\phi}(\xim^{j\delta})$. Therefore 
\begin{align*}
{\mathsf{ Re}}\,(\mD^{\kappa} {\rm Op}(e^{-\La})\mD^{\kappa} u,\mD^{\kappa} {\rm Op}(e^{-\La})u)\simeq
\|\mD^{3\kappa/2}{\rm Op}(e^{-\La})u\|^2.
\end{align*}
Repeating the same arguments proving \eqref{eq:hiru} one can write
\[
e^{-\La}\#\xim^{-1}=(\xim^{-1}+A_1)\#e^{-\La}+{\tilde q}e^{-\Lambda}+{\tilde r}
\]
where $A_1\in S_{\phi}(\xim^{\tika-2}\phi^{-1/2})\subset S_{\delta}(\xim^{\tika-2+\delta})$. Thus we have
\begin{align*}
\|\mD^{3\kappa/2}{\rm Op}(e^{-\La})u\|=\|\mD^{3\kappa/2}{\rm Op}(e^{-\La})\mD^{-1}\mD u\|\\
\simeq \|\mD^{3\kappa/2-1}{\rm Op}(e^{-\La})\mD u\|={\mathsf{ Re}}\,(\mD^{3\kappa-2}{\rm Op}(e^{-\La})\mD u,{\rm Op}(e^{-\La})\mD u)
\end{align*}
because $\xim^{3\kappa/2}\#A_1\in S_{\delta}(\xim^{\kappa/2+\tika-1})\subset \mu^{\kappa-\tika}S_{\delta}(\xim^{3\kappa/2-1})$. Since $3\kappa-2=-2\delta+\kappa$ and  $a(x)\xim^{\kappa}+\xim^{-2\delta+\kappa}=(a(x)+\xim^{-2\delta})\xim^{\kappa}=\phi\xim^{\kappa}$ we have the first assertion. 

To prove the second assertion  we note
\[
(\phi^{1/2}\xim^{\kappa/2})\#(\phi^{1/2}\xim^{\kappa/2})-\phi \xim^{\kappa}\in  S_{\phi}(\xim^{-2+\kappa})\subset \mu^{2\kappa}S_{\delta}(\xim^{-2+3\kappa})
\]
since $\phi^{1/2}\xim^{\kappa/2}\in S_{\phi}(\phi^{1/2}\xim^{\kappa/2})$ then using Corollary \ref{cor:koike} we have 
\begin{align*}
\|\sqrt{b(t)}{\rm Op}(\sqrt{\phi}\xim^{\kappa/2}){\rm Op}(e^{-\La})\mD u\|^2\\
\equiv (b(t){\rm Op}(\phi\xim^{\kappa}){\rm Op}(e^{-\La})\mD u,{\rm Op}(e^{-\La})\mD u)
\end{align*}
which proves the second assertion.
\end{proof}
\begin{lemma}We have
\[
{\mathcal E}_1\geq (1-C\mu^{\kappa-\tika})\|\mD^{3\kappa/2}{\rm Op}(e^{-\La})u\|^2.
\]
\label{lem:hirame}
\end{lemma}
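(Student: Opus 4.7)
The plan is to isolate from $\mathcal{E}_1$ the quantity $\|\mD^{3\kappa/2}{\rm Op}(e^{-\La})u\|^2$ and dispose of the remaining piece by a positivity argument. Tracing through the proof of Lemma~\ref{lem:levi}, the second summand in the definition of $\mathcal{E}_1$ already contributes $\simeq \|\mD^{3\kappa/2}{\rm Op}(e^{-\La})u\|^2$, while the first summand reduces to $\simeq \mathsf{Re}\bigl({\rm Op}(b(t)a(x)\xim^{\kappa})v,v\bigr)$ with $v := {\rm Op}(e^{-\La})\mD u$. Hence the whole task is to bound this last term from below by $-C\mu^{\kappa-\tika}\|\mD^{3\kappa/2}{\rm Op}(e^{-\La})u\|^2$.

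Since $b(t)\geq 0$, $a(x)\geq 0$, and $a\in S_{\phi}(\phi)$ by Lemma~\ref{lem:katto:bis}, the symbol $b(t)a(x)\xim^{\kappa}$ is nonnegative and lies in $S_{\phi}(\phi\xim^{\kappa})$ uniformly in $t,\mu$. Applying the sharp G\aa rding inequality in the Weyl--H\"ormander calculus attached to the admissible metric $g$ (whose Planck function satisfies $h^2=\xim^{-2}\phi^{-1}$) yields
\[
\mathsf{Re}\bigl({\rm Op}(b(t)a(x)\xim^{\kappa})v,v\bigr)\;\geq\; -C\,\|{\rm Op}(\xim^{\kappa/2-1})v\|^2,
\]
the error weight being $\sqrt{h^2\cdot\phi\xim^{\kappa}}=\xim^{\kappa/2-1}$. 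An alternative route avoiding sharp G\aa rding is the symmetric factorization $b(t)a(x)\xim^{\kappa}=\sigma^2$ with $\sigma:=\sqrt{b(t)a(x)}\,\xim^{\kappa/2}\in S_{\phi}(\sqrt{\phi}\,\xim^{\kappa/2})$ (using $\sqrt{a}\in S_{\phi}(\sqrt{\phi})$), for which the Weyl composition rule gives $\sigma\#\sigma-\sigma^2\in S_{\phi}(\xim^{\kappa-2})$, producing the same remainder estimate.

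It remains to compare the error to the target. By Corollary~\ref{cor:koike}, $\|{\rm Op}(\xim^{\kappa/2-1})v\|^2\simeq\|\mD^{\kappa/2}{\rm Op}(e^{-\La})u\|^2$, and the defining inequality $\xim\geq\mu^{-1}$ gives $\xim^{\kappa/2}\leq\mu^{\kappa}\xim^{3\kappa/2}$, so that the error is at most $C\mu^{2\kappa}\|\mD^{3\kappa/2}{\rm Op}(e^{-\La})u\|^2$. Since $\kappa>\tika$ by Lemma~\ref{lem:jimei}, one has $2\kappa\geq\kappa-\tika$, so the total loss is $O(\mu^{\kappa-\tika})$, giving the claim. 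The delicate point, which I expect to be the main obstacle, is the careful bookkeeping of the Weyl remainder so that the Planck gain $h^2$ is realized in full with $\mu$-uniform constants in the $g$-calculus; if the sharp G\aa rding step yields a slightly weaker weight, the alternative factorization argument above provides the same $\mu^{2\kappa}$ loss directly from the $S_{\phi}$-calculus.
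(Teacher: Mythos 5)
Your decomposition of $\mathcal{E}_1$ and the reduction to bounding $\mathsf{Re}\bigl({\rm Op}(b(t)a\xim^{\kappa})v,v\bigr)$ are right, and the numerical answer you compute, $\xim^{\kappa/2-1}$, is the one the paper needs. But there is a real gap in how you get it. You write the error weight as $\sqrt{h^{2}\cdot\phi\xim^{\kappa}}$ and call this the sharp G\aa rding inequality. Sharp G\aa rding for a nonnegative symbol in $S(m,g)$ yields a loss by $(mh)^{1/2}$, not $(mh^{2})^{1/2}$: the extra power of $h$ is precisely the Fefferman--Phong improvement, and it is the Fefferman--Phong inequality that the paper invokes here. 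The distinction matters. With only sharp G\aa rding in the $g$-calculus the loss is $(\,\phi\xim^{\kappa}\cdot\phi^{-1/2}\xim^{-1})^{1/2}=\phi^{1/4}\xim^{(\kappa-1)/2}\le C\xim^{(\kappa-1)/2}$, and after Corollary~\ref{cor:koike} this yields $\|\mD^{(\kappa+1)/2}{\rm Op}(e^{-\La})u\|^{2}$; comparing $(\kappa+1)/2$ with $3\kappa/2$ one needs $1\le 2\tika$ to absorb it, which certainly fails (e.g.\ when $N=n+\al$ is large and hence $\kappa,\tika$ are small). So the $h^{2}$ gain of Fefferman--Phong is essential, not a bookkeeping subtlety. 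The paper sidesteps any concern about Fefferman--Phong in the full $g$-calculus by observing that $0\le a(x)\xim^{\kappa}$ also lies in the much simpler class $S(\xim^{\kappa},\,dx^{2}+\xim^{-2}d\xi^{2})$ (an $S^{\kappa}_{1,0}$-type class, Planck function $\xim^{-1}$), pulling out the nonnegative scalar $b(t)$, and applying the classical Fefferman--Phong there.

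Your proposed fallback does not repair this. The factorization $\sigma=\sqrt{b(t)a(x)}\,\xim^{\kappa/2}$ with $\sqrt{a}\in S_{\phi}(\sqrt{\phi})$ is not available: $a\ge 0$ has zeros, and the square root of a nonnegative $C^{\infty}$ (even Gevrey) function is in general not $C^{1}$ there, so $\sqrt{a}$ is not a symbol. This is exactly why the paper, in Lemma~\ref{lem:nihobi}, factors using $\sqrt{\phi}$ (which is smooth, since $\phi=a+\xim^{-2\delta}>0$) and never $\sqrt{a}$. To make the lemma go through you should replace ``sharp G\aa rding'' by Fefferman--Phong, preferably in the constant-coefficient metric $dx^{2}+\xim^{-2}d\xi^{2}$ as the paper does, and drop the $\sqrt{a}$ factorization.
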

\begin{proof}Since $0\leq a(x)\xim^{\kappa}\in S(\xim^{\kappa},dx^2+\xim^{-2}d\xi^2)$ then from the Fefferman-Phong inequality it follows that
\begin{align*}
{\mathsf{ Re}}({\rm Op}(a\xim^{\kappa}){\rm Op}(e^{-\La})\mD u,
{\rm Op}(e^{-\La})\mD u)\\
\geq -C\|\xim^{\kappa/2-1}{\rm Op}(e^{-\La})\mD u\|^2\geq -C'\|\xim^{\kappa/2}{\rm Op}(e^{-\La}) u\|^2.
\end{align*}
The rest of the proof is clear from Lemma \ref{levki}.
\end{proof}
%
\begin{lemma}
\label{lem:nihobi}
Assume that $T\in S_{\phi}(\sqrt{\phi}\xim^{\kappa/2})$  is real or pure imaginry. Then there is $C>0$ such that
\[
\|b(t) {\rm Op}(T){\rm Op}(e^{-\Lambda})\mD u\|^2 \preceq C{\mathcal E}_1.
\]
\end{lemma}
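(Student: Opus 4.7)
The plan is to reduce to a bound of the form $\|{\rm Op}(T)v\|^2 \le C({\rm Op}(\phi\xim^\kappa)v,v)$ for $v={\rm Op}(e^{-\Lambda})\mD u$ (modulo terms of the form \eqref{eqivi}) and then invoke Lemma \ref{lem:levi}. Since $b(t)$ is a bounded nonnegative scalar, bounded by some $B>0$, I would write $\|b\,{\rm Op}(T)v\|^2 = b(t)^2\|{\rm Op}(T)v\|^2 \le B\,b(t)\|{\rm Op}(T)v\|^2$, and by Lemma \ref{lem:levi} we have ${\mathcal E}_1 \simeq b(t)({\rm Op}(\phi\xim^\kappa)v,v)$. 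Hence it suffices to prove $\|{\rm Op}(T)v\|^2 \preceq C({\rm Op}(\phi\xim^\kappa)v,v)$.

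The next step uses the hypothesis that $T$ is real or pure imaginary: this makes ${\rm Op}(T)$ self-adjoint or anti-self-adjoint, so
\[
\|{\rm Op}(T)v\|^2 = ({\rm Op}(\bar T\#T)v,v).
\]
I would expand this Weyl product in the admissible metric $g$. Because $T,\bar T\in S_\phi(\sqrt{\phi}\xim^{\kappa/2})$ and $\bar T=\pm T$, the Poisson bracket $\{\bar T,T\}$ vanishes identically, so the first subprincipal Moyal term disappears. The remainder is therefore one order better than the generic sharp-G\aa rding gain: $\bar T\#T = |T|^2 + r_T$ with $r_T\in S_\phi(\phi\xim^\kappa\cdot h^2)\subset S_\phi(\xim^{\kappa-2})$, using $h^2\le\xim^{-2\kappa}$ from \eqref{eq:hikoshi}.

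The pointwise bound $|T|^2 \le C^2\phi\xim^\kappa$ gives a nonnegative symbol $C^2\phi\xim^\kappa-|T|^2\in S_\phi(\phi\xim^\kappa)$. I would apply the Fefferman--H\"ormander--Phong inequality for the admissible metric $g$ to obtain
\[
{\rm Op}(|T|^2) \le C^2{\rm Op}(\phi\xim^\kappa) + K\,{\rm Op}(\xim^{\kappa-2})
\]
as self-adjoint operators, with residual of order $\phi\xim^\kappa\cdot h^2\subset\xim^{\kappa-2}$. Combining with the $r_T$ term yields $\|{\rm Op}(T)v\|^2 \le C^2({\rm Op}(\phi\xim^\kappa)v,v) + C'\|\mD^{(\kappa-2)/2}v\|^2$. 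For the residual, by Corollary \ref{cor:koike}
\[
\|\mD^{(\kappa-2)/2}{\rm Op}(e^{-\Lambda})\mD u\| \le C\|\mD^{\kappa/2}{\rm Op}(e^{-\Lambda})u\|,
\]
and using $\mD\ge\mu^{-1}$ this is at most $C\mu^{\kappa}\|\mD^{3\kappa/2}{\rm Op}(e^{-\Lambda})u\|$, placing the error safely within \eqref{eqivi} with $\epsilon=2\kappa>0$. Multiplying back by $b(t)$ and invoking Lemma \ref{lem:levi} concludes.

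The main obstacle is that the residual order $\xim^{\kappa-2}$ is tight: a bare sharp-G\aa rding application would only give a residual of order $\phi\xim^\kappa\cdot h \subset \xim^{2-2\kappa}$, which, after conversion via Corollary \ref{cor:koike}, need not be dominated by $\mu^\epsilon\|\mD^{3\kappa/2}{\rm Op}(e^{-\Lambda})u\|^2$ for small $\kappa$. The crucial gain of an extra factor of $h$ comes from the vanishing of $\{\bar T,T\}$, i.e.\ precisely from the hypothesis that $T$ is real or pure imaginary; this, combined with the two-order gain of Fefferman--Phong, brings the total residual down to the acceptable $\xim^{\kappa-2}$.
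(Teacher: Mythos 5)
Your proof is correct but takes a genuinely different route from the paper. Both arguments reduce to $\|{\rm Op}(T)v\|^2 \preceq C({\rm Op}(\phi\xim^{\kappa})v,v)$ with $v={\rm Op}(e^{-\Lambda})\mD u$, and both exploit the ``real or pure imaginary'' hypothesis through the vanishing of the Poisson bracket $\{{\bar T},T\}$, which makes ${\bar T}\#T-|T|^2\in S_{\phi}(\xim^{\kappa-2})$ (note that you do not actually need the bound $h^2\leq\xim^{-2\kappa}$ here; the exact identity $\phi\xim^{\kappa}h^2=\phi\xim^{\kappa}\cdot\phi^{-1}\xim^{-2}=\xim^{\kappa-2}$ already gives the right weight). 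The genuine difference is in how positivity is extracted from the remaining nonnegative symbol $C\phi\xim^{\kappa}-|T|^2$. You apply the Fefferman--Phong inequality in the admissible metric $g$, obtaining a residual of weight $\phi\xim^{\kappa}h^2=\xim^{\kappa-2}$. The paper instead takes an exact square root: it writes $C\phi\xim^{\kappa}-T^2=Cq^2$ with $q=\xim^{\kappa/2}\sqrt{\phi}\sqrt{1-C^{-1}T^2\xim^{-\kappa}\phi^{-1}}\in S_{\phi}(\sqrt{\phi}\xim^{\kappa/2})$ for $C$ large, uses $q\#q-q^2\in S_{\phi}(\xim^{\kappa-2})$ (same gain from the self-composition cancellation) and the trivial positivity $({\rm Op}(q)^*{\rm Op}(q)v,v)\geq 0$, bypassing Fefferman--Phong entirely in the metric $g$. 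This is consistent with the paper's overall strategy: it invokes Fefferman--Phong only for the elementary metric $dx^2+\xim^{-2}d\xi^2$ (Lemma \ref{lem:hirame}) and sharp G\aa rding for $g$ (Lemma \ref{lem:muneage}), reserving the exact-square trick here. Your alternative is sound provided one accepts Fefferman--Phong for the admissible $g$ (which is available in the Weyl--H\"ormander framework, cf.\ Lerner), and your residual accounting via Corollary \ref{cor:koike} and $\mD\geq\mu^{-1}$, landing in \eqref{eqivi} with $\epsilon=2\kappa$, is correct.
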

\begin{proof}We may assume that $T$ is real. Note ${\bar T}\#T=T^2+R$ with $R\in S_{\phi}(\xim^{-2+\kappa})\subset \mu^{2\kappa}S_{\delta}(\xim^{-2+3\kappa})$ 
and by  Corollary \ref{cor:koike} one has 
\[
({\rm Op}(R){\rm Op}(e^{-\Lambda})\mD u,{\rm Op}(e^{-\Lambda})\mD u)\simeq 0.
\]
 Write $
C\phi\xim^{\kappa}-T^2
=Cq^2$ with $q=\xim^{\kappa/2}\sqrt{\phi}\sqrt{1-C^{-1}T^2\xim^{-\kappa}\phi^{-1}}$
where $q\in S_{\phi}(\xim^{\kappa/2}\sqrt{\phi})$ if we take a large $C>0$. Since $\phi\in S_{\phi}(\phi)$ we have
\[
q\#q-q^2=r\in  S_{\phi}(\xim^{\kappa-2})\subset \mu^{2\kappa}S_{\delta}(\xim^{-2+3\kappa})
\]
and $|({\rm Op}(r){\rm Op}(e^{-\Lambda})\mD,{\rm Op}(e^{-\Lambda})\mD)|\simeq 0$ by Corollary \ref{cor:koike}. The rest of the proof is clear.
\end{proof}

\begin{proposition}
\label{prva} 
There exists $C>0$ such that
\begin{align*}
|{\mathcal H}|\preceq C\mu^{\kappa-\tika}{\mathcal E}_1.
\end{align*}
\end{proposition}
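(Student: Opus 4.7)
The plan is to exploit a cancellation: if the weight ${\rm Op}(e^{-\La})$ were absent, the bracketed expression defining ${\mathcal H}$ would vanish identically. Since $\mD$ is self-adjoint, $[A,\mD]=0$, and $b(t)a(x)$ is a real multiplication operator,
\[
(\mD b(t)a\mD u, Au) = (b(t)a\mD u, A\mD u) = \overline{(b(t)a A\mD u, \mD u)},
\]
whence $\mathsf{Im}\{(\mD b(t)a\mD u, Au) + (b(t)a A\mD u, \mD u)\} \equiv 0$. With ${\rm Op}(e^{-\La})$ reinstated, ${\mathcal H}$ is produced entirely by commutators of ${\rm Op}(e^{-\La})$ with $\mD$, $A$, and ${\rm Op}(b(t)a)$, and the task reduces to showing that these commutators generate the $\mu^{\kappa-\tika}$ factor.

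To implement this, I would first rewrite $\overline{H_2} = (b(t)a\,{\rm Op}(e^{-\La})\mD u, {\rm Op}(e^{-\La})A\mD u)$ via self-adjointness of $b(t)a$, and apply Lemma \ref{lem:koreha} to the symbol $p = b(t)a\,\xim \in S_{\phi}(\phi\xim)\cap S^{\lr{s}}_{\delta}(\xim)$ in both of its forms,
\begin{align*}
pe^{-\La} &= (p+p_1+p_2)\#e^{-\La} + qe^{-\La} + r,\\
pe^{-\La} &= e^{-\La}\#(p+\tilde p_1+\tilde p_2) + \tilde q\,e^{-\La} + \tilde r,
\end{align*}
with $p_1,\tilde p_1\in S_{\phi}(\phi^{1/2}\xim^{\tika})$ pure imaginary, $p_2,\tilde p_2\in S_{\phi}(\xim^{2\tika-1})\subset \mu^{\kappa-\tika}S_{\delta}(\xim^{\tika+\kappa-1})$, and $q,\tilde q,r,\tilde r$ negligible (by Lemmas \ref{lem:yodobasi} and \ref{lem:sanuki}). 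Substituting these expansions into $H_1$ and $\overline{H_2}$, integrating the outer $\mD$ by parts, and using $A\mD = \mD A$, both principal parts reduce to the same real bilinear form $(b(t)a\,{\rm Op}(e^{-\La})\mD u, {\rm Op}(e^{-\La})\mD Au)$; taking $\mathsf{Im}$, these principal parts cancel, and ${\mathcal H} = \mathsf{Im}(H_1-\overline{H_2})$ is reduced to the imaginary part of the correction symbols $p_1\pm\tilde p_1$, $p_2, \tilde p_2$ plus negligible remainders.

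The main obstacle is the pure-imaginary first-order correction $p_1$ (and $\tilde p_1$), whose direct contribution survives under $\mathsf{Im}$. The key is the Weyl calculus identity $p\#e^{-\La} - e^{-\La}\#p = \frac{1}{i}\{p,e^{-\La}\}$ modulo symbols two orders smoother; comparing the two decompositions above then forces $p_1\#e^{-\La} + e^{-\La}\#\tilde p_1$ to be of order $\xim^{\tika-(\kappa-\tika)}$, i.e.\ to carry an honest $\mu^{\kappa-\tika}$ gain, matching the gain already present in $p_2,\tilde p_2$. Each surviving term is then of the form $\mu^{\kappa-\tika}|({\rm Op}(T){\rm Op}(e^{-\La})\mD u, {\rm Op}(e^{-\La})Au)|$ with $T\in S_{\phi}(\sqrt{\phi}\,\xim^{\kappa/2})$ (or smoother); Cauchy-Schwarz together with Lemma \ref{lem:nihobi} and Lemma \ref{lem:levi} bounds the first factor by $\sqrt{{\mathcal E}_1}$, while the second factor $\|\mD^{\kappa/2}{\rm Op}(e^{-\La})Au\|$ is absorbed into the $\preceq$ tolerance \eqref{eqivi}, yielding ${\mathcal H}\preceq C\mu^{\kappa-\tika}{\mathcal E}_1$. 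The negligible remainders $qe^{-\La}+r$ and $\tilde q\,e^{-\La}+\tilde r$ are controlled by Lemmas \ref{lem:sanuki} and \ref{lem:yodobasi}.
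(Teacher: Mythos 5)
Your opening observation is correct and matches the paper: the bracketed expression in $\mathcal{H}$ vanishes when ${\rm Op}(e^{-\Lambda})$ is removed (because $\mD$ is self-adjoint, $\mD A=A\mD$, and $b(t)a(x)$ is a real multiplication operator), so $\mathcal{H}$ is purely a commutator effect. This is exactly the paper's starting point; its rewriting
$\mathcal{H}=\mathsf{Im}\,\mathcal{H}_1+\mathsf{Im}\,\mathcal{H}_2+\mathsf{Im}\,\mathcal{H}_3$
in terms of $[a,{\rm Op}(e^{-\Lambda})]$, $[{\rm Op}(e^{-\Lambda}),\mD]$, $[\mD,{\rm Op}(e^{-\Lambda})]$ is precisely that cancellation made explicit. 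But from there your plan misdiagnoses where the work is. You single out the pure-imaginary first-order corrections $p_1,\tilde p_1\in S_\phi(\sqrt{\phi}\,\xim^{\tika})$ as ``the main obstacle'' because their contribution survives $\mathsf{Im}$, and you propose to remove them by a further cancellation $p_1\#e^{-\Lambda}+e^{-\Lambda}\#\tilde p_1=O(\xim^{\tika-(\kappa-\tika)})$. No such cancellation is needed, and the paper does not use one: since $\tika<\kappa$ one already has
$\xim^{-\kappa/2}\#p_1\in S_\phi(\sqrt{\phi}\,\xim^{\tika-\kappa/2})\subset\mu^{\kappa-\tika}S_\phi(\sqrt{\phi}\,\xim^{\kappa/2})$,
and Lemma \ref{lem:nihobi} then gives $\|b(t)\mD^{-\kappa/2}{\rm Op}(p_1){\rm Op}(e^{-\Lambda})\mD u\|^2\preceq C\mu^{2(\kappa-\tika)}\mathcal{E}_1$ directly. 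This is exactly how the paper handles the leading commutator symbols $B_1\in S_\phi(\sqrt\phi\,\xim^{\tika-1})$ from \eqref{eq:aLamu} and $A_1\in S_\phi(\phi^{-1/2}\xim^{\tika})$ from \eqref{eq:dyson}: the factor $\mu^{\kappa-\tika}$ comes from the symbol calculus (via $\xim^{\tika-\kappa}\le\mu^{\kappa-\tika}$), not from cancelling $p_1$ against $\tilde p_1$.

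There is a second, more concrete gap in your accounting. You assert that after integrating the outer $\mD$ by parts ``both principal parts reduce to the same real bilinear form $(b(t)a\,{\rm Op}(e^{-\Lambda})\mD u,{\rm Op}(e^{-\Lambda})\mD Au)$''. They do not: $H_1$ reduces to $({\rm Op}(e^{-\Lambda})b(t)a\mD u,{\rm Op}(e^{-\Lambda})\mD Au)$ whereas $\overline{H_2}=(b(t)a\,{\rm Op}(e^{-\Lambda})\mD u,{\rm Op}(e^{-\Lambda})\mD Au)$, and the difference $([{\rm Op}(e^{-\Lambda}),b(t)a]\mD u,{\rm Op}(e^{-\Lambda})\mD Au)$ is not negligible --- it is precisely $\mathcal{H}_1$ in the paper's notation, one of the three terms that has to be estimated. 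Treating $p=b(t)a\xim$ as a single Weyl symbol also blurs $a(x)\mD$ versus ${\rm Op}(a\xim)$ (they differ by $\tfrac{1}{2i}\{a,\xim\}$, etc.), so the claimed reduction is not exact. The proposal therefore does not close: the decisive inputs are really \eqref{eq:aLamu}, \eqref{eq:dyson} together with Lemma \ref{lem:nihobi}, Lemmas \ref{lem:sanuki}, \ref{lem:yodobasi} and Corollary \ref{cor:koike} applied term by term, and the ``delicate cancellation'' you invoke is both unverified and unnecessary.
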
 
\begin{proof} We first write 
\begin{eqnarray*}
{\mathcal H}={\mathsf{ Im}}\,(b(t){\rm Op}(e^{-\La})A\mD u,[a,{\rm Op}(e^{-\La})]\mD u)\\
+{\mathsf{ Im}}\,(b(t) [{\rm Op}(e^{-\La}),\mD]a\mD u,{\rm Op}(e^{-\La})Au)\\
+{\mathsf{ Im}}\,(b(t){\rm Op}(e^{-\La})a\mD u,[\mD,{\rm Op}(e^{-\La})]Au)\\={\mathsf{ Im}}\,{\mathcal H}_1+{\mathsf{ Im}}\,{\mathcal H}_2+{\mathsf{ Im}}\,{\mathcal H}_3
\end{eqnarray*}
because $\mD A=A \mD$. Then to prove the proposition it suffices to prove there is $C>0$ such that 
\begin{equation}
\label{eq:izumo}
|{\mathsf{ Im}}\,{\mathcal H}_j|\preceq C\mu^{\kappa-\tika}{\mathcal E}_1
\end{equation}
with some $C>0$ for $j=1, 2, 3$.  We first study 
\[
{\mathcal H}_1=(b(t){\rm Op}(e^{-\Lambda})A\mD u,[a,{\rm Op}(e^{-\Lambda})]\mD u).
\]
 From Proposition \ref{pro:abLam} and Lemma \ref{lem:koreha} one can write
\begin{equation}
\label{eq:aLamu}
e^{-\Lambda}\#a-a\#e^{-\Lambda}=(B_1+B_2)\#e^{-\Lambda}+qe^{-\Lambda}+r
\end{equation}
where $q\in S_{\delta}^{\lr{s}}(\xim^{-M})$  and $r\in S_{0,0}^{\lr{\bas}}(e^{-c\xim^{\baka}})$ with $\bas\tika<1$ and $\baka>\tika$. Note that $B_j\in S_{\phi}(\phi^{1-j/2}\xim^{{\tilde \kappa}j-j})\cap S_{\delta}^{\lr{s}}(1)$ since $a\in S_{\phi}(\phi)\cap S_{\delta}^{\lr{s}}(1)$. Note that
\begin{eqnarray*}
&|({\rm Op}(e^{-\Lambda})A\mD u, {\rm Op}(B_2){\rm Op}(e^{-\Lambda})\mD u)|\\
&\leq \|\mD^{-1+\kappa/2} {\rm Op}(e^{-\Lambda})\mD A u\|\|\mD^{1-\kappa/2} {\rm Op}(B_2){\rm Op}(e^{-\Lambda})\mD u\|\simeq 0
\end{eqnarray*}
by Corollary \ref{cor:koike} because $B_2\in S_{\phi}(\xim^{2\tika-2})\subset \mu^{2(\kappa-\tika)}S_{\delta}(\xim^{2\kappa-2})$. 
Noting  that $\xim^{1-\kappa/2} \#B_1\in  S_{\phi}(\sqrt{\phi}\xim^{{\tilde \kappa}-\kappa/2})\subset \mu^{\kappa-\tika} S_{\phi}(\sqrt{\phi}\xim^{\kappa/2})$ from Lemma \ref{lem:nihobi} one has  
\begin{align*}
\|b(t)\mD {\rm Op}( B_1){\rm Op}(e^{-\Lambda})\mD u\|^2 \preceq C\mu^{2(\kappa-\tika)}{\mathcal E}_1
\end{align*}
with some $C>0$. Therefore \eqref{eq:izumo} for $j=1$ is proved. 
We turn to study
\begin{align*}
&{\mathcal H}_2=(b(t) [{\rm Op}(e^{-\Lambda}),\mD]a\mD u,{\rm Op}(e^{-\Lambda})Au)\\
&=(b(t) \mD^{-\kappa/2}[{\rm Op}(e^{-\Lambda}),\mD]a\mD u,\mD^{\kappa/2}{\rm Op}(e^{-\Lambda})Au)
\end{align*}
where the right-hand side is estimated by 
\[
C\|\mD^{\kappa/2-\epsilon}{\rm Op}(e^{-\Lambda})Au\|^2+C\|b(t) \mD^{-\kappa/2+\epsilon}[{\rm Op}(e^{-\Lambda}),\mD]a\mD u\|^2
\]
where $\epsilon=(\kappa-\tika)/2$. 
We now estimate $\|b(t)\mD^{-\kappa/2}[{\rm Op}(e^{-\Lambda}),\mD]a\mD u\|$.  Taking $N$ large so that $(\kappa-\tika)N>M+1$ in Proposition \ref{pro:abLam} one can write thanks to Lemma \ref{lem:koreha} 
\begin{equation}
\label{eq:dyson}
e^{-\Lambda}\#\xim-\xim\#e^{-\Lambda}=(A_1+A_2)\#e^{-\Lambda}+qe^{-\Lambda}+R\;\; 
\end{equation}
where $A_j\in  S_{\phi}(\xim^{{\tilde \kappa}j+1-j}\phi^{-j/2})\cap S_{\delta}^{\lr{s}}(\xim^{1-(\kappa-\tika)j})$, $q\in  S_{\delta}^{\lr{s}}(\xim^{-M})$ and $R\in  S_{0,0}^{\lr{\bas}}(e^{-c\xim^{\baka}})$ with $\bas\tika<1$, $\baka>\tika$.   It follows from Lemmas \ref{lem:sanuki} and \ref{lem:yodobasi} that  $\|\mD^{-\kappa/2+\epsilon}{\rm Op}(qe^{-\La}+R)a\mD u\|\simeq 0$.  From \eqref{eq:aLamu} one can write $
e^{-\Lambda}\#a=\big(a+B_1+B_2)\#e^{-\Lambda}+qe^{-\Lambda}+r$ then
\[
(A_1+A_2)\#e^{-\Lambda}\#a=({\tilde A}_1+{\tilde A}_2)\#e^{-\Lambda}+(A_1+A_2)\#(qe^{-\Lambda}+r)
\]
where  ${\tilde A}_j\in S_{\phi}(\phi^{1-j/2}\xim^{{\tilde \kappa}j+1-j})$. It follows from Lemma \ref{lem:yodobasi} and Lemma \ref{lem:sanuki} that
\[
\|\mD^{-\kappa/2+\epsilon}{\rm Op}(A_1+A_2){\rm Op}(qe^{-\Lambda}+r)a\mD u\|\simeq 0.
\]
Since $\xim^{-\kappa/2+\epsilon}\#{\tilde A}_2\in S_{\delta}(\xim^{3{\tilde \kappa}/2})\subset \mu^{3\epsilon}S_{\delta}(\xim^{3\kappa/2-1})$  then from Corollary \ref{cor:koike} we have
\[
\|b(t)\mD^{-\kappa/2}{\rm Op}({\tilde A}_2){\rm Op}(e^{-\Lambda})\mD u\|\simeq 0.
\]
It remains to estimate $\|b(t)\mD^{-\kappa/2+\epsilon}{\rm Op}({\tilde A}_1){\rm Op}(e^{-\Lambda})\mD u\|$. Since $\xim^{-\kappa/2+\epsilon}\#{\tilde A}_1\in  S_{\phi}(\phi^{1/2}\xim^{-\tika/2})\subset \mu^{\epsilon}S_{\phi}(\phi^{1/2}\xim^{\kappa/2})$ we conclude \eqref{eq:izumo} for $j=2$ from Lemma \ref{lem:nihobi}. 
%
%
We finally consider ${\mathsf{Im}}\,{\mathcal H}_3$. From \eqref{eq:dyson} one can write
\[
[{\rm Op}(e^{-\La}),\mD]={\rm Op}(A_1+A_2){\rm Op}(e^{-\La})+{\rm Op}(qe^{-\La}+R).
\]
Thanks to Lemmas \ref{lem:sanuki} and \ref{lem:yodobasi} one has
\begin{align*}
|({\rm Op}(e^{-\Lambda})a\mD u,{\rm Op}(qe^{-\Lambda}+R)Au)|\\
\leq \|\mD^{-1}{\rm Op}(e^{-\Lambda})a\mD u\|\|\mD {\rm Op}(qe^{-\Lambda}+R) Au\|\simeq 0.
\end{align*}
 Write
\begin{align*}
(b(t){\rm Op}(e^{-\Lambda})a\mD u,{\rm Op}(A_1+ A_2){\rm Op}(e^{-\Lambda})Au)\\
=(b(t)[{\rm Op}(e^{-\Lambda}),a]\mD u, {\rm Op}(A_1+ A_2){\rm Op}(e^{-\Lambda})Au)\\
+(b(t)a {\rm Op}(e^{-\Lambda})\mD u, {\rm Op}(A_1+ A_2){\rm Op}(e^{-\Lambda})Au).
\end{align*}
It is clear from Corollary \ref{cor:koike} that 
\begin{align*}
|b(t)a{\rm Op}(e^{-\Lambda})\mD u, {\rm Op}(A_2){\rm Op}(e^{-\Lambda})Au)|\\
\leq \|b(t)\mD^{-\kappa/2}{\rm Op}({\bar A}_2)a{\rm Op}(e^{-\Lambda})\mD u\|\|\mD^{\kappa/2}{\rm Op}(e^{-\Lambda})Au\|\simeq 0
\end{align*}
because $\xim^{-\kappa/2}\#{\bar A}_2\#a\in  S_{\delta}(\xim^{{\tilde \kappa}-\kappa/2-1})\subset \mu^{\kappa-\tika}S_{\delta}(\xim^{3\kappa/2-1})$. Note that
\begin{align*}
2(b(t)a {\rm Op}(e^{-\Lambda})\mD u, {\rm Op}(A_1){\rm Op}(e^{-\Lambda})Au)\\
\leq \|b(t)\mD^{-\kappa/2+\epsilon}{\rm Op}({\bar A}_1)a{\rm Op}(e^{-\Lambda})\mD u\|^2
+\|\mD^{-\kappa/2-\epsilon}{\rm Op}(e^{-\Lambda})Au\|^2.
\end{align*}
Since $\xim^{-\kappa/2+\epsilon}{\bar A}_1\#a\in  S_{\phi}(\sqrt{\phi}\xim^{\tika- \kappa/2+\epsilon})\subset \mu^{\epsilon}S_{\phi}(\sqrt{\phi}\xim^{\kappa/2})$  one can conclude from Lemma \ref{lem:nihobi}
\begin{eqnarray*}
(b(t)a {\rm Op}(e^{-\Lambda})\mD u, {\rm Op}(A_1+A_2){\rm Op}(e^{-\Lambda})Au)\preceq C\mu^{\kappa-\tika}{\mathcal E}
\end{eqnarray*}
with some $C>0$. Using \eqref{eq:aLamu} we see that 
\[
\big|([{\rm Op}(e^{-\Lambda}),a]\mD u, {\rm Op}(A_1+A_2){\rm Op}(e^{-\Lambda})Au)\big|\simeq 0
\]
in virtue of Lemmas \ref{lem:sanuki} and \ref{lem:yodobasi} and Corollary \ref{cor:koike}. Thus we get \eqref{eq:izumo} for $j=3$.
\end{proof}
%
%

\medskip


%
\begin{proposition}
\label{prvi} We have
\begin{eqnarray*}
{\mathcal K}
\preceq C \mu^{3\kappa/2-\tika}{\mathcal E}_1
\end{eqnarray*}
with some $C>0$.
\end{proposition}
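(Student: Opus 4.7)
The plan is to split $\mathcal{K}=\mathcal{K}_1+\mathcal{K}_2+\mathcal{K}_3$ according to its three summands (first the $b(t)a$-term with $\Lambda'$, second the $\langle D\rangle^\kappa$-term with $\Lambda'$, and third the $b'(t)a$-term), and to treat each piece using the structural identity satisfied by $\Lambda'$.

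First I would dispose of $\mathcal{K}_2$: applying Corollaries \ref{cor:nao} and \ref{cor:takagi} to reduce ${\rm Op}(\Lambda' e^{-\Lambda})$ to ${\rm Op}(\Lambda'){\rm Op}(e^{-\Lambda})$ modulo symbols in $S^{(\bar s)}_{0,0}(e^{-c\xim^{\bar\kappa}})$ and lower‑order pieces handled by Lemmas \ref{lem:sanuki}--\ref{lem:yodobasi}, the main contribution becomes $-2\,\mathsf{Re}({\rm Op}(\Lambda')w,w)$ with $w=\langle D\rangle^\kappa{\rm Op}(e^{-\Lambda})u$. Since $\Lambda'\geq 0$ lies in $S_\delta^{\langle s\rangle}(\xim^{\delta})$, sharp Gårding gives $\mathcal{K}_2\preceq 0$.

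For $\mathcal{K}_1+\mathcal{K}_3$ I would use the crucial pointwise identity that comes from the very definition of $\Lambda$: since $\Lambda'\phi=|b'(t)|\phi^2/(b(t)\phi+\xim^{-2\delta})$, one has $|b'(t)|=\Lambda'\bigl(b(t)+\xim^{-2\delta}/\phi\bigr)$, and consequently
\[
-2b(t)a\Lambda'+b'(t)a\ \leq\ -2b(t)a\Lambda'+|b'(t)|a\ =\ -b(t)a\Lambda'+\Lambda' a\xim^{-2\delta}/\phi\ \leq\ -b(t)a\Lambda'+\Lambda'\xim^{-2\delta}.
\]
After reducing ${\rm Op}(\Lambda' e^{-\Lambda})$ and the product $b(t)a\,{\rm Op}(\Lambda' e^{-\Lambda})$ to the leading form $b(t)a\Lambda'\#e^{-\Lambda}$ (using Lemma \ref{lem:koreha} and Corollary \ref{cor:takagi}, all commutators absorbed in $\preceq$), and invoking sharp Gårding on the non‑negative symbol $b(t)a\Lambda'+\Lambda'\xim^{-2\delta}-b'(t)a\geq 0$, I obtain
\[
\mathcal{K}_1+\mathcal{K}_3\ \leq\ -\bigl({\rm Op}(b(t)a\Lambda')v,v\bigr)+\bigl({\rm Op}(\Lambda'\xim^{-2\delta})v,v\bigr)+\text{l.o.t.},
\]
with $v={\rm Op}(e^{-\Lambda})\mD u$. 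The first bracket is $\preceq 0$ by sharp Gårding (since $b(t)a\Lambda'\geq 0$ is $g$‑admissible), so only the residue $\bigl({\rm Op}(\Lambda'\xim^{-2\delta})v,v\bigr)$ remains.

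The main obstacle is this last estimate with the precise gain $\mu^{3\kappa/2-\tika}$. By Lemma \ref{lem:ano:2bis} and the proof of Lemma \ref{lem:budo:a}, $\Lambda'\xim^{-2\delta}\in S_\phi(\sqrt{\phi}\,\xim^{-\delta})$. Using Glaeser's inequality $|b'(t)|\leq C\sqrt{b(t)}$ (valid since the novel case is $n\geq 2$) one gets the sharper bound $\Lambda'\xim^{-2\delta}\leq C\sqrt{b(t)}\,\phi$, which puts $\Lambda'\xim^{-2\delta}$ in the class whose square root is covered by Lemma \ref{lem:nihobi} applied to a symbol in $S_\phi(\sqrt{\phi}\,\xim^{\kappa/2})$. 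I expect the gain factor to split as $\mu^{3\kappa/2-\tika}=\mu^{\kappa-\tika}\cdot\mu^{\kappa/2}$: the first factor $\mu^{\kappa-\tika}$ comes from the conjugation errors $e^{\pm\Lambda}\#e^{\mp\Lambda}-1\in\mu^{\kappa-\tika}S_\delta(1)$ (Corollary \ref{cor:kaneko}) that appear when replacing $\xim^{-\delta}$ by $\xim^{\kappa/2}\cdot\xim^{-\delta-\kappa/2}$, while the second factor $\mu^{\kappa/2}$ comes from $\xim^{-\delta-\kappa/2}=\xim^{-1+\kappa/2}\leq\mu^{1-\kappa/2}\leq \mu^{\kappa/2}$, trading frequency for smallness in $\mu$. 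Combining these with Cauchy--Schwarz against $\|{\rm Op}(\sqrt{b(t)\phi}\,\xim^{\kappa/2}){\rm Op}(e^{-\Lambda})\mD u\|^2\simeq\mathcal{E}_1$ (Lemma \ref{lem:levi}) yields the claimed bound $\mathcal{K}\preceq C\mu^{3\kappa/2-\tika}\mathcal{E}_1$.
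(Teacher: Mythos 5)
Your decomposition discards the crucial cancellation on which the whole proposition rests. In the paper, the three pieces of $\mathcal{K}$ are not treated separately: after reducing each ${\rm Op}(\Lambda'e^{-\Lambda})$ to a leading symbol against $v={\rm Op}(e^{-\Lambda})\mD u$ (your $\mathcal{K}_1,\mathcal{K}_2,\mathcal{K}_3$ contribute symbols $-2b(t)a\Lambda'$, $-2\xim^{-2\delta}\Lambda'$, $+b'(t)a$ respectively), the \emph{combined} symbol
\[
\Delta=2\big(b(t)a\Lambda'+\xim^{-2\delta}\Lambda'\big)-b'(t)a=\frac{F}{b(t)\phi+\xim^{-2\delta}}
\]
satisfies $c|b'(t)|\phi\leq\Delta\leq 3|b'(t)|\phi$, so that $\mathcal{K}\simeq -({\rm Op}(\Delta)v,v)\preceq 0$ after writing $\Delta$ as a square $\Gamma\#\Gamma$ plus lower order. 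The factor $\mu^{3\kappa/2-\tika}\mathcal{E}_1$ in the statement does \emph{not} come from any of these three main terms — they combine to $\preceq 0$ exactly — but solely from the symbolic expansion remainder $Q_2\in S_\phi(\sqrt{\phi}\,\xim^{-1-\kappa+2\tika})$, via $\xim^{1-\tika}\#Q_2\subset\mu^{3\kappa/2-\tika}S_\phi(\sqrt{\phi}\,\xim^{\kappa/2})$ and Lemma~\ref{lem:nihobi}.

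By disposing of $\mathcal{K}_2\preceq 0$ at the outset you lose precisely the good term $-2({\rm Op}(\Lambda'\xim^{-2\delta})v,v)$ that is needed: your manipulation of $\mathcal{K}_1+\mathcal{K}_3$ then leaves the leftover $({\rm Op}(\Lambda'\xim^{-2\delta})v,v)$, and this cannot be absorbed into $\mu^{3\kappa/2-\tika}\mathcal{E}_1$. Test this near a zero of $b$: when $b(t)=0$, $\Lambda'\xim^{-2\delta}=|b'(t)|\phi$ and $({\rm Op}(\Lambda'\xim^{-2\delta})v,v)\sim |b'(t)|\,\big[(a(\cdot)v,v)+\|\mD^{-\delta}v\|^2\big]$, which for generic $a$ is of the size $\|v\|^2\simeq\|\mD\,{\rm Op}(e^{-\Lambda})u\|^2$. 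This has more derivatives than the $\|\mD^{3\kappa/2}{\rm Op}(e^{-\Lambda})u\|^2$ appearing in the irrelevant term \eqref{eqivi} (since $3\kappa/2<1$ for $\kappa\leq 2/3$), and $\mathcal{E}_1\simeq\|{\rm Op}(\sqrt{b\phi}\,\xim^{\kappa/2})v\|^2$ vanishes where $b$ does, so no $\mu$-factor can rescue the bound. Your appeal to Glaeser's inequality $|b'|\leq C\sqrt b$ does not repair this — it gives $\Lambda'\xim^{-2\delta}\lesssim\sqrt b\,\phi$, which has a $\sqrt b$ where $\mathcal{E}_1$ needs a full $b$ — and in any case it is unavailable for $n\leq 1$, which the paper also covers. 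The fix is to keep $\mathcal{K}_2$ in play: it produces exactly $-2({\rm Op}(\Lambda'\xim^{-2\delta})v,v)$, which turns your leftover into a negative term, reproducing the paper's argument.
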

%
%
%
%
%
Since $\Lambda'=\dif_t\Lambda \in S_{\phi}(\sqrt{\phi}\xim^{1-\kappa})\cap S_{\delta}^{\lr{s}}(\xim^{1-\kappa})$ by Lemma \ref{lem:budo:a} applying Lemma \ref{lem:koreha} we get
\begin{equation}
\label{eq:ramA}
\Lambda'e^{-\Lambda}=(\Lambda'+\lambda_1+\lambda_2)\#e^{-\Lambda}+qe^{-\Lambda}+r
\end{equation}
with $\lambda_j\in  S_{\phi}(\phi^{-(j-1)/2}\xim^{-(j-1)-\kappa+{\tilde \kappa}j})\cap S_{\delta}^{\lr{s}}(\xim^{1-\kappa-(\kappa-\tika)j})$,  $j=1,2$  and $q\in  S_{\delta}^{\lr{s}}(\xim^{-M})$, $r\in S_{0,0}^{\lr{\bas}}(e^{-c\xim^{\baka}})$ with $\bas\tika<1$, $\baka>\tika$. Note that $\lambda_1$ is pure imaginary. Since $a\in S_{\phi}(\phi)$ one can write
\[
a\#(\Lambda'+\lambda_1+\lambda_2)=a\Lambda'+Q_1+Q_2
\]
where $Q_j\in S_{\phi}(\phi^{(3-j)/2}\xim^{-(j-1)-\kappa+{\tilde \kappa}j})$ and $Q_1$ is pure imaginary. Note that  $(a\,{\rm Op}(qe^{-\Lambda}+r)\mD u, {\rm Op}(e^{-\Lambda}) \mD u)\simeq 0$ by Lemmas \ref{lem:yodobasi} and \ref{lem:sanuki} therefore  one has
\begin{equation}
\label{eq:isoji}
\begin{split}
{\mathsf{Re}}\,(b(t)a{\rm Op}&(\Lambda'e^{-\Lambda})\mD u, {\rm Op}(e^{-\Lambda})\mD u)\\
\simeq ({\rm Op}&(b(t) a\Lambda'){\rm Op}(e^{-\Lambda})\mD u, {\rm Op}(e^{-\Lambda})\mD u)\\
&+{\mathsf{Re}}\,({\rm Op}(b(t) Q_2){\rm Op}(e^{-\Lambda})\mD u, {\rm Op}(e^{-\Lambda})\mD u).
\end{split}
\end{equation}
Using \eqref{eq:ramA} one has $(\mD^{\kappa}{\rm Op}(\Lambda'e^{-\Lambda})u,\mD^{\kappa}{\rm Op}(e^{-\Lambda})u)
\simeq (\mD^{2\kappa-1}{\rm Op}(\Lambda'+\lambda_1+\lambda_2){\rm Op}(e^{-\Lambda})u,\mD {\rm Op}(e^{-\Lambda})u)$.  By \eqref{eq:dyson} we see
\begin{align*}
(\mD^{2\kappa-1}{\rm Op}(\Lambda'+\lambda_1+\lambda_2){\rm Op}(e^{-\Lambda})u,[\mD, {\rm Op}(e^{-\Lambda})]u)\\
\simeq (\mD^{2\kappa-1}{\rm Op}(\Lambda'+\lambda_1+\lambda_2){\rm Op}(e^{-\Lambda})u, {\rm Op}(A_1+ A_2){\rm Op}(e^{-\Lambda})u)\simeq 0
\end{align*}
since $({\bar A}_1+{\bar A}_2)\#\xim^{2\kappa-1}\#(\Lambda'+\lambda_1+\lambda_2)\in  S_{\phi}(\xim^{\kappa+{\tilde \kappa}})\subset \mu^{2\kappa-\tika}S_{\delta}(\xim^{3\kappa})$ because  $\La'\in S_{\phi}(\sqrt{\phi}\xim^{\delta})$ by Lemma \ref{lem:budo:a} and $\lambda_j\in  S_{\phi}(\phi^{-(j-1)/2}\xim^{-\kappa+{\tilde \kappa}j})$  and $A_j\in  S_{\phi}(\xim^{{\tilde \kappa}j+1-j}\phi^{-j/2})$. Thus one has
\begin{equation}
\label{eq:momo}
\begin{split}
&(\mD^{\kappa}{\rm Op}(\Lambda'e^{-\Lambda})u,\mD^{\kappa}{\rm Op}(e^{-\Lambda})u)\\
&\simeq (\mD^{2\kappa-1}{\rm Op}(\Lambda'+\lambda_1+\lambda_2)e^{-\Lambda}u,{\rm Op}(e^{-\Lambda})\mD u).
\end{split}
\end{equation}
Repeating the same argument one has 
\[
(\mD^{2\kappa-1}{\rm Op}(\Lambda'+\lambda_1+\lambda_2)\mD^{-1}[\mD, {\rm Op}(e^{-\Lambda})]u,{\rm Op}(e^{-\Lambda})\mD u)\simeq 0
\]
and hence 
\begin{eqnarray*}
&(\mD^{\kappa}{\rm Op}(\Lambda'e^{-\Lambda})u,\mD^{\kappa}{\rm Op}(e^{-\Lambda})u)\\
&\simeq (\mD^{2\kappa-1}{\rm Op}(\Lambda'+\lambda_1+\lambda_2)\mD^{-1}{\rm Op}(e^{-\Lambda})\mD u,{\rm Op}(e^{-\Lambda})\mD u).
\end{eqnarray*}
Since $\xim^{2\kappa-1}\#(\Lambda'+\lambda_1+\lambda_2)\#\xim^{-1}-\xim^{2\kappa-2}\Lambda'\in  S_{\delta}(\xim^{-2+\kappa+{\tilde \kappa}})$ which is contained in $\mu^{2\kappa-\tika}S_{\delta}(\xim^{-2+3\kappa})$ we obtain 
\begin{equation}
\label{eq:kaki:b}
\begin{split}
(\mD^{\kappa}{\rm Op}(\Lambda' e^{-\Lambda}) u,\mD^{\kappa}{\rm Op}(e^{-\Lambda})u)\\
\simeq ({\rm Op}(\Lambda'\xim^{2\kappa-2}){\rm Op}(e^{-\Lambda})\mD u, {\rm Op}(e^{-\Lambda})\mD u).
\end{split}
\end{equation}
We now study the sum:
\begin{align*}
({\rm Op}(b(t) a\Lambda'){\rm Op}(e^{-\Lambda})\mD u, {\rm Op}(e^{-\Lambda})\mD u)\\
+({\rm Op}(\Lambda'\xim^{2\kappa-2}){\rm Op}(e^{-\Lambda})\mD u, {\rm Op}(e^{-\Lambda})\mD u).
\end{align*}
Noting
\begin{align*}
b(t)a(x)\Lambda'+\xim^{2\kappa-2}\Lambda'
=\frac{|b'(t)|\phi\,(b(t)a(x)+\xim^{-2\delta})}{b(t)\phi+\xim^{-2\delta}}
\end{align*}
we write
\[
\Delta=2\big(b(t)a(x)\Lambda'+\xim^{2\kappa-2}\Lambda'\big)-b'(t)a(x)\\
=\frac{ F}{b(t)\phi(x,\xi)+\xim^{-2\delta}}
\]
where
\begin{align*}
F=2|b'(t)|\phi\,\big(b(t)a(x)+\xim^{-2\delta}\big)-b'(t)a(x)b(t)\phi
-\xim^{-2\delta}b'(t)a(x)\\
=|b'(t)|\phi\,\big(b(t)a(x)+\xim^{-2\delta}\big)\\
+\Big\{|b'(t)|b(t)a(x)\phi+|b'(t)|\phi \,\xim^{-2\delta}-b'(t)b(t)a(x)\phi
-b'(t)a(x)\xim^{-2\delta}\Big\}\\
\geq |b'(t)|\phi\,\big(b(t)a(x)+\xim^{-2\delta}\big)
\end{align*}
since $\phi\geq a(x)$. Therefore there exists $c>0$ such that
\begin{align*}
\frac{F}{b(t)\phi+\xim^{-2\delta}}\geq \frac{|b'(t)|\phi\,\big(b(t)a(x)+\xim^{-2\delta}\big)}{b(t)\phi+\xim^{-2\delta}}\\
=\frac{|b'(t)|\phi\,\big(b(t)a(x)+\xim^{-2\delta}\big)}{b(t)a(x)+(b(t)+1)\xim^{-2\delta}}
\geq c\,|b'(t)|\phi
\end{align*}
because $0\leq b(t)\leq C$. On the other hand it is clear that $F\leq 3\,|b'(t)|\phi\,(b(t)\phi+\xim^{-2\delta})$ for $a(x)\leq \phi$ and hence
\[
c\,|b'(t)|\phi\leq \frac{F}{b(t)\phi+\xim^{-2\delta}}\leq 3\,|b'(t)|\phi.
\]
\begin{lemma}
\label{lem:kihon}
 We have
 \[
\Big( \frac{|b'(t)|^{-1}\phi^{-1}F}{b(t)\phi+\xim^{-2\delta}}\Big)^{1/2}\in S_{\phi}(1)
\]
uniformly in $t$ with $|b'(t)|\neq 0$. That is, for any $k, l$ there exists $C_{kl}$ such that
\[
\Big|\dif_x^k\dif_{\xi}^l\Big( \frac{|b'(t)|^{-1}\phi^{-1}F}{b(t)\phi+\xim^{-2\delta}}\Big)^{1/2}\Big|\leq C_{kl}\phi^{-k/2}\xim^{-l}
\]
for any $t\in [0,T]$ with $b'(t)\neq 0$.
\end{lemma}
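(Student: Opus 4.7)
The plan is to prove the stronger statement that $G := |b'(t)|^{-1}\phi^{-1}F/(b(t)\phi+\xim^{-2\delta})$ itself belongs to $S_\phi(1)$ uniformly in $t$ with $b'(t)\neq 0$, whence $\sqrt{G}\in S_\phi(1)$ will follow from Fa\`a di Bruno applied to the smooth map $u\mapsto\sqrt{u}$ on the bounded range $[c,3]$ furnished by the two-sided estimate already established in the text just above the lemma.

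I would first rewrite the numerator by substituting $a(x)=\phi-\xim^{-2\delta}$ in the two isolated $a(x)$ factors of $F$. With $\sigma=\mathrm{sgn}\,b'(t)\in\{-1,+1\}$ a direct computation gives
\[
|b'(t)|^{-1}\phi^{-1}F \;=\; (2-\sigma)\bigl(b(t)a(x)+\xim^{-2\delta}\bigr)+\sigma\,\xim^{-4\delta}\phi^{-1},
\]
so $G$ is a bounded linear combination of the two quotients $(b(t)a(x)+\xim^{-2\delta})/(b(t)\phi+\xim^{-2\delta})$ and $\xim^{-4\delta}\phi^{-1}/(b(t)\phi+\xim^{-2\delta})$, each of which is pointwise bounded by $1$ (the second using $\xim^{-2\delta}\phi^{-1}\leq 1$).

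Then I would check that the three pieces $b(t)a(x)+\xim^{-2\delta}$, $\xim^{-4\delta}\phi^{-1}$ and $b(t)\phi+\xim^{-2\delta}$ are each $g$-admissible weights with constants independent of $t$. The denominator estimate is already inside the proof of Lemma \ref{lem:ano:1bis}, and the same argument works verbatim for $b(t)a(x)+\xim^{-2\delta}$ using boundedness of $b(t)$, while $\xim^{-4\delta}\phi^{-1}$ is a product of two admissible weights by Lemma \ref{lem:katto:bis} and \eqref{eq:sanada}. Applying the Leibniz rule together with the inverse estimate \eqref{eq:kakibis} then yields $|\dif_x^k\dif_\xi^l G|\leq C_{kl}\phi^{-k/2}\xim^{-l}$ uniformly in $t$, and composing with the square root concludes the argument.

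The only mildly delicate step is ensuring genuine uniformity in $t$ as $b$ approaches a zero of $b'$: this is handled precisely because one estimates numerator and denominator by their own values, so that the $b(t)$ factors cancel in the ratio rather than producing constants that blow up near zeros of $b$ or $b'$.
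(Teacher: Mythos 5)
Your proposal is correct and follows essentially the same route as the paper: show that $G=|b'|^{-1}\phi^{-1}F/(B\phi+\xim^{-2\delta})\in S_\phi(1)$ uniformly in $t$ by estimating the derivatives of numerator and denominator, then pass to $\sqrt{G}$ using the two-sided bound $c\leq G\leq 3$. The algebraic rewrite $|b'|^{-1}\phi^{-1}F=(2-\sigma)(Ba+\xim^{-2\delta})+\sigma\,\xim^{-4\delta}\phi^{-1}$ is a nice cleanup that the paper does not spell out (it simply observes $a\in S(\phi,g)$ and bounds the numerator's derivatives by $(B\phi+\xim^{-2\delta})\phi^{-k/2}\xim^{-l}$), but it leads to the same estimate and the same conclusion.
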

\begin{proof}
Assume $b'(t)\neq 0$. Set $A=|b'(t)|^{-1}$ and $B=b(t)$. We first note that
\[
|\dif_x^k\dif_{\xi}^l(B\phi+\xim^{-2\delta})|\leq C_{kl}(B\phi+\xim^{-2\delta})\phi^{-k/2}\xim^{-l}
\]
because $\phi, \xim^{-2\delta}\in S(\phi,g)$ from which it follows that
\[
|\dif_x^k\dif_{\xi}^l(B\phi+\xim^{-2\delta})^{-1}|\leq C_{kl}(B\phi+\xim^{-2\delta})^{-1}\phi^{-k/2}\xim^{-l}.
\]
Since 
\[
A\phi^{-1}F=2(Ba(x)+\xim^{-2\delta})-\frac{b'(t)}{|b'(t)|}\Big(Ba(x)+\phi^{-1}a\xim^{-2\delta}\Big)
\]
and $a(x)\in S(\phi,g)$ it is easy to see
\[
|\dif_x^k\dif_{\xi}^lA\phi^{-1}F|\leq C_{kl}(B\phi+\xim^{-2\delta})\phi^{-k/2}\xim^{-l}.
\]
Therefore we conclude that
\begin{equation}
\label{eq:sezon}
|\dif_x^k\dif_{\xi}^l\big(A\phi^{-1}F/(B\phi+\xim^{-2\delta})\big)|\leq C_{kl}\phi^{-k/2}\xim^{-l}
\end{equation}
where $C_{kl}$ is independent of $A$ and $B$. Since $c\leq A\phi^{-1}F/(B\phi+\xim^{-2\delta})\leq 3$ the proof follows from \eqref{eq:sezon}.
\end{proof}
Define $\Gamma$ by
\[
\Gamma=\left\{\begin{array}{ll}
\displaystyle{ |b'(t)|^{1/2}\phi^{1/2}\Big(\frac{|b'(t)|^{-1}\phi^{-1}F}{b(t)\phi+\xim^{-2\delta}}\Big)^{1/2}}\;\;& \text{if}\;\;b'(t)\neq 0\\
 0\;\;&\text{if}\;\;b'(t)=0
 \end{array}\right.
\]
then thanks to Lemma \ref{lem:kihon} we have $
\Gamma\in S_{\phi}( \sqrt{\phi})$ 
uniformly in $t$. Since $\Delta=\Gamma^2$ we can write
\[
\Delta=\Gamma\#\Gamma+R
\]
where $R\in S_{\phi}(\xim^{-2})$ uniformly in $t$ from which one has 
\[
({\rm Op}(\Delta){\rm Op}(e^{-\Lambda})\mD u,{\rm Op}(e^{-\Lambda})\mD u)\geq -C\|\mD^{-1}{\rm Op}(e^{-\Lambda})\mD u\|^2
\]
which proves
\begin{equation}
\label{eq:isoji:b}
\begin{split}
-2\big({\rm Op}(b(t) a\Lambda'+\xim^{2\kappa-2}\Lambda'){\rm Op}(e^{-\Lambda})\mD u, {\rm Op}(e^{-\Lambda})\mD u\big)\\
+{\mathsf{ Re}}\,(b'(t) a{\rm Op}(e^{-\Lambda})\mD u,{\rm Op}(e^{-\Lambda})\mD u)\preceq 0.
\end{split}
\end{equation}

We get back to estimate the second term on the right-hand side of \eqref{eq:isoji} which is estimated as
\begin{eqnarray*}
&|({\rm Op}(b(t) Q_2){\rm Op}(e^{-\Lambda})\mD u, {\rm Op}(e^{-\Lambda})\mD u)|\\
&\leq C\big( \|b(t)\mD^{1-\tika} {\rm Op}( Q_2){\rm Op}(e^{-\Lambda})\mD u\|^2+\|\mD^{-1+\tika}{\rm Op}(e^{-\Lambda})\mD u\|^2\big).
\end{eqnarray*}
Since $\xim^{1-\tika}\#Q_2\in  S_{\phi}(\sqrt{\phi}\xim^{-\kappa+\tika})\subset \mu^{3\kappa/2-\tika}S_{\phi}(\sqrt{\phi}\xim^{\kappa/2})$ then from Lemma \ref{lem:nihobi} it follows that
\begin{equation}
\label{eq:isoji:c}
{\mathsf{Re}}\,({\rm Op}(b(t) Q_2)e^{-\Lambda}\mD u, e^{-\Lambda}\mD u)\preceq C\mu^{3\kappa/2-\tika}{\mathcal E}_1
\end{equation}
 with some $C>0$.
We end the proof of Proposition \ref{prvi} combining \eqref{eq:isoji}, \eqref{eq:kaki:b}, \eqref{eq:isoji:b} and \eqref{eq:isoji:c}.

\medskip

\begin{lemma}
\label{lem:muneage}There exists $C>0$ such that 
\[
({\rm Op}(\Lambda'){\rm Op}(e^{-\Lambda})u,{\rm Op}(e^{-\Lambda})u)\geq -C\mu^{\kappa}\| {\rm Op}(e^{-\Lambda})u\|^2.
\]
\end{lemma}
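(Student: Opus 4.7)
My plan is to apply the Fefferman--Phong inequality in the metric $g$ to the nonnegative symbol
\[
\Lambda'=\dif_t\Lambda=\frac{|b'(t)|\phi(x,\xi)}{b(t)\phi(x,\xi)+\xim^{-2\delta}}\geq 0,
\]
which by Lemma \ref{lem:budo:a} belongs to $S_\phi(\sqrt{\phi}\xim^\delta)$. Since $\Lambda'\geq 0$, one gets an essentially free lower bound, and the whole task reduces to controlling the remainder in the correct symbol class so that the remainder scales like a positive power of $\mu$.

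First I would verify that $\mu^{-2\kappa}\Lambda'$ lies in the Fefferman--Phong weight class $S_\phi(h^{-2})$ (with $h^{-2}=\phi\xim^2$) uniformly in $\mu$. This amounts to the pointwise estimate $\sqrt{\phi}\xim^\delta\leq \mu^{2\kappa}\phi\xim^2$. Since $\phi\geq \xim^{-2\delta}$ gives $\phi^{-1/2}\leq \xim^\delta$, and $\xim\geq \mu^{-1}$ together with $\kappa=1-\delta$ yield
\[
\frac{\sqrt{\phi}\xim^\delta}{\phi\xim^2}=\xim^{\delta-2}\phi^{-1/2}\leq \xim^{2\delta-2}=\xim^{-2\kappa}\leq \mu^{2\kappa}.
\]
Combining this pointwise bound with the derivative estimates of Lemma \ref{lem:ano:2bis} (i.e.\ $|\dif_x^k\dif_\xi^l\Lambda'|\leq C\Lambda'\phi^{-k/2}\xim^{-l}$) gives $\mu^{-2\kappa}\Lambda'\in S_\phi(h^{-2})$ with seminorms independent of $\mu$.

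Then I would invoke Fefferman--Phong for the admissible metric $g$: since $\mu^{-2\kappa}\Lambda'\geq 0$ belongs to $S_\phi(h^{-2})$ uniformly in $\mu$, there exists $C>0$ independent of $\mu$ with
\[
\big({\rm Op}(\mu^{-2\kappa}\Lambda')w,w\big)\geq -C\|w\|^2 \qquad\text{for all } w\in \mathcal{S}(\R),
\]
so $({\rm Op}(\Lambda')w,w)\geq -C\mu^{2\kappa}\|w\|^2$. Specializing to $w={\rm Op}(e^{-\Lambda})u$ yields the claimed inequality, in fact with the stronger factor $\mu^{2\kappa}$ (which dominates $\mu^\kappa$ for $\mu$ small). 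The main point requiring care is the $\mu$-uniformity of the Fefferman--Phong constant, but this is guaranteed since only finitely many symbol seminorms of $\mu^{-2\kappa}\Lambda'$ and the slow-variation / $\sigma$-temperance constants of $g$ enter the inequality, all of which are uniform in $\mu$ by the framework of Section 2.
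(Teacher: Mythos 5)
Your proof is correct, and it takes a slightly different route from the paper. The paper proves the same bound by observing that $\Lambda'\in S_{\phi}(\sqrt{\phi}\xim^{\delta})$ and $h^{-1}=\sqrt{\phi}\xim$, so that $\mu^{-\kappa}\Lambda'\in S_{\phi}(h^{-1})$ (using only $\xim^{\delta-1}=\xim^{-\kappa}\leq\mu^{\kappa}$), and then invokes the \emph{sharp G\aa rding inequality} for the admissible metric $g$. You instead push the symbol into the stronger class $S_{\phi}(h^{-2})$ — paying the extra factor $\phi^{-1/2}\leq\xim^{\delta}$ — and invoke \emph{Fefferman--Phong}, which costs one more square root of $h$ on the weight but yields the stronger gain $\mu^{2\kappa}$ rather than $\mu^{\kappa}$. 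Both are legitimate: the paper already uses Fefferman--Phong for the metric $dx^2+\xim^{-2}d\xi^2$ in Lemma \ref{lem:hirame}, so the tool is in scope, and the uniformity in $\mu$ that you are careful to flag is indeed guaranteed by the framework of Section 2. The paper's choice of sharp G\aa rding is the more economical one since the Lemma only claims $\mu^{\kappa}$, but your version is correct and in fact gives a sharper remainder estimate.
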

\begin{proof}Recall $\Lambda'\in S_{\phi}(\phi^{1/2}\xim^{\delta})$ by Lemma \ref{lem:budo:a}. 
Since $h=\big(\sup {g/g^{\sigma}}\big)^{1/2}= \phi^{-1/2}\xim^{-1}\leq  \mu^{\kappa}\phi^{-1/2}\xim^{-\delta}$ by \eqref{eq:hikoshi} then we see that  $\mu^{-\kappa}\Lambda'\in S_{\phi}(1/h)$ hence the sharp G\aa rding inequality (see \cite[Theorem 18.6.2]{Ho}) proves the assertion.
\end{proof}
\begin{lemma}
\label{levki}
There exists $C>0$ such that
\begin{eqnarray}
\label{eqvtt}
&{\mathsf{ Re}}({\rm Op}(\La'e^{-\La})Au,{\rm Op}(e^{-\Lambda})Au)\geq 
-C \mu^{\kappa-\tika}\|{\rm Op}(e^{-\Lambda})Au\|^2,\\
 \label{eqvte}
 &{\mathcal E}_2\geq 
 (1-C\mu^{\kappa-\tika})\,\|\mD^{\kappa/2}{\rm Op}(e^{-\Lambda})Au\|^2,\\
\label{eqvna}
& {\mathsf{ Re}}(\mD^{\kappa}{\rm Op}(e^{-\Lambda})\mD^{\kappa}u,\mD^{\kappa}{\rm Op}(e^{-\Lambda})u)\\\nonumber
&\qquad \quad\geq 
 (1-C\mu^{\kappa-\tika})\,\|\mD^{3\kappa/2}{\rm Op}(e^{-\Lambda})u\|^2. 
\end{eqnarray}
\end{lemma}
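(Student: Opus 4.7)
The plan is to prove the three estimates in Lemma \ref{levki} by a uniform strategy: expand the composition that couples a power of $\mD$ (or $\Lambda'$) to $e^{-\Lambda}$ via Lemma \ref{lem:koreha}, observe that the first--order correction in the resulting Weyl expansion is pure imaginary (so its contribution drops from the real part), and check that the second--order correction carries a factor of at least $\mu^{2(\kappa-\tika)}$ after invoking $\phi^{-1/2}\le\xim^{\delta}$ with $\delta=1-\kappa$. The residual pieces $qe^{-\Lambda}$ and $r$ will be absorbed into the class \eqref{eqivi} by Lemmas \ref{lem:sanuki} and \ref{lem:yodobasi}. The main tools are thus the expansion \eqref{eq:ramA}, its analogue \eqref{eq:hiru}, and the sharp G\aa rding inequality Lemma \ref{lem:muneage}.

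For \eqref{eqvtt}, Lemma \ref{lem:budo:a} places $\Lambda'\ge 0$ in $S_{\phi}(\sqrt{\phi}\,\xim^{\delta})\cap S_{\delta}^{\lr{s}}(\xim^{1-\kappa})$. Substituting \eqref{eq:ramA} into the inner product and taking the real part, $\lambda_1$ vanishes by pure imaginarity, $\lambda_2\in S_{\phi}(\phi^{-1/2}\xim^{-1-\kappa+2\tika})\subset\mu^{2(\kappa-\tika)}S_{\delta}(1)$ contributes $O(\mu^{2(\kappa-\tika)})\|{\rm Op}(e^{-\Lambda})Au\|^2$, and Lemma \ref{lem:muneage} yields ${\mathsf{Re}}({\rm Op}(\Lambda'){\rm Op}(e^{-\Lambda})Au,{\rm Op}(e^{-\Lambda})Au)\ge -C\mu^{\kappa}\|{\rm Op}(e^{-\Lambda})Au\|^2$, which combines (using $\mu^{\kappa}\le\mu^{\kappa-\tika}$) to give \eqref{eqvtt}.

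For \eqref{eqvte}, I would write ${\mathcal E}_2={\mathsf{Re}}({\rm Op}(e^{-\Lambda}\#\xim^{\kappa})Au,{\rm Op}(e^{-\Lambda})Au)$ and apply \eqref{eq:hiru}, obtaining $e^{-\Lambda}\#\xim^{\kappa}=(\xim^{\kappa}+A_1+A_2)\# e^{-\Lambda}+\tilde q\, e^{-\Lambda}+\tilde r$ with $A_1$ pure imaginary and $A_2\in S_{\phi}(\xim^{\kappa+2\tika-2}\phi^{-1})\subset \mu^{2(\kappa-\tika)}S_{\delta}(\xim^{\kappa})$. The real part kills $A_1$, the $A_2$ piece costs $C\mu^{2(\kappa-\tika)}\|\mD^{\kappa/2}{\rm Op}(e^{-\Lambda})Au\|^2$, and the principal $\xim^{\kappa}$ is replaced by $\xim^{\kappa/2}\#\xim^{\kappa/2}$ modulo a symbol in $S(\xim^{\kappa-2})\subset\mu^{2}S(\xim^{\kappa})$, producing the factor $1-C\mu^{\kappa-\tika}$. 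For \eqref{eqvna}, I would reuse the identity
\[
\xim^{2\kappa}\# e^{-\Lambda}\#\xim^{\kappa}=(\xim^{3\kappa}+\tilde A_1+\tilde A_2)\# e^{-\Lambda}+\xim^{2\kappa}\#(\tilde q\, e^{-\Lambda}+\tilde r)
\]
already established in the proof of Lemma \ref{lem:levi}, with $\tilde A_1$ pure imaginary and $\tilde A_2\in \mu^{2(\kappa-\tika)}S_{\delta}(\xim^{3\kappa})$. Moving one factor of ${\rm Op}(e^{-\Lambda})$ across by self--adjointness reduces the left--hand side of \eqref{eqvna} to ${\mathsf{Re}}({\rm Op}(\xim^{3\kappa}+\tilde A_1+\tilde A_2){\rm Op}(e^{-\Lambda})u,{\rm Op}(e^{-\Lambda})u)$ modulo negligible terms, after which the same principal/correction bookkeeping as for \eqref{eqvte} delivers the claimed bound.

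The main obstacle, largely prepared for by Sections 2--3, is verifying that each surviving non--imaginary correction class truly lies in $\mu^{2(\kappa-\tika)}$ times a class suited for absorption into the intended norm, so that the final constant takes the clean form $1-C\mu^{\kappa-\tika}$. This hinges on the relations $\delta=1-\kappa$ and the strict inequality $\tika<\kappa$ from Lemma \ref{lem:jimei}, together with the pure--imaginary structure of the first--order corrections $\lambda_1$, $A_1$, $\tilde A_1$ that is built into Lemma \ref{lem:koreha} and the iterated expansion \eqref{eq:hiru}.
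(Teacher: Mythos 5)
Your proposal is correct and follows essentially the same route as the paper: expand the relevant compositions via \eqref{eq:ramA}/\eqref{eq:hiru} (or \eqref{eq:dyson}), absorb $qe^{-\Lambda}$ and $r$ by Lemmas \ref{lem:sanuki} and \ref{lem:yodobasi}, and invoke sharp G\aa rding (Lemma \ref{lem:muneage}) for \eqref{eqvtt}. The only minor deviation is that you exploit the pure-imaginarity of $\lambda_1,A_1,\tilde A_1$ to discard the first-order corrections (gaining an unneeded $\mu^{2(\kappa-\tika)}$), whereas the paper simply bounds the single correction $A_1\in S_{\phi}(\xim^{\kappa-1+\tika}\phi^{-1/2})\subset\mu^{\kappa-\tika}S_{\delta}(\xim^{\kappa})$ directly, which suffices for the claimed $1-C\mu^{\kappa-\tika}$; also, your remark about a $S(\xim^{\kappa-2})$ error in $\xim^{\kappa/2}\#\xim^{\kappa/2}$ is spurious (the Weyl product of $\xi$-only symbols is exact) but harmless.
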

\begin{proof}
From \eqref{eq:ramA} one has
\[
\Lambda' e^{-\Lambda}=(\Lambda'+\lambda)\# e^{-\Lambda}+qe^{-\Lambda}+r,\quad \lambda\in  S_{\phi}(\xim^{-\kappa+{\tilde \kappa}})
\]
where $q\in  S_{\delta}^{\lr{s}}(\xim^{-M})$ and $r\in S_{0,0}^{\lr{\bas}}(e^{-c\xim^{\baka}})$ with $\bas\tika<1$, $\baka>\tika$. Noting $\lambda\in \mu^{\kappa-\tika}S_{\delta}(1)$ then \eqref{eqvtt} follows from Lemma \ref{lem:muneage} and Lemmas \ref{lem:sanuki} and \ref{lem:yodobasi}. 

We next consider ${\mathcal E}_2={\mathsf{ Re}}\,({\rm Op}(e^{-\Lambda})\mD^{\kappa}Au,{\rm Op}(e^{-\Lambda})Au)$. From the same arguments proving \eqref{eq:dyson} one can write
\[
e^{-\La}\#\xim^{\kappa}=(\xim^{\kappa}+A_1)\#e^{-\La}+qe^{-\Lambda}+R
\]
where $A_1\in  S_{\phi}(\xim^{\kappa-1+{\tilde \kappa}}\phi^{-1/2})\subset S_{\delta}(\xim^{{\tilde \kappa}})\subset \mu^{\kappa-\tika}S_{\delta}(\xim^{\kappa})$ and $q\in S_{\delta}^{\lr{s}}(\xim^{-M})$ and $R\in S_{0,0}^{\lr{\bas}}(e^{-c\xim^{\baka}})$ with $\bas\tika<1$, $\baka>\tika$. 
Noting
\begin{align*}
{\mathsf{Re}}\,({\rm Op}(\xim^{\kappa}+ A_1){\rm Op}(e^{-\Lambda})A u, {\rm Op}(e^{-\Lambda}) Au)\geq (\mD^{\kappa}{\rm Op}(e^{-\Lambda})A u,{\rm Op}(e^{-\Lambda})A u)\\
-\big|{\mathsf{Re}}\,({\rm Op}(A_1){\rm Op}(e^{-\Lambda})A u,{\rm Op}(e^{-\Lambda})A u)\big|\geq (1-C\mu^{\kappa-\tika} )\|\mD^{\kappa/2}{\rm Op}(e^{-\Lambda})A u\|^2
\end{align*}
and applying Lemmas \ref{lem:yodobasi} and \ref{lem:sanuki} to ${\rm Op}(qe^{-\La}+r)$ we conclude \eqref{eqvte}. The last assertion is proved similarly. 
 \end{proof}
 %

%
\begin{proposition}
\label{prvo}
We have
\begin{eqnarray*}
\big|2{\mathsf{Im}}\,(b(t){\rm Op}(e^{-\Lambda}) \mD {\rm Op}(a_1)\mD u,{\rm Op}(e^{-\Lambda}) Au)\big|
\preceq  \tau\sqrt{c^*}({\mathcal E}_1+{\mathcal E}_2)
\end{eqnarray*}
where 
\begin{equation}
\label{eq:btoa}
c^*=2\,\sup_{t\in [0,T]}b(t)\,\sup_{x\in\R}\dif_x^2a(x).
\end{equation}
\end{proposition}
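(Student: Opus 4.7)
The plan is to estimate the quantity
\[I=2{\rm Im}\bigl(b(t){\rm Op}(e^{-\Lambda})\mD{\rm Op}(a_1)\mD u,\,{\rm Op}(e^{-\Lambda})Au\bigr)\]
by a weighted Cauchy--Schwarz split tailored to ${\mathcal E}_1$ and ${\mathcal E}_2$, and close with AM--GM. Writing $L=b(t){\rm Op}(e^{-\Lambda})\mD{\rm Op}(a_1)\mD$ and $Y={\rm Op}(e^{-\Lambda})Au$, the self-adjointness of the real Weyl symbol $\xim^{\kappa/2}$ gives $I=2{\rm Im}(\mD^{-\kappa/2}Lu,\mD^{\kappa/2}Y)$, so $|I|\leq 2\|\mD^{-\kappa/2}Lu\|\,\|\mD^{\kappa/2}Y\|$. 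The right factor satisfies $\|\mD^{\kappa/2}Y\|^2\preceq{\mathcal E}_2$ by \eqref{eqvte}, so by $2xy\leq x^2+y^2$ the proposition will follow from
\[\|\mD^{-\kappa/2}Lu\|^2\preceq \tau^2 c^*\,{\mathcal E}_1.\]

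The decisive pointwise input is Glaeser's inequality $|\dif_xa(x)|^2\leq 2\,a(x)\sup_y|\dif_y^2a(y)|$ for a nonnegative $C^2$ function, which together with $c^*=2\sup b\,\sup|\dif^2a|$ yields
\[\sqrt{b(t)}\,|D_xa(x)|\leq \sqrt{c^*\,a(x)}\leq \sqrt{c^*\,\phi(x,\xi)}.\]
Since the higher $x$-derivatives of $a$ are bounded ($a\in{\mathcal G}^s$) and $\phi$ is bounded above, one checks routinely that $\sqrt{b(t)}D_xa(x)\in S_\phi(\sqrt{c^*\phi})$ uniformly in $t\in[0,T]$, this symbol being moreover pure imaginary (because $D_x=-i\dif_x$ and $a$ is real).

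To reduce $\mD^{-\kappa/2}L$ to a canonical form I would commute $\mD^{-\kappa/2}$ through ${\rm Op}(e^{-\Lambda})$ via Proposition \ref{pro:abLam}, the discarded commutator being $\preceq$-negligible by Lemmas \ref{lem:sanuki} and \ref{lem:yodobasi}. Composing $\mD^{1-\kappa/2}{\rm Op}(a_1)\mD$ in Weyl calculus produces the principal symbol $\lambda\kappa\,D_xa(x)\,\xi\xim^{\kappa/2}$ with $\lambda=\tau-\theta t\leq\tau$, the subprincipal terms again being $\preceq$-harmless. Writing $\xi\xim^{\kappa/2}=(\xi\xim^{\kappa/2-1})\cdot\xim$ to pull one $\mD$ out to the right and commuting ${\rm Op}(e^{-\Lambda})$ past the resulting operator by Proposition \ref{pro:abLam} once more, I arrive at
\[\mD^{-\kappa/2}Lu\simeq \sqrt{b(t)}\,{\rm Op}(T_0)\,{\rm Op}(e^{-\Lambda})\mD u,\qquad T_0=\lambda\kappa\,\sqrt{b(t)}\,D_xa(x)\,\xi\xim^{\kappa/2-1}.\]
By the previous paragraph $T_0$ is pure imaginary and lies uniformly in $S_\phi(\sqrt{\phi}\,\xim^{\kappa/2})$, with the sharp pointwise bound $|T_0|^2\leq \lambda^2\kappa^2 c^*\phi\xim^\kappa$.

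The concluding step mirrors the proof of Lemma \ref{lem:nihobi}. From $\bar{T_0}\#T_0=|T_0|^2+R$ (with $R\in\mu^{2\kappa}S_\delta(\xim^{-2+3\kappa})$, hence $\preceq$-harmless by Corollary \ref{cor:koike}) and the pointwise bound above, we can write $\lambda^2\kappa^2 c^*\phi\xim^\kappa-|T_0|^2=q^2$ with $q\in S_\phi(\sqrt{\phi}\,\xim^{\kappa/2})$, giving
\[b(t)\,\|{\rm Op}(T_0){\rm Op}(e^{-\Lambda})\mD u\|^2\preceq \lambda^2\kappa^2 c^*\bigl({\rm Op}(b\phi\xim^\kappa){\rm Op}(e^{-\Lambda})\mD u,{\rm Op}(e^{-\Lambda})\mD u\bigr)\simeq \lambda^2\kappa^2 c^*{\mathcal E}_1\]
by Lemma \ref{lem:levi}. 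Since $\lambda\kappa\leq\tau$, we conclude $\|\mD^{-\kappa/2}Lu\|^2\preceq\tau^2 c^*{\mathcal E}_1$, and then $|I|\leq 2\tau\kappa\sqrt{c^*}\sqrt{{\mathcal E}_1{\mathcal E}_2}\leq \tau\sqrt{c^*}({\mathcal E}_1+{\mathcal E}_2)$ using $\kappa\leq 1$. The main obstacle is the bookkeeping in paragraph three: one must verify that each commutator generated by (i) pulling $\mD^{-\kappa/2}$ through ${\rm Op}(e^{-\Lambda})$, (ii) the Weyl composition $\mD^{1-\kappa/2}{\rm Op}(a_1)\mD$, (iii) extracting $\mD$ on the right, and (iv) commuting ${\rm Op}(e^{-\Lambda})$ past ${\rm Op}(T_0)$, lies in a class that contributes only $\preceq$-ignorable residuals of the form \eqref{eqivi}, a task handled by the framework of Lemmas \ref{lem:sanuki}--\ref{lem:yodobasi} and Corollary \ref{cor:koike}.
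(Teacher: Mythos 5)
Your overall structure — Cauchy–Schwarz to reduce to bounding $\|\mD^{-\kappa/2}{\rm Op}(e^{-\Lambda})\mD{\rm Op}(a_1)\mD u\|^2$ by $\tau^2 c^*{\mathcal E}_1$, followed by symbol manipulations that peel off $e^{-\Lambda}$ and expose the principal part $\xim^{1-\kappa/2}a_1$, and the use of Glaeser's inequality $b(t)(\dif_xa)^2\leq c^*a(x)$ — matches the paper step by step. The divergence, and the gap, is in the final positivity argument.

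You propose to write $\lambda^2\kappa^2 c^*\phi\xim^\kappa-|T_0|^2=q^2$ with $q\in S_\phi(\sqrt{\phi}\,\xim^{\kappa/2})$, imitating Lemma \ref{lem:nihobi}. This cannot work with the \emph{sharp} constant $c^*$. In Lemma \ref{lem:nihobi} the factorization $C\phi\xim^\kappa-T^2=Cq^2$ is only claimed after \emph{choosing $C$ large}, precisely so that the ratio $C^{-1}T^2\xim^{-\kappa}\phi^{-1}$ stays bounded away from $1$ and $\sqrt{1-X}$ remains a symbol. In your situation
\[
1-X\;=\;1-\frac{b(t)(\dif_xa(x))^2\,\xi^2\xim^{-2}}{c^*\,\phi(x,\xi)}
\;\geq\;\frac{\xim^{-2\delta}}{\phi},
\]
and this lower bound tends to $0$ as $\xim\to\infty$ wherever Glaeser's inequality is saturated (e.g.\ $a(x)=x^2$); one then has $(1-X)^{-1/2}\gtrsim\xim^{\delta}$, so already the first $x$-derivative of $\sqrt{1-X}$ fails the $S_\phi(1)$ bound. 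Thus $q$ does not lie in $S_\phi(\sqrt{\phi}\,\xim^{\kappa/2})$, and the step $\bigl({\rm Op}(q)w,{\rm Op}(q)w\bigr)\simeq({\rm Op}(q^2)w,w)$ is unjustified. The proposition cannot tolerate enlarging $c^*$ (its size governs the admissible $\theta_0$ in \eqref{eq:yazu}), so a constant-losing fix is not available here.

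The paper avoids the square root entirely: it observes that the symbol $0\leq\tau^2c^*\phi\xim^\kappa-b(t)\xim^{2-\kappa}a_1^2$ belongs to $S(\xim^\kappa,\,dx^2+\xim^{-2}d\xi^2)$ and applies the \emph{Fefferman–Phong inequality} to the associated quadratic form in $w={\rm Op}(e^{-\Lambda})\mD u$, producing only a $\preceq$-negligible remainder $-C\|\xim^{\kappa/2-1}w\|^2$. Fefferman–Phong requires only nonnegativity of the symbol, not a smooth square root, which is exactly what is needed in the sharp-constant regime. Replacing your factorization step with this Fefferman–Phong argument (identical to the device already used in Lemma \ref{lem:hirame}) repairs the proof and recovers the stated bound $\tau\sqrt{c^*}({\mathcal E}_1+{\mathcal E}_2)$.
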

\begin{proof}  
Recalling \eqref{eq:dyson} and taking  Lemmas \ref{lem:sanuki} and \ref{lem:yodobasi} into account we see
\begin{align*}
\|\mD^{-\kappa/2}{\rm Op}(e^{-\Lambda})\mD {\rm Op}(a_{1})\mD u\|^2\\
\simeq \|\mD^{-\kappa/2}{\rm Op}(\xim+A_1+A_2){\rm Op}(e^{-\Lambda}) {\rm Op}(a_{1})\mD u\|^2.
\end{align*}
because $a_1= (\tau-\theta t) D_xa\dif_{\xi}\xim^{\kappa}\in   S_{\phi}(\phi^{1/2}\xim^{\kappa-1})\cap S_{\delta}^{\lr{s}}(\xim^{\kappa-1})$. From Proposition \ref{pro:abLam} and Lemma \ref{lem:koreha} we have
\begin{equation}
\label{eq:aLamu:b}
a_{1}\#e^{-\Lambda}-e^{-\Lambda}\#a_{1}=(b_1+b_2)\#e^{-\Lambda}+qe^{-\Lambda}+R
\end{equation}
where $b_j\in S_{\phi}(\phi^{(1-j)/2}\xim^{{\tilde \kappa}j+\kappa-1-j})\cap S_{\delta}^{\lr{s}}(\xim^{-(\kappa-\tika)j})$ and  $q\in S_{\delta}^{\lr{s}}(\xim^{-M})$, $R\in S_{0,0}^{\lr{\bas}}(e^{-c\xim^{\baka}})$ with $\bas\tika<1$, $\baka>\tika$.   Repeating the same arguments as before we conclude that
\begin{align*}
\|\mD^{-\kappa/2}{\rm Op}(e^{-\Lambda})\mD {\rm Op}(a_{1})\mD u\|^2\\
\simeq \|\mD^{-\kappa/2}{\rm Op}(\xim+A_1+A_2){\rm Op}(a_{1}+b_1+b_2){\rm Op}(e^{-\Lambda}) \mD u\|^2.
\end{align*}
Since $b_j\in S_{\phi}(\phi^{(1-j)/2}\xim^{{\tilde \kappa}j+\kappa-1-j})$ and $A_j\in S_{\phi}(\phi^{-j/2}\xim^{{\tilde \kappa}j+1-j})$ one has
\begin{align*}
\xim^{-\kappa/2}\#(\xim+A_1+A_2)\#(a_{1}+b_1+b_2)
=\xim^{1-\kappa/2}a_{1}+B
\end{align*}
with $B\in S_{\delta}(\xim^{\kappa/2+{\tilde \kappa}-1})\subset \mu^{\kappa-\tika}S_{\delta}(\xim^{-1+3\kappa/2}$ and $\|{\rm Op}(B){\rm Op}(e^{-\Lambda})\mD u\|^2\simeq 0$ in virtue of Corollary \ref{cor:koike}.  Noticing $\xim^{1-\kappa/2}a_{1}\in S_{\phi}(\sqrt{\phi}\xim^{\kappa/2})$  and hence $(\xim^{1-\kappa/2}a_{1})\#(\xim^{1-\kappa/2}a_{1})-\xim^{2-\kappa}a_1^2\in S_{\phi}(\xim^{\kappa-2})$ we have 
\begin{align*}
\|b(t){\rm Op}(\xim^{1-\kappa/2}a_{1}){\rm Op}(e^{-\Lambda})\mD u\|^2\\
\simeq ({\rm Op}(b(t)^2\xim^{2-\kappa}a_1^2){\rm Op}(e^{-\Lambda})\mD u,{\rm Op}(e^{-\Lambda})\mD u).
\end{align*}
Note that $
 \tau^2c^*\xim^{\kappa}a(x)\geq b(t)\xim^{2-\kappa}a_1^2=b(t)\xim^{2-\kappa}((\tau-\theta t)\dif_xa(x)\dif_{\xi}\xim^{\kappa})^2$ 
because $|\dif_{\xi}\xim^{\kappa}|\leq \xim^{\kappa-1}$  
and $c^*a(x)\geq b(t)(\dif_xa(x))^2$ for $(t,x)\in [0,T]\times \R$ by Glaeser's inequality. Since $0\leq \tau^2c^*\xim^{\kappa}\phi-b(t)\xim^{2-\kappa}a_1^2\in S(\xim^{\kappa},dx^2+\xim^{-2}d\xi^2)$ then from the Fefferman-Phong inequality it follows that
\begin{align*}
\tau^2c^*{\mathsf{ Re}}\big(b(t)\,{\rm Op}(\phi \xim^{\kappa})w,w\big)
 -({\rm Op}(b(t)^2\xim^{2-\kappa}a_1^2)w,w)\\
 \geq -C\|\xim^{\kappa/2-1}w\|^2\simeq 0
\end{align*}
where $w={\rm Op}(e^{-\Lambda})\mD u$. Thanks to Lemmas \ref{lem:levi} and \ref{levki} one has 
\begin{align*}
\big|2{\mathsf{Im}}\,(b(t){\rm Op}(e^{-\Lambda}) \mD a_1\mD u,{\rm Op}(e^{-\Lambda}) Au)\big|\\
\leq (\tau\sqrt{c^*})^{-1}\|b(t)\mD^{-\kappa/2}{\rm Op}(e^{-\Lambda})\mD {\rm Op}(a_{1})\mD u\|^2\\
+\tau\sqrt{c^*}\|\mD^{\kappa/2}{\rm Op}(e^{-\Lambda}) Au\|^2\preceq \tau\sqrt{c^*}({\mathcal E}_1+{\mathcal E}_2)
\end{align*}
which proves the assertion.
\end{proof}
We study $R$ in \eqref{eq:dedasi}. Recall $R=b(t)\mD{\rm Op}(q+r)\mD$.  One has
\begin{align*}
|({\rm Op}(e^{-\Lambda})\mD{\rm Op}(q)\mD u,{\rm Op}(e^{-\Lambda})Au)|\\
\leq C\|\mD^{1-\kappa/2}{\rm Op}(e^{-\Lambda}){\rm Op}(q)\mD u\|
\|\mD^{\kappa/2}{\rm Op}(e^{-\Lambda})Au\|.
\end{align*}
From Corollary \ref{cor:nao} one can write $e^{-\Lambda}\#q=c\,e^{-\Lambda}+{\tilde r}$ with $c\in S_{\delta}^{\lr{s}}(\xim^{-2+2\kappa})$ and ${\tilde r}\in S_{0,0}^{\lr{\bas}}(e^{-c\xim^{\baka}})$ with $\bas\tika<1$, $\baka>\tika$. Recalling $c\in \mu^{\kappa-\tika}S_{\delta}(\xim^{3\kappa-2})$ thanks to Lemmas \ref{lem:sanuki} and \ref{lem:yodobasi} one concludes that the right hand-side is $\simeq 0$. On the other hand note that
\begin{align*}
|({\rm Op}(e^{-\Lambda})\mD{\rm Op}(r)\mD u,{\rm Op}(e^{-\Lambda})Au)|\\
\leq C\|\mD{\rm Op}(r)\mD u\|
\|{\rm Op}(e^{-\Lambda})Au\|
\end{align*}
since $e^{-\Lambda}\in S_{\delta}(1)$ by Lemma \ref{lem:yoritune}. We apply Lemma \ref{lem:sanuki} to conclude that the right-hand side is $\simeq 0$. In conclusion we have
\begin{equation}
\label{eq:anman}
|({\rm Op}(e^{-\Lambda}){\rm Op}(R)u, {\rm Op}(e^{-\Lambda})Au)|\simeq 0.
\end{equation}
Finally we note that
\begin{align*}
-2{\mathsf{ Im}}\,(\mD^{\kappa} {\rm Op}(e^{-\La})Au,\mD^{\kappa} {\rm Op}(e^{-\La})u)\\
\leq \|\mD^{3\kappa/2}{\rm Op}(e^{-\La})u\|^2+\|\xim^{\kappa/2}{\rm Op}(e^{-\La})Au\|^2\preceq {\mathcal E}_1+{\mathcal E}_2
\end{align*}
and 
\begin{align*}
-2{\mathsf{ Im}}\,({\rm Op}(e^{-\La})P^{\sharp}u, {\rm Op}(e^{-\La})Au)
\leq   \|{\rm Op}(e^{-\Lambda})P^{\sharp}u\|^2+E
\end{align*}
by \eqref{eq:tiyo}.
From \eqref{eq:samui} and Propositions \ref{prva}, \ref{prvi} and  \ref{prvo} and Lemma \ref{levki} we conclude that
\begin{equation}
\label{eq:hayabusa}
\frac{dE}{dt}\leq -(2\theta-\tau\sqrt{c^*}-1-C\mu^{\epsilon})({\mathcal E}_1+{\mathcal E}_2)+E+\|{\rm Op}(e^{-\Lambda})P^{\sharp}u\|^2
\end{equation}
with some $\epsilon>0$. We now fix $\theta_0$ and $\mu>0$ such that 
\begin{equation}
\label{eq:yazu}
2\theta_0>T\sqrt{c^*}+1,\quad 2\theta_0-T\sqrt{c^*}-1-C\mu^{\epsilon}\geq 0.
\end{equation}
Then from \eqref{eq:hayabusa} it follows that for $0\leq t\leq \tau/\theta_0$
\[
E(u;t)\leq e^TE(u;0)+e^T\int_0^t\|{\rm Op}(e^{-\Lambda})P^{\sharp}u\|^2ds.
\]
Since $\Lambda(0)=0$ and $E(u;0)\leq \|(D_t-\theta_0\mD^{\kappa})u(0)\|^2+C\|\mD u(0)\|^2$ this shows that
\begin{eqnarray*}
\|\mD^{\kappa}{\rm Op}(e^{-\Lambda})u(t)\|^2+\|{\rm Op}(e^{-\Lambda})Au(t)\|^2
\leq C\Big\{\|\mD u(0)\|^2+\|D_tu(0)\|^2\Big\}\\[3pt]
+C\int_0^t\|{\rm Op}(e^{-\Lambda}){\rm Op}(e^{(\tau-\theta_0s)\xim^{\kappa}})P\,{\rm Op}(e^{-(\tau-\theta_0 s)\xim^{\kappa}})u(s)\|^2ds.
\end{eqnarray*}
Replacing $u$ by ${\rm Op}(e^{ (\tau-\theta_0 t)\xim^{\kappa}})u$ we obtain
\begin{proposition}
\label{pro:thva}
We have
\begin{eqnarray*}
\|\mD^{\kappa}{\rm Op}(e^{-\Lambda}){\rm Op}(e^{ (\tau-\theta_0t)\xim^{\kappa}})u(t)\|^2
+\|{\rm Op}(e^{-\Lambda}){\rm Op}(e^{(\tau-\theta_0 t)\xim^{\kappa}})Au(t)\|^2\nonumber\\
\leq C\Big\{\|\mD {\rm Op}(e^{ \tau\xim^{\kappa}})u(0)\|^2+\|{\rm Op}(e^{ \tau\xim^{\kappa}}) D_tu(0)\|^2\Big\}\\
+C\int_0^t\|{\rm Op}(e^{-\Lambda}){\rm Op}(e^{ (\tau-\theta_0s)\xim^{\kappa}})Pu(s)\|^2ds
\end{eqnarray*}
for $0\leq t\leq \tau/\theta_0$.
\end{proposition}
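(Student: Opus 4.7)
The plan is to combine all the ingredients already assembled in Section 4 into the energy identity \eqref{eq:samui}, then apply a Gronwall-type argument and finally undo the conjugation that defines $P^\sharp$.

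First I would estimate $dE/dt$ by inserting the bounds proved earlier into \eqref{eq:samui}. The term $-2\theta \mathcal{E}_1 - 2\theta\mathcal{E}_2$ stays on the negative side; Proposition \ref{prva} and Proposition \ref{prvi} together give $-2\mathcal{H}+\mathcal{K}\preceq C\mu^{\epsilon}\mathcal{E}_1$ for some $\epsilon>0$; Lemma \ref{levki} shows that the term $-2\,\mathsf{Re}\,(\mathrm{Op}(\Lambda'e^{-\Lambda})Au,\mathrm{Op}(e^{-\Lambda})Au)$ is bounded below by $-C\mu^{\kappa-\tika}\|\mathrm{Op}(e^{-\Lambda})Au\|^2$ and that $\mathcal{E}_2,\mathcal{E}_1$ absorb $\|\mD^{\kappa/2}\mathrm{Op}(e^{-\Lambda})Au\|^2$ and $\|\mD^{3\kappa/2}\mathrm{Op}(e^{-\Lambda})u\|^2$; Proposition \ref{prvo} controls the $a_1$-contribution by $\tau\sqrt{c^{*}}(\mathcal{E}_1+\mathcal{E}_2)$; equation \eqref{eq:anman} handles the remainder $R$; the mixed term $-2\mathsf{Im}(\mD^{\kappa}\mathrm{Op}(e^{-\Lambda})Au,\mD^{\kappa}\mathrm{Op}(e^{-\Lambda})u)$ is bounded by $\mathcal{E}_1+\mathcal{E}_2$ after Cauchy--Schwarz; and the source term is $\le \|\mathrm{Op}(e^{-\Lambda})P^{\sharp}u\|^2 + E$ by \eqref{eq:tiyo}. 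Collecting everything yields \eqref{eq:hayabusa}.

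Next I would fix $\theta_0$ and $\mu$ satisfying \eqref{eq:yazu}, i.e.\ large enough $\theta_0$ so that $2\theta_0 > T\sqrt{c^{*}}+1$ and $\mu$ small enough so that the full coefficient of $\mathcal{E}_1+\mathcal{E}_2$ in \eqref{eq:hayabusa} is $\le 0$. Discarding that term one is left with $dE/dt \le E + \|\mathrm{Op}(e^{-\Lambda})P^{\sharp}u\|^2$ on the interval $0\le t\le \tau/\theta_0$ (so that $0\le \theta_0 t\le \tau$ and $P^{\sharp}$ makes sense). Gronwall's inequality then gives
\begin{equation*}
E(u;t)\le e^{T}E(u;0)+ e^{T}\int_0^{t}\|\mathrm{Op}(e^{-\Lambda})P^{\sharp}u(s)\|^2\,ds.
\end{equation*}
Since $\Lambda(0,x,\xi)=0$, the initial energy $E(u;0)$ is controlled by $\|(D_t-i\theta_0\mD^{\kappa})u(0)\|^2+C\|\mD u(0)\|^2\le C'(\|D_tu(0)\|^2+\|\mD u(0)\|^2)$, and \eqref{eq:tiyo} provides the lower bound $E(u;t)\ge \|\mathrm{Op}(e^{-\Lambda})Au(t)\|^2+\|\mD^{\kappa}\mathrm{Op}(e^{-\Lambda})u(t)\|^2$.

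Finally I would replace $u$ by $\mathrm{Op}(e^{(\tau-\theta_0 t)\xim^{\kappa}})u$ throughout. The decisive point is that $\xim^{\kappa}$ is an $x$-independent Fourier multiplier, so $e^{(\tau-\theta_0 t)\xim^{\kappa}}$ and $e^{-(\tau-\theta_0 t)\xim^{\kappa}}$ compose exactly; in particular $P^{\sharp}\mathrm{Op}(e^{(\tau-\theta_0 t)\xim^{\kappa}}) = \mathrm{Op}(e^{(\tau-\theta_0 t)\xim^{\kappa}})P$ and a direct computation using $\partial_t\mathrm{Op}(e^{(\tau-\theta_0 t)\xim^{\kappa}})=-\theta_0\mD^{\kappa}\mathrm{Op}(e^{(\tau-\theta_0 t)\xim^{\kappa}})$ gives $A\,\mathrm{Op}(e^{(\tau-\theta_0 t)\xim^{\kappa}}) = \mathrm{Op}(e^{(\tau-\theta_0 t)\xim^{\kappa}})D_t$. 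At $t=0$ the weight is $\mathrm{Op}(e^{\tau\xim^{\kappa}})$, so the initial data on the right-hand side turn into $\|\mD\,\mathrm{Op}(e^{\tau\xim^{\kappa}})u(0)\|^2+\|\mathrm{Op}(e^{\tau\xim^{\kappa}})D_tu(0)\|^2$, and the source becomes $\|\mathrm{Op}(e^{-\Lambda})\mathrm{Op}(e^{(\tau-\theta_0 s)\xim^{\kappa}})Pu(s)\|^2$. This delivers Proposition \ref{pro:thva}.

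The hard part is not any single estimate but making sure the coefficient in \eqref{eq:hayabusa} really is nonpositive \emph{independently} of $\tau\in(0,T]$: this forces $\theta_0$ to be chosen once and for all in terms of $T\sqrt{c^{*}}$, after which $\mu$ is taken small depending on $\theta_0$. All other steps are organisational, so the real content of the proof already lies in the commutator estimates of Propositions \ref{prva}--\ref{prvo} and Lemma \ref{levki}.
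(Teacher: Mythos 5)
Your proposal is correct and follows the paper's proof step for step: assemble \eqref{eq:hayabusa} from \eqref{eq:samui} using Propositions~\ref{prva}--\ref{prvo}, Lemma~\ref{levki} and \eqref{eq:anman}; fix $\theta_0,\mu$ via \eqref{eq:yazu}; apply Gronwall with $\Lambda(0)=0$ and \eqref{eq:tiyo}; and finally replace $u$ by ${\rm Op}(e^{(\tau-\theta_0 t)\xim^{\kappa}})u$, using the exact intertwinings $P^{\sharp}{\rm Op}(e^{(\tau-\theta_0 t)\xim^{\kappa}})={\rm Op}(e^{(\tau-\theta_0 t)\xim^{\kappa}})P$ and $A\,{\rm Op}(e^{(\tau-\theta_0 t)\xim^{\kappa}})={\rm Op}(e^{(\tau-\theta_0 t)\xim^{\kappa}})D_t$. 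No gaps.
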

\begin{lemma}
\label{levku} For any $0<\tau'<\tau$ and $s\in\R$ there exists $C>0$  such that
\begin{align*}
\|\mD^se^{(\tau'-\theta_0t)\mD^{\kappa}}u\|\leq C\|\mD^s{\rm Op}(e^{-\Lambda})e^{(\tau-\theta_0t)\mD^{\kappa}}u\|\\
\leq C'\|\mD^se^{ (\tau-\theta_0t)\mD^{\kappa}}u\|
\end{align*}
for $0\leq t\leq \tau'/\theta_0$.
\end{lemma}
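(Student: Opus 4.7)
My plan is to reduce both inequalities, by pulling the Fourier multiplier $e^{(\tau-\theta_0 t)\mD^{\kappa}}$ through $\mD^s$, to $L^2$-boundedness statements for Weyl quantizations, which can then be settled by the Weyl-H\"ormander calculus of Section 3. Setting $v:=e^{(\tau-\theta_0 t)\mD^{\kappa}}u$ and using that this Fourier multiplier commutes with $\mD^s$, the inequalities become
\[
\|\mD^s e^{-(\tau-\tau')\mD^{\kappa}}v\|\leq C\,\|\mD^s {\rm Op}(e^{-\Lambda})v\|\leq C'\,\|\mD^s v\|.
\]

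The right-hand inequality is immediate: by Lemma \ref{lem:yoritune} $e^{-\Lambda}\in S_{\delta}(1)$, so by the calculus $\mD^s\#e^{-\Lambda}\#\mD^{-s}\in S_{\delta}(1)$ and $\mD^s {\rm Op}(e^{-\Lambda})\mD^{-s}$ is $L^2$-bounded; applying this to $\mD^s v$ gives the bound. For the left-hand inequality, I would invert ${\rm Op}(e^{-\Lambda})$ via the identity ${\rm Op}(e^{\Lambda}){\rm Op}(K){\rm Op}(e^{-\Lambda})=1$ from \eqref{eq:ofuda}, where $K\in S^0_{1,\delta}$, writing
\[
\mD^s e^{-(\tau-\tau')\mD^{\kappa}}v=\bigl(\mD^s e^{-(\tau-\tau')\mD^{\kappa}}{\rm Op}(e^{\Lambda}){\rm Op}(K)\mD^{-s}\bigr)\,\mD^s {\rm Op}(e^{-\Lambda})v.
\]
Since ${\rm Op}(K)$ is $L^2$-bounded and the Fourier multipliers $\mD^{\pm s}$ and $e^{-(\tau-\tau')\mD^{\kappa}}$ mutually commute, the task reduces to showing that the Weyl operator with symbol $\xim^{s}\#e^{-(\tau-\tau')\xim^{\kappa}}\#e^{\Lambda}\#K\#\xim^{-s}$ is bounded on $L^2$.

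The crucial ingredient is the symbol estimate
\[
e^{-(\tau-\tau')\xim^{\kappa}}\#e^{\Lambda}\in S_{\delta}(e^{-c\xim^{\kappa}})\quad\text{for some }c>0.
\]
Pointwise, the leading term is $e^{-(\tau-\tau')\xim^{\kappa}+\Lambda}$, and Corollary \ref{cor:kurinasi} ($\Lambda\leq B\xim^{\tika}$) together with Lemma \ref{lem:jimei} ($\tika<\kappa$) yield $\Lambda-(\tau-\tau')\xim^{\kappa}\leq -c\xim^{\kappa}+C$. Derivative estimates are obtained in the style of Lemma \ref{lem:piko}, combining Corollary \ref{cor:moriyama} for $e^{\Lambda}$ with the derivative bounds of $e^{-(\tau-\tau')\xim^{\kappa}}$; the factors $\xim^{\tika}$ and $(k+l)^{s}\xim^{-\delta}$ produced by differentiation are absorbed into the exponential decay at the cost of slightly shrinking $c$, and the remainders in the Weyl composition $\#$ are handled analogously. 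Once the symbol is known to lie in $S_{\delta}(e^{-c'\xim^{\kappa}})$ for some $c'>0$, composition with $K\in S_{\delta}(1)$ and conjugation by $\mD^{\pm s}$ keep it in $S_{\delta}(1)$, from which $L^2$-boundedness is standard. The main obstacle is precisely this symbol estimate: individually $e^{\Lambda}$ has badly growing derivatives by Corollary \ref{cor:moriyama}, so one must verify carefully that composition with the decaying factor $e^{-(\tau-\tau')\xim^{\kappa}}$ upgrades the pointwise domination to a bona fide $S_{\delta}$-symbol estimate.
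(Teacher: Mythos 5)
Your proposal follows essentially the same route as the paper: the right-hand inequality via $e^{-\Lambda}\in S_{\delta}(1)$ (Lemma \ref{lem:yoritune}) and Corollary \ref{cor:koike}, and the left-hand inequality by inverting ${\rm Op}(e^{-\Lambda})$ with the identity \eqref{eq:ofuda} and reducing to $L^2$-boundedness of ${\rm Op}(e^{-(\tau-\tau')\xim^{\kappa}}\#e^{\Lambda})$. The symbol estimate you flag as the ``main obstacle'' is not something you need to re-derive: it follows immediately from Lemma \ref{lem:piko} (giving $e^{-(\tau-\tau')\xim^{\kappa}}\in S_{\delta}^{\lr{s}}(e^{-c\xim^{\kappa}})$, hence also $S_{0,0}^{(\bas)}(e^{-c\xim^{\baka}})$ with admissible $\bas,\baka$) and Proposition \ref{pro:kodaira}, which yields $e^{-(\tau-\tau')\xim^{\kappa}}\#e^{\Lambda}\in S_{\delta}(\xim^{l})$ for every $l$ --- more than enough to conclude.
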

\begin{proof} Thanks to Lemma \ref{lem:piko} we have $e^{-(\tau-\tau')\xim^{\kappa}}\in S_{\delta}^{\lr{s}}(e^{-c\xim^{\kappa}})$ where $0<c<\tau-\tau'$. Denote 
\[
p=e^{-(\tau-\tau')\xim^{\kappa}}\#e^{\Lambda}
\]
then Proposition \ref{pro:kodaira} shows that $p\in S_{\delta}(\xim^{l})$ for any $l\in\R$.  Repeating the same arguments as in the proof of Lemma \ref{leiva} we have 
${\rm Op}(e^{-(\tau-\tau')\xim^{\kappa}})={\rm Op}(p){\rm Op}(K){\rm Op}(e^{-\Lambda})$ 
since $1={\rm Op}(e^{\Lambda}){\rm Op}(K){\rm Op}(e^{-\Lambda})$ and hence 
\[
\|{\rm Op}(e^{-(\tau-\tau')\xim^{\kappa}})u\|=\|{\rm Op}(p){\rm Op}(K){\rm Op}(e^{-\Lambda})u\|
\leq C\|{\rm Op}(e^{-\Lambda})u\|
\]
which proves the first inequality replacing $u$ by ${\rm Op}(e^{(\tau-\theta_0t)\xim^{\kappa}})\mD^su$ and taking  Corollary \ref{cor:koike} into account. On the other hand since $e^{-\La}\in S_{\delta}(1)$ by Lemma \ref{lem:yoritune} the second inequality is clear from Corollary \ref{cor:koike}. 
\end{proof}
Thanks to Lemma \ref{levku}  it follows from Proposition \ref{pro:thva} that
\begin{theorem}
\label{thm:thva}
For any $0<\tau'<\tau$ there exists $C>0$  such that one has
\begin{eqnarray*}
\sum_{j=0}^1\|\mD^{(1-j)\kappa}{\rm Op}(e^{(\tau'-\theta_0t)\xim^{\kappa}})D_t^ju(t)\|^2
\leq C\sum_{j=0}^1\|\mD^{1-j} {\rm Op}(e^{ \tau\xim^{\kappa}})D_t^ju(0)\|^2\\[3pt]
+C\int_0^t\|{\rm Op}(e^{ (\tau-\theta_0s)\xim^{\kappa}})Pu(s)\|^2ds
\end{eqnarray*}
for $0\leq t\leq \tau'/\theta_0$.
\end{theorem}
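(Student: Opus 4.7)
The plan is to deduce Theorem \ref{thm:thva} directly from Proposition \ref{pro:thva} by trading the weight $\mathrm{Op}(e^{-\Lambda})\mathrm{Op}(e^{(\tau-\theta_0 t)\xim^{\kappa}})$ on the left-hand side for the simpler exponential $\mathrm{Op}(e^{(\tau'-\theta_0 t)\xim^{\kappa}})$ (paying the standard Gevrey loss $\tau\to\tau'$), and by dropping the $\mathrm{Op}(e^{-\Lambda})$ factor from the source term on the right. The two ingredients are Lemma \ref{levku} for the left-hand side and the $L^2$-boundedness of $\mathrm{Op}(e^{-\Lambda})$ (uniform in $t$) coming from Lemma \ref{lem:yoritune}, for the source term.

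For the $j=0$ summand, I apply Lemma \ref{levku} with $s=\kappa$ to $u(t)$ to get
\[
\|\mD^{\kappa}\mathrm{Op}(e^{(\tau'-\theta_0 t)\xim^{\kappa}})u(t)\| \leq C\,\|\mD^{\kappa}\mathrm{Op}(e^{-\Lambda})\mathrm{Op}(e^{(\tau-\theta_0 t)\xim^{\kappa}})u(t)\|.
\]
For the $j=1$ summand, I apply the same lemma with $s=0$, this time to $D_tu(t)$ in place of $u$, obtaining
\[
\|\mathrm{Op}(e^{(\tau'-\theta_0 t)\xim^{\kappa}})D_tu(t)\| \leq C\,\|\mathrm{Op}(e^{-\Lambda})\mathrm{Op}(e^{(\tau-\theta_0 t)\xim^{\kappa}})D_tu(t)\|.
\]
In the second term on the left-hand side of Proposition \ref{pro:thva}, the expression $Au(t)$ is to be read as $A$ applied to the transformed variable $v(t)=\mathrm{Op}(e^{(\tau-\theta_0 t)\xim^{\kappa}})u(t)$; since $D_t e^{(\tau-\theta_0 t)\xim^{\kappa}} = i\theta_0\xim^{\kappa} e^{(\tau-\theta_0 t)\xim^{\kappa}}$, the $-i\theta_0\mD^{\kappa}$ part of $A$ cancels the contribution from differentiating the weight, so $Av(t)=\mathrm{Op}(e^{(\tau-\theta_0 t)\xim^{\kappa}})D_tu(t)$. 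Hence the two displays above control the left-hand side of Theorem \ref{thm:thva} by a constant times the left-hand side of Proposition \ref{pro:thva}.

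The initial data terms in Proposition \ref{pro:thva} are already in the exact shape $\|\mD\,\mathrm{Op}(e^{\tau\xim^{\kappa}})u(0)\|^2+\|\mathrm{Op}(e^{\tau\xim^{\kappa}})D_tu(0)\|^2$ required by Theorem \ref{thm:thva}. For the source integral, Lemma \ref{lem:yoritune} gives $e^{-\Lambda}\in S_\delta(1)$ with semi-norms uniform in $t\in[0,T]$ (all estimates in its proof depend only on the nonnegativity of $\Lambda$), so $\mathrm{Op}(e^{-\Lambda})$ is bounded on $L^2$ uniformly in $t$, and
\[
\|\mathrm{Op}(e^{-\Lambda})\mathrm{Op}(e^{(\tau-\theta_0 s)\xim^{\kappa}})Pu(s)\|^2 \leq C\,\|\mathrm{Op}(e^{(\tau-\theta_0 s)\xim^{\kappa}})Pu(s)\|^2.
\]
Inserting the three bounds above into Proposition \ref{pro:thva} yields the desired inequality.

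There is no serious obstacle here: the energy method already absorbed all the real work into Proposition \ref{pro:thva}, while Lemma \ref{levku} packages the comparison between the $\Lambda$-modified exponential weight and the pure exponential at the expected cost $\tau\to\tau'$. The only point requiring care is the interpretation of $Au(t)$ in Proposition \ref{pro:thva} explained above, which ensures that the bound on the second summand of the energy delivers precisely the $\|\mathrm{Op}(e^{(\tau'-\theta_0 t)\xim^{\kappa}})D_tu(t)\|$ required by the theorem.
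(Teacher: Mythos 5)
Your proof is correct and follows the paper's own (very terse) argument: the paper simply says Theorem~\ref{thm:thva} follows from Proposition~\ref{pro:thva} thanks to Lemma~\ref{levku}, and you have filled in exactly those details — applying the first inequality of Lemma~\ref{levku} with $s=\kappa$ and $s=0$ to the two left-hand terms, observing that $A\,{\rm Op}(e^{(\tau-\theta_0 t)\xim^{\kappa}})u = {\rm Op}(e^{(\tau-\theta_0 t)\xim^{\kappa}})D_tu$ because $D_t$ differentiating the weight exactly cancels the $-i\theta_0\mD^{\kappa}$ part of $A$, and dropping ${\rm Op}(e^{-\Lambda})$ from the source term by $L^2$-boundedness (equivalently the second inequality of Lemma~\ref{levku} with $s=0$).
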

%

\section{Proof of Theorem \ref{thm:taifu}}
\label{sec:thmproof}

 We prove a detailed version of Theorem \ref{thm:taifu}.
 \begin{proposition}
 \label{pro:seikaku}Assume $0\leq b(t)\in C^{n,\al}([0,T])$ and $0\leq a(x)\in {\mathcal G}^s(\R)$. There exists $\theta_0>0$ such that for any $1<s<s'<1+(n+\al)/2$ and $0<\tau \,(\leq T)$ there is $\mu>0$  such that for any $u_j\in {\mathcal G}_0^{s'}(\R)$ with finite $\sum_{j=0}^1\|\mD^{1-j}{\rm Op}(e^{\tau\xim^{\kappa}})u_j\|^2$ and any $0<\tau'<\tau$ there exist
  $C>0$ and a unique $u\in C^1([0,\tau'/\theta_0];{\mathcal G}_0^{s'}(\R))$, solution to \eqref{eq:mondai}, such that
\begin{equation}
\label{eq:kihon}
\begin{split}
\sum_{j=0}^1\|\mD^{(1-j)\kappa}{\rm Op}(e^{(\tau'-\theta_0t)\xim^{\kappa}})D_t^ju(t)\|^2\\
\leq C\sum_{j=0}^1\|\mD^{1-j} {\rm Op}(e^{ \tau\xim^{\kappa}})D_t^ju(0)\|^2.
\end{split}
\end{equation}
Moreover if $\cup_{j=0}^1{\rm supp}\,u_j\subset \{|x| \leq R\}$ then ${\rm supp}\,u(t,\cdot)\subset \{|x|\leq R+{\hat c}t\}$.
 \end{proposition}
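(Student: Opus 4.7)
The plan is to deduce Proposition \ref{pro:seikaku} from the a priori energy estimate of Theorem \ref{thm:thva} by a standard regularization argument. First I would fix $\theta_0$ and $\mu$ once and for all as in \eqref{eq:yazu}, observing that $\theta_0$ depends only on $T$ and on the constant $c^*$ of \eqref{eq:btoa}, while $\mu$ depends in addition on $s,s'$. With this choice, inequality \eqref{eq:kihon} for any sufficiently regular solution of \eqref{eq:mondai} is nothing but Theorem \ref{thm:thva} specialized to $Pu=0$, the forcing integral vanishing and the initial term reducing to the stated weighted norms.

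For existence I would regularize the time-dependent coefficient: set $b_\varepsilon(t)=(\tilde b\ast\rho_\varepsilon)(t)+\varepsilon$, where $\tilde b$ extends $b$ to $\R$ with controlled $C^{n,\al}$ norm and $\rho_\varepsilon$ is a standard mollifier. Then $b_\varepsilon\in C^\infty([0,T])$ is strictly positive, with $\|b_\varepsilon\|_{C^{n,\al}([0,T])}\le \|b\|_{C^{n,\al}([0,T])}+C\varepsilon$, so the regularized operator $P_\varepsilon=D_t^2-D_x(b_\varepsilon a)D_x$ is strictly hyperbolic with smooth-in-$t$ and Gevrey-in-$x$ coefficients. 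The classical theory then produces a unique smooth solution $u_\varepsilon$ with the prescribed Cauchy data, valued in $\mathcal{G}_0^{s'}(\R)$. Applying Theorem \ref{thm:thva} to each $u_\varepsilon$, the constants throughout Sections 2--4 depend on $b$ only through $\|b\|_{C^{n,\al}}$ (via Lemma \ref{lem:iia}, and then through all the subsequent bounds for $\Lambda$, $\dif_t\Lambda$, $e^{\pm\Lambda}$ and their commutators) and through $c^*$, both uniformly bounded in $\varepsilon$. A weak-compactness argument on the $\varepsilon$-uniformly bounded family $\{u_\varepsilon\}$ produces a limit $u$ which solves \eqref{eq:mondai} and satisfies \eqref{eq:kihon}; the Gevrey-$s'$ membership of $u(t,\cdot)$ is encoded in the finiteness of the weighted norms on the left of \eqref{eq:kihon}.

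Uniqueness is immediate: the difference $v$ of two putative solutions satisfies $Pv=0$ with zero initial data, so \eqref{eq:kihon} forces $v\equiv 0$ on $[0,\tau'/\theta_0]$. For the support assertion with $\hat c=\sup_{(t,x)\in[0,T]\times\R}\sqrt{b(t)a(x)}$, I would run a localized energy computation in the backward truncated cone $\{|x-x_0|\le \hat c(t_0-t)\}$ attached to any $(t_0,x_0)$ outside $\{|x|\le R+\hat c t\}$. The same identities of Section 4, applied with a suitable space-like cutoff, yield boundary contributions on the lateral face whose sign is controlled by the inequality $\hat c\ge \sqrt{b(t)a(x)}$, and one concludes that $u\equiv 0$ outside the forward cone of the initial support.

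The main technical point requiring care is the $\varepsilon$-uniformity of the pseudodifferential calculus of Sections 2--3 built around $\Lambda$ and $e^{\pm\Lambda}$. Since $\Lambda_\varepsilon$ is defined by \eqref{eq:ramu} with $b$ replaced by $b_\varepsilon$, and Lemma \ref{lem:iia} bounds $\Lambda_\varepsilon$ solely in terms of $\|b_\varepsilon\|_{C^{n,\al}}$, this uniformity is automatic, but tracking every constant through the chain of Lemmas on the symbol classes $S_\phi$, $S_\delta^{\langle s\rangle}$ and $S_{0,0}^{(s)}$ is the most laborious aspect of the argument; no new ideas are needed beyond those already deployed in Section 4.
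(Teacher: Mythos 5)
Your plan to regularize, apply Theorem \ref{thm:thva} uniformly, and pass to the limit is the right skeleton, but there are two genuine gaps and one serious inefficiency.

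First, and most importantly, you never truncate $a(x)$ in space. The hypotheses of Proposition \ref{pro:seikaku} only require $a\in\mathcal G^s(\R)$, yet every piece of the calculus in Sections 2--4 (starting from Proposition \ref{pro:weyl:1}, and all of the energy identities leading to Theorem \ref{thm:thva}) is developed under the standing assumption $a\in\mathcal G_0^s(\R)$ (compactly supported, or constant for large $|x|$). The paper resolves this by introducing a second approximation parameter: $a_\nu(x)=a(x)\chi(\nu x)$ with a Gevrey cutoff $\chi$, so that $a_\nu\in\mathcal G_0^s$ with constants uniform in $\nu$, and then removes the truncation at the very end via finite propagation speed by choosing $\nu^{-1}>R+\hat c T$. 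Without this, your appeal to Theorem \ref{thm:thva} is not justified as stated.

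Second, your regularized operator $P_\varepsilon=D_t^2-D_x(b_\varepsilon a)D_x$ is \emph{not} strictly hyperbolic: $b_\varepsilon\ge\varepsilon$ does not prevent $b_\varepsilon(t)a(x)$ from vanishing wherever $a$ vanishes, so ``the classical theory'' does not give you a solution. The paper regularizes \emph{both} coefficients, $b_\varepsilon=b+\varepsilon$ and $a_{\nu,\varepsilon}=a_\nu+\varepsilon$, and then checks that since $b_\varepsilon'=b'$, $a_{\nu,\varepsilon}'=a_\nu'$, $b\le b_\varepsilon$ and $a_\nu\le a_{\nu,\varepsilon}$, the constants in Theorem \ref{thm:thva} remain uniform in both $\nu$ and $\varepsilon$. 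You would still need to do this check if you insist on mollifying $b$ in time; note also that simply adding $\varepsilon$ already achieves the required regularity since $b_\varepsilon'=b'\in C^{n-1,\al}$ is all the energy argument needs, so the mollification buys you nothing.

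Third, your proposal for the support assertion (a localized energy computation with the weights of Section 4) is not straightforward: the weight ${\rm Op}(e^{-\Lambda})$ is a nonlocal pseudodifferential operator, so inserting a space cutoff into the energy destroys the sign structure that makes the estimate work, and you would have to control the commutators. The paper sidesteps this entirely: finite propagation speed is inherited from the strictly hyperbolic approximants $P_{\nu,\varepsilon}$, for which it is classical with speed $\hat c$ bounded uniformly in $\nu,\varepsilon$, and passes to the limit. Finally, for uniqueness the paper does not deduce it ``immediately'' from \eqref{eq:kihon} but runs a Holmgren-type duality argument, exploiting $P^*=P$ and solving the backward adjoint problem (which again requires the already-established existence theory); your direct route would require justifying that the a priori estimate of Theorem \ref{thm:thva} actually applies to an arbitrary $C^1([0,\tau'/\theta_0];\mathcal G_0^{s'})$ solution, which is nontrivial.
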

 \begin{proof} Take $\chi(x)\in {\mathcal G}_0^s(\R)$ such that $0\leq \chi(x)\leq 1$ and $1$ in $|x|\leq 1$ and $0$ for $|x|\geq 2$. We define $0\leq a_{\nu}(x)\in {\mathcal G}_0^s(\R)$ by $a_{\nu}(x)=a(x)\chi(\nu x)$. Then it is easy to see that there are $C>0$,  $A>0$ independent of $0<\nu\leq1$ such that
 \[
 |\dif_x^ka_{\nu}(x)|\leq CA^kk!^s\quad \forall x\in \R, \quad k=0,1,\ldots.
 \]
Since $C$ and $A$ are independent of $\nu$ it can be seen   that one can take $\theta_0$ and $\mu$ verifying  \eqref{eq:yazu} which are independent of $\nu$.  Introducing a small positive parameter $\varepsilon>0$ we set
\[
b_{\varepsilon}(t)=b(t)+\varepsilon,\quad a_{\nu,\varepsilon}(x)=a_{\nu}(x)+\varepsilon
\]
and consider $P_{\nu,\varepsilon}=D_t^2-D_x(b_{\varepsilon}(t)a_{\nu,\varepsilon}(x))D_x$.
Noting $a'_{\nu,\varepsilon}(x)=a_{\nu}'(x)$, $0\leq a_{\nu}(x)\leq a_{\nu,\varepsilon}(x)$ and $b'_{\varepsilon}(t)=b'(t)$, $0\leq b(t)\leq b_{\varepsilon}(t)$ it is not difficult to examine that Theorem \ref{thm:thva} holds uniformly in $0<\nu\leq 1$ and $0<\varepsilon\leq \epsilon_0$. Since $P_{\nu,\varepsilon}$ is strictly hyperbolic and $b_{\varepsilon}(t)a_{\nu,\varepsilon}(x)\leq {\hat c}^2$ for $(t,x)\in [0,T]\times \R$ with ${\hat c}>0$ independent of $0<\varepsilon\leq \varepsilon_0$ and $0<\nu\leq 1$ the Cauchy problem for $P_{\nu,\varepsilon}$ 
\[
\left\{\begin{array}{ll}
P_{\nu,\varepsilon}u_{\nu,\varepsilon}=0\;\;\mbox{in}\;\;(t,x)\in [0,T]\times \R,\\
D_t^ju_{\nu,\varepsilon}(0,x)=u_j(x)  \;\;\mbox{for}\;\;j=0,1
\end{array}
\right.
\]
with $u_j(x)\in {\mathcal G}_0^{s'}(\R)$ supported in $|x|\leq R$
has a unique solution $u_{\nu,\varepsilon}$ such that 
\[
{\rm supp}\, u_{\nu,\varep}(t,\cdot)\subset\{x\mid |x|\leq R+{\hat c}t\}.
\]
From Theorem \ref{thm:thva} we conclude that 
 \[
\sup_{0\leq t\leq \tau'/\theta_0}\sum_{j=0}^1 \|\mD^{(1-j)\kappa}{\rm Op}(e^{(\tau'-\theta_0t)\xim^{\kappa}})D_t^ju_{\nu,\varepsilon}(t)\|
\]
 is uniformly bounded in $0<\varepsilon\leq \varepsilon_0$ provided   $\sum_{j=0}^1\|\mD^{1-j} {\rm Op}(e^{ \tau\xim^{\kappa}})u_j\|$ is finite. Therefore by the standard argument one can prove that there exists $u_{\nu}\in C^1([0,\tau');{\mathcal G}_0^{s'}(\R))$ satisfying 
\[
\left\{\begin{array}{ll}
D_t^2u_{\nu}-D_x(b(t)a_{\nu}(x))D_xu_{\nu}=0\;\;\mbox{in}\;\;(t,x)\in [0,T]\times \R,\\
D_t^ju_{\nu}(0,x)=u_j(x)  \;\;\mbox{for}\;\;j=0,1
\end{array}
\right.
\]
and the energy estimate \eqref{eq:kihon}. 
Since $D_x(b(t)a_{\nu}(x))D_xu_{\nu}=D_x(b(t)a(x))D_xu_{\nu}$ taking  $\nu^{-1}>R+{\hat c}T$ then $u_{\nu}$ is a desired solution to the Cauchy problem \eqref{eq:mondai}.  
 
 It remains to prove the uniqueness. Let $0<{\hat T}<\tau/\theta_0$. Since $P^*=P$ we conclude from Proposition \ref{pro:seikaku} that the Cauchy problem for $P^*$ with reversed time direction has a solution with finite propagation speed, that is  there is $\mu>0$ such that for any $\phi_j(x)\in {\mathcal G}_0^{s'}(\R)$ with finite $\sum_{j=0}^1\|\mD^{1-j}{\rm Op}(e^{(\tau+1)\xim^{\kappa}})\phi_j\|^2$ there exists a solution $v\in C^1([0,{\hat T}];{\mathcal G}_0^{s'}(\R))$ to the adjoint Cauchy problem
\[
\left\{\begin{array}{ll}
P^*v=0\;\;\mbox{in}\;\;(t,x)\in [0,{\hat T}]\times \R,\\
D_t^jv({\hat T},x)=\phi_j(x)  \;\;\mbox{for}\;\;j=0,1
\end{array}
\right.
\]
 with finite propagation speed ${\hat c}$. Now assume that $u\in C^1([0,\tau'/\theta_0];{\mathcal G}_0^{s'}(\R))$ is a solution to \eqref{eq:mondai} with $u_j(x)=0$ and $T'=\tau'/\theta_0$. Then following the Holmgren's arguments we have
 \begin{align*}
0=\int_0^{{\hat T}}(Pu,v)dt=-i(D_tu({\hat T}),\phi_0)-i(u({\hat T}),\phi_1)+\int_0^{{\hat T}}(u,P^*v)dt\\
=-i(D_tu({\hat T}),\phi_0)-i(u({\hat T}),\phi_1).
 \end{align*}
 Since $\phi_j$ are arbitraly we conclude that $u({\hat T})=0$.
 \end{proof}
 %


\section{Appendix}

In this section we give the proofs of Propositions \ref{pro:weyl:1},  \ref{pro:comLam}, \ref{pro:abLam} and \ref{pro:kodaira}, where for the convenience of applications to  operators of the form \eqref{eqia}, the assertions are stated in one dimensional space, while the dimension is irrelevant we prove the statements in $n$ dimensional space.

\subsection{Composition $(be^{\psi})\# e^{-\psi}$}

\begin{proposition}
\label{pro:syu:AA}Assume $\psi\in S_{\delta}^{\lr{s}}(\xim^{\tika})$ and $b\in S^{\lr{s}}_{\delta}(\xim^m)$ where  $ 1-\delta>\tika$ and $s(1-\delta)<1$.  Then we have
\[
(be^{\psi})\#e^{-\psi}=b+{\tilde b}+R
\]
where ${\tilde b}\in  S^{\lr{s}}_{\delta}\big(\xim^{m-(1-\delta-\tika)}\big)$ and $
R\in S^{(\bas)}_{0,0}\big(e^{-c\xim^{\baka}}\big)$ 
with some $\bas>1$, $\baka>0$, $c>0$ such that $\bas \tika<1$ and $\baka>\tika$.
\end{proposition}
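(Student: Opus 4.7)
The approach is to apply the Kohn--Nirenberg--Weyl asymptotic expansion to $(be^\psi)\#e^{-\psi}$, identify $b$ as the zeroth-order term, collect the next finitely many orders into $\tilde b$, and push the tail into an exponentially small remainder $R$ by optimizing the truncation level $N$ as a function of $\xim$.

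First I would write the Weyl composition formula
\[
(be^\psi)\#e^{-\psi}=\sum_{k+l<N}\frac{(-1)^k}{(2i)^{k+l}k!l!}\dif_x^k\dif_\xi^l(be^\psi)\,\dif_x^l\dif_\xi^k e^{-\psi}+R_N(x,\xi)
\]
with $R_N$ the standard Kohn--Nirenberg oscillatory-integral remainder. The term $k=l=0$ gives $be^\psi\cdot e^{-\psi}=b$. For each $k+l\ge 1$, Leibniz expansion of $\dif_x^k\dif_\xi^l(be^\psi)$ together with the $\psi$-analogue of Corollary \ref{cor:moriyama} (with $\lam=\xim^{\tika}$) controls each resulting product of a derivative of $b$, a derivative of $e^\psi$, and a derivative of $e^{-\psi}$; the exponentials cancel pointwise. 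Exploiting the algebraic identity $\dif^\gamma(e^\psi\cdot e^{-\psi})=0$ for $|\gamma|\ge 1$ to regroup the Leibniz decomposition, one obtains symbol order at most $\xim^{m-(1-\delta-\tika)(k+l)}$ for each contribution, so the partial sum
\[
\tilde b:=\sum_{1\le k+l<N}\frac{(-1)^k}{(2i)^{k+l}k!l!}\dif_x^k\dif_\xi^l(be^\psi)\,\dif_x^l\dif_\xi^k e^{-\psi}
\]
belongs to $S_\delta^{\lr{s}}(\xim^{m-(1-\delta-\tika)})$, with Gevrey-$s$ derivative control inherited from $b$ and the estimates on $e^{\pm\psi}$.

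Second, for $R_N$ I would apply the standard Gevrey-class remainder estimate for Kohn--Nirenberg oscillatory integrals, treating $be^\psi$ and $e^{-\psi}$ as Gevrey symbols on scale $\xim^{\kappa}$ via Lemma \ref{lem:seikei}. Choosing $N=N(\xim)\sim c_0\xim^{\kappa^*}$ for an exponent $\kappa^*\in(\tika,\kappa)$, the Gevrey factorials $(k!)^s$ balance against the order-improvement factor $\xim^{-(1-\delta-\tika)(k+l)}$ thanks to $s\tika<1$ (Lemma \ref{lem:jimei}). This yields $|R_N|\le Ce^{-c\xim^{\baka}}$ with $\baka>\tika$ and $c>0$, and the analogous bounds for $\dif_x^j\dif_\xi^i R_N$ place $R:=R_N$ in $S_{0,0}^{(\bas)}(e^{-c\xim^{\baka}})$ for some $\bas>1$ satisfying $\bas\tika<1$.

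The main obstacle is the order accounting in the first step: applying Corollary \ref{cor:moriyama} separately to $\dif^\alpha e^\psi$ and $\dif^\beta e^{-\psi}$ naively yields a factor $\lam^{|\alpha|+|\beta|}$ in each Weyl term, which would force the \emph{stronger} hypothesis $2\tika<1-\delta$. Recovering the correct single-power factor $\lam^{k+l}$ requires genuine use of the $\psi$-versus-$(-\psi)$ cancellation, most cleanly by rewriting the expansion through the oscillatory representation
\[
\iint e^{-iy\eta}\,b(x+y/2,\xi-\eta/2)\,e^{\psi(x+y/2,\xi-\eta/2)-\psi(x-y/2,\xi+\eta/2)}\,dy\,d\eta,
\]
whose phase exponent $\psi(x+y/2,\xi-\eta/2)-\psi(x-y/2,\xi+\eta/2)$ is linear in $(y,\eta)$ to leading order, so Taylor expansion combined with stationary phase extracts only one power of $\psi$-derivatives per order. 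The secondary delicate point is the optimal choice of $N$ in the remainder step, where $s\tika<1$ is essential for achieving genuine exponential decay $e^{-c\xim^{\baka}}$ with $\baka>\tika$ rather than mere Gevrey decay.
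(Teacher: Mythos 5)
Your high-level strategy --- peel off $b$ as the trivial term, put the bulk into $\tilde b$, and get exponential decay for the tail by adaptively choosing the truncation level using $s\tika<1$ --- is the right shape, and your last paragraph correctly identifies that the double power $\lam^{2(k+l)}$ must be reduced to $\lam^{k+l}$ by a cancellation between $\psi$ and $-\psi$. But there is a genuine gap in the mechanism you propose for that cancellation. You invoke "$\dif^\gamma(e^\psi\cdot e^{-\psi})=0$ for $|\gamma|\ge 1$" to regroup the Leibniz expansion, but the Weyl terms in your display pair $\dif_x^k\dif_\xi^l(be^\psi)$ with $\dif_x^{l}\dif_\xi^{k}e^{-\psi}$, with the multi-indices \emph{transposed}; the sum over $k+l=n$ is not a Leibniz-regrouping of $\dif_x^p\dif_\xi^q(e^\psi e^{-\psi})$ in any single variable pair, and that identity simply does not apply to it. The cancellation that \emph{is} present (already visible for $k+l=1$) comes from the antisymmetry $(-1)^k$ in the Poisson-bracket structure, not from the derivative-of-a-constant-product identity, and you do not establish it. As written, the first step of your proof only yields order $\xim^{m-(1-\delta-2\tika)(k+l)}$ for the "all derivatives on $e^{\pm\psi}$" contributions, which would force the stronger hypothesis $1-\delta>2\tika$ --- exactly the obstacle you flag.

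Your proposed remedy, passing to the oscillatory representation with phase $\psi(X+Y)-\psi(X+Z)$ and Taylor-expanding that difference to extract one power of $\nabla\psi$ per order, is in fact what the paper does --- but in the paper's proof this \emph{replaces} the truncated Moyal expansion entirely rather than repairing it, and a fair amount of additional structure is needed that your sketch omits: the change of variables $Z\mapsto Z+Y$, writing $e^{\Psi}-1=\int_0^1 (\nabla\psi\cdot(z,\zeta))\,e^{\theta\Psi}\,d\theta$ to extract a single derivative (Lemma \ref{lem:iike}, Lemma \ref{lem:sakura}), a three-part microlocal cutoff decomposition $1={\hat\chi}_0+{\hat\chi}_1+(1-{\hat\chi})$ separating the "good" diagonal region from the exponentially small regions, and in the good region an integration-by-parts count $2N\sim {\bar c}\,\Phi^{1/2(1+\varep)}$ depending on $(Z,\xi)$. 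In particular the paper never defines $\tilde b$ as a finite sum of Moyal terms --- it is the full ${\hat\chi}_0$-localized integral --- so your decomposition and the paper's are structurally different. To turn your proposal into a proof you would need either to actually carry out the cancellation calculation in the Moyal sum (nontrivial), or discard the first paragraph entirely and carry out the oscillatory-integral estimate in detail along the lines of the paper's Lemmas \ref{lem:syubu} and \ref{lem:syu:b}.
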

\begin{corollary}
\label{cor:piko}
Assume $\psi\in S_{\delta}^{\lr{s}}(\xim^{\tika})$ where  $ 1-\delta>\tika$ and $s(1-\delta)<1$.   Then one has with some $\tis>1$
\[
e^{\psi}\#e^{-\psi}-1\in  S^{\lr{\tis}}_{\delta}(\xim^{-(1-\delta-\tika)})\subset \mu^{1-\delta-\tika} S_{\delta}(1).
\]
\end{corollary}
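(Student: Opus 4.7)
The plan is to read off the corollary from Proposition \ref{pro:syu:AA} by taking the trivial choice $b\equiv 1$. Since the constant symbol $1$ sits in $S_\delta^{\lr{s}}(\xim^0)$ (no derivatives to estimate), the proposition applied with $m=0$ gives
\[
e^{\psi}\#e^{-\psi}=1+{\tilde b}+R
\]
with ${\tilde b}\in S_\delta^{\lr{s}}(\xim^{-(1-\delta-\tika)})$ and $R\in S_{0,0}^{(\bas)}(e^{-c\xim^{\baka}})$ where $\bas\tika<1$ and $\baka>\tika$. Subtracting the constant $1$ reduces the corollary to showing that the sum ${\tilde b}+R$ belongs to $S_\delta^{\lr{\tis}}(\xim^{-(1-\delta-\tika)})$ for some $\tis>1$.

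The summand ${\tilde b}$ is already in $S_\delta^{\lr{s}}(\xim^{-(1-\delta-\tika)})$, which embeds into $S_\delta^{\lr{\tis}}(\xim^{-(1-\delta-\tika)})$ whenever $\tis\ge s$, since enlarging the Gevrey exponent only enlarges the class. The heart of the proof is to place the exponentially small remainder $R$ into the same class. The derivative bound on $R$ reads $|\dif_x^j\dif_\xi^i R|\le CA^{i+j}(i+j)^{\bas(i+j)}e^{-c\xim^{\baka}}$; I would split the exponential as $e^{-c\xim^{\baka}/2}\cdot e^{-c\xim^{\baka}/2}$, using the first factor to absorb the polynomial weight $\xim^{i-\delta j+(1-\delta-\tika)}$ (legitimate because $\baka>0$), and the second factor to trade the factorial $(i+j)^{\bas(i+j)}$ against the mixed bound $\big((i+j)^{1+\varepsilon}+(i+j)^{\tis}\xim^{-\delta}\big)^{i+j}$ required by $S_\delta^{\lr{\tis}}$. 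The hypothesis $\bas\tika<1$ is exactly what is needed for this trade: when $i+j$ is small compared to a fixed power of $\xim$, the pure polynomial part $(i+j)^{1+\varepsilon}$ suffices, whereas for large $i+j$ one uses $(i+j)^{\tis(i+j)}\xim^{-\delta(i+j)}$ (choosing $\tis\ge\bas$ and comparing $(i+j)^{\tis-\bas}$ with $\xim^\delta$), and the leftover $\xim^{\tika(i+j)}$-type growth is killed by $e^{-c\xim^{\baka}/2}$ precisely because $\baka>\tika$. This bookkeeping --- balancing Gevrey order against the exponent in the exponential decay --- is where I expect the one real obstacle to lie, though it is a standard computation once the splitting is in place.

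The final inclusion $S_\delta^{\lr{\tis}}(\xim^{-(1-\delta-\tika)})\subset \mu^{1-\delta-\tika}S_\delta(1)$ is immediate from the definition $\xim=(\mu^{-2}+|\xi|^2)^{1/2}\ge\mu^{-1}$, which gives $\xim^{-(1-\delta-\tika)}\le\mu^{1-\delta-\tika}$ since the exponent $1-\delta-\tika$ is positive by hypothesis. Multiplying every semi-norm bound by this constant $\mu^{1-\delta-\tika}$ and observing that $S_\delta^{\lr{\tis}}\subset S_\delta$ finishes the argument.
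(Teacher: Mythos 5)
Your proposal is correct and follows essentially the same route as the paper's proof: apply Proposition \ref{pro:syu:AA} with $b=1$, embed ${\tilde b}\in S_\delta^{\lr{s}}$ into $S_\delta^{\lr{\tis}}$ via monotonicity of the Gevrey index, and put $R$ into $S_\delta^{\lr{\tis}}$ by splitting the exponential decay and trading polynomial growth against it; the paper writes $e^{-c\xim^{\baka}}=e^{-(c-c')\xim^{\baka}}e^{-c'\xim^{\baka}}$, lands in $S_\delta^{\lr{\tis}}(e^{-c'\xim^{\baka}})$ with $\tis=\bas+(1+\delta)/\baka$, then notes $e^{-c'\xim^{\baka}}\lesssim\xim^{-(1-\delta-\tika)}$, and your 50/50 split gives the same $\tis$.

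One inaccuracy in your discussion of the trade: the hypotheses $\bas\tika<1$ and $\baka>\tika$ are \emph{not} used in this corollary (they matter elsewhere in the calculus, e.g.\ in Lemma \ref{lem:sanuki} and Proposition \ref{pro:kodaira}), and there is no ``$\xim^{\tika(i+j)}$-type growth'' to kill in $R$. After moving the required weight $\xim^{-i+\delta j}\xim^{-(1-\delta-\tika)}$ and the $\xim^{-\delta(i+j)}$ from the $(i+j)^{\tis}\xim^{-\delta}$ branch of the mixed bound to the left-hand side, the polynomial to absorb is exactly $\xim^{(1+\delta)i+(1-\delta-\tika)}$, and the standard estimate $\xim^{m}e^{-\gamma\xim^{\baka}}\lesssim A^m m^{m/\baka}$ converts it into a factorial $\lesssim A^{i+j}(i+j)^{(1+\delta)(i+j)/\baka}$. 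No case analysis on the size of $i+j$ relative to $\xim$ is needed; the second branch of the mixed bound suffices uniformly and yields $\tis=\bas+(1+\delta)/\baka$ directly, just as in the paper.
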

\begin{proof}We apply Proposition \ref{pro:syu:AA} with $b=1$ to get $e^{\psi}\#e^{-\psi}-1=r+R$ with $r\in S^{\lr{s}}_{\delta}\big(\xim^{-(1-\delta-\tika)}\big)$ and $
R\in S^{(\bas)}_{0,0}\big(e^{-c\xim^{\baka}}\big)$. We note that if $R\in S_{0,0}^{(\bas)}(e^{-c\xim^{\baka}})$ with $c>0$ then for any $0<c'<c$ and $\delta\geq 0$ one has $R\in S_{\delta}^{\lr{\tis}}(e^{-c'\xim^{\baka}})$ with some $\tis>1$. Indeed since
\begin{align*}
e^{-c\xim^{\baka}}\leq\xim^{-|\al|}\xim^{-\delta|\al+\be|}\xim^{(1+\delta)|\al+\be|}e^{-(c-c')\xim^{\baka}}e^{-c'\xim^{\baka}}\\
\leq CA^{|\al+\be|}\xim^{-|\al|}\xim^{-\delta|\al+\be|}|\al+\be|^{(1+\delta)(|\al+\be|)/\baka}e^{-c'\xim^{\baka}}
\end{align*}
it follows that
\begin{align*}
|\al+\be|^{\bas|\al+\be|}e^{-c\xim^{\baka}}\leq CA^{|\al+\be|}\big(|\al+\be|^{\bas+(1+\delta)/\baka}\xim^{-\delta}\big)^{|\al+\be|}
\xim^{-|\al|}e^{-c'\xim^{\baka}}
\end{align*}
that is $R\in S_{\delta}^{\lr{\tis}}(e^{-c'\xim^{\baka}})$ with $\tis=\bas+(1+\delta)/\baka$. Since $S_{\delta}^{\lr{\tis}}(\xim^{-(1-\delta-\tika)})\subset S_{\delta}(\xim^{-(1-\delta-\tika)})$ and $\xim^{-(1-\delta-\tika)}\leq \mu^{1-\delta-\tika}$  we get the assertion.
\end{proof}
%

\noindent
Proof of Proposition \ref{pro:syu:AA}: The idea of the proof is same as the proof of \cite[Theorem 5.1]{N:ojm}.
Consider  
\begin{align*}
(be^{\psi})\# e^{-\psi}=\int e^{-2i\sigma(Y,Z)}b(X+Y)e^{\psi(X+Y)-\psi(X+Z)}dYdZ\\
=b+\int e^{-2i\sigma(Y,Z)}b(X+Y)\big(e^{\psi(X+Y)-\psi(X+Z)}-1\big)dYdZ
\end{align*}
where $Y=(y,\eta)$, $Z=(z,\zeta)$ and $\sigma(Y,Z)=\lr{\eta, z}-\lr{y, \zeta}$. After the change of variables $Z\to Z+Y$ the integral on the right-hand side turns to 
\begin{equation}
\label{eq:sitamati}
\int e^{-2i\sigma(Y,Z)}b(X+Y)(e^{\psi(X+Y)-\psi(X+Y+Z)}-1)dYdZ.
\end{equation}
For any small $\varep>0$ we choose $\chi(r)\in \gamma^{(1+\varep)}(\R)$ which is $1$ in $|r|\leq 1/4$ and $0$ for $|r|\geq 1/2$.  
Denoting ${\hat\chi}=\chi(\lr{\eta}\xim^{-1})\chi(\lr{\zeta}\xim^{-1})$ and ${\tilde \chi}=\chi(|y|/6)\chi(\xim^{\delta}|z|/6)$ we note that
\begin{align*}
|\dif^{\be}_{(x,y,z)}\dif_{(\xi,\eta,\zeta)}^{\al}({\hat\chi}{\tilde\chi})|\leq CA^{|\al+\be|}|\al+\be|^{(1+\varep)|\al+\be|}
\xim^{-|\al|+\delta|\be|}.
\end{align*}
Write $1={\tilde\chi}{\hat \chi}+(1-{\tilde\chi}){\hat \chi}+(1-{\hat\chi})={\hat \chi}_0+{\hat \chi}_1+(1-{\hat \chi})$ and we first consider 
\begin{equation}
\label{eq:hanami}
\int e^{-2i\sigma(Y,Z)}b(X+Y)\big(e^{\psi(X+Y)-\psi(X+Y+Z)}-1\big){\hat\chi}_0dYdZ.
\end{equation}
 Inserting the Taylor expansion 
\begin{align*}
e^{\psi(X+Y)-\psi(X+Y+Z)}-1=-\sum_{i} \int_0^1 \big(\dif_{x_i}\psi(X+Y+\theta Z)z_i\\
+\dif_{\xi_i}\psi(X+Y+\theta Z)\zeta_i\big)e^{\psi(X+Y)-\psi(X+Y+\theta Z)}d\theta
\end{align*}
into \eqref{eq:hanami} then after  integration by parts we have
\begin{equation}
\label{eq:tako}
\begin{split}
\int e^{-2i\sigma(Y,Z)}b(X+Y)\big(e^{\psi(X+Y)-\psi(X+Y+Z)}-1\big){\hat \chi}_0dYdZ\\
=\int_0^1d\theta \int e^{-2i\sigma(Y,Z)}{\tilde b}(X,Y,Z)e^{\psi(X+Y)-\psi(X+Y+\theta Z)}dYdZ.
\end{split}
\end{equation}
\begin{lemma}
\label{lem:iike}Notations being as above. We have
\begin{align*}
|\dif_{(x,y)}^{\be}\dif_{(\xi,\eta)}^{\al}{\tilde b}(X,Y,Z)|\leq CA^{|\al+\be|}(|\al+\be|^{1+\varep}+|\al+\be|^s\xim^{-\delta})^{|\al+\be|}\\
\times \xim^{\tika}(1+\xim^{\tika}g_X^{1/2}(Z))\xim^{m-1+\delta}\xim^{-|\al|+\delta|\be|}.
\end{align*}
\end{lemma}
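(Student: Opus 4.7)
The plan is to derive an explicit expression for $\tilde b$ from the integration by parts in \eqref{eq:tako} and then bound its derivatives by combining the Gevrey-type bounds on $b$, $\psi$ and $\hat\chi_0$ with a mean value estimate on the finite difference of $\partial\psi$. Substituting the Taylor formula
\[
e^{\psi(X+Y)-\psi(X+Y+Z)}-1=-\int_0^1\sum_i\bigl(z_i\partial_{x_i}\psi+\zeta_i\partial_{\xi_i}\psi\bigr)(X+Y+\theta Z)\,e^{\psi(X+Y)-\psi(X+Y+\theta Z)}\,d\theta
\]
into \eqref{eq:hanami}, using $z_ie^{-2i\sigma}=\tfrac{i}{2}\partial_{\eta_i}e^{-2i\sigma}$ and $\zeta_ie^{-2i\sigma}=-\tfrac{i}{2}\partial_{y_i}e^{-2i\sigma}$ to integrate by parts, and pulling the factor $e^{\psi(X+Y)-\psi(X+Y+\theta Z)}$ outside, one finds that $\tilde b$ is a finite sum of terms of two essentially different types. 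Type~(I) consists of products $(\partial_{\xi_i}b)(X+Y)(\partial_{x_i}\psi)(X+Y+\theta Z)$ or $b(X+Y)(\partial_{\xi_i}\partial_{x_i}\psi)(X+Y+\theta Z)$ together with their $\partial_{y_i}$ analogues, and type~(II) consists of terms $b(X+Y)(\partial_{x_i}\psi)(X+Y+\theta Z)\bigl[\partial_{\xi_i}\psi(X+Y)-\partial_{\xi_i}\psi(X+Y+\theta Z)\bigr]$ (and its $\partial_{y_i}$ analogue with the $\partial_{x_i}\psi$ difference) produced when the derivative lands on the exponential factor; the remaining terms, in which the $\partial_{\eta_i},\partial_{y_i}$ derivatives hit $\hat\chi_0$, are harmless by the Gevrey-$(1+\varep)$ bound on $\hat\chi_0$.

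Next I would verify the base weight $\xim^{\tika}\xim^{m-1+\delta}$. Every term of type~(I) is the product of one factor of $b$ or a single derivative of $b$ with $\partial\psi$ or $\partial^2\psi$; inspecting the pairings (e.g.\ $\partial_{\xi_i}b\cdot\partial_{x_i}\psi$ is bounded by $\xim^{m-1}\xim^{\tika+\delta}=\xim^{m+\tika-1+\delta}$, and $b\cdot\partial_{\xi_i}\partial_{x_i}\psi$ by $\xim^m\xim^{\tika-1+\delta}=\xim^{m+\tika-1+\delta}$, similarly for the $\partial_{y_i}$ pairings) one always recovers exactly $\xim^{m+\tika-1+\delta}=\xim^{\tika}\cdot\xim^{m-1+\delta}$. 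For type~(II), writing
\[
\partial_{\xi_i}\psi(X+Y)-\partial_{\xi_i}\psi(X+Y+\theta Z)=-\theta\int_0^1\bigl(\nabla\partial_{\xi_i}\psi\bigr)(X+Y+\tau\theta Z)\cdot Z\,d\tau
\]
and bounding the integrand via $\psi\in S_{\delta}^{\lr{s}}(\xim^{\tika})$ yields the estimate $|\partial_{\xi_i}\psi(X+Y)-\partial_{\xi_i}\psi(X+Y+\theta Z)|\leq C\xim^{\tika-1}g_X^{1/2}(Z)$; multiplying by $|b||\partial_{x_i}\psi|\leq\xim^{m+\tika+\delta}$ gives precisely $\xim^{\tika}\cdot\xim^{\tika}g_X^{1/2}(Z)\cdot\xim^{m-1+\delta}$. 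Summing the two contributions produces the factor $\xim^{\tika}\bigl(1+\xim^{\tika}g_X^{1/2}(Z)\bigr)\xim^{m-1+\delta}$ asserted in the lemma.

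For the higher-order Gevrey factor $(|\alpha+\beta|^{1+\varep}+|\alpha+\beta|^s\xim^{-\delta})^{|\alpha+\beta|}\xim^{-|\alpha|+\delta|\beta|}$ I would apply Leibniz to each of the product terms described above, distributing the $\alpha,\beta$ derivatives among the factors $b(X+Y)$, $\partial\psi(X+Y+\theta Z)$, the finite difference and $\hat\chi_0$, and then invoke $b\in S_\delta^{\lr{s}}(\xim^m)$, $\psi\in S_\delta^{\lr{s}}(\xim^{\tika})$ and the Gevrey-$(1+\varep)$ bound on $\hat\chi_0$. The combinatorial bookkeeping is exactly as in the proof of Lemma~\ref{lem:seikei}: the $(k+l)^{1+\varep}$ and $(k+l)^s\xim^{-\delta}$ contributions from the individual factors combine via the binomial theorem into a single factor of the announced form. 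The main obstacle will be to control higher derivatives of the finite difference in type~(II): I would differentiate the integral representation above under the integral sign, producing $\bigl(\nabla\partial_{\xi_i}\psi\bigr)^{(\alpha')}_{(\beta')}(X+Y+\tau\theta Z)$, bound this factor by $CA^{|\alpha'+\beta'|}(|\alpha'+\beta'|^{1+\varep}+|\alpha'+\beta'|^s\xim^{-\delta})^{|\alpha'+\beta'|}\xim^{\tika-1-|\alpha'|+\delta|\beta'|}$ coming from the symbol class, and multiply by $|Z|$ so as to preserve the $\xim^{\tika}g_X^{1/2}(Z)$ structure after reorganization; the remaining bookkeeping is then routine.
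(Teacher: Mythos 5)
Your proposal is correct and follows essentially the same route as the paper's proof: the same split of $\tilde b$ into the terms where the $\partial_\eta,\partial_y$ derivatives land on $b$, $\partial\psi$, or $\hat\chi_0$ (the paper's $\tilde b_1$) versus the term where they land on the exponential, producing the finite difference $\dif_\xi^e\dif_x^f\psi(X+Y)-\dif_\xi^e\dif_x^f\psi(X+Y+\theta Z)$ (the paper's $\tilde b_2$), which is then represented by a fundamental-theorem integral and bounded on ${\rm supp}\,\hat\chi_0$ to yield the $\xim^{\tika}g_X^{1/2}(Z)$ factor. Your weight bookkeeping and the appeal to Lemma~\ref{lem:seikei}-type combinatorics for the higher-order Gevrey factor match the paper's argument.
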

\begin{proof}Note that ${\tilde b}$ consists of terms such as
\begin{align*}
&{\tilde b}_1=\dif_y^{e}\dif_{\eta}^{f}\big(b(X+Y)\dif_{\xi}^{e}\dif_x^{f}\psi(X+Y+\theta Z){\hat \chi}_0\big),\\
&{\tilde b}_2=b(X+Y)\dif_x^{e}\dif_{\xi}^{f}\psi(X+Y+\theta Z)\\
&\qquad \quad\times
\big(\dif_{\xi}^{e}\dif_x^{f}\psi(X+Y)-\dif_{\xi}^{e}\dif_x^{f}\psi(X+Y+\theta Z)\big){\hat \chi}_0
\end{align*}
where $|e+f|=1$. The assertion for ${\tilde b}_1$ is easy. As for ${\tilde b}_2$ write
\begin{align*}
\dif_x^{\be}\dif_{\xi}^{\al}\big(\dif_{\xi}^{e}\dif_x^{f}\psi(X+Y)-\dif_{\xi}^{e}\dif_x^{f}\psi(X+Y+\theta Z)\big)\\
=\sum_j\int_0^1\big(\dif_x^{\be+f+e_j}\dif_{\xi}^{\al+e}\psi(X+ Y+s\theta Z)z_j\\
+\dif_x^{\be+f}\dif_{\xi}^{\al+e+e_j}\psi(X+Y+s\theta Z)\zeta_j\big)ds
\end{align*}
then on the support of ${\hat\chi}_0$ it is easy to see
\begin{align*}
|\dif_x^{\be}\dif_{\xi}^{\al}\big(\dif_{\xi}^{e}\dif_x^{f}\psi(X+Y)-\dif_{\xi}^{e}\dif_x^{f}\psi(X+Y+Z)\big)|\\
\leq CA^{|\al+\be|+2}(|\al+\be|+2+(|\al+\be|+2)^s\xim^{-\delta})^{|\al+\be|+2}\\
\times \xim^{\tika}\big(\xim^{\delta(|\be+f|+1)-|\al+e|}|z|+\xim^{\delta(|\be+f|-(|\al+e|+1)}|\zeta|\big)\\
\leq C_1A_1^{|\al+\be|}(|\al+\be|+|\al+\be|^s\xim^{-\delta})^{|\al+\be|}\xim^{\delta|\be+f|-|\al+e|}\xim^{{\tilde\kappa}}g_X^{1/2}(Z).
\end{align*}
Thus we get the assertion for ${\tilde b}_2$.
\end{proof}
%
%
\begin{lemma}
\label{lem:sakura} Let $\Psi(X,Y,Z)=\psi(X+Y)-\psi(X+Y+\theta Z)$ then  
on the support of ${\hat \chi}_0$ one has  
\[
|\Psi(X,Y,Z)|
\leq C\xim^{{\tilde\kappa}}g_X^{1/2}(Z)
\]
and
\begin{align*}
|\dif_{(x,y)}^{\be}\dif_{(\xi,\eta)}^{\al}e^{\Psi}|&\leq CA^{|\al+\be|}
\xim^{-|\al|+\delta|\be|}\\
&\times\big(\xim^{\tika}g_X^{1/2}(Z)+|\al+\be|+|\al+\be|^s\xim^{-\delta}\big)^{|\al+\be|}e^{\Psi}.
\end{align*}
\end{lemma}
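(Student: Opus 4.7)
The plan is to first bound $|\Psi|$ pointwise by a Taylor expansion in $Z$, then extend the same Taylor trick to every $(x,y,\xi,\eta)$-derivative $\partial^{\gamma}\Psi$, and finally feed these Gevrey-type estimates into the exponential bound of Lemma \ref{lem:seikei} (treating $Z$ as a parameter).

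For Step 1, write
\[
\Psi(X,Y,Z)=-\int_0^1\!\bigl[(\dif_x\psi)(X+Y+s\theta Z)\cdot z+(\dif_\xi\psi)(X+Y+s\theta Z)\cdot\zeta\bigr]\,\theta\,ds.
\]
On the support of ${\hat\chi}_0$ we have $|y|\le 3$, $|z|\le 3\xim^{-\delta}$, and $\lr{\eta}$, $\lr{\zeta}\le\xim/2$, so the usual slow variation of $\xim$ gives $\lr{\xi+\eta+s\theta\zeta}_\mu\asymp\xim$. Applied to $\psi\in S_\delta^{\lr{s}}(\xim^{\tika})$ this yields $|\dif_x\psi|\lesssim\xim^{\tika+\delta}$ and $|\dif_\xi\psi|\lesssim\xim^{\tika-1}$ at the intermediate point, hence
\[
|\Psi|\le C\xim^{\tika}\bigl(|z|\xim^{\delta}+|\zeta|\xim^{-1}\bigr)\le C'\xim^{\tika}g_X^{1/2}(Z),
\]
since $g_X(Z)=\xim^{2\delta}|z|^2+\xim^{-2}|\zeta|^2$.

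For Step 2, the key observation is that $\dif_{y_i}$ (resp.\ $\dif_{\eta_i}$) acts on $\psi(X+Y)$ and on $\psi(X+Y+\theta Z)$ exactly like $\dif_{x_i}$ (resp.\ $\dif_{\xi_i}$); so with $\gamma=(\alpha,\beta)$, setting $k=|\beta|$ and $l=|\alpha|$, the same integral representation of the difference gives
\[
\dif_{(x,y)}^{\beta}\dif_{(\xi,\eta)}^{\alpha}\Psi=-\theta\!\int_0^1\!\!\bigl[(\dif_x^{\,k+1}\dif_\xi^{\,l}\psi)\cdot z+(\dif_x^{\,k}\dif_\xi^{\,l+1}\psi)\cdot\zeta\bigr]\bigl|_{X+Y+s\theta Z}\,ds.
\]
Using the Gevrey/factorial estimate for $\psi$ (which is how $\psi\in S_\delta^{\lr{s}}(\xim^{\tika})$ actually arises in the applications, cf.\ Lemma \ref{lem:ano:1}) together with the slow-variation $\lr{\cdot}_\mu\asymp\xim$, one obtains
\[
|\dif_{(x,y)}^{\beta}\dif_{(\xi,\eta)}^{\alpha}\Psi|\le C^{|\gamma|+1}|\gamma|!\bigl(1+|\gamma|^{s-1}\xim^{-\delta}\bigr)^{|\gamma|}\,\xim^{\tika}g_X^{1/2}(Z)\,\xim^{-|\alpha|+\delta|\beta|}.
\]

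For Step 3, this is precisely condition \eqref{eq:adati} for the function $\Psi$ (of the $4n$ variables $(x,y,\xi,\eta)$, with $Z$ and $\theta$ regarded as parameters) with weight $\lambda=\xim^{\tika}g_X^{1/2}(Z)$. The proof of Lemma \ref{lem:seikei} is purely combinatorial and independent of the number of independent variables, so applying it verbatim gives
\[
|\dif_{(x,y)}^{\beta}\dif_{(\xi,\eta)}^{\alpha}e^{\Psi}|\le C_2^{|\gamma|}\bigl(B\xim^{\tika}g_X^{1/2}(Z)+|\gamma|+|\gamma|^s\xim^{-\delta}\bigr)^{|\gamma|}\xim^{-|\alpha|+\delta|\beta|}e^{\Psi},
\]
which is the asserted inequality after absorbing $B$ into $A$.

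The main obstacle is Step~2, in particular the careful verification that $\xim$ at $X$ can be substituted for $\lr{\xi+\eta+s\theta\zeta}_\mu$ (for every $s\in[0,1]$) on the support of ${\hat\chi}_0$ via slow variation, and the factorial-type bookkeeping that keeps the constants $C$ and $A$ independent of the multi-index order. Once that is done, the rest is a direct invocation of the already established exponential estimate.
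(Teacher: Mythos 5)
Your proof is correct and follows essentially the same route as the paper: the paper's entire proof of this lemma is the single line ``It suffices to repeat the proof of Lemma~\ref{lem:seikei}.'' Your Steps~1 and~2 simply make explicit the premises that the paper leaves implicit (the fundamental-theorem-of-calculus representation of $\Psi$, the substitution $\lr{\xi+\eta+s\theta\zeta}_\mu\asymp\xim$ on $\mathrm{supp}\,{\hat\chi}_0$, and the resulting \eqref{eq:adati}-type bound with $\lam=\xim^{\tika}g_X^{1/2}(Z)$), and Step~3 is exactly the invocation of Lemma~\ref{lem:seikei}, whose combinatorial proof is indeed dimension-independent.
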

\begin{proof}It suffices to repeat the proof of Lemma  \ref{lem:seikei}.
\end{proof}
Introducing the following differential operators and symbols
\[
\left\{\begin{array}{ll}
L=1+4^{-1}\xim^{2}|D_{\eta}|^2+4^{-1}\xim^{-2\delta}|D_y|^2,\\[3pt]
M=1+4^{-1}\xim^{2\delta}|D_{\zeta}|^2+4^{-1}\xim^{-2}|D_z|^2,\\[3pt]
\Phi=1+\xim^{2}|z|^2+\xim^{-2\delta}|\zeta|^2
=1+\xim^{2(1-\delta)}g_X(Z),\\[3pt]
\Theta=1+\xim^{2\delta}|y|^2+\xim^{-2}|\eta|^2=1+g_X(Y)\end{array}\right.
\]
so that $\Phi^{-N}L^Ne^{-2i\sigma(Y,Z)}=e^{-2i\sigma(Y,Z)}$ and $\Theta^{-\ell}M^{\ell}e^{-2i\sigma(Y,Z)}=e^{-2i\sigma(Y,Z)}$ we make integration by parts in \eqref{eq:sitamati}. Let $
F={\tilde b}(X,Y,Z)e^{\Psi}$ where ${\tilde b}$ is given in \eqref{eq:tako} and consider 
\begin{equation}
\label{eq:iseki}
\dif_x^{\be}\dif_{\xi}^{\al}\! \int e^{-2i\sigma(Y,Z)}FdYdZ
=\int e^{-2i\sigma(Y,Z)}L^N\Phi^{-N} M^{\ell}\Theta^{-\ell}(\dif_x^{\be}\dif_{\xi}^{\al}F)dYdZ.
\end{equation}
Here we note that $
|\dif_{(\xi,\eta)}^{\al}\dif_y^{\be}\Theta^{-\ell}|\leq CA^{|\al+\be|}|\al+\be|!\xim^{-|\al|+\delta|\be|}\Theta^{-\ell}$. 
Applying Lemmas \ref{lem:iike} and  \ref{lem:sakura} we can estimate $\big|L^N \Phi^{-N} M^{\ell}\Theta^{-\ell}(\dif_x^{\be}\dif_{\xi}^{\al}F)\big|
$ by
\begin{equation}
\label{eq:hayas}
\begin{split}
 C_{\ell}A_1^{2N+|\al+\be|}\Phi^{-N}\Theta^{-\ell}\xim^{-(1-\delta -\tika)}(1+\xim^{\tika}g_X^{1/2}(Z))
\big(\xim^{\tika}g_X^{1/2}(Z)\\
+(2N+|\al+\be|)^{1+\varep}
+(2N+|\al+\be|)^s\xim^{-\delta})^{2N+|\al+\be|}e^{\Psi}\xim^{m-|\al|+\delta|\be|}.
\end{split}
\end{equation}
Here we recall the following easy lemma \cite[Lemma 5.3]{N:ojm}.
\begin{lemma}
\label{lem:kanta}
Let $A\geq 0$, $B\geq 0$. Then there exists $C>0$ independent of $n, m\in\N$, $A$, $B$ such that
\begin{align*}
(A+(n+m)^{1+\varep}+(n+m)^sB)^{n+m}\\
\leq C^{n+m}(A+n^{1+\varep}+n^sB)^n(A+m^{1+\varep}+m^sB)^m.
\end{align*}
\end{lemma}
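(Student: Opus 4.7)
The plan is to fix notation $f(k)=A+k^{1+\varep}+k^s B$, so the claim reads $f(n+m)^{n+m}\le C^{n+m}f(n)^n f(m)^m$. By the symmetry of the statement I may assume $n\le m$. The boundary case $n=0$ is immediate: $f(0)^0=1$ under the convention $0^0=1$, and the inequality collapses to $f(m)^m\le C^m f(m)^m$, true for any $C\ge 1$. So from now on assume $1\le n\le m$.

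The proof proceeds in two self-contained estimates. First, I would use $n+m\le 2m$ to dominate the $(n+m)$-argument by the $m$-argument termwise: since $(n+m)^{1+\varep}\le 2^{1+\varep}m^{1+\varep}$ and $(n+m)^s\le 2^s m^s$, setting $C_1=\max(2^{1+\varep},2^s)$ gives
\begin{equation*}
f(n+m)\;\le\;A+2^{1+\varep}m^{1+\varep}+2^s m^s B\;\le\;C_1\bigl(A+m^{1+\varep}+m^s B\bigr)=C_1 f(m),
\end{equation*}
because $C_1\ge 1$ ensures $A\le C_1 A$. Raising to the $(n+m)$-th power and factoring $f(m)^{n+m}=f(m)^n f(m)^m$, it remains to control $f(m)^n$ by $f(n)^n$ up to an exponential factor.

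Second, I would estimate $f(m)/f(n)$ termwise. Writing $\gamma=\max(1+\varep,s)$ and using $m\ge n\ge 1$,
\begin{equation*}
f(m)=A+(m/n)^{1+\varep}n^{1+\varep}+(m/n)^s n^s B\;\le\;(m/n)^{\gamma}\bigl(A+n^{1+\varep}+n^s B\bigr)=(m/n)^{\gamma}f(n),
\end{equation*}
since $(m/n)^{\gamma}\ge 1$ absorbs the $A$-term as well. Hence $f(m)^n\le (m/n)^{\gamma n}f(n)^n$, and the elementary inequality $\log x\le x-1$ applied to $x=m/n$ yields
\begin{equation*}
(m/n)^{\gamma n}=\exp\bigl(\gamma n\log(m/n)\bigr)\le \exp\bigl(\gamma(m-n)\bigr)\le e^{\gamma(n+m)}.
\end{equation*}

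Putting the two steps together gives $f(n+m)^{n+m}\le C_1^{n+m}\cdot e^{\gamma(n+m)}f(n)^n f(m)^m$, so the stated inequality holds with $C=C_1 e^{\gamma}=e^{\gamma}\max(2^{1+\varep},2^s)$, a constant depending only on $s$ and $\varep$ and in particular independent of $A,B,n,m$. There is no real obstacle here; the only mild subtlety is handling the boundary cases $n=0$ and the two termwise comparisons which, thanks to $C_1,(m/n)^{\gamma}\ge 1$, absorb the $A$-term without requiring any sign or size assumption on $A$.
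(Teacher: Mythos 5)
Your proof is correct, and it is self-contained and elementary. The paper itself does not reprove this lemma but cites it as \cite[Lemma 5.3]{N:ojm}, so there is no in-paper argument to compare against; your route — first dominate $f(n+m)$ by $C_1 f(m)$ termwise using $n+m\le 2m$, then control $f(m)^n$ by $f(n)^n$ via $f(m)\le (m/n)^{\gamma}f(n)$ and the bound $(m/n)^{\gamma n}\le e^{\gamma(m-n)}\le e^{\gamma(n+m)}$ coming from $\log x\le x-1$ — is clean and gives the explicit constant $C=e^{\gamma}\max(2^{1+\varep},2^s)$ depending only on $s$ and $\varep$. You handle the boundary case $n=0$ (including $A=0$, under the convention $0^0=1$) correctly, and the termwise absorptions of the $A$-term by constants $\ge 1$ are valid since $A\ge 0$. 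No gaps.
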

From Lemma \ref{lem:kanta} the right-hand side  of \eqref{eq:hayas} can be estimated by
\begin{align*}
 C_{\ell}A_1^{2N+|\al+\be|}\Phi^{-N}\Theta^{-\ell}
\xim^{-(1-\delta-\tika)}(1+\xim^{\tika}g_X^{1/2}(Z))\\
\times \big(\xim^{\tika}g_X^{1/2}(Z)
+(2N)^{1+\varep}+(2N)^s\xim^{-\delta}\big)^{2N}\\
\times 
 \big(\xim^{\tika}g_X^{1/2}(Z)
+|\al+\be|^{1+\varep}+|\al+\be|^s\xim^{-\delta}\big)^{|\al+\be|} 
 \xim^{m-|\al|+\delta|\be|}.
\end{align*}
Writing
\begin{equation}
\label{eq:bove}
\begin{split}
A_1^{2N}\Phi^{-N}\big(\xim^{\tika}g_X^{1/2}(Z)+(2N)^{1+\varep}+(2N)^s\xim^{-\delta}\big)^{2N}\\
=\Big(\frac{A_1\xim^{\tika}g_X^{1/2}(Z)}{\Phi^{1/2}}+\frac{A_1(2N)^{1+\varep}}{\Phi^{1/2}}+\frac{A_1(2N)^s\xim^{-\delta}}{\Phi^{1/2}}\Big)^{2N}
\end{split}
\end{equation}
we choose $N=N(Z,\xi)$ so that $
2N={\bar c}\,\Phi^{1/2(1+\varep)}$ with a small ${\bar c}>0$. Assuming $1+\varep<s$ without restrictions  one has
\begin{align*}
\Phi^{s/2(1+\varep)-1/2}\xim^{-\delta}=(1+\xim^{2(1-\delta)}g_X(Z))^{s/2(1+\varep)-1/2}\xim^{-\delta}\\
\leq C\xim^{(s/(1+\varep)-1)(1-\delta)-\delta}
\leq C\xim^{s(1-\delta)/(1+\varep)-1}\leq C\mu^{c}
\end{align*}
with some $c>0$ on the support of ${\hat\chi}_0$ because $g_X(Z)$ is bounded there and $s(1-\delta)<1$.  Then the right-hand side of \eqref{eq:bove} is bounded by 
\begin{align*}
 \big(A_1(\mu^{1-\delta+\tika} +{\bar c}^{1+\varep}+\mu^{c})\big)^{2N}\leq Ce^{-c_1\Phi^{1/2(1+\varep)}}
\end{align*}
choosing ${\bar c}$ and  $\mu>0$ small such that $A_1(\mu^{1-\delta+\tika} +{\bar c}^{1+\varep}+\mu^{c})<1$. 
Assuming $\varep>0$ small so that $1-\delta>(1+\varep)\tika$ then since $g_X^{1/2}(Z)\leq Cg_X^{1/2(1+\varep)}(Z)$ on the support of ${\hat\chi}_0$ we have 
\begin{equation}
\label{eq:tetu}
\Phi^{1/2(1+\varep)}\geq c\xim^{(1-\delta)/(1+\varep)}g_X^{1/2}(Z)\geq c\mu^{-(1-\delta)/(1+\varep)}\xim^{\tika}g_X^{1/2}(Z).
\end{equation}
Then one has
\begin{align*}
(\xim^{\tika}g_X^{1/2}(Z)
+|\al+\be|^{1+\varep}+|\al+\be|^s\xim^{-\delta})^{|\al+\be|}e^{-c\,\Phi^{1/2(1+\varep)}}\\
\leq CA^{|\al+\be|}(|\al+\be|^{1+\varep}+|\al+\be|^s\xim^{-\delta})^{|\al+\be|}e^{-c'\Phi^{1/2(1+\varep)}}.
\end{align*}
Noting $e^{-c'\Phi^{1/2(1+\varep)}}\leq C_{\ell}\Phi^{-\ell}$ and Lemma \ref{lem:sakura} and \eqref{eq:tetu} we have
\begin{equation}
\label{eq:itiban:a}
\begin{split}
|L^N \Phi^{-N}
M^{\ell} \Theta^{-\ell}(\dif_x^{\be}\dif_{\xi}^{\al}F)|
\leq 
\mu^{\tika} C_{\ell}A^{|\al+\be|}(|\al+\be|^{1+\varep}\\
 +|\al+\be|^s\xim^{-\delta})^{|\al+\be|}
 \xim^{-(1-\delta-\tika)}
\xim^{m-|\al|+\delta|\be|}\Theta^{-\ell}\Phi^{-\ell}.
 \end{split}
\end{equation}
Finally choosing $\ell>(n+1)/2$ and recalling $\int \Theta^{-\ell}\Phi^{-\ell}dYdZ=C$  we conclude that
\begin{equation}
\label{eq:suika:a}
\begin{split}
\Big|\dif_x^{\be}\dif_{\xi}^{\al}\int e^{-2i\sigma(Y,Z)}F{\hat \chi}_0dYdZ\Big|\\
\leq  CA^{|\al+\be|}(|\al+\be|^{1+\varep}+|\al+\be|^s\xim^{-\delta})^{|\al+\be|}\xim^{-(1-\delta-\tika)}\xim^{m-|\al|+\delta|\be|}.
\end{split}
\end{equation}

 Denoting $F=b(X+Y)(e^{\psi(X+Y)-\psi(X+Y+Z)}-1){\hat \chi}_1$ we next consider 
\begin{equation}
\label{eq:suika:b}
\begin{split}
\dif_x^{\be}\dif_{\xi}^{\al}\int e^{-2i\sigma(Y,Z)}FdYdZ\\
=
\int e^{-2i\sigma(Y,Z)}(\xim^{2\delta}|D_{\eta}|^2+|D_{\zeta}|^2)^N (\xim^{2\delta}|z|^2+|y|^2)^{-N} \dif_x^{\be}\dif_{\xi}^{\al}FdYdZ.
\end{split}
\end{equation}
Since
$|\psi(X+Y)|+|\psi(X+Y+Z)|$ is bounded by $C\xim^{\tika}$  and $C^{-1}\leq \lr{\xi+\eta}_{\mu}/\xim$, $ \lr{\xi+\zeta}_{\mu}/\xim\leq C$  with some $C>0$ on the support of ${\hat\chi}_1$ thanks to Lemma \ref{lem:sakura} it is not difficult to show
\begin{align*}
\big|(\xim^{2\delta}|D_{\eta}|^2+|D_{\zeta}|^2)^N\dif_x^{\be}\dif_{\xi}^{\al}F\big|
\leq CA^{2N+|\al+\be|}\xim^{m-|\al|+\delta|\be|}\\
\times (C\xim^{\tika}+|\al+\be|^{1+\varep}+|\al+\be|^s\xim^{-\delta})^{|\al+\be|}
\\
 \times  (C\xim^{\tika}+(2N)^{1+\varep}+(2N)^s\xim^{-\delta})^{2N} \xim^{-2(1-\delta) N}e^{c\xim^{\tika}}.
\end{align*}
Choose $2N=c_1 \xim^{(1-\delta)/(1+\varep)}$ with small $c_1>0$ so that
\begin{align*}
A^{2N}\xim^{-2(1-\delta) N}\big(C \xim^{\tika}+(2N)^{1+\varep}+(2N)^s\xim^{-\delta}\big)^{2N}\\
=\Big(\frac{AC\xim^{\tika}}{\xim^{1-\delta}}+Ac_1^{1+\varep}+\frac{Ac_1^s\xim^{s(1-\delta)/(1+\varep)}}{\xim}\Big)^{2N}
\end{align*}
is bounded by $Ce^{-c\xim^{(1-\delta)/(1+\varep)}}$ choosing $\mu$ small which is possible because $\tika<1-\delta$ and $s(1-\delta)<1$. Note that $\xim^{\tika |\al+\be|}e^{-c\xim^{(1-\delta)/(1+\varep)}}$ is bounded by 
\begin{align*}
CA^{|\al+\be|} \big(|\al+\be|^{(1+\varep)\tika/(1-\delta)}\big)^{|\al+\be|}e^{-c'\xim^{(1-\delta)/(1+\varep)}}\\
\leq CA^{|\al+\be|}|\al+\be|^{(1+\varep)|\al+\be|}e^{-c'\xim^{(1-\delta)/(1+\varep)}}
\end{align*}
for $\tika/(1-\delta)<1$ where $0<c'<c$. Since $|y|^2+\xim^{2\delta}|z|^2\geq 1$ if ${\hat\chi}_1\neq 0$ then
\[
\xim^{-2n+n\delta}\int (|y|^2+\xim^{2\delta}|z|^2)^{-N}{\hat\chi}_1dYdZ\leq C.
\]
Noting $\xim^{2n-n\delta}e^{-c'\xim^{(1-\delta)/(1+\varep)}}\leq C$ we conclude that \eqref{eq:suika:b} is also bounded by the right-hand side of \eqref{eq:suika:a}: Therefor recalling ${\hat \chi}={\hat\chi}_0+{\hat\chi}_1$ we have
\begin{lemma}
\label{lem:syubu} Let ${\hat\chi}=\chi(\lr{\eta}\xim^{-1})\chi(\lr{\zeta}\xim^{-1})$. Then  for any $\varep>0$ there exist $C>0,A>0$ such that we have 
\begin{align*}
\Big|\dif_x^{\be}\dif_{\xi}^{\al}\int e^{-2i\sigma(Y,Z)}b(X+Y)(e^{\psi(X+Y)-\psi(X+Y+Z)}-1){\hat\chi}dYdZ\Big|\\
\leq  CA^{|\al+\be|}(|\al+\be|^{1+\varep}+|\al+\be|^{s}\xim^{-\delta}\big)^{|\al+\be|}
 \xim^{-(1-\delta-\tika)}\xim^{m-|\al|+\delta|\be|}.
\end{align*}
\end{lemma}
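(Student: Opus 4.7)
The plan is to split the cutoff via ${\hat\chi} = {\hat\chi}_0 + {\hat\chi}_1$, where ${\hat\chi}_0 = {\tilde\chi}{\hat\chi}$ is the near-diagonal piece (both $|y|$ and $\xim^{\delta}|z|$ bounded) and ${\hat\chi}_1 = (1-{\tilde\chi}){\hat\chi}$ is the complementary piece on which the frequencies are still bounded but at least one of $|y|$, $\xim^{\delta}|z|$ is not small. Each contribution will be estimated by integration by parts tailored to its support.

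For the near-diagonal piece I would first Taylor expand $e^{\psi(X+Y)-\psi(X+Y+Z)} - 1$ to first order in $Z$, converting the resulting $z_i$, $\zeta_i$ factors into $y$, $\eta$-derivatives; this produces the amplitude ${\tilde b}$ of \eqref{eq:tako}, which already carries the gain $\xim^{-(1-\delta-\tika)}$ by Lemma \ref{lem:iike}. I would then use $\Phi^{-N}L^N e^{-2i\sigma(Y,Z)} = e^{-2i\sigma(Y,Z)}$ and $\Theta^{-\ell}M^\ell e^{-2i\sigma(Y,Z)} = e^{-2i\sigma(Y,Z)}$ to shift $2N + 2\ell$ derivatives onto ${\tilde b}e^{\Psi}$ and extract the decay $\Phi^{-N}\Theta^{-\ell}$. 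The derivative bounds from Lemmas \ref{lem:iike} and \ref{lem:sakura}, combined with the splitting Lemma \ref{lem:kanta}, yield a factor of the form \eqref{eq:bove}; choosing $2N = {\bar c}\,\Phi^{1/2(1+\varep)}$ with ${\bar c}$ and $\mu$ sufficiently small turns this factor into $e^{-c_1\Phi^{1/2(1+\varep)}}$, after which taking $\ell > (n+1)/2$ makes the $dYdZ$ integral converge and produces \eqref{eq:suika:a}.

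For the off-diagonal piece I would integrate by parts in $\eta$, $\zeta$ using $(\xim^{2\delta}|D_\eta|^2+|D_\zeta|^2) e^{-2i\sigma(Y,Z)} = (\xim^{2\delta}|z|^2+|y|^2) e^{-2i\sigma(Y,Z)}$, exploiting that $\xim^{2\delta}|z|^2 + |y|^2 \geq 1$ on $\mathrm{supp}\,{\hat\chi}_1$. The derivative estimate for the amplitude produces a growth $e^{c\xim^{\tika}}$ from $|\psi(X+Y)|+|\psi(X+Y+Z)| \leq C\xim^{\tika}$, which is absorbed by setting $2N = c_1 \xim^{(1-\delta)/(1+\varep)}$; this yields exponential decay $e^{-c\,\xim^{(1-\delta)/(1+\varep)}}$ that dominates every polynomial factor and reduces everything, after adjustment, to the same right-hand side as in \eqref{eq:suika:a}.

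The principal obstacle is arranging the exponent inequalities consistently: I need $\tika < (1-\delta)/(1+\varep)$ so that $e^{c\xim^{\tika}}$ is beaten by $e^{-c\,\xim^{(1-\delta)/(1+\varep)}}$, and $s(1-\delta)/(1+\varep) < 1$ so that the $(2N)^{s}\xim^{-\delta}$ term in \eqref{eq:bove} does not spoil the geometric summation when $2N \sim \Phi^{1/2(1+\varep)}$. Both are available because the standing hypotheses give $1-\delta > \tika$ and $s(1-\delta) < 1$, so $\varep$ can be chosen positive and sufficiently small. Summing the two pieces with ${\hat\chi} = {\hat\chi}_0+{\hat\chi}_1$ then gives the claimed bound.
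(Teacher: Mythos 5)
Your proposal is correct and follows essentially the same route as the paper: the same decomposition ${\hat\chi}={\hat\chi}_0+{\hat\chi}_1$, the first-order Taylor expansion feeding into Lemma \ref{lem:iike} for the near-diagonal piece with the $L,M,\Phi,\Theta$ integration by parts and the choice $2N\sim\Phi^{1/(2(1+\varep))}$, the $(\xim^{2\delta}|D_\eta|^2+|D_\zeta|^2)$ integration by parts on $\mathrm{supp}\,{\hat\chi}_1$ with $2N\sim\xim^{(1-\delta)/(1+\varep)}$, and the same two exponent constraints $1-\delta>(1+\varep)\tika$ and $s(1-\delta)/(1+\varep)<1$ that close the argument.
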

We turn to $\int e^{-2i\sigma(Y,Z)}b(X+Y)(e^{\psi(X+Y)-\psi(X+Y+Z)}-1)(1-{\hat\chi})dYdZ$. Making the change of variables $Z\to Z-Y$ we come back to the original coordinates. Denote $\chi(\xim^{-1}\lr{\eta})\chi(\xim^{-1}\lr{\zeta-\eta})={\tilde\chi}_1$ and write
\begin{align*}
1-{\tilde\chi}_1=(1-{\tilde\chi}_1)\chi(\lr{\eta}^{-1}\lr{\zeta})+(1-{\tilde\chi}_1)(1-\chi(\lr{\eta}^{-1}\lr{\zeta}))
={\hat\chi}_2+{\hat\chi}_3.
\end{align*}
Note that on the support of ${\hat\chi}_2$ we have $C\lr{\eta}\geq \lr{\zeta}$ and $C\lr{\eta}\geq \xim$. Similarly on the support of ${\hat\chi}_3$ one has $C\lr{\zeta}\geq \lr{\eta}$ and $C\lr{\zeta}\geq \xim$.
Denoting $F=b(X+Y)(e^{\psi(X+Y)-\psi(X+Z)}-1)$ we consider  
\begin{align*}
\dif_x^{\be}\dif_{\xi}^{\al}\int e^{-2i\sigma(Y,Z)}F{\hat\chi}_idYdZ=\int e^{-2i\sigma(Y,Z)}\lr{\eta}^{-2N_2}\lr{\zeta}^{-2N_1}\\
\times \lr{D_z}^{2N_2}\lr{D_y}^{2N_1}
 \lr{y}^{-2\ell}\lr{z}^{-2\ell}\lr{D_{\zeta}}^{2\ell}\lr{D_{\eta}}^{2\ell}\dif_x^{\be}\dif_{\xi}^{\al}F{\hat\chi}_idYdZ.
\end{align*}
For case ${\hat\chi}_2$ we choose $N_1=\ell$, $N_2=N$. Since $|\psi(X+Y)|+|\psi(X+Z)|\leq C\lr{\eta}^{\tika}$  on the support of ${\hat\chi}_2$ it is not difficult to see that 
\[
|\lr{\eta}^{-2N}\lr{\zeta}^{-2\ell}\lr{D_z}^{2N}\lr{D_y}^{2\ell}\lr{y}^{-2\ell}\lr{z}^{-2\ell}\lr{D_{\zeta}}^{2\ell}\lr{D_{\eta}}^{2\ell}\dif_x^{\be}\dif_{\xi}^{\al}F{\hat\chi}_2|
\]
is bounded by
\begin{equation}
\label{eq:copii}
\begin{split}
C_{\ell}A^{2N+|\al+\be|}\lr{\eta}^{-2N}\lr{\zeta}^{-2\ell}\lr{y}^{-2\ell}\lr{z}^{-2\ell}\lr{\eta}^{|m|+2\delta\ell+6\tika\ell}\\
\times (C\lr{\eta}^{\tika+\delta}+N^{1+\varep}\lr{\eta}^{\delta}+N^s)^{2N}
 (C\lr{\eta}^{\tika+\delta}\\
+|\al+\be|^{1+\varepsilon}\lr{\eta}^{\delta} +|\al+\be|^s)^{|\al+\be|}
e^{C\lr{\eta}^{\tika}}
\end{split}
\end{equation}
because 
\begin{equation}
\label{eq:komaba}
\begin{split}
(C\lr{\omega}^{\tika}+(2k)^{1+\varep}+(2k)^s\lr{\omega}^{-\delta})^{2k}\lr{\omega}^{2 \delta l}\\
\leq (C\lr{\omega}^{\tika+\delta}+(2k)^{1+\varep}\lr{\omega}^{\delta}+(2k)^s)^{2k}
\end{split}
\end{equation}
for any $l, k\in\N$, $l\leq k$ and any $\omega$, in particular $\omega=\xi+\eta$, $\omega=\xi+\zeta$.  
Here writing 
\begin{align*}
A^{2N}\lr{\eta}^{-2N}\big( C\lr{\eta}^{\tika+\delta}+(2N)^{1+\epsilon}\lr{\eta}^{\delta}+(2N)^s\big)^{2N}\\
=\Big(\frac{AC \lr{\eta}^{\tika+\delta}}{\lr{\eta}}+\frac{A(2N)^{1+\varep}}{\lr{\eta}^{1-\delta}}+\frac{A(2N)^s}{\lr{\eta}}\Big)^{2N}
\end{align*}
we take $
2N=c_1 \lr{\eta}^{(1-\delta)/(1+\varep)}$ 
with small $c_1>0$ so that the right-hand side is bounded by $Ce^{-c\lr{\eta}^{(1-\delta)/(1+\varep)}}$ with some $c>0$ since $s(1-\delta)<1$. Since
\begin{equation}
\label{eq:pean}
\begin{split}
&\lr{\eta}^{\delta |\al+\be|}e^{-c\lr{\eta}^{(1-\delta)/(1+\varep)}}\\
&\leq CA_1^{|\al+\be|}\big(|\al+\be|^{\delta(1+\varep)/(1-\delta)}\big)^{|\al+\be|}e^{-c_1\lr{\eta}^{(1-\delta)/(1+\varep)}},\\
 &\lr{\eta}^{(\tika+\delta) |\al+\be|}e^{-c\lr{\eta}^{(1-\delta)/(1+\varep)}}\\
 &\leq CA_1^{|\al+\be|}\big(|\al+\be|^{(1+\varep)(\tika+\delta)/(1-\delta)}\big)^{|\al+\be|}e^{-c_1\lr{\eta}^{(1-\delta)/(1+\varep)}}
 \end{split}
 \end{equation}
  ($0<c_1<c$) and 
 \begin{equation}
 \label{eq:pean:b}
\frac{(1+\varep)\delta}{1-\delta}+1+\varepsilon=\frac{1+\varep}{1-\delta},\quad
 \frac{(1+\varep)(\tika+\delta)}{1-\delta}< \frac{1+\varep}{1-\delta}
 \end{equation}
 one sees that \eqref{eq:copii} is bounded by
\begin{align*}
C_{\ell}A_1^{|\al+\be|}\lr{\zeta}^{-2\ell}\lr{y}^{-2\ell}\lr{z}^{-2\ell}(|\al+\be|^{(1+\varep)/(1-\delta)}\big)^{|\al+\be|}
 e^{-c_1\lr{\eta}^{(1-\delta)/(1+\varep)}}.
\end{align*}
For case ${\hat\chi}_3$ choosing $N_1=N$ and $N_2=\ell$  we can  prove that 
\[
\big|\lr{\eta}^{-2\ell}\lr{\zeta}^{-2N}\lr{D_z}^{2\ell}\lr{D_y}^{2N}\lr{y}^{-2\ell}\lr{z}^{-2\ell}\lr{D_{\zeta}}^{2\ell}\lr{D_{\eta}}^{2\ell}\dif_x^{\be}\dif_{\xi}^{\al}F{\hat\chi}_3\big|
\]
 is bounded by 
 \begin{align*}
 C_{\ell}A^{2N+|\al+\be|}\lr{\eta}^{-2\ell}\lr{\zeta}^{-2N}\lr{y}^{-2\ell}\lr{z}^{-2\ell}\lr{\zeta}^{|m|+2\delta\ell+6\tika \ell}\\
\times (C\lr{\zeta}^{\tika+\delta}+N^{1+\varep}\lr{\zeta}^{\delta}+N^s)^{2N}
 (C\lr{\zeta}^{\tika+\delta}\\
+ |\al+\be|^{1+\varep}\lr{\zeta}^{\delta}+|\al+\be|^s)^{|\al+\be|}
e^{C\lr{\zeta}^{\tika}}.
 \end{align*}
 Therefore repeating the same arguments, choosing $
2N=c_1 \lr{\zeta}^{(1-\delta)/(1+\varep)}$ now,  this is bounded by
\begin{align*}
C_{\ell}A_1^{|\al+\be|}\lr{\eta}^{-2\ell}\lr{y}^{-2\ell}\lr{z}^{-2\ell}(|\al+\be|^{(1+\varep)/(1-\delta)})^{|\al+\be|}
e^{-c_1\lr{\zeta}^{(1-\delta)/(1+\varep)}}.
\end{align*}
Thus recalling that $\xim\leq C\lr{\eta}$, $\xim\leq  C\lr{\zeta}$ on the support of ${\hat\chi}_2$,  ${\hat\chi}_3$ we get
\begin{lemma}
\label{lem:syu:b}We have
\begin{align*}
\Big|\dif_x^{\be}\dif_{\xi}^{\al}\int e^{-2i\sigma(Y,Z)}b(X+Y)(e^{\psi(X+Y)-\psi(X+Z)}-1)(1-{\tilde\chi}_1)dYdZ\Big|\\
\leq  CA^{|\al+\be|}(|\al+\be|^{(1+\varep)/(1-\delta)})^{|\al+\be|}e^{-c_1\xim^{(1-\delta)/(1+\varep)}}.
\end{align*}
\end{lemma}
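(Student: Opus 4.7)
The plan is to exploit non-stationary phase on the support of $1-\tilde\chi_1$, where at least one of $\lr{\eta},\lr{\zeta}$ dominates $\xim$, and to convert the resulting polynomial decay into exponential decay by optimising the order of integration by parts. Following the split $1-\tilde\chi_1=\hat\chi_2+\hat\chi_3$ already introduced in the text, on $\operatorname{supp}\hat\chi_2$ one has $\lr{\eta}\gtrsim\max(\lr{\zeta},\xim)$ while on $\operatorname{supp}\hat\chi_3$ the roles of $\eta$ and $\zeta$ are swapped.

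First I would integrate by parts in the phase using $\lr{\eta}^{-2N_2}\lr{D_z}^{2N_2}$ and $\lr{\zeta}^{-2N_1}\lr{D_y}^{2N_1}$ to generate the large denominators, and $\lr{y}^{-2\ell}\lr{D_\eta}^{2\ell}$, $\lr{z}^{-2\ell}\lr{D_\zeta}^{2\ell}$ to make the $(Y,Z)$-integrand absolutely integrable. On $\hat\chi_2$ take $(N_1,N_2)=(\ell,N)$; on $\hat\chi_3$ take $(N_1,N_2)=(N,\ell)$. Estimating the resulting derivatives $|\lr{D_z}^{2N}\lr{D_y}^{2\ell}\lr{D_\eta}^{2\ell}\lr{D_\zeta}^{2\ell}\dif_x^\be\dif_\xi^\al F|$ with $F=b(X+Y)(e^{\psi(X+Y)-\psi(X+Z)}-1)$ uses the Gevrey-type symbol estimates of Lemma \ref{lem:seikei} together with the bound $|\psi(X+Y)|+|\psi(X+Z)|\leq C\lr{\eta}^{\tika}$ valid on $\operatorname{supp}\hat\chi_2$ (and analogously on $\hat\chi_3$); the factor $\lr{\eta}^{2\delta\ell}$ coming from $\xi$-derivatives is absorbed into the parenthesis via the elementary inequality \eqref{eq:komaba}, producing a bound of the displayed form.

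The core step, which is also the main obstacle, is the choice of $N$ that converts $N$-dependent growth into exponential decay. Setting $2N=c_1\lr{\eta}^{(1-\delta)/(1+\varep)}$ with $c_1$ small, the key factor
\[
A^{2N}\lr{\eta}^{-2N}\bigl(C\lr{\eta}^{\tika+\delta}+(2N)^{1+\varep}\lr{\eta}^\delta+(2N)^s\bigr)^{2N}=\Bigl(\tfrac{AC\lr{\eta}^{\tika+\delta}}{\lr{\eta}}+\tfrac{A(2N)^{1+\varep}}{\lr{\eta}^{1-\delta}}+\tfrac{A(2N)^s}{\lr{\eta}}\Bigr)^{2N}
\]
is bounded by $Ce^{-c\lr{\eta}^{(1-\delta)/(1+\varep)}}$ once $c_1$ and $\mu$ are small. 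This requires $\tika<1-\delta$ to kill the first summand and $s(1-\delta)<1$ to kill the third; both conditions are built into the setup ($\kappa>\tika$ and $s\kappa<1$).

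Finally I would trade the remaining $\lr{\eta}^{(\tika+\delta)|\al+\be|}$ prefactor against a sliver of the exponential decay, as in \eqref{eq:pean}--\eqref{eq:pean:b}, to obtain the Gevrey-$(1+\varep)/(1-\delta)$ factor $|\al+\be|^{(1+\varep)|\al+\be|/(1-\delta)}$; integrate the remaining $\lr{y}^{-2\ell}\lr{z}^{-2\ell}\lr{\zeta}^{-2\ell}$ (resp.\ $\lr{\eta}^{-2\ell}\lr{y}^{-2\ell}\lr{z}^{-2\ell}$) weight, which is finite for $\ell>(n+1)/2$; and use $\xim\leq C\lr{\eta}$ on $\operatorname{supp}\hat\chi_2$ (resp.\ $\xim\leq C\lr{\zeta}$ on $\operatorname{supp}\hat\chi_3$) to convert the decay into $e^{-c_1\xim^{(1-\delta)/(1+\varep)}}$. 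Summing the two contributions yields the lemma.
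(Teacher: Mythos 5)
Your proposal follows the paper's proof essentially step for step: the same split $1-\tilde\chi_1=\hat\chi_2+\hat\chi_3$ based on whether $\lr{\eta}$ or $\lr{\zeta}$ dominates, the same choice $(N_1,N_2)=(\ell,N)$ or $(N,\ell)$ per region, the same calibration $2N\sim c_1\lr{\eta}^{(1-\delta)/(1+\varep)}$ (resp. $\lr{\zeta}$), and the same use of \eqref{eq:komaba} and \eqref{eq:pean}--\eqref{eq:pean:b} to absorb the $\delta$-weighted and $|\al+\be|$-dependent factors into the exponential decay. This is a correct reconstruction of the paper's argument.
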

 We choose ${\bar\varepsilon}>0$ such that $1-\delta>(1+{\bar\varepsilon})\tika$  and set $\baka=(1-\delta)/(1+{\bar\varepsilon})$. With $\bas=(1+{\bar\varepsilon})/(1-\delta)$ ($\bas>s$) we finish the proof of Proposition \ref{pro:syu:AA}.
%

\subsection{Composition $(be^{\psi})\#a$ }

\begin{proposition}
\label{pro:apsiomega} Assume $\psi\in S_{\delta}^{\lr{s}}(\xim^{\tika})$ and $a\in S^{\lr{s}}_{\delta}(\xim^{m_2})$, $b\in S^{\lr{s}}_{\delta}(\xim^{m_1})$ where $1-\delta>\tika$ and $(1-\delta)s<1$. Then for any $p\in \N$ we have
\[
(be^{\psi})\#a=\sum_{|\al+\be|<p}\frac{(-1)^{|\be|}}{(2i)^{|\al+\be|}\al!\be!}a^{(\be)}_{(\al)}(be^{\psi})^{(\al)}_{(\be)}+r_pe^{\psi}+R_p
\]
where $r_p\in \mu^{\tika p} S^{\lr{s}}_{\delta}\big(\xim^{m_1+m_2-(1-\delta -\tika)p}\big)$  and $R_p\in S_{0,0}^{(\bar s)}(e^{-c\xim^{\baka}})$ 
with some ${\bar s}>1$, $\baka$ and $c>0$  satisfying $\bas \tika<1$ and $\baka>\tika$.
 For $a\#(be^{\psi})$ similar assertion holds, where $(-1)^{|\be|}$ is replaced by $(-1)^{|\al|}$.
\end{proposition}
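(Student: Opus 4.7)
The strategy mirrors that of Proposition \ref{pro:syu:AA}, preceded by a Taylor expansion of $a(X+Z)$ to order $p$ in $Z$. Starting from
\[
(be^{\psi})\#a(X)=\int e^{-2i\sigma(Y,Z)}b(X+Y)e^{\psi(X+Y)}a(X+Z)\,dY\,dZ,
\]
I would write $a(X+Z)=\sum_{|\gamma|<p}\frac{Z^{\gamma}}{\gamma!}\dif^{\gamma}a(X)+p\sum_{|\gamma|=p}\frac{Z^{\gamma}}{\gamma!}\int_0^1(1-\theta)^{p-1}\dif^{\gamma}a(X+\theta Z)\,d\theta$ with $\gamma=(\be,\al)$, and for each principal monomial $|\gamma|<p$ use the identities $z^{\be}e^{-2i\sigma}=(2i)^{-|\be|}D_{\eta}^{\be}e^{-2i\sigma}$ and $\zeta^{\al}e^{-2i\sigma}=(-2i)^{-|\al|}D_y^{\al}e^{-2i\sigma}$ to transfer the factor $Z^{\gamma}$ onto $be^{\psi}$ via integration by parts in $Y$; Fourier inversion in $(Y,Z)$ then collapses the integral to a pointwise product giving the explicit sum $\sum_{|\al+\be|<p}\frac{(-1)^{|\be|}}{(2i)^{|\al+\be|}\al!\be!}a^{(\be)}_{(\al)}(be^{\psi})^{(\al)}_{(\be)}$.

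After applying the same transfer to the $Z^{\gamma}$ of the Taylor remainder, there remain finitely many oscillatory integrals of the form
\[
\mathcal R_p^{\gamma}(X)=c_{\gamma}\int_0^1(1-\theta)^{p-1}\int e^{-2i\sigma(Y,Z)}(be^{\psi})^{(\al)}_{(\be)}(X+Y)(\dif^{\gamma}a)(X+\theta Z)\,dY\,dZ\,d\theta
\]
with $|\gamma|=p$. I would insert the cut-off $\hat\chi_0=\chi(\lr{\eta}\xim^{-1})\chi(\lr{\zeta}\xim^{-1})\chi(|y|/6)\chi(\xim^{\delta}|z|/6)$ and decompose $1=\hat\chi_0+\hat\chi_1+(1-\hat\chi)$ exactly as in Proposition \ref{pro:syu:AA}. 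On the support of $\hat\chi_0$, factor $e^{\psi(X+Y)}=e^{\psi(X)}e^{\Psi(X,Y)}$ with $\Psi$ controlled by Lemma \ref{lem:sakura}. The weight gain $\xim^{-(1-\delta-\tika)p}$ comes from the $p$ derivatives on $be^{\psi}$: by Corollary \ref{cor:moriyama}, each derivative of $be^{\psi}$ loses $\xim^{-1+\delta}$ in the symbol weight while contributing only an additional $\xim^{\tika}$ from differentiating the exponent. The extra $\mu^{\tika p}$ factor is extracted from the constraint $|z|\le 3\xim^{-\delta}$ on $\hat\chi_0$ combined with $\xim^{-1}\le\mu$; running the $L^N\Phi^{-N}M^{\ell}\Theta^{-\ell}$ integration by parts with $2N\asymp\Phi^{1/2(1+\varep)}$ and applying Lemma \ref{lem:kanta} exactly as in Proposition \ref{pro:syu:AA} produces $r_pe^{\psi(X)}$ with $r_p\in\mu^{\tika p}S^{\lr{s}}_{\delta}(\xim^{m_1+m_2-(1-\delta-\tika)p})$.

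On the tails $\hat\chi_1$, $\hat\chi_2$, $\hat\chi_3$ (with $\hat\chi_2, \hat\chi_3$ the subdivisions of $1-\hat\chi$ used in the proof of Lemma \ref{lem:syu:b}), the arguments of Lemmas \ref{lem:syubu} and \ref{lem:syu:b} apply with only cosmetic modifications: the extra polynomial factor coming from $(\dif^{\gamma}a)(X+\theta Z)$ is absorbed by choosing the number of integrations by parts optimally ($2N\asymp\xim^{(1-\delta)/(1+\varep)}$, $2N\asymp\lr{\eta}^{(1-\delta)/(1+\varep)}$, $2N\asymp\lr{\zeta}^{(1-\delta)/(1+\varep)}$ respectively), and the resulting exponential decay dominates the growth $e^{\psi}\le e^{C\xim^{\tika}}$ because $\tika<1-\delta$ and $s(1-\delta)<1$; this produces the residual $R_p\in S_{0,0}^{(\bas)}(e^{-c\xim^{\baka}})$ with $\bas=(1+\varep)/(1-\delta)$ and $\baka=(1-\delta)/(1+\varep)>\tika$. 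The expansion of $a\#(be^{\psi})$ follows by exchanging $Y$ and $Z$ in the oscillatory integral, which replaces $(-1)^{|\be|}$ by $(-1)^{|\al|}$. The main obstacle is the careful bookkeeping that extracts the $\mu^{\tika p}$ factor: every one of the transferred $z$-powers on $\hat\chi_0$ must be handled separately and combined, through the Lemma \ref{lem:kanta} splitting, with the $(2N)$-optimization and the lower bound $\xim\ge\mu^{-1}$, while keeping all estimates uniform in $\mu$.
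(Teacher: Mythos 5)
Your plan follows the same architecture as the paper's proof (Taylor expansion of $a$ at $X$ in $Z$, transfer of the monomials $Z^{\ga}$ to derivatives of $be^{\psi}$ by integration by parts against $e^{-2i\sigma}$, cutoff decomposition, then the $L^N\Phi^{-N}M^{\ell}\Theta^{-\ell}$ integration-by-parts machine). However there is a substantive error in the cutoff and the supporting lemma you invoke, and it would break the main estimate.

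You propose the Prop.~\ref{pro:syu:AA} cutoff ${\hat\chi}_0=\chi(\lr{\eta}\xim^{-1})\chi(\lr{\zeta}\xim^{-1})\chi(|y|/6)\chi(\xim^{\delta}|z|/6)$ and invoke Lemma~\ref{lem:sakura}. But in Prop.~\ref{pro:apsiomega} one factors $e^{\psi(X+Y)}=e^{\psi(X)}e^{\Psi(X,Y)}$ with $\Psi(X,Y)=\psi(X+Y)-\psi(X)$, which depends on $Y$, not on $Z$ or $Y+Z$ as in Prop.~\ref{pro:syu:AA}. To get $|\Psi(X,Y)|\le C\xim^{\tika}g_X^{1/2}(Y)$ with $g_X^{1/2}(Y)$ bounded on the main region, the $\xim^{\delta}$-scaled localization must be on $y$, i.e. ${\tilde\chi}=\chi(\xim^{\delta}|y|/6)\chi(|z|/6)$ as the paper does, and the correct estimate is Lemma~\ref{lem:kanpa}, not Lemma~\ref{lem:sakura}. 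With your cutoff, $|y|\le 12$ gives only $g_X^{1/2}(Y)\lesssim\xim^{\delta}$, which is unbounded in $\xi$, so the bound $|\Psi(X,Y)|\le C\xim^{\tika}$ on $\operatorname{supp}{\hat\chi}_0$ fails and the exponential factor $e^{\Psi}$ cannot be controlled.

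A secondary point: your explanation of the $\mu^{\tika p}$ gain as coming from ``the constraint $|z|\le 3\xim^{-\delta}$'' is misattributed (and tied to the wrong cutoff). In the paper the small-$\mu$ gain arises from absorbing the factors $\xim^{\tika}g_X^{1/2}$, produced by differentiating $e^{\Psi}$, into the exponential decay $e^{-c\Phi^{1/2(1+\varep)}}$ generated by the $L^N\Phi^{-N}$ integration by parts, via the inequality $\Phi^{1/2(1+\varep)}\geq c\,\mu^{-(1-\delta)/(1+\varep)}\xim^{\tika}g_X^{1/2}$ (the analogue of \eqref{eq:tetu}), combined with $\mu^{(1-\delta)/(1+\varep)}\leq\mu^{\tika}$ for small $\varep$. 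Aside from these two issues, the treatment of the tails ${\hat\chi}_1$, ${\hat\chi}_2$, ${\hat\chi}_3$ and the final identification of $\bas$ and $\baka$ matches the paper's Lemmas~\ref{lem:syubu:b} and~\ref{lem:syu:c}.
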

\noindent
Proof of Proposition \ref{pro:apsiomega}:
Write
\begin{align*}
(b e^{\psi})\# a=\int e^{-2i\sigma(Y,Z)}b(X+ Y)e^{\psi(X+Y)}a(X+Z)dYdZ
\end{align*}
with $Y=(y,\eta)$, $Z=(z,\zeta)$ and replace $a(X+Z)$ by its Taylor expansion
\begin{align*}
\sum_{|\al|<p}\frac{1}{\al!}\dif_X^{\al}a(X)Z^{\al}
+\sum_{|\al|=p}\frac{p}{\al!}\int_0^1(1-\theta)^{p-1}\dif_X^{\al}a(X+\theta Z)d\theta\cdot Z^{\al}\\
=\sum_{|\al|<p}\frac{1}{\al!}\dif_X^{\al}a(X)Z^{\al}+R_p
\end{align*}
where
\[
R_p=\sum_{|\al|=p}\frac{p}{\al!}\int_0^1(1-\theta)^{p-1}\dif_X^{\al}a(X+\theta Z))d\theta \cdot Z^{\al}.
\]
Here note that the first term on the left-hand side yeilds
\begin{equation}
\label{eq:yunohi}
\begin{split}
\sum_{|\al|<p}\frac{1}{\al!}\int e^{-2i\sigma(Y,Z)}b(X+Y)e^{\psi(X+Y)}\dif_X^{\al}a(X)Z^{\al}dYdZ\\
=\sum_{|\mu+\nu|<p}\frac{(-1)^{|\nu|}}{(2i)^{|\mu+\nu|}\mu!\nu!}\big(\dif_{\xi}^{\mu}\dif_x^{\nu}(b(X)e^{\psi(X)})\big)\dif_x^{\mu}\dif_{\xi}^{\nu}a(X).
\end{split}
\end{equation}
Consider the remainder term
\begin{align*}
\int e^{-2i\sigma(Y,Z)}b(X+ Y)e^{\psi(X+Y)}R_pdYdZ
=p\sum_{|\mu+\nu|=p}C_{\mu,\nu}\int_0^1(1-\theta)^{p-1}\\
\times \int e^{-2i\sigma(Y,Z)}\dif_{\eta}^{\mu}\dif_y^{\nu}\big(b(X+ Y)e^{\psi(X+Y)}\big) \dif_{\xi}^{\nu}\dif_x^{\mu}a(X+\theta Z)d\theta dYdZ.
\end{align*}
with $
C_{\mu,\nu}=(-1)^{|\nu|}/((2i)^{|\mu+\nu|}\mu!\nu!)$. 
Denoting 
\[
F=\dif_y^{\nu}\dif_{\eta}^{\mu}\big(b(X+Y)e^{\psi(X+Y)}\big)\dif_{\xi}^{\nu}\dif_x^{\mu}a(X+\theta Z)
\]
and ${\hat\chi}=\chi(\lr{\eta}\xim^{-1})\chi(\lr{\zeta}\xim^{-1})$, ${\tilde \chi}=\chi(\xim^{\delta}|y|/6)\chi(|z|/6)$ we write $\int e^{-2i\sigma(Y,Z)}FdYdZ$ as
\begin{align*}
e^{\psi(X)}\Big(\int e^{-2i\sigma(Y,Z)}Fe^{-\psi(X)}\big\{{\tilde\chi}{\hat \chi}+(1-{\tilde\chi}){\hat \chi}\big\}\Big)dYdZ\\
+\int e^{-2i\sigma(Y,Z)}F(1-{\hat\chi})dYdZ.
\end{align*}
Denote ${\hat\chi}_0={\tilde \chi}{\hat \chi}$ as before. 
\begin{lemma}
\label{lem:kanpa}
Let $\Psi(X,Y)=\psi(X+Y)-\psi(X)$ then  
on the support of ${\hat \chi}_0$ one has  
\[
|\Psi(X,Y)|
\leq C\xim^{\tika}g_X^{1/2}(Y).
\]
Assume $a_i\in S^{\lr{s}}_{\delta}(m_i)$, $i=1,2$ then 
\begin{align*}
|\dif_{(x,z)}^{\be}\dif_{(\xi,\zeta)}^{\al}\big(a_1(X+Y)e^{\Psi(X,Y)}a_2(X+Z){\hat\chi}_0\big)|\leq CA^{|\al+\be|}
\xim^{-|\al|+\delta|\be|}\\
\times\big(\xim^{\tika}g_X^{1/2}(Y)+|\al+\be|^{1+\varep}+|\al+\be|^s\xim^{-\delta}\big)^{|\al+\be|}
m_1m_2e^{\Psi}.
\end{align*}
\end{lemma}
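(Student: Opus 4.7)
The plan is to reduce both assertions to derivative estimates on $\Psi$ and on $e^\Psi$, then Leibniz-expand the full product. For the first assertion, Taylor's theorem gives
\[
\Psi(X,Y)=\int_0^1\!\big(y\cdot\partial_x\psi(X+tY)+\eta\cdot\partial_\xi\psi(X+tY)\big)\,dt.
\]
On $\mathrm{supp}\,{\hat\chi}_0$ the cutoffs force $\xim^\delta|y|$ and $\xim^{-1}|\eta|$ to be bounded and $\lr{\xi+t\eta}_\mu\sim\xim$ uniformly in $t\in[0,1]$, so $\psi\in S_\delta^{\lr{s}}(\xim^{\tika})$ yields $|\partial_x\psi(X+tY)|\leq C\xim^{\tika+\delta}$ and $|\partial_\xi\psi(X+tY)|\leq C\xim^{\tika-1}$. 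Inserting these bounds produces $|\Psi|\leq C\xim^{\tika}(\xim^\delta|y|+\xim^{-1}|\eta|)\leq C\xim^{\tika}g_X^{1/2}(Y)$. The same Taylor identity applied to $\partial_x^\be\partial_\xi^\al\psi$ (which obeys the $S_\delta^{\lr{s}}(\xim^{\tika})$ estimates one order higher) delivers
\[
|\partial_x^\be\partial_\xi^\al\Psi|\leq CA^{|\al+\be|}\big(|\al+\be|^{1+\varep}+|\al+\be|^s\xim^{-\delta}\big)^{|\al+\be|}\xim^{\tika}g_X^{1/2}(Y)\,\xim^{-|\al|+\delta|\be|}
\]
for every $\al,\be$.

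Next I would convert these bounds on $\Psi$ into bounds on $e^\Psi$ by reproducing the inductive argument of Lemma \ref{lem:seikei} (see also Lemma \ref{lem:sakura}): setting $\partial_\xi^i\partial_x^j e^\Psi=\om^i_j e^\Psi$ and inducting on $i+j$, with the quantity $B\lam$ appearing in \eqref{eq:adati} replaced by $\xim^{\tika}g_X^{1/2}(Y)$, one obtains
\[
|\partial_x^\be\partial_\xi^\al e^\Psi|\leq CA^{|\al+\be|}\big(\xim^{\tika}g_X^{1/2}(Y)+|\al+\be|^{1+\varep}+|\al+\be|^s\xim^{-\delta}\big)^{|\al+\be|}\xim^{-|\al|+\delta|\be|}e^\Psi.
\]

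Finally I would Leibniz-expand $\partial_{(x,z)}^\be\partial_{(\xi,\zeta)}^\al\big(a_1(X+Y)e^{\Psi}a_2(X+Z){\hat\chi}_0\big)$ across the four factors. Since $\lr{\xi+\eta}_\mu\sim\lr{\xi+\zeta}_\mu\sim\xim$ on $\mathrm{supp}\,{\hat\chi}_0$, the hypothesis $a_i\in S_\delta^{\lr{s}}(m_i)$ supplies the Gevrey bounds for $a_1(X+Y)$ and $a_2(X+Z)$ with weights $m_1$ and $m_2$ respectively, while the cutoff itself obeys $|\partial_{(x,y,z)}^\be\partial_{(\xi,\eta,\zeta)}^\al{\hat\chi}_0|\leq CA^{|\al+\be|}|\al+\be|^{(1+\varep)|\al+\be|}\xim^{-|\al|+\delta|\be|}$ as recorded earlier in the proof of Proposition \ref{pro:syu:AA}. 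Distributing the multi-index and repeatedly applying Lemma \ref{lem:kanta} to collapse the resulting sums assembles all Gevrey factors into the single expression $\big(\xim^{\tika}g_X^{1/2}(Y)+|\al+\be|^{1+\varep}+|\al+\be|^s\xim^{-\delta}\big)^{|\al+\be|}$, yielding the claimed estimate. The main obstacle is precisely this last bookkeeping step: merging four separate Gevrey estimates into one product via Lemma \ref{lem:kanta} while correctly tracking the $\xim^{-|\al|+\delta|\be|}$ prefactors across every split of $\al+\be$; modulo this routine but delicate accounting, no new idea is required beyond what has already been used in Lemmas \ref{lem:iike} and \ref{lem:sakura}.
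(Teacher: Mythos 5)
Your reconstruction follows precisely the pattern the paper uses (implicitly, since Lemma \ref{lem:kanpa} is stated without a written proof) in the sibling Lemmas \ref{lem:iike} and \ref{lem:sakura}: a Taylor expansion of $\Psi$ to pick up the $g_X^{1/2}(Y)$ factor, the inductive argument of Lemma \ref{lem:seikei} with $B\lambda$ replaced by $\xim^{\tika}g_X^{1/2}(Y)$ to pass from $\Psi$ to $e^{\Psi}$, and a Leibniz expansion over the four factors with the Gevrey cutoff estimate already recorded in the proof of Proposition \ref{pro:syu:AA}. This is essentially the paper's intended argument.

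One small inaccuracy worth noting: you invoke Lemma \ref{lem:kanta} to \emph{merge} the four Gevrey factors after Leibniz, but Lemma \ref{lem:kanta} goes the other way (it splits a combined factor into a product). The merge you actually need is the elementary monotonicity bound $\big(A+n_i^{1+\varep}+n_i^sB\big)^{n_i}\leq\big(A+|\al+\be|^{1+\varep}+|\al+\be|^sB\big)^{n_i}$ for each piece, followed by multiplying and absorbing the multinomial coefficient into $A^{|\al+\be|}$; Lemma \ref{lem:kanta} is not the right tool here (it is used in the opposite direction later in the proofs, to separate the $N$-dependent part of the integrand). This does not affect the correctness of the conclusion, only the attribution of the final bookkeeping step.
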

Denoting 
\[
{\tilde F}=Fe^{-\psi(X)}=\dif_y^{\nu}\dif_{\eta}^{\mu}(b(X+Y)e^{\Psi(X,Y)})\dif_{\xi}^{\nu}\dif_x^{\mu}a(X+\theta Z)
\]
and  applying Lemma \ref{lem:kanpa} we obatin
\begin{corollary}
\label{cor:kasai}One has 
\begin{align*}
|\dif_y^{\tbe}\dif_{\eta}^{\tal}\dif_{(x,z)}^{\be}\dif_{(\xi,\zeta)}^{\al}{\tilde F}{\hat\chi}_0|
\leq CA^{|\al+\be+\tal+\tbe|}\xim^{\delta|\be+\tbe|-|\al+\tal|}\\
\times \big(\xim^{\tika}+|\tal+\tbe|^{1+\varep}+|\tal+\tbe|^s\xim^{-\delta}\big)^{|\tal+\tbe|}\\
\times \big(\xim^{\tika}g_X^{1/2}(Y)+|\al+\be|^{1+\varep}+|\al+\be|^s\xim^{-\delta}\big)^{|\al+\be|}\\
\times (\xim^{\tika}+p^{1+\varep}+p^s\xim^{-\delta})^p(p^{1+\varep}+p^{s}\xim^{-\delta})^p\xim^{-(1-\delta)p}\xim^{m_1+m_2}e^{\Psi}.
\end{align*}
\end{corollary}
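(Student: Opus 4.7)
The plan is to expand $\dif_y^{\tbe}\dif_{\eta}^{\tal}\dif_{(x,z)}^{\be}\dif_{(\xi,\zeta)}^{\al}({\tilde F}\hat\chi_0)$ via the Leibniz rule into a sum over distributions of the outer derivatives among the three factors $\dif_y^{\nu}\dif_{\eta}^{\mu}(b(X+Y)e^{\Psi(X,Y)})$, $\dif_{\xi}^{\nu}\dif_x^{\mu}a(X+\theta Z)$ and $\hat\chi_0$. Since $\hat\chi_0$ is $x$-independent, the first factor is independent of $(z,\zeta)$, and the second is independent of $(y,\eta)$, only certain assignments contribute; nevertheless, every resulting summand is of the same analytic shape, so it suffices to bound a generic one.

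For additional multi-indices landing on $be^{\Psi}$, I would repeat the computation behind Lemma \ref{lem:kanpa} with $a_2\equiv 1$, observing that $y,\eta$-derivatives act on $b(X+Y)$ and $\psi(X+Y)$ exactly like $x,\xi$-derivatives. This produces a bound of the shape
\[
CA^{q}\big(\xim^{\tika}g_X^{1/2}(Y)+q^{1+\varep}+q^{s}\xim^{-\delta}\big)^{q}\,\xim^{-(\text{freq.})+\delta(\text{pos.})}\,\xim^{m_1}e^{\Psi},
\]
where $q$ is the total derivative count on this factor including the internal $|\mu+\nu|=p$. Derivatives landing on $a(X+\theta Z)$ are estimated directly by $a\in S_{\delta}^{\lr{s}}(\xim^{m_2})$ and give the analogous bound without the $g_X^{1/2}(Y)$ summand. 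Derivatives of $\hat\chi_0$ obey a Gevrey-$(1+\varep)$ bound with the anisotropic scaling $\xim^{\delta},\,1,\,\xim^{-1},\,\xim^{-1}$ for $\dif_y,\dif_z,\dif_{\eta},\dif_{\zeta}$ respectively (and $\xim^{-1}$ for $\dif_{\xi}$), and are absorbed into the combinatorial factors without difficulty.

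To produce the factorized form stated in the corollary, I would apply Lemma \ref{lem:kanta} iteratively: first to separate the internal block of $p=|\mu|+|\nu|$ derivatives from the external block, then to split the external derivatives into a $|\tal+\tbe|$-block acting on $be^{\Psi}$ and an $|\al+\be|$-block. The weight $\xim^{-(1-\delta)p}$ arises by multiplying $\xim^{-|\mu|+\delta|\nu|}$ (from the $b$-side internal derivatives) by $\xim^{-|\nu|+\delta|\mu|}$ (from the $a$-side), while the overall external weight $\xim^{\delta|\be+\tbe|-|\al+\tal|}$ comes from collecting frequency/position derivatives across the three factors. The $g_X^{1/2}(Y)$ term is kept only in the $|\al+\be|$-block as in the statement; in the $p$- and $|\tal+\tbe|$-blocks we use $g_X(Y)\le C$ on $\mathrm{supp}\,\hat\chi_0$ to replace $\xim^{\tika}g_X^{1/2}(Y)$ by $C\xim^{\tika}$.

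The main obstacle is the multi-index bookkeeping: one must organize the Leibniz expansion so each elementary derivative is assigned to exactly one factor, and then verify that applying Lemma \ref{lem:kanta} twice cleanly yields the triple-product combinatorial structure. No new analytic input beyond Lemma \ref{lem:kanpa}, Lemma \ref{lem:kanta}, the defining estimate for $S_{\delta}^{\lr{s}}$, and the Gevrey bound for $\chi$ is needed.
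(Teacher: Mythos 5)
There is a genuine gap in your intermediate estimate for the derivatives of $b(X+Y)e^{\Psi(X,Y)}$. You assert that
\[
CA^{q}\big(\xim^{\tika}g_X^{1/2}(Y)+q^{1+\varep}+q^{s}\xim^{-\delta}\big)^{q}\cdots
\]
bounds all $q$ derivatives on this factor, including the $y,\eta$-derivatives (the external $\tal,\tbe$-block and the internal $p$-block). That is not a valid upper bound. The factor $g_X^{1/2}(Y)$ is a gain available only for $x,\xi$-derivatives of $e^{\Psi}$, because $\Psi(X,Y)=\psi(X+Y)-\psi(X)$ is a difference in the $X$-variable: an $x$ or $\xi$ derivative of $\Psi$ yields $\dif\psi(X+Y)-\dif\psi(X)$, which Taylor expansion in $Y$ bounds with a factor $g_X^{1/2}(Y)$ as in Lemma \ref{lem:kanpa}. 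A $y$ or $\eta$ derivative, in contrast, hits only $\psi(X+Y)$ and produces $\dif\psi(X+Y)$ directly---no difference, hence no $g_X^{1/2}(Y)$. Concretely, at $Y=0$ your bound for one $y$-derivative of $be^{\Psi}$ reduces to $C(1+\xim^{-\delta})\xim^{\delta}\xim^{m_1}$, while $\dif_{y}(be^{\Psi})|_{Y=0}=(\dif_{x}b)(X)+b(X)(\dif_{x}\psi)(X)$ contains a term of size $\xim^{m_1}\xim^{\tika}\xim^{\delta}$; the claimed bound therefore fails as soon as $\mu$ is small.

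The correct estimate for the $y,\eta$-block follows from the identity $\dif_y^{\nu}\dif_{\eta}^{\mu}\big(b(X+Y)e^{\Psi(X,Y)}\big)=e^{-\psi(X)}\big[\dif_x^{\nu}\dif_{\xi}^{\mu}(be^{\psi})\big]\big|_{X+Y}$ together with Lemma \ref{lem:seikei} applied at the shifted point $X+Y$; on ${\rm supp}\,{\hat\chi}_0$ (where $\lr{\xi+\eta}_\mu\sim\xim$ and $\psi(X+Y)\le C\xim^{\tika}$) this gives the factor $(\xim^{\tika}+q^{1+\varep}+q^{s}\xim^{-\delta})^{q}$ with $\xim^{\tika}$, \emph{not} $\xim^{\tika}g_X^{1/2}(Y)$---exactly the $\tal,\tbe$- and $p$-blocks in the corollary. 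The $g_X^{1/2}(Y)$ gain belongs only in the $\al,\be$-block, as in Lemma \ref{lem:kanpa}. Your later step of ``using $g_X(Y)\le C$ to replace $\xim^{\tika}g_X^{1/2}(Y)$ by $C\xim^{\tika}$'' happens to reproduce the statement, but it does not repair the estimate it was supposedly derived from; a valid inequality cannot be obtained by weakening an invalid one. Once you substitute the correct intermediate bound, the rest of the plan (Leibniz expansion, Lemma \ref{lem:kanta} for the block factorization, the $S_{\delta}^{\lr{s}}$ estimate on $a(X+\theta Z)$, and the Gevrey bound on ${\hat\chi}_0$) goes through.
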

%
Repeating similar arguments proving Lemma \ref{lem:syubu} one has
\begin{lemma}
\label{lem:syubu:b} We have 
\begin{align*}
\Big|\dif_x^{\be}\dif_{\xi}^{\al}\int e^{-2i\sigma(Y,Z)}R_pe^{-\psi(X)}{\hat\chi}dYdZ\Big|\\
\leq  \mu^{\tika p}C_pA^{|\al+\be|}(|\al+\be|^{1+\varep}+|\al+\be|^{s}\xim^{-\delta})^{|\al+\be|}\\
\times \xim^{m_1+m_2-(1-\delta-\tika)p}\xim^{-|\al|+\delta|\be|}.
\end{align*}
\end{lemma}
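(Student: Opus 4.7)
The argument parallels Lemma~\ref{lem:syubu}, the new ingredient being the bookkeeping forced by the $p$-dependent factors supplied by Corollary~\ref{cor:kasai} from the $p$-th order Taylor remainder. First I split $\hat\chi=\hat\chi_0+\hat\chi_1$ with $\hat\chi_0={\tilde\chi}{\hat\chi}$, pull the factor $e^{\psi(X)}$ outside the oscillatory integral, and handle each piece separately.

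For the $\hat\chi_0$ part I insert the operators $L^N\Phi^{-N}M^\ell\Theta^{-\ell}$ (with the same $L,M,\Phi,\Theta$ as in~\eqref{eq:iseki}) under the integral sign, apply Corollary~\ref{cor:kasai} to estimate $L^N M^\ell(\dif_x^{\beta}\dif_\xi^{\alpha}\tilde F)$ pointwise (with $\tilde F=Fe^{-\psi(X)}$), and split the combinatorial factors of the shape $(\xim^\tika g_X^{1/2}(Y)+k^{1+\varep}+k^s\xim^{-\delta})^k$ across the indices $2N$ and $|\alpha+\beta|$ by Lemma~\ref{lem:kanta}. Choosing $2N=\bar c\,\Phi^{1/(2(1+\varep))}$ with $\bar c$ and $\mu$ small, the calculation leading to~\eqref{eq:itiban:a} produces exponential decay $e^{-c\,\Phi^{1/(2(1+\varep))}}$; by the analog of~\eqref{eq:tetu} with $Y$ in place of $Z$ (which is available because $g_X^{1/2}(Y)\le Cg_X^{1/(2(1+\varep))}(Y)$ on the support of $\hat\chi_0$), this decay absorbs both the $e^\Psi$ factor and the $(\xim^\tika g_X^{1/2}(Y)+|\alpha+\beta|^{1+\varep}+|\alpha+\beta|^s\xim^{-\delta})^{|\alpha+\beta|}$ growth, leaving only $\Theta^{-\ell}\Phi^{-\ell}$ for the $Y,Z$ integration.

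The genuinely new feature is the extra factor
\[
(\xim^\tika+p^{1+\varep}+p^s\xim^{-\delta})^p(p^{1+\varep}+p^s\xim^{-\delta})^p\xim^{-(1-\delta)p}
\]
from Corollary~\ref{cor:kasai}. Bounding the second bracket by a $p$-dependent constant and taking the leading $\xim^{\tika p}$ from the first, I obtain $C_p\,\xim^{-(1-\delta-\tika)p}$, which supplies the announced $\xim$-decay. The $\mu^{\tika p}$ prefactor arises because after the integration by parts the residual gain in the integrand is really $\xim^{-(1-\delta)p}$; writing $\xim^{-(1-\delta)p}=\xim^{-\tika p}\cdot\xim^{-(1-\delta-\tika)p}$ and applying the inequality $\xim^{-1}\le\mu$ (which holds because $\xim\ge\mu^{-1}$) converts $\xim^{-\tika p}$ into $\mu^{\tika p}$.

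For the $\hat\chi_1=(1-\tilde\chi)\hat\chi$ piece I integrate by parts against $(\xim^{2\delta}|D_\eta|^2+|D_\zeta|^2)^N/(\xim^{2\delta}|y|^2+|z|^2)^N$, mirroring~\eqref{eq:suika:b}, and choose $2N=c_1\,\xim^{(1-\delta)/(1+\varep)}$ to extract the decay $e^{-c\xim^{(1-\delta)/(1+\varep)}}$, which dwarfs all polynomial and $p$-dependent growth. The main technical obstacle is exactly the bookkeeping of the interplay between the $p$-factors coming from the Taylor remainder, the exponential decay in $\Phi$, and the outer polynomial weights: one must ensure that the final exponent in $\xim$ is sharp ($m_1+m_2-(1-\delta-\tika)p$) and that the conversion $\xim^{-\tika p}\to\mu^{\tika p}$ produces the announced $\mu^{\tika p}$ prefactor with no loss in the remaining $\xim$-decay.
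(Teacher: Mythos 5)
There is a genuine gap in the treatment of the $\hat\chi_0$ piece: you cannot use ``the same $L,M,\Phi,\Theta$ as in~\eqref{eq:iseki}''. In the proof of Lemma~\ref{lem:syubu} the phase in the exponential is $\Psi=\psi(X+Y)-\psi(X+Y+\theta Z)$, so $|\Psi|\lesssim\xim^{\tika}g_X^{1/2}(Z)$ and the exponential growth of $e^{\Psi}$ lives in the $Z$-variable. That is why the non-oscillatory decay there comes from $\Phi^{-N}$ with $\Phi=1+\xim^{2(1-\delta)}g_X(Z)$, generated by $L$ acting in the dual variables $(\eta,y)$, and why the cutoff is $\tilde\chi=\chi(|y|/6)\chi(\xim^{\delta}|z|/6)$. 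In Proposition~\ref{pro:apsiomega} the situation is exactly reversed: $\Psi(X,Y)=\psi(X+Y)-\psi(X)$ so $|\Psi|\lesssim\xim^{\tika}g_X^{1/2}(Y)$ by Lemma~\ref{lem:kanpa}, and Corollary~\ref{cor:kasai} confirms that the combinatorial factors involving $g_X^{1/2}(Y)$ sit on the $(x,z,\xi,\zeta)$-derivatives while the $(y,\eta)$-derivatives only produce $\xim^{\tika}$. Accordingly the paper swaps the cutoff to $\tilde\chi=\chi(\xim^{\delta}|y|/6)\chi(|z|/6)$, and the integration by parts must be taken against the mirror operators: one differentiates in $(\zeta,z)$ with the rational weight $\Phi'=1+\xim^{2}|y|^2+\xim^{-2\delta}|\eta|^2=1+\xim^{2(1-\delta)}g_X(Y)$, not against $\Phi=1+\xim^{2(1-\delta)}g_X(Z)$.

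If you keep $L^N\Phi^{-N}$ as you wrote, the exponential smallness you generate, $e^{-c\,\Phi^{1/(2(1+\varep))}}$, is a function of $Z$ and it simply does not interact with $e^{\Psi}\sim e^{C\xim^{\tika}g_X^{1/2}(Y)}$: on the part of the support of $\hat\chi_0$ where $g_X(Z)$ is small but $g_X(Y)\sim 1$ the product $e^{\Psi}e^{-c\Phi^{1/(2(1+\varep))}}$ is of size $e^{C\xim^{\tika}}$ and is not absorbed. Your parenthetical invocation of ``the analog of~\eqref{eq:tetu} with $Y$ in place of $Z$'' actually requires $\Phi'$, not $\Phi$; as stated your two sentences contradict each other. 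The fix is precisely to take $N=N(Y,\xi)$ and $\Phi'$ (and, symmetrically, $\Theta'=1+g_X(Z)$ for the tail integration), after which the rest of the bookkeeping you describe, including the split of the combinatorial factors via Lemma~\ref{lem:kanta} and the treatment of the $p$-dependent block $(\xim^{\tika}+p^{1+\varep}+p^{s}\xim^{-\delta})^{p}(p^{1+\varep}+p^{s}\xim^{-\delta})^{p}\xim^{-(1-\delta)p}$, does go through and produces $C_p\,\xim^{-(1-\delta-\tika)p}$. The extraction of the extra $\mu^{\tika p}$ as you describe it is also not quite coherent (you cannot use the single factor $\xim^{-(1-\delta)p}$ both to cancel $\xim^{\tika p}$ and to donate $\mu^{\tika p}$); the cleaner route is to split $\xim^{-(1-\delta)p}=\xim^{-(1-\delta-\tika)p}\,\xim^{-\tika p}$, pair $\xim^{-\tika p}$ against the leading $\xim^{\tika p}$, and obtain $\mu^{\tika p}$ from the genuinely subleading pieces using $\xim^{-1}\le\mu$; but that is a secondary issue compared to the $Y/Z$ confusion, which as written breaks the proof.
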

Write 
\begin{align*}
1-{\hat\chi}=(1-{\hat\chi})\chi(\lr{\eta}^{-1}\lr{\zeta})+(1-{\hat\chi})(1-\chi(\lr{\eta}^{-1}\lr{\zeta}))
={\hat\chi}_2+{\hat\chi}_3
\end{align*}
and repeating similar arguments proving Lemma \ref{lem:syu:b} we have
\begin{lemma}
\label{lem:syu:c}There exist $C>0, A$ and $c>0$ such that
\begin{align*}
\Big|\dif_x^{\be}\dif_{\xi}^{\al}\int e^{-2i\sigma(Y,Z)}R_p(1-{\hat\chi})dYdZ\Big|\\
\leq  CA^{|\al+\be|}(|\al+\be|^{(1+\varep)/(1-\delta)}\big)^{|\al+\be|}e^{-c\xim^{(1-\delta)/(1+\varep)}}.
\end{align*}
\end{lemma}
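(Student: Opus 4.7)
The plan is to mirror the proof of Lemma \ref{lem:syu:b} with the few adjustments forced by the Taylor remainder $R_p$. I first split $1-\hat\chi = \hat\chi_2+\hat\chi_3$ as indicated just before the statement; on $\mathrm{supp}\,\hat\chi_2$ one has $\xim\leq C\lr{\eta}$ and $\lr{\zeta}\leq C\lr{\eta}$, and on $\mathrm{supp}\,\hat\chi_3$ these hold with $\eta$ and $\zeta$ exchanged. It suffices to estimate each piece separately and then convert the exponential decay in $\lr{\eta}$ (resp.\ $\lr{\zeta}$) into decay in $\xim$ via these comparisons.

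For the $\hat\chi_2$-contribution I will perform the same two integrations by parts as in Lemma \ref{lem:syu:b}: insert $\lr{y}^{-2\ell}\lr{z}^{-2\ell}\lr{D_\zeta}^{2\ell}\lr{D_\eta}^{2\ell}$ to obtain $(y,z)$-integrability, and $\lr{\eta}^{-2N_2}\lr{\zeta}^{-2N_1}\lr{D_z}^{2N_2}\lr{D_y}^{2N_1}$ to harvest $\eta$-decay, choosing $N_1=\ell$ and $2N_2 = c_1\lr{\eta}^{(1-\delta)/(1+\varep)}$ with $c_1$ small. Because $R_p$ has already been integrated by parts in the proof of Proposition \ref{pro:apsiomega} into a sum over $|\mu+\nu|=p$ of $\dif_y^\nu\dif_\eta^\mu(b(X+Y)e^{\psi(X+Y)})\cdot \dif_\xi^\nu\dif_x^\mu a(X+\theta Z)$, Lemma \ref{lem:seikei} together with the $S_\delta^{\lr{s}}$-bounds on $a$ and $b$ gives an estimate analogous to \eqref{eq:copii}, enlarged by the fixed factor $(\lr{\eta}^{\tika+\delta}+p^{1+\varep}\lr{\eta}^\delta+p^s)^p$ coming from the $p$ inner derivatives. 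Since $p$ is fixed and $\tika+\delta<1$, this factor is at most $C_p\lr{\eta}^{(\tika+\delta)p}$ and will be swallowed by the exponential decay of the next step.

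That decay is produced exactly as in \eqref{eq:bove}--\eqref{eq:komaba}: the choice of $N_2$ and the conditions $\tika<1-\delta$, $s(1-\delta)<1$ yield $A^{2N_2}\lr{\eta}^{-2N_2}\bigl(\lr{\eta}^{\tika+\delta}+(2N_2)^{1+\varep}\lr{\eta}^\delta+(2N_2)^s\bigr)^{2N_2}\leq Ce^{-c\lr{\eta}^{(1-\delta)/(1+\varep)}}$ for $\mu$ sufficiently small. The polynomial growth $\lr{\eta}^{\delta|\al+\be|}$ from the outer $\dif_x^\be\dif_\xi^\al$ is converted via \eqref{eq:pean}--\eqref{eq:pean:b} into the desired factorial factor $A^{|\al+\be|}|\al+\be|^{(1+\varep)|\al+\be|/(1-\delta)}$ at the cost of a slightly weaker exponential. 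The $\hat\chi_3$-case is handled verbatim after exchanging $(\eta,y,N_2)\leftrightarrow(\zeta,z,N_1)$, now with $2N_1=c_1\lr{\zeta}^{(1-\delta)/(1+\varep)}$. Finally, the $(y,z)$-integration with $\ell>(n+1)/2$ converges, and $\xim\leq C\lr{\eta}$ (resp.\ $\xim\leq C\lr{\zeta}$) on the respective supports promotes the exponential factor to $e^{-c\xim^{(1-\delta)/(1+\varep)}}$.

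The main obstacle is purely bookkeeping: tracking how the $p$ extra derivatives hidden in $R_p$ interact with the weights produced by Lemma \ref{lem:seikei} and by the two outer differentiations. This is resolved by the structural inequalities $\tika<1-\delta$ and $s(1-\delta)<1$ already relied on throughout the appendix, which ensure that every exponent of $\lr{\eta}$ appearing along the way stays strictly below $(1-\delta)/(1+\varep)$ and is therefore absorbed into the exponential decay.
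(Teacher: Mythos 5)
Your proof is correct and takes essentially the same route the paper does: the paper's own treatment of Lemma~\ref{lem:syu:c} consists of a one-line remark that one splits $1-\hat\chi=\hat\chi_2+\hat\chi_3$ and ``repeats similar arguments proving Lemma~\ref{lem:syu:b},'' which is exactly what you carry out — same two rounds of integration by parts, same choice $N_1=\ell$ with $2N_2\sim c_1\lr{\eta}^{(1-\delta)/(1+\varep)}$ (and symmetrically on $\operatorname{supp}\hat\chi_3$), same conversion of the polynomial $\lr{\eta}^{\delta|\al+\be|}$ into the factorial weight via \eqref{eq:pean}--\eqref{eq:pean:b}. Your observation that the $p$ inner derivatives in $R_p$ only contribute a fixed polynomial factor in $\lr{\eta}$, harmless against the sub-exponential decay, is the only extra bookkeeping the paper silently assumes, and you handle it correctly.
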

From \eqref{eq:yunohi} and Lemmas \ref{lem:syubu:b} and \ref{lem:syu:c} we end the proof of Proposition \ref{pro:apsiomega}.

\subsection{Composition $(pe^{-\xim^{\baka}})\# e^{\psi}$}

\begin{proposition}
\label{pro:syu:AAbis}Assume $\psi\in S_{\delta}^{\lr{s}}(\xim^{\tika})$ where $\tika<1-\delta$, $s(1-\delta)<1$ and $p\in S_{0,0}^{(\bas)}(e^{-c\xim^{\baka}})$  with $\bas \tika<1$ and $\baka>\tika$, $c>0$.  Then we have $p\#e^{\psi}, e^{\psi}\#p\in S_{0,0}^{\lr{s^*}}(e^{-c\xim^{\kappa^*}})$ with some $s^*>1$, $\kappa^*>0$ and $c>0$ such that $s^*\tika<1$ and $\kappa^*>\tika$.
\end{proposition}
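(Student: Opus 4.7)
The plan is to mimic the proof of Proposition \ref{pro:syu:AA}, exploiting that the strong exponential decay $e^{-c\xim^{\baka}}$ carried by $p$ eats up the $e^{C\xim^{\tika}}$-growth produced by $e^{\psi}$, thanks to $\baka>\tika$. I will sketch $p\#e^{\psi}$; the case $e^{\psi}\#p$ is entirely symmetric. Write
\[
(p\#e^{\psi})(X)=\int e^{-2i\sigma(Y,Z)}\,p(X+Y)\,e^{\psi(X+Z)}\,dY\,dZ,
\]
with $Y=(y,\eta)$, $Z=(z,\zeta)$, and introduce the same frequency cutoff $\hat\chi=\chi(\lr{\eta}\xim^{-1})\chi(\lr{\zeta}\xim^{-1})$ and spatial cutoff $\tilde\chi=\chi(\xim^{\delta}|y|/6)\chi(|z|/6)$ used in Proposition \ref{pro:syu:AA}. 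Split the integrand via $1={\hat\chi}_0+(1-\tilde\chi)\hat\chi+{\hat\chi}_2+{\hat\chi}_3$, where ${\hat\chi}_0=\tilde\chi\hat\chi$ and ${\hat\chi}_2,{\hat\chi}_3$ decompose $1-\hat\chi$ according to whether $\lr{\eta}$ or $\lr{\zeta}$ dominates.

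\textbf{Main region ${\hat\chi}_0$.} On the support of ${\hat\chi}_0$ one has $\lr{\xi+\eta}_{\mu},\lr{\xi+\zeta}_{\mu}\sim\xim$, so $|p(X+Y)|\le Ce^{-c'\xim^{\baka}}$ and, by Lemma \ref{lem:seikei} applied to $\psi$, $|e^{\psi(X+Z)}|\le Ce^{C'\xim^{\tika}}$; since $\baka>\tika$ the product is bounded by $Ce^{-c''\xim^{\baka}}$. I would then perform the same $L,M$ integration by parts in $Y,Z$ as in the proof of Proposition \ref{pro:syu:AA} to produce the localizing factor $\Phi^{-\ell}\Theta^{-\ell}$, while each $\dif_x^{\be}\dif_{\xi}^{\al}$-derivative applied to $p(X+Y)$ costs $|\al+\be|^{\bas|\al+\be|}$ and each one applied to $e^{\psi(X+Z)}$ costs $(|\al+\be|^s\xim^{-\delta})^{|\al+\be|}$ by Lemma \ref{lem:seikei}. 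After integrating out $Y,Z$ against $\Theta^{-\ell}\Phi^{-\ell}$ this region contributes to $\dif_{\xi}^{i}\dif_x^{j}(p\#e^{\psi})$ a bound of the shape $CA^{i+j}\bigl((i+j)^{s^*}\bigr)^{i+j}\xim^{-i}e^{-c\xim^{\kappa^*}}$ for some $\kappa^*$ slightly below $\baka$ and $s^*$ slightly above $\max(\bas,s)$.

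\textbf{Exterior regions.} On the support of $(1-\tilde\chi)\hat\chi$ one has $|y|\ge c\,\xim^{-\delta}$ or $|z|\ge c$, so integrating by parts against $(\xim^{-2\delta}|D_\eta|^2+|D_\zeta|^2)^N(\xim^{2\delta}|z|^2+|y|^2)^{-N}$ produces rapid decay in $Y,Z$ without affecting the $e^{-c\xim^{\baka}}$ factor carried by $p$. On ${\hat\chi}_2$ one has $\lr{\zeta}\lesssim\lr{\eta}$ and $\xim\lesssim\lr{\eta}$, so $|p(X+Y)|\le Ce^{-c'\lr{\eta}^{\baka}}$ and $|e^{\psi(X+Z)}|\le Ce^{C'\lr{\eta}^{\tika}}$, and repeating the Lemma \ref{lem:syu:b} integration by parts (weights $\lr{y}^{-2\ell}\lr{z}^{-2\ell}\lr{D_{\zeta}}^{2\ell}\lr{D_{\eta}}^{2\ell}$ combined with $\lr{\eta}^{-2N}\lr{D_z}^{2N}$ and $2N\sim c_1\lr{\eta}^{1/\bas}$) yields exponential decay in $\lr{\eta}$, hence in $\xim$. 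Region ${\hat\chi}_3$ is the symmetric one: here the product $p\cdot e^{\psi}$ is a priori larger than $1$, but integration by parts in $y$ with $\lr{\zeta}^{-2N}\lr{D_y}^{2N}$ costs $(\lr{\zeta}^{-1}(2N)^{\bas})^{2N}$; optimizing $2N\sim c_1\lr{\zeta}^{1/\bas}$ produces a factor $e^{-c\lr{\zeta}^{1/\bas}}$, and $1/\bas>\tika$ (since $\bas\tika<1$) makes this beat the $e^{C\lr{\zeta}^{\tika}}$ growth coming from $\psi(X+Z)$.

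\textbf{Main obstacle.} The delicate point is purely bookkeeping: after the optimal choice of the number of integration by parts in each region, the combinatorial factors inherited from $p$ (Gevrey $\bas$) and from $e^{\psi}$ (effective Gevrey $s$ via Lemma \ref{lem:seikei}) must combine into a single Gevrey index $s^*$ still satisfying $s^*\tika<1$, and the surviving exponential weight $e^{-c\xim^{\kappa^*}}$ must have $\kappa^*>\tika$. Both inequalities rely on the strict gaps $\baka>\tika$, $\bas\tika<1$, $s(1-\delta)<1$; fixing $\varep>0$ small enough that $(1+\varep)\tika<\min(\baka,\,1-\delta)$ and $(1+\varep)\bas\tika<1$, one may take $\kappa^*=\baka/(1+\varep)$ and $s^*=(1+\varep)\max(\bas,\,s/(1-\delta))$, which delivers the claimed $p\#e^{\psi}\in S_{0,0}^{\lr{s^*}}(e^{-c\xim^{\kappa^*}})$.
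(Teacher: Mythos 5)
Your overall strategy is the right one and is essentially the paper's: split the double integral with a frequency cutoff $\hat\chi=\chi(\lr{\eta}\xim^{-1})\chi(\lr{\zeta}\xim^{-1})$, use that $e^{-c\xim^{\baka}}$ from $p$ kills the $e^{C\xim^{\tika}}$ growth from $e^{\psi}$ on the support of $\hat\chi$ (since $\baka>\tika$), and on $1-\hat\chi$ further split according to whether $\lr{\eta}$ or $\lr{\zeta}$ dominates and integrate by parts. Two remarks about the machinery: your four-way split with the spatial cutoff $\tilde\chi$ and the $L,M$ operators of Proposition \ref{pro:syu:AA} is more than is needed here. In the $\hat\chi$ region the paper simply uses $\lr{\eta}^{-2\ell}\lr{\zeta}^{-2\ell}\lr{D_z}^{2\ell}\lr{D_y}^{2\ell}\lr{y}^{-2\ell}\lr{z}^{-2\ell}\lr{D_\zeta}^{2\ell}\lr{D_\eta}^{2\ell}$ with a \emph{fixed} $\ell$, because the net exponential factor $e^{-c\xim^{\baka}+C\xim^{\tika}}$ already absorbs every power of $\xim$ that appears; the Taylor expansion that produces the $\xim^{-(1-\delta-\tika)}$ gain in Proposition \ref{pro:syu:AA} is unnecessary.

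However, there are two concrete flaws in your bookkeeping. First, in your $\eta$-dominant region ($\lr{\zeta}\lesssim\lr{\eta}$, $\xim\lesssim\lr{\eta}$) you claim $|p(X+Y)|\le Ce^{-c'\lr{\eta}^{\baka}}$ and then optimize with $2N\sim c_1\lr{\eta}^{1/\bas}$. Both points are off: $p$ decays in $\lr{\xi+\eta}_\mu$, not in $\lr{\eta}$ (for instance $\eta\approx -\xi$ gives large $|\eta|$ with no decay), so the exponential decay in $\eta$ in this region has to be \emph{manufactured} by integration by parts; and the correct scale for the $\lr{\eta}^{-2N}\lr{D_z}^{2N}$ integration by parts (hitting $e^{\psi(X+Z)}$, whose $z$-derivatives cost roughly $(2N)^{(1+\varep)(2N)}\lr{\eta}^{\delta(2N)}$) is $2N=c_1\lr{\eta}^{(1-\delta)/(1+\varep)}$, giving decay $e^{-c\lr{\eta}^{(1-\delta)/(1+\varep)}}$. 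Your choice $2N\sim c_1\lr{\eta}^{1/\bas}$ need not even produce decay when $\bas<(1+\varep)/(1-\delta)$. The $\lr{\zeta}^{1/\bas}$ optimization belongs in the $\zeta$-dominant region, where $\lr{D_y}^{2N}$ hits the Gevrey-$\bas$ symbol $p$.

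Second, your final formulas do not verify the claimed inequalities. You propose $s^*=(1+\varep)\max(\bas,\,s/(1-\delta))$, but $(s/(1-\delta))\tika$ can exceed $1$: e.g.\ $\delta=0.8$, $\tika=0.16$, $s=3$ (all admissible, since $\tika<1-\delta$ and $s(1-\delta)=0.6<1$) gives $s\tika/(1-\delta)=2.4$, so $s^*\tika>1$ and the conclusion fails. The correct Gevrey index from the $\hat\chi$ region tracks the absorption of $\xim$-powers into $e^{-c\xim^{\baka}}$, namely $\hat s=\max\{(\tika+\delta)/\baka,\,1+\varep+\delta/\baka,\,\bas\}$, for which $\hat s\,\tika<1$ does hold using $\baka>\tika$, $\tika<1-\delta$ and $\bas\tika<1$; the other regions contribute $\max\{(1+\varep)/(1-\delta),\bas\}$ and $\max\{\bas(\tika+\delta),\bas\delta+1+\varep,\bas\}$. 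Similarly $\kappa^*$ must be the \emph{minimum} of the exponents arising in the several regions, $\kappa^*=\min\{(1-\delta)/(1+\varep),\,1/\bas,\,\baka\}$, not $\baka/(1+\varep)$; the latter overstates the decay whenever $\baka>1-\delta$ or $\baka>(1+\varep)/\bas$.
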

\begin{corollary}
\label{cor:piko}
Assume $\psi\in S_{\delta}^{\lr{s}}(\xim^{\tika})$ and $p\in S_{0,0}^{(\bas)}(e^{-c\xim^{\baka}})$  with $\bas \tika<1$ and $\baka>\tika$, $c>0$ then $p\#e^{\psi}$, $e^{\psi}\#p\in  S_{\delta}(\xim^l)$ for any $l\in \R$.
\end{corollary}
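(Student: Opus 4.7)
The plan is to adapt the oscillatory-integral technique of Propositions \ref{pro:syu:AA} and \ref{pro:apsiomega}. Write
\[
(p\# e^{\psi})(X)=\int e^{-2i\sigma(Y,Z)}\, p(X+Y)\,e^{\psi(X+Z)}\,dYdZ
\]
and introduce the usual cutoffs ${\hat\chi}=\chi(\lr{\eta}\xim^{-1})\chi(\lr{\zeta}\xim^{-1})$, ${\tilde\chi}=\chi(\xim^{\delta}|y|/6)\chi(|z|/6)$, splitting the identity as $1={\hat\chi}_0+{\hat\chi}_1+(1-{\hat\chi})$ with ${\hat\chi}_0={\tilde\chi}{\hat\chi}$ and ${\hat\chi}_1=(1-{\tilde\chi}){\hat\chi}$.

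On the central piece ${\hat\chi}_0$ one has $\lr{\xi+\eta}\sim\xim$ and $\lr{\xi+\zeta}\sim\xim$, so that by Lemma \ref{lem:seikei} and the exponential decay of $p$,
\[
|p(X+Y)\,e^{\psi(X+Z)}|\leq C\,e^{-c\lr{\xi+\eta}^{\baka}}e^{C\lr{\xi+\zeta}^{\tika}}\leq C\,e^{-c'\xim^{\baka}}
\]
for some $0<c'<c$, thanks to $\baka>\tika$. Applying the $L$-$M$ operators from the proof of Proposition \ref{pro:syu:AA} and using the pointwise derivative bounds on $p$ (via $S_{0,0}^{(\bas)}(e^{-c\xim^{\baka}})$) and on $e^{\psi(X+Z)}$ (via Lemma \ref{lem:seikei} in the form of Lemma \ref{lem:sakura}) yields a bound of the shape $CA^{|\alpha+\beta|}(|\alpha+\beta|^{s^*})^{|\alpha+\beta|}e^{-c''\xim^{\kappa^*}}$ for $\dif_x^{\beta}\dif_\xi^{\alpha}$ of the ${\hat\chi}_0$-piece.

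The other two pieces are handled exactly as in Propositions \ref{pro:syu:AA} and \ref{pro:apsiomega}. On ${\hat\chi}_1$ the integration by parts via $(\xim^{2\delta}|D_\eta|^2+|D_\zeta|^2)^N(\xim^{2\delta}|z|^2+|y|^2)^{-N}$ gains $\xim^{-2(1-\delta)N}$; choosing $2N=c_1\xim^{(1-\delta)/(1+\varep)}$ the resulting combinatorial factors (dominated by $e^{C\xim^{\tika}}$ from $e^\psi$ and by constants from $p$) lose to this gain because $\tika<1-\delta$ and $s(1-\delta)<1$, yielding exponential decay in $\xim$. On $1-{\hat\chi}$, after the change of variables $Z\mapsto Z-Y$ to recover the original arguments, one splits $1-{\hat\chi}$ according to whether $\lr{\zeta}\leq C\lr{\eta}$ or the reverse and repeats the integration-by-parts argument of Lemma \ref{lem:syu:b}; here $p$ still decays at least like $e^{-c\lr{\xi+\eta}^{\baka}/C}$ on its region and $e^\psi$ grows at most like $e^{C\lr{\xi+\zeta}^{\tika}}$, so the polynomial gain again wins.

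The main obstacle is bookkeeping of the Gevrey indices. Fix $\varep>0$ small enough that $(1+\varep)\tika<1$ and $(1+\varep)\tika<\baka$, and take
\[
\kappa^*=\min\!\bigl(\baka,\ (1-\delta)/(1+\varep)\bigr),\qquad s^*=\max\!\bigl(\bas,\ (1+\varep)/(1-\delta)\bigr).
\]
Then $\kappa^*>\tika$ and $s^*\tika<1$, and collecting the bounds from the three regions gives $p\#e^{\psi}\in S_{0,0}^{(s^*)}(e^{-c\xim^{\kappa^*}})$; the upgrade from a bound with one power of $|\alpha+\beta|^{s^*}$ to a $\lr{s^*}$-type estimate is done exactly as in the proof of Corollary \ref{cor:piko}, by absorbing $\xim^{\tika|\alpha+\beta|}$-type factors into $e^{-c\xim^{\kappa^*}}$ at the cost of $|\alpha+\beta|^{(1+\delta)|\alpha+\beta|/\kappa^*}$. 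The composition $e^{\psi}\#p$ is treated identically after exchanging the roles of $Y$ and $Z$ in the oscillatory integral. Finally, Corollary \ref{cor:piko} follows at once since $e^{-c\xim^{\kappa^*}}\lesssim \xim^{l}$ for every $l\in\R$.
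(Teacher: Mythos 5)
Your proof is correct and takes essentially the same route as the paper: establish a bound of type $S_{0,0}^{(s^*)}(e^{-c\xim^{\kappa^*}})$ for $p\#e^{\psi}$ (and $e^{\psi}\#p$) via the oscillatory-integral/cutoff machinery, then observe that exponential decay dominates any polynomial weight. Note, however, that the bulk of your argument simply re-derives Proposition~\ref{pro:syu:AAbis}, which the paper has already proved and which is exactly why the statement is labelled a corollary: the paper's intended proof is just to invoke that proposition and then note the embedding $S_{0,0}^{(s^*)}(e^{-c\xim^{\kappa^*}})\subset S_{\delta}(\xim^l)$, since for each fixed $\alpha,\beta$ one has $CA^{|\alpha+\beta|}|\alpha+\beta|^{s^*|\alpha+\beta|}e^{-c\xim^{\kappa^*}}\leq C_{\alpha\beta,l}\,\xim^{-|\alpha|+\delta|\beta|}\xim^{l}$. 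Two small remarks on the redundant material: the ${\tilde\chi}$ subdivision into ${\hat\chi}_0$ and ${\hat\chi}_1$ (and the change of variables $Z\mapsto Z-Y$) are borrowed from the proof of Proposition~\ref{pro:syu:AA}, where the Taylor expansion forces a finer analysis near the diagonal; the paper's proof of Proposition~\ref{pro:syu:AAbis} works directly with ${\hat\chi}$ and $1-{\hat\chi}$ since $p$ already decays exponentially in $\xim$. And the final "upgrade to a $\lr{s^*}$-type estimate" via Remark~\ref{re:sara} is unnecessary here: $S_{\delta}(\xim^l)$ imposes no Gevrey uniformity in $\alpha,\beta$, so one passes directly from $S_{0,0}^{(s^*)}(e^{-c\xim^{\kappa^*}})$ to $S_{\delta}(\xim^l)$ without any intermediate $\lr{\cdot}$-class.
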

\begin{remark}
\label{re:sara}\rm As we observed in the proof of Corollary \ref{cor:piko}  if $p\in S_{0,0}^{(s^*)}(e^{-c\xim^{\kappa^*}})$ with some $c>0$  then for any $0<c'<c$ and $\delta\geq 0$ one has  $p\in S_{\delta}^{\lr{\tis}}(e^{-c'\xim^{\kappa^*}})$ with $\tis=s^*+(1+\delta)/\kappa^*$.  
\end{remark}

\noindent
Proof of Proposition \ref{pro:syu:AAbis}:  
Consider  
\begin{align*}
p\# e^{-\psi}=\int e^{-2i\sigma(Y,Z)}p(X+Y)e^{\psi(X+Z)}dYdZ
\end{align*}
where $Y=(y,\eta)$, $Z=(z,\zeta)$. Let ${\hat\chi}$ be as before.  Write
\begin{align*}
\int e^{-2i\sigma(Y,Z)}p(X+Y)e^{\psi(X+Z)}\{{\hat \chi}+(1-{\hat \chi})\}dYdZ
\end{align*}
and consider
\begin{align*}
\dif_x^{\be}\dif_{\xi}^{\al} \int e^{-2i\sigma(Y,Z)}p(X+Y)e^{\psi(X+Z)}{\hat \chi}dYdZ\\
=
\int e^{-2i\sigma(Y,Z)}\lr{\eta}^{-2\ell}\lr{\zeta}^{-2\ell}\lr{D_z}^{2\ell}\lr{D_y}^{2\ell}\\
\times \lr{y}^{-2\ell}\lr{z}^{-2\ell}\lr{D_{\zeta}}^{2\ell}\lr{D_{\eta}}^{2\ell}\dif_x^{\be}\dif_{\xi}^{\al}F{\hat\chi}dYdZ
\end{align*}
where $
F=p(X+Y)e^{\psi(X+Z)}{\hat\chi}$. Since $\psi\in S_{\delta}^{\lr{s}}(\xim^{\tika})$ and $p\in S^{(\bas)}_{0,0}(e^{-c\xim^{\baka}})$ it is clear that
\begin{align*}
|\dif_{\xi}^{\al}\dif_x^{\be}\psi|\leq CA^{|\al+\be|}\xim^{\tika}(|\al+\be|^{1+\varep}+|\al+\be|^s\xim^{-\delta})^{|\al+\be|}\xim^{\delta|\al+\be|},\\
|\dif_{\xi}^{\al}\dif_x^{\be}p|\leq CA^{|\al+\be|}(|\al+\be|^{1+\varep}+|\al+\be|^{\bas}\xim^{-\delta})^{|\al+\be|}\xim^{\delta|\al+\be|}e^{-c\xim^{\baka}}.
\end{align*}
Without restrictions we may assume $\bas\geq s$ and therefore 
\[
|\lr{\eta}^{-2\ell}\lr{\zeta}^{-2\ell}\lr{D_z}^{2\ell}\lr{D_y}^{2\ell}
 \lr{y}^{-2\ell}\lr{z}^{-2\ell}\lr{D_{\zeta}}^{2\ell}\lr{D_{\eta}}^{2\ell}\dif_x^{\be}\dif_{\xi}^{\al}F{\hat\chi}|
\]
is bounded by
\begin{align*}
 C_{\ell}A^{8\ell+|\al+\be|}\lr{\eta}^{-2\ell}\lr{\zeta}^{-2\ell} \lr{y}^{-2\ell}\lr{z}^{-2\ell}
\big(\xim^{\tika}
+(8\ell+|\al+\be|)^{1+\varep}\\
+(8\ell+|\al+\be|)^\bas\xim^{-\delta})^{8\ell+|\al+\be|}
 \xim^{\delta(8\ell+|\al+\be|)}e^{-c'\xim^{\baka}+C\xim^{\tika}}.
\end{align*}
From Lemma \ref{lem:kanta} this can be estimated by
\begin{align*}
C_{\ell}A_1^{|\al+\be|}\lr{\eta}^{-2\ell}\lr{\zeta}^{-2\ell} \lr{y}^{-2\ell}\lr{z}^{-2\ell}
\xim^{8(\tika+\delta)\ell}\\
\times 
 \big(\xim^{\tika+\delta}
+|\al+\be|^{1+\varep}\xim^{\delta}+|\al+\be|^\bas\big)^{|\al+\be|} 
e^{-c''\xim^{\baka}}
\end{align*}
with some $c''>0$. Here we note that
\begin{align*}
\big(\xim^{\tika+\delta}
+|\al+\be|^{1+\varep}\xim^{\delta}+|\al+\be|^{\bas}\big)^{|\al+\be|} 
e^{-c''\xim^{\baka}}\\
\leq CA^{|\al+\be|}|\al+\be|^{{\hat s}|\al+\be|}e^{-c_1\xim^{\baka}}
\end{align*}
where $
{\hat s}=\max\{(\tika+\delta)/\baka, 1+\varep+\delta/\baka, \bas\}$. 
Since $\tika<1-\delta$, $\baka>\tika$ and $\bas \tika<1$ assuming $\varep$ such that $(1+\varep)\tika<1-\delta$ it is clear that ${\hat s}\tika<1$. 
Choosing $\ell>(n+1)/2$ and recalling $\int \Theta^{-\ell}\Phi^{-\ell}dYdZ=C$  we conclude
\begin{equation}
\label{eq:renko}
\Big|\dif_x^{\be}\dif_{\xi}^{\al}\int e^{-2i\sigma(Y,Z)}F{\hat \chi}dYdZ\Big|
\leq \ CA^{|\al+\be|}|\al+\be|^{{\hat s}|\al+\be|}e^{-c''\xim^{\baka}}.
\end{equation}
Denoting $F=p(X+Y)e^{\psi(X+Z)}$ consider  
\begin{equation}
\label{eq:Upac}
\begin{split}
\dif_x^{\be}\dif_{\xi}^{\al}\int e^{-2i\sigma(Y,Z)}F{\hat\chi}_idYdZ=\int e^{-2i\sigma(Y,Z)}\lr{\eta}^{-2N_2}\lr{\zeta}^{-2N_1}\\
\times \lr{D_z}^{2N_2}\lr{D_y}^{2N_1}
 \lr{y}^{-2\ell}\lr{z}^{-2\ell}\lr{D_{\zeta}}^{2\ell}\lr{D_{\eta}}^{2\ell}\dif_x^{\be}\dif_{\xi}^{\al}F{\hat\chi}_idYdZ
\end{split}
\end{equation}
where ${\hat\chi}_1=(1-{\hat\chi})\chi(\lr{\zeta}\lr{\eta}^{-1}/4)$ and  ${\hat\chi}_2=(1-{\hat\chi})(1-\chi(\lr{\zeta}\lr{\eta}^{-1}/4))$. Note that there is $C>0$ such that $\xim\leq C\lr{\eta}$ and $\lr{\zeta}\leq C\lr{\eta}$ on the support of ${\hat \chi}_1$ and hence $\lr{\xi+\zeta}\leq C \lr{\eta}$ there. Similarly on the support of ${\hat\chi}_2$ one has $\xim\leq C\lr{\zeta}$,  $\lr{\eta}\leq C\lr{\zeta}$ and $\lr{\xi+\zeta}\leq C\lr{\zeta}$.

For case ${\hat\chi}_1$ we choose $N_1=\ell$, $N_2=N$. Since $|\psi(X+Z)|\leq C\lr{\eta}^{\tika}$  on the support of ${\hat\chi}_1$ it is not difficult to see that 
\[
\big|\lr{\eta}^{-2N}\lr{\zeta}^{-2\ell}\lr{D_z}^{2N}\lr{D_y}^{2\ell}\lr{y}^{-2\ell}\lr{z}^{-2\ell}\lr{D_{\zeta}}^{2\ell}\lr{D_{\eta}}^{2\ell}\dif_x^{\be}\dif_{\xi}^{\al}F{\hat\chi}_1\big|
\]
is bounded by
\begin{equation}
\label{eq:copi}
\begin{split}
C_{\ell}A^{2N+|\al+\be|}\lr{\eta}^{-2N}\lr{\zeta}^{-2\ell}\lr{y}^{-2\ell}\lr{z}^{-2\ell}\lr{\eta}^{|m|+2\delta\ell+4\tika\ell}\\
\times (C\mu^{\tika}\lr{\eta}^{\tika+\delta}+(2N)^{1+\varep}\lr{\eta}^{\delta}+(2N)^s)^{2N}\\
\times 
 (C\mu^{\tika}\lr{\eta}^{\tika+\delta}+|\al+\be|^{1+\varep}\lr{\eta}^{\delta}+|\al+\be|^{\bas})^{|\al+\be|}
e^{C\lr{\eta}^{\tika}}
\end{split}
\end{equation}
since $\bas\geq s$. Here writing 
\begin{align*}
A^{2N}\lr{\eta}^{-2N}\big( C\mu^{\tika}\lr{\eta}^{\tika+\delta}+(2N)^{1+\varep}\lr{\eta}^{\delta}+(2N)^s\big)^{2N}\\
=\Big(\frac{AC\mu^{\tika} \lr{\eta}^{\tika+\delta}}{\lr{\eta}}+\frac{(2N)^{1+\varep}\lr{\eta}^{\delta}}{\lr{\eta}}+\frac{A(2N)^s}{\lr{\eta}}\Big)^{2N}
\end{align*}
we choose $2N=c_1 \lr{\eta}^{(1-\delta)/(1+\varep)}$ with small $c_1>0$ then the right-hand side is bounded by $Ce^{-c\lr{\eta}^{(1-\delta)/(1+\varep)}}$ because $s(1-\delta)<1$ and $\tika<1-\delta$. Thanks to \eqref{eq:pean} and \eqref{eq:pean:b} it follows that \eqref{eq:copi} is bounded by
\begin{align*}
C_{\ell}A_1^{|\al+\be|}\lr{\zeta}^{-2\ell}\lr{y}^{-2\ell}\lr{z}^{-2\ell}|\al+\be|^{{\hat s}_1|\al+\be|}
e^{-c_1\lr{\eta}^{(1-\delta)/(1+\varep)}}
\end{align*}
where ${\hat s}_1=\max\{(1+\varep)/(1-\delta), \bas\}$ and hence ${\hat s}_1\tika<1$ assuming $(1+\varep)\tika<1-\delta$ which holds for small $\varep$.

For case ${\hat\chi}_2$ we choose $N_1=N$, $N_2=\ell$ in \eqref{eq:Upac}. Since $p\in S_{0,0}^{(\bas)}(e^{-c\xim^{\baka}})$ it follows that 
\[
\big|\lr{\eta}^{-2\ell}\lr{\zeta}^{-2N}\lr{D_z}^{2\ell}\lr{D_y}^{2N}\lr{y}^{-2\ell}\lr{z}^{-2\ell}\lr{D_{\zeta}}^{2\ell}\lr{D_{\eta}}^{2\ell}\dif_x^{\be}\dif_{\xi}^{\al}F{\hat\chi}_2\big|
\]
 is bounded by 
 \begin{equation}
 \label{eq:fubuki}
 \begin{split}
 C_{\ell}A^{2N+|\al+\be|}\lr{\eta}^{-2\ell}\lr{\zeta}^{-2N}\lr{y}^{-2\ell}\lr{z}^{-2\ell}\lr{\zeta}^{|m|+2\delta\ell+2\tika \ell}\\
\times (2N)^{2\bas N}
 (C\lr{\zeta}^{\tika+\delta}+|\al+\be|^{1+\varep}\lr{\zeta}^{\delta}+|\al+\be|^\bas)^{|\al+\be|}
e^{C\lr{\zeta}^{\tika}}.
\end{split}
 \end{equation}
Choose $2N=c_1\lr{\zeta}^{1/\bas}$ then we have
 \[
 \lr{\zeta}^{-2N}(2N)^{2\bas N}=\big(c_1^{\bas}\big)^{2N}\leq e^{-c\lr{\zeta}^{1/\bas}}
 \]
taking $c_1>0$ small.  Here recall that $1/\bas>\tika$ by assumption. Noting 
\begin{align*}
\lr{\zeta}^{(\tika+\delta)|\al+\be|}e^{-c\lr{\zeta}^{1/\bas}}
\leq \big(|\al+\be|^{\bas (\tika+\delta)}\big)^{|\al+\be|}e^{-c'\lr{\zeta}^{1/\bas}}
\end{align*}
it is easy to see that \eqref{eq:fubuki} is bounded by
\begin{align*}
C_{\ell}A_1^{|\al+\be|}\lr{\eta}^{-2\ell}\lr{y}^{-2\ell}\lr{z}^{-2\ell}|\al+\be|^{{\hat s}_2|\al+\be|}e^{-c_1\lr{\zeta}^{1/\bas}}
\end{align*}
where ${\hat s}_2=\max\{\bas(\tika+\delta), \bas\delta+1+\varep,\bas\}$. 
Since $\bas\tika<1$ then $(\bas \delta+1+\varep)\tika<\delta+(1+\varep)\tika< 1$ for small $\varep>0$ and hence ${\hat s}_2\tika<1$. Setting
\[
\left\{\begin{array}{ll}s^*=\max\big\{{\hat s}_1, {\hat s}_2, s\big\},\\
\kappa^*=\min\big\{(1-\delta)/(1+\varep), 1/\bas, \baka\big\}
\end{array}\right.
\]
where $s^*\tika<1$ and $\kappa^*>\tika$ and recalling that $\xim\leq C\lr{\eta}$, $\xim\leq  C\lr{\zeta}$ on the support of ${\hat\chi}_1$ and ${\hat\chi}_2$  one can show 
\begin{equation}
\label{eq:renko:b}
\Big|\dif_x^{\be}\dif_{\xi}^{\al}\int e^{-2i\sigma(Y,Z)}F(1-{\hat \chi})dYdZ\Big|\\
\leq CA^{|\al+\be|}|\al+\be|^{s^*|\al+\be|}
e^{-c_1\xim^{\kappa^*}}.
\end{equation}
Therefore combining \eqref{eq:renko} and \eqref{eq:renko:b} we obtain Proposition \ref{pro:syu:AAbis}.
\subsection{Composition $e^{\lr{D}^{\kappa}}a(x)e^{-\lr{D}^{\kappa}}$}
\begin{proposition}
\label{pro:weyl:1gene} 
Assume $0< \kappa<1$ and
$a(x)\in {\mathcal G}_0^s(\R^n)$ with $s>1$ and $\kappa s< 1$. Then  
the operator $b(x,D)=e^{\lr{D}_{\mu}^{\kappa}}a(x)e^{-\lr{D}_{\mu}^{\kappa}}$ is a pseudodifferential operator with symbol  given by
\begin{equation}
\label{eq:iwama}
b(x,\xi) =a(x)+
\sum_{|\al|=1}
 D_x^{\al}a(x)
\big(\dif_{\xi}\lr{\xi}_{\mu}^{\kappa}   \big)^{\al}
 +q(x,\xi)+r(x,\xi)
\end{equation}
with $q\in S_{\delta}^{\lr{s}}(\xim^{-2+2\kappa})$ and 
$r(x,\xi)\in  S_{0,0}^{(s)}(e^{-c\xim^{1/s}})$ where $\delta=1-\kappa$. 
\end{proposition}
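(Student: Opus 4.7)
Following the pattern of the earlier propositions in this Appendix, I will proceed via an oscillatory-integral representation of $b(x,\xi)$, then split the integral through a Gevrey cut-off into a near-diagonal piece (which will supply $a(x)$, the first-order term, and the symbol-class remainder $q$) and a far-field piece (which will yield the exponentially decaying remainder $r$).

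\emph{Step 1 (oscillatory representation).} Since $a(x)$ (as multiplication) and $e^{\pm\xim^{\kappa}}$ (as Fourier multipliers) each depend on only one of the variables $(x,\xi)$, the composition formula of the pseudodifferential calculus gives
\[
b(x,\xi)=(2\pi)^{-n}\iint e^{-iz\zeta}\,e^{\Phi(\xi,\zeta)}\,a(x+z)\,dz\,d\zeta,
\qquad \Phi(\xi,\zeta):=\lr{\xi+\zeta}_\mu^{\kappa}-\xim^{\kappa}.
\]
The corresponding Weyl-symbol computation is entirely analogous and differs only by lower-order contributions that can be absorbed into $q$.

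\emph{Step 2 (near-diagonal analysis).} Let $\hat\chi=\chi(\lr{\zeta}\xim^{-1})$ with $\chi$ as in the proof of Proposition \ref{pro:syu:AA}. On $\mathrm{supp}\,\hat\chi$ one has $\lr{\xi+\zeta}_\mu\sim \xim$, and Taylor expansion around $\zeta=0$ gives
\[
e^{\Phi(\xi,\zeta)}=1+\sum_{|\al|=1}(\dif_\xi^{\al}\xim^{\kappa})\zeta^{\al}+\Psi(\xi,\zeta),
\]
with $\Psi$ vanishing to second order in $\zeta$. Inserting into the integral and integrating by parts in $z$ to convert $\zeta^{\al}$ into $D_x^{\al}a(x+z)$ will produce the leading terms $a(x)+\sum_{|\al|=1}D_x^{\al}a(x)(\dif_\xi\xim^{\kappa})^{\al}$ together with a remainder $q_1$ built from $\Psi$. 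Both the explicit quadratic Taylor coefficient of $\Phi$ (of order $\xim^{\kappa-2}$) and the quadratic contribution $\Phi_1^2/2$ from the expansion of $e^{\Phi}$ (of order $\xim^{2\kappa-2}$) are dominated by $\xim^{-2+2\kappa}$, and the derivatives $\dif_\xi^{\al}\dif_x^{\be}\Psi$ are controlled on $\mathrm{supp}\,\hat\chi$ by Lemma \ref{lem:seikei}-type bounds (with $\delta=1-\kappa$). Combined with the Gevrey-$s$ bound on $a$, this will yield $q_1\in S_\delta^{\lr{s}}(\xim^{-2+2\kappa})$; the differentiation bookkeeping is a direct adaptation of Corollary \ref{cor:kasai}.

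\emph{Step 3 (far-field decay).} On $\mathrm{supp}\,(1-\hat\chi)$ one has $\lr{\zeta}\gtrsim \xim$ and $|\Phi|\le C\lr{\zeta}^{\kappa}$. I will integrate by parts in $z$ via $\lr{\zeta}^{-2N}\lr{D_z}^{2N}e^{-iz\zeta}=e^{-iz\zeta}$ (to exploit the Gevrey-$s$ regularity of $a$) and in $\zeta$ via $|z|^{-2\ell}|D_\zeta|^{2\ell}$ (to produce an integrable $z$-factor). Using $|\dif_z^{\be}a(x+z)|\le CA^{|\be|}|\be|!^s$, the integrand will be bounded pointwise by
\[
CA^{2N}\,\lr{\zeta}^{-2N}(2N)^{2sN}\,e^{C\lr{\zeta}^{\kappa}},
\]
and the optimal choice $2N\sim c_1\lr{\zeta}^{1/s}$ will yield a factor $e^{-c\lr{\zeta}^{1/s}}$ that, because $\kappa s<1$, dominates $e^{C\lr{\zeta}^{\kappa}}$. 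Since $\lr{\zeta}\ge \xim/4$ on this region, this will deliver $r\in S_{0,0}^{(s)}(e^{-c\xim^{1/s}})$.

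\emph{Main obstacle.} The crux is the balance in Step 3 between the growth $e^{C\lr{\zeta}^{\kappa}}$ of the amplitude and the Gevrey-$s$ decay $(2N)^{2sN}\lr{\zeta}^{-2N}$ gained from integration by parts. The hypothesis $\kappa s<1$ is sharp: only then does optimising $2N\sim c_1\lr{\zeta}^{1/s}$ produce a factor beating $e^{C\lr{\zeta}^{\kappa}}$. A secondary, routine point is preserving the Gevrey-$s$ combinatorics through the successive differentiations of $e^{\Psi}$ in Step 2, which is handled exactly as in the proofs of Propositions \ref{pro:syu:AA} and \ref{pro:apsiomega}.
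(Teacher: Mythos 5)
Your overall plan mirrors the paper's: write $b$ as an oscillatory integral, Taylor-expand to extract $a(x)$ and the first-order term, split near/far diagonal with a Gevrey cut-off, and use the balance $\kappa s<1$ to defeat the exponential growth of the amplitude. The cosmetic differences (Taylor-expanding $e^{\Phi}$ instead of $a(x+y)$; left quantization rather than the Weyl formula which the paper reduces to a one-integral form by a change of variables) do not alter the substance.

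There is, however, one point where your plan underestimates the work required and misattributes where the hypothesis $\kappa s<1$ is used. You present Step~2 (near-diagonal) as ``secondary, routine'' combinatorics, with the crux only in Step~3 (far-field). That is not right. On $\mathrm{supp}\,\hat\chi$ one has $|\Phi(\xi,\zeta)|\lesssim \xim^{\kappa-1}|\zeta|\lesssim \lr{\zeta}^{\kappa}$, and since $\lr{\zeta}$ ranges up to $\sim \xim$ in that region, the amplitude $e^{\Phi}$ can be as large as $e^{C\xim^{\kappa}}$; a ``Lemma~\ref{lem:seikei}-type bound'' for $e^{\Psi}$ leaves this factor untouched, and a \emph{finite} number of integrations by parts in $z$ only gains polynomial decay $\lr{\zeta}^{-2\ell}$, which cannot absorb it. The paper resolves exactly this by first converting $a$ to its Fourier side via the second-order Taylor remainder, so that the exponential decay $|E_{\al}(\eta)|\le Ce^{-c\lr{\eta}^{1/s}}$ of the compactly supported Gevrey function (Lemma~\ref{le:weyl:5}) is available already in the near-diagonal estimate; the inequality \eqref{eq:aoki} then uses $\kappa+\delta=1$ and $\kappa s<1$ to absorb both the $\lr{\eta}^{|\gamma|}$ powers and the $e^{C\lr{\eta}^{\kappa}}$ growth into $e^{-c'\lr{\eta}^{1/s}}$. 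If you push your Step~2 through, you will be forced to either (i) integrate by parts in $z$ with an \emph{optimized} order $N\sim \lr{\zeta}^{1/s}$ (i.e., redo your Step~3 argument on the near-diagonal as well), or (ii) factor out the Fourier transform of $D_x^{\al}a$ and invoke its Gevrey decay, which is precisely the paper's route. Either way, the hypothesis $\kappa s<1$ enters Step~2 just as essentially as Step~3, and your sketch should say so explicitly; as written, the phrase ``Combined with the Gevrey-$s$ bound on $a$'' does not make the mechanism visible and, read literally as a pointwise bound on $D^{\al}a$, is insufficient.
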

\begin{proof} Write $\phi(\xi)=\lr{\xi}_{{\mu}}^{\kappa}$ then 
\begin{align*}
b(x,\xi)
=\int e^{-2i\sigma(Y,Z)}e^{\phi(\xi+\eta)-\phi(\xi+\zeta)}a(x+y+z)dYdZ.
\end{align*}
The change of variables $
{\tilde y}=y+z$, ${\tilde z}=z-y$, 
${\tilde \eta}=\eta-\zeta$, ${\tilde \zeta}=\zeta+\eta$ 
gives
\begin{equation}
\label{eqn:weyl:7}
\begin{split}
b(x,\xi)=(2\pi )^n
\int e^{-i{\tilde y}{\tilde\eta}}\
e^{\phi(\xi+\frac{{\tilde \eta}}{2})-\phi(\xi-\frac{{\tilde \eta}}{2})}\
a(x+{\tilde y})\
d{\tilde y}d{\tilde \eta}.
\end{split}
\end{equation}
 Insert the Taylor expansion of $a(X+Y)$ 
\begin{align*}
a(x+y)=\sum_{|\al|\leq 1} & \frac{1}{\al !}D_x^{\al}a(x)(iy)^{\al}
+2\sum_{|\al|=2}\frac{(iy)^{\al}}{\al !}\int_0^1(1-\theta)D_x^{\al}a(x+\theta y)d\theta
\end{align*}
into \eqref{eqn:weyl:7} to get
\begin{equation}
\label{eq:okazaki}
\begin{split}
&b(x,\xi)=\sum_{|\al|\leq 1}\frac{1}{\al !}\int e^{-iy\eta}e^{\phi(\xi+\frac{\eta}{2})-\phi(\xi-\frac{\eta}{2})}D_x^{\al}a(x)(iy)^{\al}dyd\eta
\\
+&2\sum_{|\al|=2}\int e^{-iy\eta}e^{\phi(\xi+\frac{\eta}{2})-\phi(\xi-\frac{\eta}{2})}(iy)^{\al}dyd\eta
 \int_0^1(1-\theta)D_x^{\al}a(x+\theta y)d\theta.
\end{split}
\end{equation}
Since $e^{-iy\eta}(iy)^{\al}=(-\partial_{\eta})^{\al}e^{-iy\eta}$ the first term on the right-hand side of \eqref{eq:okazaki} is
\begin{equation}
\label{eqn:weyl:8}
a(x)+\sum_{|\al|=1}\partial_{\eta}^{\al}e^{\phi(\xi+\frac{\eta}{2})-\phi(\xi-\frac{\eta}{2})}\Big|_{\eta=0}D_x^{\al}a(x)
\end{equation}
which is the second term on the right-hand side of \eqref{eq:iwama}. Note that 
\[
\partial_{\xi}^{\al}\phi(\xi)\in S_{\delta}^{\lr{s}}(\xim^{-2+\kappa }),\quad |\al|=1
\]
for any $ \delta\geq 0$. After integration by parts, denoting $H_{\al}(\xi,\eta)=
 \partial_{\eta}^{\al}e^{\phi(\xi+\frac{\eta}{2})
-\phi(\xi-\frac{\eta}{2})}$ the second term on the right-hand side of \eqref{eq:okazaki} yields, up to a multiplicative constant, 
\begin{align*}
R=\int   &  \sum_{|\al|=2}e^{-iy\eta}    H_{\al}(\xi,\eta)dyd\eta\int_0^1(1-\theta)D_x^{\al}a(x+\theta y)d\theta\\
&=\sum_{|\al|=2}\int\int_0^1 e^{ix\eta}(1-\theta)H_{\al}(\xi,\theta\eta)d\eta d\theta\int e^{-iy\eta}D_x^{\al}a(y)dy.
\end{align*}
Denote $E_{\al}(\eta)=\int e^{-iy\eta}D_x^{\al}a(y)dy$ then
\begin{lemma}
\label{le:weyl:5}
There exist $c>0, C>0$ such that $
|E_{\al}(\eta)|
\ \leq\
 C\,
 e^{-c\lr{\eta}^{1/s}}$.
\end{lemma}
\begin{proof} Integration by parts gives
\[
{\eta}^{\be}E_{\al}(\eta)
\ =\
\int e^{-iy\eta}\ D_x^{\al+\be}a(y)\ dy\,.
\]
Then there exist constants $A>0, C>0$ such that  $
|E_{\al}(\eta)|\leq CA^{|\be|}|\be|!^{s}\lr{\eta}^{-|\be|}$. 
Choose $p=|\be|$ such that $p$ minimizes $A^{p}p!^s\lr{\eta}^{-p}$, that is $p\sim e^{-1}A^{-1/s}\lr{\eta}^{1/s}$ so that $A^pp!^s\lr{\eta}^{-p}\lesssim e^{-s^{-1}A^{-1/s}\lr{\eta}^{1/s}}$.
\end{proof}
Note that $H_{\al}(\xi,\eta)$ is a linear combination of terms
\begin{align*}
\partial_{\xi}^{\be_1}\phi(\xi+\frac{\eta}{2})\cdots \partial_{\xi}^{\be_k}\phi(\xi+\frac{\eta}{2})\partial_{\xi}^{{\tilde\be}_1}\phi(\xi-\frac{\eta }{2})&\cdots\partial_{\xi}^{{\tilde\be}_l}\phi(\xi-\frac{\eta}{2})
\
 e^{\phi(\xi+\frac{\eta}{2})-\phi(\xi-\frac{\eta}{2})}
 \\
 =\ &K(\xi,\eta)
 \,e^{\phi(\xi+\frac{\eta}{2})-\phi(\xi-\frac{\eta}{2})}
\end{align*}
where $\sum \be_j=\be$, $\sum {\tilde\be}_j={\tilde\be}$ and $|\be_j|\geq 1$, $|{\tilde\be}_j|\geq 1$, $\be+{\tilde\be}=\al$. Let $\chi(r)\in \gamma^{(1+\varepsilon)}(\R)$ such that $\chi(r)=1$ in $|r|<1$ and $0$ for $r\geq 2$. Then we see 
\begin{equation}
\label{eqn:weyl:10}
|\partial_{\xi}^{\ga}K(\xi,\eta)|
\ \leq\
 CA^{|\ga|}|\ga|!\,\xim^{2\kappa-2}\lr{\xi}_{{\mu}}^{-|\ga|}
\end{equation}
on the support of ${\hat \chi}=\chi(\lr{\xi}^{-1}_{\mu}\lr{\eta})$. Writing $H_{\al}(\xi,\eta)=C_{\al}(\xi,\eta)e^{\phi(\xi+\frac{\eta}{2})-\phi(\xi-\frac{\eta}{2})}$ we have
\[
|\dif_{\xi}^{\ga}\big(C_{\al}(\xi,\eta){\hat\chi}\big)|\leq CA^{\ga|}|\ga|!^{(1+\varepsilon)}\xim^{2\kappa-2}\xim^{-|\ga|}.
\]
On the other hand  it is easy to check that
\[
\big|\dif_{\xi}^{\ga}\big(\phi(\xi+\eta/2)-\phi(\xi-\eta/2)\big)\big|\leq CA^{|\ga|}|\ga|!\xim^{\kappa-1}|\eta|\xim^{-|\ga|}
\]
on the support of ${\hat\chi}$ and in particular 
\begin{align*}
|\phi(\xi+\eta/2)-\phi(\xi-\eta/2)|\leq C\xim^{\kappa-1}|\eta|
\leq C\big(\xim^{-1}\lr{\eta}\big)^{1-\kappa}\lr{\eta}^{\kappa}\leq C'\lr{\eta}^{\kappa}.
\end{align*}
 Therefore from Lemma \ref{lem:seikei} we conclude
\[
\big|\dif_{\xi}^{\ga}\big(H_{\al}(\xi,\eta){\hat\chi}\big)\big|\leq CA^{|\ga|}\xim^{2\kappa-2}\big(\xim^{\kappa-1}|\eta|+|\ga|^{1+\varepsilon}\big)^{|\ga|}\xim^{-|\ga|}e^{C'\lr{\eta}^{\kappa}}
\]
on the support of ${\hat\chi}$. 
From this it follows that
\begin{equation}
\label{eq:aoki}
\begin{split}
\big|\dif_{\xi}^{\ga}\big(H_{\al}(\xi,\eta){\hat\chi}\big)&e^{-c\lr{\eta}^{1/s}}\big|\\
\leq CA^{|\ga|}\xim^{2\kappa-2}&\big(|\ga|^{1+\varepsilon}+|\ga|^s\xim^{-\delta}\big)^{|\ga|}\xim^{-|\ga|}e^{-c'\lr{\eta}^{1/s}}.
\end{split}
\end{equation}
Indeed since $\kappa+\delta=1$ one has 
\begin{align*}
\big(\xim^{\kappa-1}|\eta|\big)^{|\ga|}e^{-c\lr{\eta}^{1/s}}
\leq (\xim^{-\delta}\lr{\eta})^{|\ga|}e^{-c\lr{\eta}^{1/s}}
\leq CA^{|\ga|}(\xim^{-\delta}|\ga|^s)^{|\ga|}e^{-c'\lr{\eta}^{1/s}}.
\end{align*}
With $F_{\al}=(1-\theta)H_{\al}(\xi,\theta\eta)
E_{\al}(\eta)$ denote
 \[
 R=\sum_{|\al|=3}\int\int_0^1 e^{ix\eta}
F_{\al}{\hat\chi}d\eta d\theta
+\sum_{|\al|=3}\int\int_0^1 e^{ix\eta}
F_{\al}(1-{\hat\chi})
d\eta d\theta=R'+R''.
\]
Thanks to Lemma \ref{le:weyl:5} and \eqref{eq:aoki}, taking $|\lr{\eta}^{\be}e^{-c\lr{\eta}^{1/s}}|\leq CA^{|\be|}|\be|^{s|\be|}e^{-c'\lr{\eta}^{1/s}}$ into account ($0<c'<c$) one has
\begin{align*}
\big|\dif_{\xi}^{\ga}\dif_x^{\be}R'\big|\leq CA^{|\be|}\sum_{|\al|=3}\int\int |\dif_{\xi}^{\ga}H(\xi,\theta\eta)|\be|^{s|\be|}e^{-c\lr{\eta}^{1/s}}|d\eta d\theta\\
\leq C\xim^{2\kappa-2}A^{|\ga+\be|}(|\ga|^{1+\varepsilon}+|\ga|^s\xim^{-\delta})^{|\ga|}(\xim^{-\delta}|\be|^s)^{|\be|}\xim^{-|\ga|+\delta |\be|}\\
\leq C\xim^{2\kappa-2}A^{|\ga+\be|}(|\be+\ga|^{1+\varepsilon}+|\be+\ga|^s\xim^{-\delta})^{|\be+\ga|}\xim^{-|\ga|+\delta|\be|}
\end{align*}
for any $\delta\geq 0$ that is $
R'\in S_{\delta}^{\lr{s}}(\xim^{2\kappa-2})$. 
On the other hand it is easy to check that
\begin{align*}
\big|\dif_{\xi}^{\ga}\big(H(\xi,\theta\eta)(1-{\hat\chi})\big)e^{-c\lr{\eta}^{1/s}}\big|
\leq CA^{|\ga|}|\ga|^{(1+\varepsilon)|\ga|}e^{-c\lr{\eta}^{1/s}}
\end{align*}
and hence $|\dif_{\xi}^{\ga}\dif_x^{\be}R''|$ is bounded by $
 CA^{|\be+\ga|}|\be+\ga|^{s|\be+\ga|}e^{-c\xim^{1/s}}$ 
so that $R''\in S_{0,0}^{(s)}(e^{-c\xim^{1/s}})$. Therefore choosing $q(x,\xi)=r+R'$ and $R=R''$ we end the proof of Proposition \ref{pro:weyl:1gene}.
\end{proof}


\end{document}